\documentclass[letterpaper,12pt,oneside]{report}
\usepackage[T1]{fontenc}
\usepackage{amsmath,amsfonts,amsthm,subcaption,graphicx,mathrsfs,amssymb,booktabs,latexsym,url,xspace,tikz-cd}
\usepackage[english]{babel}
\usepackage[latin1]{inputenc}
\usepackage{bm}
\usepackage{geometry}
\usepackage{xcolor}
\usepackage{setspace}
\usepackage{fancyhdr}
\usepackage{titlesec}
\usepackage[nottoc]{tocbibind}
\usepackage{etoolbox}

\IfFileExists{algpseudocode.sty}{%
  \usepackage{algpseudocode}
  \usepackage{algorithm}
  
}{%
  \usepackage{float}
  \newfloat{algorithm}{tbp}{loa}[chapter]
  \floatname{algorithm}{Algorithm}
  \newenvironment{algorithmic}
    {\begin{list}{}{\setlength{\leftmargin}{1.5em}\setlength{\itemsep}{0.2em}\setlength{\parsep}{0pt}}}
    {\end{list}}
  \newcommand{\State}{\item[]}
  \newcommand{\Require}{\item[\textbf{Inputs:}]}
  \newcommand{\Return}{\item[]\textbf{return} }
  \newcommand{\For}[1]{\item[]\textbf{for }##1\textbf{ do}\begin{list}{}{\setlength{\leftmargin}{1.5em}\setlength{\itemsep}{0.2em}\setlength{\parsep}{0pt}}}
  \newcommand{\EndFor}{\end{list}\item[]\textbf{end for}}
  \newcommand{\While}[1]{\item[]\textbf{while }##1\textbf{ do}\begin{list}{}{\setlength{\leftmargin}{1.5em}\setlength{\itemsep}{0.2em}\setlength{\parsep}{0pt}}}
  \newcommand{\EndWhile}{\end{list}\item[]\textbf{end while}}
}

\fancypagestyle{plain}{%
  \fancyhf{}
  \fancyfoot[C]{\thepage}

}
\titleformat{\chapter}[display]
  {\normalfont\bfseries\centering}
  {\thechapter}
  {0.75em}
  {\Large\MakeUppercase}
\titlespacing*{\chapter}{0pt}{0pt}{2em}
\titleformat{\section}
  {\normalfont\large\bfseries}
  {\thesection}
  {1em}
  {}

\addto\captionsenglish{%
}

\AtBeginEnvironment{figure}{\singlespacing}
\AtBeginEnvironment{table}{\singlespacing}
\AtBeginEnvironment{algorithm}{\singlespacing}

\theoremstyle{definition}
\newtheorem{theorem}{Theorem}
\newtheorem{lemma}[theorem]{Lemma}
\newtheorem{corollary}[theorem]{Corollary}

\newtheorem{definition}[theorem]{Definition}

\newtheorem{observation}[theorem]{Observation}

\newcommand{\thesistitle}{Recurrent Barycenter Encoder: Prediction, Simulation and Applications to Time Series Analysis}
\newcommand{\thesisauthor}{Javier Ignacio Almarza}
\newcommand{\thesisdepartment}{Department of Mathematics}
\newcommand{\thesisuniversity}{New York University}
\newcommand{\thesisdate}{August, 2026}
\newcommand{\thesisadvisor}{Esteban Tabak}

\begin{document}
\pagenumbering{roman}
\thispagestyle{empty}
\singlespacing
\begin{center}
  \vspace*{0.25in}
  {\large\bfseries \thesistitle\par}

  \vspace{0.75in}
  by

  \vspace{0.35in}
  {\large \thesisauthor\par}

  \vfill

  A dissertation submitted in partial fulfillment

  of the requirements for the degree of

  \vspace{0.35in}
  Doctor of Philosophy

  \vspace{0.35in}
  \thesisdepartment

  \vspace{0.2in}
  \thesisuniversity

  \vspace{0.35in}
  \thesisdate
\end{center}

\vfill

\begin{flushright}
  \begin{minipage}{2.75in}
    \centering
    \rule{\linewidth}{0.5pt}\\[0.5ex]
    \thesisadvisor
  \end{minipage}
\end{flushright}

\clearpage


\chapter*{ACKNOWLEDGEMENTS}
\addcontentsline{toc}{chapter}{ACKNOWLEDGEMENTS}
\doublespacing

\noindent\textit{I would like to thank my advisor, Prof. Esteban Tabak, whose feedback, constant support and original ideas made this thesis possible, and from whom I learned a stimulating way of approaching problems in applied mathematics. I also thank Prof. Petter Kolm for his careful reading of this work and for several helpful remarks that helped me improve it. I want to thank all the mathematicians, and especially among them Prof. Pablo Shmerkin and Prof. Rom\'{a}n Sasyk, who taught me to do and, most importantly, to appreciate mathematics.\\
I am thankful to my parents and family for all their support.\\
 And last but not least I want to express my immense gratitude to Sofi, my wife, without whose love I would have never mustered the strength to resume my PhD and bring this work to completion. You mean everything to me.}

\clearpage

\chapter*{ABSTRACT}
\addcontentsline{toc}{chapter}{ABSTRACT}
\doublespacing

\hspace*{\parindent}This dissertation has two parts. In the first part we theoretically develop a framework that extends the optimal transport barycenter problem from independent data with observed covariates to time-dependent processes whose relevant covariate is their own history. Given a stationary, square-integrable stochastic process, our framework shows that under reasonable regularity assumptions there exists an invertible transport map that extracts from it an iid component whose distribution is the barycenter of the conditional distributions of the input process on its own past. This extracted iid sequence thus captures the innovation or noise component of the process that is not explained by the process' own past.

In the second part of this thesis we implement an effective algorithm to solve this problem and test it against synthetic data generated with some well-known models in time series analysis for which the solution is known in closed-form. We then show how this method can be applied to build a predictive sampler for such processes, and assess the accuracy of this predictive sampler both with the aforementioned synthetic data and with real-world time series such as the SP500 and Bitcoin 4-hour returns, and the daily records of the North Atlantic Oscillation index from the NOAA Climate Prediction Center. Finally, we demonstrate an application of the sampler to path simulation with a well-known time series model.

\clearpage
\doublespacing

\tableofcontents

\clearpage
\listoffigures

\clearpage
\listoftables

\clearpage
\chapter*{LIST OF APPENDICES}
\addcontentsline{toc}{chapter}{LIST OF APPENDICES}
\singlespacing

\noindent APPENDIX A\quad DEFERRED PROOFS\dotfill\pageref{app:proofs}

\vspace{1.25em}

\noindent APPENDIX B\quad IMPLEMENTATION DETAILS\dotfill\pageref{appimplementation}

\clearpage
\pagenumbering{arabic}
\doublespacing

\chapter{Introduction}

\section{Background}

The fundamental aim in statistical learning is to extract underlying structural information from datasets, enabling critical tasks such as prediction and inference from unseen data and generation of new samples that exhibit the same structure or obey the same stochastic law as the original dataset. This goal of capturing latent dependence structures applies equally to time-independent or cross-sectional data and to time-dependent sequential processes. In the first case, the common assumption is that the observed dataset $x_1,\dots, x_N\in\mathbb{R}^n$ is an iid sample taken from an unknown underlying distribution $P_X$ on $\mathbb{R}^n$. In the time-dependent case, it is assumed that the data is a realization $x_1,\dots,x_T$ of some stochastic process following a law $P_X$.

Often the variable $x\in\mathbb{R}^n$ is observed together with another variable $z\in\mathbb{R}^d$ and the structure that one aims to extract is the conditional law $P(x\in\cdot\mid z)$. In recent years, a `barycentric'\footnote{The reason for this name will become clear in the next section.} methodology based on optimal transport has been developed to extract this conditional structure given samples $(x_1,z_1),\dots,(x_N,z_N)$ drawn from $P(x,z)$, by finding a random variable $y^*$ that solves the problem
\begin{equation}\label{eqf}\min_y \mathbb{E}_P[\|x-y\|^2]\quad\text{ s.t. } y{\perp\!\!\!\perp} z
\end{equation}
where $y$ may be any suitable random variable\footnote{Strictly speaking, the problem as written is ill-posed since there may be no random variable $y$ minimizing the transport cost subject to the independence constraint. We will later see how to properly define this problem.} and ${\perp\!\!\!\perp}$ stands for independence, yielding successful applications to inference, simulation and latent factor discovery\cite{Yang22}.

Our goal in this work is to extend this methodology, which has been so far limited to iid data, to the analysis of data that exhibits time-dependence. In other words, we want to extract some dependence structure from time-indexed sample paths $(x_t)_{t\in\mathbb{N}}$ that obey a given stochastic law. This law could be known (for example, the law of a well-known stochastic process like a linear Gaussian or a GARCH process) or unknown, like the law underlying the returns of some set of financial assets.

To illustrate what kind of structure extraction we are aiming at, we may start by assuming there is a stochastic process $(x_t,z_t)_{t\in\mathbb{T}}$ ($\mathbb{T}=\mathbb{N}\text{ or }\mathbb{Z}$) with law $P$ ($x_t\in\mathbb{R}^{d_x},z_t\in\mathbb{R}^{d_z}$) underlying our dataset. In the most general setting, the variable $z$ will represent a source of variability for $x$. It may be known and observable, or it may be some latent or hidden state that must be inferred, or a combination of observable and hidden variables, $z_t=(z_t^o,z_t^h)$. For example, in financial time series, the observable $z^o$ could be exogenous factors that can explain individual returns, such as macroeconomic variables (inflation, GDP) or market or factor returns, while unobservable variables $z^h$ could be volatility or some latent ``market sentiment''. When the data is time-dependent, $z_t$ may also consist of past observations of $x_t$, i.e. $z_t=(x_{t-1},\dots,x_{t-l})$ that explain its current variability. Indeed, our general focus in this work will be on using the methodology to extract this kind of dependence, since cross-sectional dependence (i.e. analyzing $P(x_t\mid z_t)$, when $z_t$ is an exogenous variable) has already been the object of most applications of the barycentric method so far. Nonetheless, both approaches may naturally be combined.

Extracting the structural dependence of a stochastic process on its own past entails expressing it at time $t$ as a function of $(x_s)_{s<t}$ plus an innovation\footnote{We will use `innovation' to mean not only a strictly additive residual but rather any `noise' variable $\epsilon_t$, independent of the history $x_{< t}$, from which, when combined with this history, the next observation $x_{t}$ can be recovered, using any measurable transformation.} $\epsilon_t$ which captures the variability in $x_t$ not explained by its own past.

\begin{equation}\label{eq111}x_t=F((x_s)_{s<t},\epsilon_t)\end{equation}

A distinction must be made here regarding the innovation term. If we only required, as is often done in time series analysis, that the $\epsilon_t$ were zero-mean, constant variance and uncorrelated (in other words, if $\epsilon_t$ were a \textit{weak} white noise), then it is a classical result that the existence of orthogonal complements in Hilbert spaces can be used to prove that such a decomposition always exists in linear form. 
\begin{theorem}[Wold's theorem]
Let $(x_t)_{t\in\mathbb{Z}}$ be a weakly stationary\footnote{This condition means that $\mathbb{E}[x_t]$ is constant in $t$, $\mathbb{E}[x_t^2]<\infty$ and $\operatorname{Cov}(x_t,x_{t-l})$ depends only on the lag $l$.}, zero-mean process. Then $x_t$ admits a decomposition as a sum of two uncorrelated components
$$x_t=d_t+\sum_{j=0}^\infty c_j\epsilon_{t-j}$$ where:
\begin{itemize}
\item a) $\sum_jc_j^2<\infty$,
\item b) the first component $d_t$ (the `deterministic part') belongs to  $\bigcap_{s \in \mathbb{Z}} \mathcal{H}_s$, where $\mathcal{H}_s = \overline{\text{span}}\{x_l : l \le s\}$, and
\item c) the $\epsilon_s$'s in the second component are a weak white noise, i.e. for $c_{s-t}\neq 0$, $\mathbb{E}[\epsilon_t]=0$, $\mathbb{E}[\epsilon_s^2]=1$ and $\mathbb{E}[\epsilon_{s'}\epsilon_s]=0$ for $s\neq s'$
\end{itemize}
\end{theorem}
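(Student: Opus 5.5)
The plan is the classical Hilbert-space argument in $L^2(\Omega)$. Set $\mathcal{H}_t=\overline{\text{span}}\{x_l: l\le t\}$, let $P_{t}$ denote orthogonal projection onto $\mathcal{H}_{t}$, and put $\mathcal{H}_{-\infty}=\bigcap_s\mathcal{H}_s$. The natural innovation is the one-step prediction error $u_t=x_t-P_{t-1}x_t$, which lies in $\mathcal{H}_t\ominus\mathcal{H}_{t-1}$.

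\textbf{Step 1 (white noise).} For $s<t$ we have $u_s\in\mathcal{H}_s\subseteq\mathcal{H}_{t-1}\perp u_t$, so $\mathbb{E}[u_su_t]=0$. The innovations have mean zero because the $x_l$ do, so every element of $\mathcal{H}_s$ has mean zero. For constant variance, I would use weak stationarity: the map $U:x_l\mapsto x_{l+1}$ preserves inner products, since $\langle x_l,x_m\rangle$ depends only on $l-m$. It therefore extends to a unitary operator on $\overline{\text{span}}\{x_l:l\in\mathbb{Z}\}$ with $U\mathcal{H}_{t-1}=\mathcal{H}_t$. Hence $UP_{t-1}=P_tU$ and $Uu_t=u_{t+1}$, which gives $\sigma^2:=\|u_t\|^2$ independent of $t$. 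If $\sigma^2=0$, then $x_t\in\mathcal{H}_{t-1}$ for all $t$, so $\mathcal{H}_t=\mathcal{H}_{-\infty}$ and $x_t=d_t$ with all $c_j=0$; the statement then holds vacuously, consistent with the qualifier ``for $c_{s-t}\neq0$'' in c). Otherwise set $\epsilon_t=u_t/\sigma$.

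\textbf{Step 2 (expansion).} The $\{\epsilon_{t-j}\}_{j\ge0}$ form an orthonormal family in $\mathcal{H}_t$. Let $c_j=\langle x_t,\epsilon_{t-j}\rangle$, which is independent of $t$ by the unitarity of $U$. Bessel's inequality gives $\sum_j c_j^2\le\|x_t\|^2<\infty$, which is a). Then $\sum_j c_j\epsilon_{t-j}$ converges in $L^2$, and I define $d_t=x_t-\sum_j c_j\epsilon_{t-j}$. By construction $d_t\perp\epsilon_{t-j}$ for all $j\ge0$. Also $d_t\perp\epsilon_s$ for $s>t$, because $d_t\in\mathcal{H}_t\subseteq\mathcal{H}_{s-1}$. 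So $d_t$ is orthogonal to every $\epsilon_s$, and the two components are uncorrelated.

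\textbf{Step 3 (deterministic part; the main obstacle).} I must show $d_t\in\mathcal{H}_{-\infty}$. The key identity is the orthogonal decomposition
$$\mathcal{H}_t=\mathcal{H}_{t-n}\oplus\text{span}\{\epsilon_{t-n+1},\dots,\epsilon_t\},$$
proved by induction from $\mathcal{H}_s=\mathcal{H}_{s-1}\oplus\text{span}\{\epsilon_s\}$. That one-step identity holds because $\mathcal{H}_s$ is the closure of $\mathcal{H}_{s-1}+\text{span}\{x_s\}$ and $x_s=P_{s-1}x_s+\sigma\epsilon_s$.

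With this in hand, $d_t\in\mathcal{H}_t$ is orthogonal to $\epsilon_{t},\dots,\epsilon_{t-n+1}$, so $d_t\in\mathcal{H}_{t-n}$ for every $n$. Hence $d_t\in\bigcap_n\mathcal{H}_{t-n}=\mathcal{H}_{-\infty}$, since the $\mathcal{H}_s$ are nested and so this intersection equals $\bigcap_{s}\mathcal{H}_s$. This gives b). The care needed here is in the closure step, namely that the sum of a closed subspace and a finite-dimensional subspace is closed, and I expect it to be the only delicate point. The rest is bookkeeping.
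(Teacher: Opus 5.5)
Your argument is correct and complete. It is the standard Hilbert-space proof, built from the one-step innovations $u_t=x_t-P_{t-1}x_t$, the shift unitary $U$, and the decomposition $\mathcal{H}_t=\mathcal{H}_{t-n}\oplus\operatorname{span}\{\epsilon_{t-n+1},\dots,\epsilon_t\}$. The paper does not prove Wold's theorem; it only cites it as a classical consequence of orthogonal complements in Hilbert space, which is exactly the route you take.
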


If, however, we want a decomposition where each extracted noise term is independent of the past then the first obstruction is that the function $F$ in (\ref{eq111}), which must exist in a measurable sense for standard Borel spaces, is nonunique and generally intractable, providing no useful procedure for innovation extraction. A nice decomposition like the linear one guaranteed to exist by Wold's theorem will generally not hold. It is precisely this case that is of interest to us: we want the extracted innovation to be not only decorrelated from the observed past, but also free of any higher-order dependence. This is so because many processes of interest, such as zero-mean GARCH return processes, are already free of any autocorrelation, i.e. they are already a weak white noise and therefore have a trivial Wold decomposition, yet their higher order dependence is strong \footnote{GARCH processes specifically show high volatility clusters, i.e. variance tends to be high if it was recently high.}.

A classical theoretical solution to extracting strictly independent innovations is the Rosenblatt transformation \cite{Rosenblatt52}, which maps the sequence into iid uniform random variables by evaluating each observation through its conditional cumulative distribution function\footnote{Here we are assuming that $F_t$ (equivalently, the conditional distribution) is continuous. Later we will see this assumption guarantees a Monge solution in the Optimal Transport framework for one-dimensional $x_t$ (in higher dimensions, Monge solutions require absolute continuity).}, i.e.
$$u_t = F_t(x_t),\quad\text{ where }F_t(a):=P(x_t\leq a \mid x_{s},s<t)$$

However, while this transformation guarantees exact independence, it provides no practical extraction procedure for complex or unknown data-generating processes, since it requires either full knowledge of the conditional distributions or at least an accurate estimation thereof. Yet estimating, or, rather, sampling, these conditional distributions is a hard problem, and it is indeed one of the main ultimate goals of the barycentric method. 

\section{The barycentric formulation}

In the barycentric approach for this problem, the independence requirement is captured by the constraint in (\ref{eqf}): the condition $y{\perp\!\!\!\perp} z$ is much stronger than $y\perp z$. It requires that the joint distribution of $(y, z)$ factorizes into the product of their respective marginal distributions. It is a nonlinear, nonconvex condition that operates over the entire space of probability measures instead of merely enforcing an orthogonality condition in a Hilbert space. As a consequence, any approach that relies on the linear structure of Hilbert spaces (such as Wold's) will be insufficient. The problem must be recast as a nonlinear optimization over the space of probability measures, a setting which naturally leads to optimal transport theory.

An important property of the optimal transport barycentric problem (OTBP) is that the distribution $\mu^*$ of the random variable $y$ that solves (\ref{eqf}) is the {\em barycenter} of the conditional distributions $P_z(x):=P(x\mid z)$ under the 2-Wasserstein distance, that is, it solves the problem (see Lemma 1 in \cite{Tabak25})

\begin{equation}\label{eqwasser}\min_{\mu \in \mathcal{P}_2(\mathbb{R}^{d_x})} \mathbb{E}_{z}\left[ W_2^2(P_z,\mu)\right]
\end{equation}
where $\mathcal{P}_2$ stands for probability measures of finite variance. Intuitively, the barycenter $y$ is meant to capture the variability in $x$ not explained by the covariate $z$.

In our time-dependent setting, we would like $z=z_t$ to capture all relevant autoregressive information of the observed past of $x$ before time $t$. The process formed by the $y_t$ extracted at each time $t$ would then form the desired sequence of iid innovations that, together with $z_t$, determine $x_t$. Finding such a $z_t$ is not, however, a trivial task.

For the sake of completeness we state now the standard existence and uniqueness results for the barycentric problem (\ref{eqwasser}).

\begin{theorem}\label{teogeneralotb}
Let $(x,z)$ be jointly distributed, with
$x$ square-integrable, and let $P_z=P(x\in\cdot\mid z)$. Assume that,
for almost every $z$, $P_z$ is atomless if $d_x=1$, and absolutely
continuous with respect to Lebesgue measure if $d_x\geq2$. Then
(\ref{eqf}) and (\ref{eqwasser}) admit unique solutions, have the same
value, and the distribution of the solution $y^*$ to (\ref{eqf}) is the
barycenter $\mu^*$ solving (\ref{eqwasser}). Moreover,
$$y^*=T_z(x)$$
where $T_z$ is the unique quadratic optimal transport map from $P_z$ to
$\mu^*$. For almost every $z$, $T_z$ is invertible up to null sets and $(T_z^{-1})_\#\mu^*=P_z$.
\end{theorem}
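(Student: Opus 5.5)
The proof will first establish existence and uniqueness of the barycenter problem (\ref{eqwasser}), then show that (\ref{eqf}) is equivalent to it, and finally read off the map $T_z$.

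\textbf{Step 1: existence for (\ref{eqwasser}).} Write $J(\mu)=\mathbb{E}_z[W_2^2(P_z,\mu)]$. We have $J(\delta_0)=\mathbb{E}\|x\|^2<\infty$. Take a minimizing sequence $\mu_n$. The bound $W_2(\mu,\delta_0)\le W_2(\mu,P_z)+W_2(P_z,\delta_0)$ shows that the second moments of $\mu_n$ are bounded, so the sequence is tight. Since $\mu\mapsto W_2^2(P_z,\mu)$ is weakly lower semicontinuous, Fatou's lemma makes $J$ lower semicontinuous, and a minimizer $\mu^*\in\mathcal{P}_2$ exists.

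\textbf{Step 2: uniqueness of $\mu^*$.} This is the delicate point. On a set of $z$ of positive measure, $P_z$ is absolutely continuous (atomless if $d_x=1$). For such $P_z$, Brenier's theorem (resp.\ monotone rearrangement in $d_x=1$) gives that $\mu\mapsto W_2^2(P_z,\mu)$ is strictly convex along linear interpolations $(1-s)\mu_0+s\mu_1$. Indeed, if equality held in the convexity inequality, the mixture of the optimal plans would be optimal, hence induced by a map, which forces $\mu_0=\mu_1$. Integrating in $z$ makes $J$ strictly convex, so $\mu^*$ is unique.

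\textbf{Step 3: equivalence of the two problems.} Let $y\perp\!\!\!\perp z$ have law $\mu$. Conditionally on $z$, the pair $(x,y)$ is a coupling of $P_z$ and $\mu$, so $\mathbb{E}\|x-y\|^2\ge J(\mu)\ge J(\mu^*)$. Conversely, take the optimal map $T_z$ from $P_z$ to $\mu^*$; it exists and is unique a.e.\ by Brenier (or by the monotone map in 1D). One must check that $(z,x)\mapsto T_z(x)$ is jointly measurable, using a measurable selection of optimal plans or the stability of optimal maps. Then set $y^*=T_z(x)$. Conditionally on $z$, $y^*$ has law $\mu^*$, so $y^*\perp\!\!\!\perp z$, and its cost equals $J(\mu^*)$. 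Hence the two values coincide and $y^*$ attains the minimum of (\ref{eqf}).

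\textbf{Step 4: uniqueness of $y^*$.} Let $y$ be any minimizer of (\ref{eqf}). Its law $\mu$ must satisfy $J(\mu)=J(\mu^*)$, so $\mu=\mu^*$. The conditional coupling of $(x,y)$ given $z$ must then be optimal for a.e.\ $z$. Since optimal plans from $P_z$ are unique and induced by $T_z$, we get $y=T_z(x)$ a.s.

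\textbf{Step 5: invertibility.} To show $T_z$ is invertible, prove that $\mu^*$ is itself absolutely continuous (atomless when $d_x=1$). Brenier's theorem applied in reverse then gives an optimal map $S_z$ from $\mu^*$ to $P_z$ with $S_z\circ T_z=\mathrm{id}$ $P_z$-a.e. and $(S_z)_\#\mu^*=P_z$.

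For $d_x=1$, $T_z=F_{\mu^*}^{-1}\circ F_z$ is explicit. An atom of $\mu^*$ at $a$ would mean $T_z^{-1}(a)$ is an interval of positive $P_z$-mass for a.e.\ $z$. Replacing the atom by its spread-out version would then lower $J$, since it matches each $P_z$ better on that interval, contradicting optimality.

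In higher dimensions, absolute continuity of the barycenter follows from the fixed-point characterization $\mathrm{id}=\mathbb{E}_z[T_z]$ $\mu^*$-a.e., together with the a.e.\ invertibility of gradients of convex maps (Agueh--Carlier, Le Gouic--Loubes). I expect this step, together with the measurability in Step 3, to be the main obstacle. I would cite these known results rather than reprove them.
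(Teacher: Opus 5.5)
Steps 1--4 are sound and take a more self-contained route than the paper. The paper's proof is purely a chain of citations. Existence and the equivalence of (\ref{eqf}) and (\ref{eqwasser}) come from Yang--Tabak and Le Gouic--Loubes. Uniqueness of $\mu^*$ comes from the quantile representation of Bigot et al.\ when $d_x=1$ and from Ma's Theorem 5.1 when $d_x\geq 2$. Invertibility comes from applying Brenier or monotone rearrangement in both directions.

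You instead prove existence by the direct method. You get uniqueness of $\mu^*$ in every dimension from strict convexity of $\mu\mapsto W_2^2(P_z,\mu)$ along mixtures, which uses only that optimal plans out of $P_z$ are unique and induced by maps. You get equality of values by conditioning on $z$, and uniqueness of $y^*$ from a.e.\ optimality of the conditional coupling. This gives an elementary, dimension-free uniqueness argument, and it makes explicit the joint measurability of $(z,x)\mapsto T_z(x)$, which the paper leaves buried in its references. In $d_x=1$ that measurability is automatic from $T_z=F_{\mu^*}^{-1}\circ F_z$.

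The weak point is Step 5.

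\textbf{The case $d_x=1$.} Your ``spread-out'' argument is loose: a single replacement of the atom must lower the \emph{average} cost over $z$, not match each $P_z$ individually. It is easily repaired, and the repair is exactly the paper's route. Since $J(\mu)=\mathbb{E}\int_0^1|Q_z(u)-F_\mu^{-1}(u)|^2\,du$, pointwise minimization forces $F_{\mu^*}^{-1}=\mathbb{E}[Q_z]$. This is strictly increasing because every $Q_z$ is, so $\mu^*$ has no atoms.

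\textbf{The case $d_x\geq 2$.} The mechanism you propose does not work.
\begin{itemize}
\item The fixed-point identity ``$\mathrm{id}=\mathbb{E}_z[T_z]$ $\mu^*$-a.e.'' only makes sense for optimal maps \emph{from} $\mu^*$ to $P_z$. Their existence already presupposes that $\mu^*$ is regular, so the argument is circular.
\item The Agueh--Carlier argument gets absolute continuity only because the barycenter is the image of one absolutely continuous marginal of positive weight $\lambda_1$ under the gradient of a $\lambda_1$-strongly convex function, whose inverse is Lipschitz. With a continuum of covariate values each $P_z$ carries zero weight, so this strong convexity disappears.
\item Le Gouic--Loubes gives existence and uniqueness, not absolute continuity.
\end{itemize}
You need a result specifically about barycenters of a probability measure on $\mathcal{P}_2(\mathbb{R}^{d_x})$ concentrated on absolutely continuous measures. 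Ma's Theorem 5.1 \cite{Ma26}, used by the paper, is such a result; Kim--Pass covers compact settings. With that in hand, Brenier in both directions finishes Step 5 as you describe.
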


Existence and equivalence follow from
\cite[Theorems 1 and 2]{Yang22} and
\cite[Theorem 2]{LeGouicLoubes17}. In dimension one, uniqueness of the barycenter and its quantile representation follow from \cite[Proposition 4.1]{BigotEtAl17}, while existence and uniqueness of the monotone optimal transport map follow from \cite[Theorems 2.5 and 2.9]{Santambrogio15}. In dimensions $d_x\geq2$, uniqueness and absolute continuity of $\mu^*$ follow from \cite[Theorem 5.1]{Ma26}. Invertibility follows in the $d_x\geq 2$ case by applying \cite[Theorem 2.12]{Villani03} in both directions. In the $d_x=1$ case it follows from applying again \cite[Theorems 2.5 and 2.9]{Santambrogio15} in both directions after noticing that $\mu^*$ must also be atomless as a consequence of the quantile representation.

A methodology based on implementing effective solutions to (\ref{eqf}) (or, equivalently, (\ref{eqwasser})) has many applications, especially when the conditions guarantee that the Monge transport map $T$ on $(x,z)$ exists and is invertible for fixed $z$. The inverse 
$$T_z^{-1}:\operatorname{Im}(T_z)\to\mathcal{X}$$
 then provides a route for conditional sampling and conditional density estimation that is sample-efficient and is not limited to a predetermined parametric density form, since $(T_z^{-1})_\#\mu^*$ equals the conditional distribution $P_z$.

As mentioned above, so far in almost all applications of this optimal transport barycentric method the datasets are not meant to represent time series or stochastic processes, and are therefore iid for all practical purposes. If the samples $(x_i,z_i)$ are iid then the barycentric variables $y_i=T(x_i,z_i)$ are iid too. This iid property of the barycentric variables is crucial for the effectiveness of applications like conditional sampling, for which we first sample uniformly from the barycentric variables $y_i=T(x_i,z_i)$ obtained from the dataset $(x_i,z_i)$ and then we transport them backwards with the inverse map $T_{z}^{-1}$ to generate new datapoints $x_i'=T_{z}^{-1}(y_i)$ that are also iid (with law  $P(\cdot\mid z)$). Since our goal is to apply this method to datasets that come from time-dependent processes which are not iid, some important modifications will be needed.

\section{Outline of the thesis}

The rest of the thesis is organized in two parts, corresponding to Chapters 2 and 3, with Chapter 4 left for conclusions. Chapter 2 theoretically develops the population level optimal transport framework for time-dependent processes, while Chapter 3 implements this framework numerically and tests the implementation with both synthetic and real-world data.
Within Chapter 2, we first examine Markov processes, for which lagged observations provide natural covariates and the resulting barycentric variables are easily shown to form an iid innovation sequence. After illustrating the construction for some well-known processes used in time series analysis such as AR and ARCH, we then consider non-Markov processes such as GARCH and linear Gaussian state space models.

The recurrent adversarial formulation necessary to address this more general setting is then developed, introducing a recurrent variable whose role is to encode the information from the observed past that is relevant to the conditional distribution of $x_t$. We then establish general existence and uniqueness results and prove that, under general regularity assumptions, the barycentric variables that solve the problem are iid. We also study and give closed-form formulas for the linear Gaussian state space model and the GARCH$(1,1)$ model.

In order to numerically implement these ideas, in Chapter 3 we approximate the independence constraint through adversarial correlation tests and examine the distinction between the original infsup problem and the more stable supinf optimization used by the algorithm. A duality criterion for equality of their values is established and verified for AR, GARCH and linear Gaussian state space models. A simple example of a model for which this equality fails, AR--GARCH, is introduced and a solver with several test-function channels is then presented to address these and more general models. Experiment results are then presented showing that the solver accurately recovers the barycentric solution.

The solver's learned transport map and barycentric variables are then used to construct a predictive sampler, the Recurrent Barycentric Encoder (RBE). We evaluate this sampler on synthetic AR, GARCH, Kalman (i.e. linear Gaussian SSM) and AR--GARCH data, at individual histories and through systematic out-of-sample tests. We also examine prediction at increasing horizons. The method is then applied to historical Bitcoin returns, SP500 returns and the North Atlantic Oscillation index. Its predictive performance is assessed using CRPS against different alternative models, and the distribution of the barycenter variables is evaluated using Kolmogorov-Smirnov and Ljung-Box tests. Finally, we discuss path simulation and present an experiment to evaluate the OTBP's potential to simulate an ARCH$(1)$ process.

\chapter{An optimal transport framework for time-dependent processes}

\section{The Markov case}\label{subs12}

To see how the barycentric methodology can be applied to time-dependent processes let us begin with the simplest scenario and assume that the process $(x_t)_{t\in\mathbb{N}}$ is both stationary and Markov, so that all dependence on the past is captured by the last value of the process. Then (\ref{eq111})
becomes
\begin{equation}\label{eqmain}x_{t}=F(x_{t-1}, \epsilon_t)
\end{equation}
for some deterministic function $F$ and iid noise $\epsilon_t$.

The barycentric framework requires a covariate $z$ which explains part of the variability in $x$ and with respect to which the barycentric variable $y$ must be independent. We will see that for general stochastic processes finding such a covariate $z_t$ that captures the dependence of $x_t$ on its past will not be trivial. Under the Markov hypothesis, however,  there is a natural candidate for a covariate $z$ that captures the dependence of $x_t$ on its own past, namely $x_{t-1}$. Notice that in the case of a Markov process of step $l$ this remains true with natural candidate $z_t=(x_{t-1},\dots,x_{t-l})$.

Thus, given a sample path $x_1,\dots,x_T$ of a Markov process of step 1, we can apply the OTB method to $x_t$ with covariate $z=x_{t-1}$ and obtain barycentric variables $y_t$ which are solutions to

$$\arg\min_{y_t} \mathbb{E}[\|x_t-y_t\|^2]\ \text{ s.t. }\ y_t{\perp\!\!\!\perp} x_{t-1}$$
where the minimization problem is independent of $t$ because of stationarity. As explained above, the minimization problem seeks to extract, through $y_t$, the variability in $x_t$ not explained by its immediate past, where `explained by' is understood in the strong sense of probabilistic dependence. Since the process is Markov, this amounts to any variability attributable to the whole past.

A crucial property for our prediction and simulation purposes is that the barycentric variables $Y=(y_t)_{t=1..T}$ obtained by solving (\ref{eqf}) for all $t$ are iid. Indeed, assuming these have been obtained from a training sample path $x_1,\dots,x_T$, if we later observe a new realization $x_1',\dots,x_s'$ of the same process up to some time $s$ then the previously obtained $Y$ can be used to sample the conditional predictive distribution $P(x_{s+1}\in\cdot|x_{l},l\leq s)=P(x_{s+1}\in\cdot|x_{s})$. Indeed, the push-forward $T_{x_s}^{-1}(\mu^*)$ of $\mu^*$, the distribution of $y_t$ for all $t$, equals $P(x_{s+1}\in\cdot|x_{s})$, by Theorem \ref{teogeneralotb}. For a practical implementation, $\mu^*$ can be approximated by $\widehat{\mu}$, the empirical distribution of $Y$, or by fitting a density estimate to it. In either case, the crucial property is that the barycentric variables $y_t$ in $Y$ are iid, since this makes the time indices exchangeable. For the Markov case, this iid property is easy to prove, as we will see now.

\begin{lemma}\label{lemma1} Let $(x_t)_{t\in \mathbb{N}}$ be a stationary Markov process. Assume the barycentric process $y_t=\arg \min_{y_t} \mathbb{E}[\|x_t-y_t\|^2]\quad\text{ s.t. }\quad y_t{\perp\!\!\!\perp} x_{t-1}$ is a Monge solution, i.e. it is given by a transport map $y_t=T(x_t,x_{t-1})$. Then the $y_t$ are iid.
\end{lemma}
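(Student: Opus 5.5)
The plan is to show two things: the $y_t$ are identically distributed, and each $y_t$ is independent of the whole past $(x_s)_{s<t}$. Since every earlier $y_s$ with $s<t$ is a measurable function of that past, independence from the past gives mutual independence by induction.

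\emph{Identical distribution.} By stationarity, the joint law of $(x_t,x_{t-1})$ does not depend on $t$. Hence the optimization problem, the conditional laws $P_z=P(x_t\in\cdot\mid x_{t-1}=z)$, and the barycenter $\mu^*$ are the same for all $t$. By Theorem \ref{teogeneralotb} the map $T$ is therefore the same for all $t$, and every $y_t$ has law $\mu^*$.

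\emph{Independence from the full past.} Write $\mathcal{F}_{t-1}=\sigma(x_s,s\le t-1)$. For a bounded measurable $g$ and $A\in\mathcal{F}_{t-1}$, condition on $\mathcal{F}_{t-1}$. Because $y_t=T(x_t,x_{t-1})$ with $x_{t-1}$ being $\mathcal{F}_{t-1}$-measurable, the Markov property gives
\[
\mathbb{E}[g(y_t)\mid\mathcal{F}_{t-1}]=\int g(T(x,x_{t-1}))\,P_{x_{t-1}}(dx)=\int g\,d\big(T_{x_{t-1}}\big)_\#P_{x_{t-1}}.
\]
By Theorem \ref{teogeneralotb}, $(T_z)_\#P_z=\mu^*$ for almost every $z$; equivalently, this is the constraint $y_t\perp\!\!\!\perp x_{t-1}$. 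So the right-hand side equals $\int g\,d\mu^*$ almost surely, which is a constant. Therefore $\mathbb{E}[g(y_t)\mathbf{1}_A]=P(A)\int g\,d\mu^*$, and $y_t$ is independent of $\mathcal{F}_{t-1}$.

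\emph{Conclusion.} For $t_1<\dots<t_k$, the variables $y_{t_1},\dots,y_{t_{k-1}}$ are $\mathcal{F}_{t_k-1}$-measurable, since each $y_s$ is a function of $(x_s,x_{s-1})$. Hence $y_{t_k}$ is independent of $(y_{t_1},\dots,y_{t_{k-1}})$. Induction on $k$ then factorizes the joint law as $\mu^{*\otimes k}$.

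The main obstacle is the middle step: passing from independence of $x_{t-1}$ alone to independence of the entire past. This is exactly where the Markov property enters, through $P(x_t\in\cdot\mid\mathcal{F}_{t-1})=P_{x_{t-1}}$. Some care is needed with null sets, since $T_z$ is defined only for almost every $z$. I would handle this by working with a regular conditional distribution and noting that the exceptional $z$-set has probability zero under the law of $x_{t-1}$.
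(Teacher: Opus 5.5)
Your proposal is correct and follows essentially the paper's argument. The constraint $y_t\perp\!\!\!\perp x_{t-1}$ makes the conditional law of $y_t$ given $x_{t-1}$ constant, the Markov property upgrades conditioning on $x_{t-1}$ to conditioning on $\mathcal{F}_{t-1}$, and since each earlier $y_s$ is $\mathcal{F}_{t-1}$-measurable, the iid property follows.

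Your two appeals to Theorem \ref{teogeneralotb} are unnecessary and would bring in atomlessness hypotheses the lemma does not state. The lemma already supplies a single time-homogeneous $T$, and the independence constraint directly gives $(T_z)_\#P_z=\mathrm{law}(y_t)$ for a.e.\ $z$.
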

\begin{proof}
    The ``identically'' follows from the fact that $x_t$ is stationary and that $y_t$ is a function of $x_t$ and $x_{t-1}$. For independence we have
    \begin{align}
        \mathbb{P}(y_{t+1}\in A)&=\mathbb{P}(y_{t+1}\in A\mid x_{t}) \quad (\hbox{since } \ y_{t+1}{\perp\!\!\!\perp} x_{t}) \nonumber \\
        &=\mathbb{P}(T(x_{t+1},x_t)\in A\mid x_{t})=\mathbb{P}(T(x_{t+1},x_t)\in A\mid x_{s},s\leq t) \quad \text{ (Markov)} \nonumber\\
        &= \mathbb{P}(y_{t+1}\in A\mid x_{s},s\leq t) \nonumber
    \end{align}
    Hence $y_{t+1}{\perp\!\!\! \perp}\{x_s,s\leq t\}$ and therefore $y_{t+1}{\perp\!\!\! \perp} (T(x_s,x_{s-1}))_{s\leq t}=(y_s)_{s\leq t}$
\end{proof}

\begin{observation}\label{obss}
The proof works the same with Markov processes of step $k$ as long as we take the last $k$ observations as covariates, i.e. $z=x_{t-1},\dots,x_{t-k}$, yielding $y_t=T(x_t,x_{t-1},\dots,x_{t-k})$. 
\end{observation}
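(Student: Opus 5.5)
The plan is to rerun the argument of Lemma \ref{lemma1} with the single lagged covariate $x_{t-1}$ replaced by the lag vector $z_t=(x_{t-1},\dots,x_{t-k})$. The key fact is that a Markov process of step $k$ becomes an ordinary (step one) Markov process once it is lifted to $k$-blocks, so the earlier proof goes through essentially verbatim.

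For the ``identically distributed'' part, I would note that $y_t=T(x_t,x_{t-1},\dots,x_{t-k})$ is a single fixed measurable function of the window $(x_t,\dots,x_{t-k})$. The map $T$ does not depend on $t$: by stationarity the law of $(x_t,z_t)$ is the same for every $t$, so by Theorem \ref{teogeneralotb} the barycentric problem, and hence its Monge map, is the same for every $t$. Stationarity of $(x_t)$ then makes the law of the window, and therefore the law of $y_t$, independent of $t$. This requires $t>k$, so that the window is defined; if the index set is $\mathbb{N}$, I would simply start the sequence at $t=k+1$.

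For independence, fix a Borel set $A$. The constraint $y_{t+1}{\perp\!\!\!\perp} z_{t+1}=(x_t,\dots,x_{t-k+1})$ gives
\[
\mathbb{P}(y_{t+1}\in A)=\mathbb{P}\bigl(T(x_{t+1},x_t,\dots,x_{t-k+1})\in A\mid x_t,\dots,x_{t-k+1}\bigr).
\]
The step-$k$ Markov property says the conditional law of $x_{t+1}$ given $\{x_s,s\le t\}$ depends only on $(x_t,\dots,x_{t-k+1})$. The window variables are measurable with respect to both conditioning $\sigma$-algebras. Hence the conditional probability of $T(x_{t+1},x_t,\dots,x_{t-k+1})\in A$ given the last $k$ values equals the conditional probability given the entire past.

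This is the step I would justify carefully, since it is the only point that needs more than the stated Markov property. I would use the ``freezing'' lemma for conditional expectations. Write the event as $g(x_{t+1},W)$ with $W$ measurable with respect to both $\sigma$-algebras. Conditioning $g(x_{t+1},W)$ on either $\sigma$-algebra then gives $\int g(u,W)\,P(x_{t+1}\in du\mid\cdot)$, and these two kernels agree by the Markov property. This requires a regular conditional distribution, which exists because we are working in $\mathbb{R}^{d_x}$.

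It then follows that $\mathbb{P}(y_{t+1}\in A\mid x_s,s\le t)$ is the constant $\mathbb{P}(y_{t+1}\in A)$, so $y_{t+1}{\perp\!\!\!\perp}\{x_s,s\le t\}$. Each $y_s$ with $s\le t$ is a measurable function of $(x_s,\dots,x_{s-k})$, all of whose indices are at most $t$. So $y_{t+1}$ is independent of $(y_s)_{s\le t}$. Applying this for every $t$ shows each $y_{t+1}$ is independent of its predecessors, which gives mutual independence of the whole sequence and completes the argument.
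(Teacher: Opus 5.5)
Your proposal is correct and follows the paper's route: it reruns the proof of Lemma \ref{lemma1} with the lag vector $(x_t,\dots,x_{t-k+1})$ in place of $x_t$, and uses the step-$k$ Markov property to pass from conditioning on that window to conditioning on the full past. Your additions (a time-homogeneous $T$, the freezing-lemma justification of the Markov step, and starting the sequence at $t=k+1$) make explicit details that the paper's one-line observation leaves implicit.
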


The proof also works the same if we add other covariates.

\begin{lemma}\label{lemmaX} Let $(x_t)_{t\in \mathbb{N}}$ satisfy a stochastic dynamics of the form
$$x_{t}=F(x_{t-1}, \epsilon_t, z^o_t), $$
where $z^o_t$ is a set of observed covariates that are jointly stationary with $x_t$ and $\epsilon_t$ is iid and
$$\epsilon_t\perp\!\!\!\perp\sigma(x_s,z_u^o:s<t,u\leq t)$$ Assume the barycentric process
$$y_t=\arg \min_{y_t} \mathbb{E}[\|x_t-y_t\|^2]
\quad\text{ s.t. }\quad
y_t{\perp\!\!\!\perp} (x_{t-1}, z^o_t)$$
is a Monge solution, i.e. it is given by a transport map $y_t=T(x_t,x_{t-1}, z^o_t)$. Then the $y_t$ are iid.
\end{lemma}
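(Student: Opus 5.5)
The plan is to mimic the proof of Lemma \ref{lemma1}, with the filtration $\mathcal{G}_t:=\sigma(x_s,z^o_u: s<t,\ u\leq t)$ in place of the past of $x$. The key structural fact is that
$$y_t=T(x_t,x_{t-1},z^o_t)=T\bigl(F(x_{t-1},\epsilon_t,z^o_t),x_{t-1},z^o_t\bigr)=:G(\epsilon_t,x_{t-1},z^o_t),$$
so $y_t$ is a measurable function of $\epsilon_t$ and the $\mathcal{G}_t$-measurable vector $w_t:=(x_{t-1},z^o_t)$.

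\emph{Identical distribution.} The pairs $(x_t,x_{t-1},z^o_t)$ are jointly stationary by hypothesis. Moreover $T$ does not depend on $t$, because the barycentric problem depends only on the joint law of $(x_t,x_{t-1},z^o_t)$. Hence all the $y_t$ have the same law, namely the barycenter $\mu^*$ of Theorem \ref{teogeneralotb}.

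\emph{Independence.} The central step is to show $y_{t}\perp\!\!\!\perp\mathcal{G}_t$. Since $\epsilon_t\perp\!\!\!\perp\mathcal{G}_t$ and $w_t$ is $\mathcal{G}_t$-measurable, the freezing lemma for conditional expectations gives, for any Borel $A$,
$$\mathbb{P}(y_t\in A\mid \mathcal{G}_t)=\phi_A(w_t),\qquad \phi_A(w):=\mathbb{P}(G(\epsilon_t,w)\in A).$$
In particular $\mathbb{P}(y_t\in A\mid\mathcal{G}_t)=\mathbb{P}(y_t\in A\mid w_t)$. The constraint $y_t\perp\!\!\!\perp w_t$ then says that this equals $\mathbb{P}(y_t\in A)$. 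Thus $\mathbb{P}(y_t\in A\mid\mathcal{G}_t)=\mathbb{P}(y_t\in A)$ for all $A$, which is exactly $y_t\perp\!\!\!\perp\mathcal{G}_t$.

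Next, for $s<t$, each $y_s=T(x_s,x_{s-1},z^o_s)$ is $\mathcal{G}_t$-measurable, since $s\leq t-1$ and $z^o_s$ has index $\leq t$. Therefore $y_t\perp\!\!\!\perp(y_s)_{s<t}$ for every $t$. The standard chain-rule factorization
$$\mathbb{P}(y_1\in A_1,\dots,y_t\in A_t)=\prod_s\mathbb{P}(y_s\in A_s),$$
obtained by induction on $t$, then gives mutual independence of the $y_t$.

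\emph{Main obstacle.} The delicate step is the passage from conditioning on $w_t$ to conditioning on all of $\mathcal{G}_t$. In the Markov lemma this was simply the Markov property. Here it must instead come from the assumed independence $\epsilon_t\perp\!\!\!\perp\mathcal{G}_t$, via the freezing lemma; this is precisely why that hypothesis is stated with $u\leq t$ for $z^o$. A minor point to check is that $T$, which is defined only up to null sets, can be taken as a fixed Borel version, so that $G$ is jointly measurable. I would handle this by choosing a Borel representative of $T$ and noting that all the identities above hold almost surely.
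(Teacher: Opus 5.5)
Your proof is correct and follows the paper's route. Like the paper, you show that $y_t$ is independent of the whole past $\sigma(x_s,z^o_u:\ s<t,\ u\le t)$ by upgrading the conditioning on $(x_{t-1},z^o_t)$ to that larger $\sigma$-algebra, and you then use that every earlier $y_s$ is measurable with respect to it. The paper only asserts the equality of the two conditional probabilities, whereas you justify it: writing $y_t$ as a measurable function of $\epsilon_t$ and $(x_{t-1},z^o_t)$ and applying the freezing lemma, with your inductive factorization making the final step to mutual independence explicit.
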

\begin{proof}
    The ``identically'' follows from joint stationarity of $(x_t,z_t^o)$ and the time-homogeneity of the transport map $T$.
    For independence we have
    \begin{align}
        \mathbb{P}(y_{t+1}\in A)&=\mathbb{P}(y_{t+1}\in A\mid x_{t}, z^o_{t+1}) \quad (\hbox{since } \ y_{t+1}{\perp\!\!\!\perp} x_{t}, z^o_{t+1}) \nonumber \\
        &=\mathbb{P}(T(x_{t+1},x_t, z^o_{t+1})\in A\mid x_{t}, z^o_{t+1})=\mathbb{P}(T(x_{t+1},x_t, z^o_{t+1})\in A\mid x_{s}, z^o_{s+1},s\leq t) \nonumber\\
        &= \mathbb{P}(y_{t+1}\in A\mid x_{s}, z^o_{s+1},s\leq t) \nonumber
    \end{align}
    Hence $y_{t+1}{\perp\!\!\! \perp}\{x_s,z^o_u,s\leq t,u\leq t+1\}$ and therefore $y_{t+1}{\perp\!\!\! \perp} (T(x_s,x_{s-1}, z^o_s))_{s\leq t}=(y_s)_{s\leq t}$
\end{proof}

Observation \ref{obss} grants that, when the process is Markov, the barycentric method can remove all the variability attributed to the past simply by taking $z$ equal to the vector of the last $k$ lagged values of the process. As we will later see, finding such a $z$ when the process is not Markov is not an easy task.

\subsection{Path simulation}

In addition to prediction we may also want to simulate new data, that is, generate paths $x_1',\dots,x_T'$ that follow the same law as the original $x_t$. The simplicity of the Markov case makes it easy to see how the iid barycentric variables $y_t$ can be used to simulate alternative paths.

Indeed, starting with a fixed $x_1'$, we can sample from $\widehat \mu\approx \mu^*$ by choosing $i\in \{1,\dots,T\}$ uniformly at random\footnote{An alternative to this uniform sampling is to fit some density estimate $\hat{\mu}_{\text{dens}}$ for the $y_t$ and then sample from this density $\hat{\mu}_{\text{dens}}$.} and apply the inverse of the transport map to define $x_2'=T_{x_1'}^{-1}(y_i)$. Sampling the iid barycentric variables uniformly and applying the inverse transport map $T_{x_1'}^{-1}$ will generate a sample $x_2'$ that follows the conditional law $P(\cdot \mid x_1')$. Thus, we get a sample path that follows the original Markov law by iteratively applying this procedure, i.e. $x_t'=T_{x_{t-1}'}^{-1}(y_{i_t})$ where $i_t\in \{1,\cdots,T\}$ is chosen uniformly at random. In Section \ref{pathsimulation} we implement this idea and run a series of tests to assess its accuracy.

For a variant of this where cross-sectional data is added, consider the situation where the initial sample involves two observed variables $(x_1,z_1),\dots,(x_T,z_T)$ such that the process is jointly Markov. We may want to sample paths $(x_1',z_1),\dots,(x_T',z_T)$ that follow the same law but where the $z_t$ is kept the same as in the original dataset.

Applying the OTB algorithm as before to the conditional distribution\footnote{Notice that in this scenario the covariate $z$ corresponds to $x_{t-1},z_t,z_{t-1}$, i.e. we are extracting both cross-sectional and past dependence.} $P(x_t\mid x_{t-1},z_t,z_{t-1})$ we get the barycentric variables $y_t=T(x_t,x_{t-1},z_t,z_{t-1})$, which are independent of covariates $(x_{t-1},z_t,z_{t-1})$.

If we assume that
$$P(x_{t+1}|x_{\leq t} ,z_{\leq T})=P(x_{t+1}|x_t,z_t,z_{t+1})$$

then, starting from some $(x_1',z_1)$, we choose $i\in \{1,\dots,T\}$ uniformly at random and apply the inverse of the transport map to define $(x_2')=T_{(x_1',z_2,z_1)}^{-1}(y_i)$. As before, applying this procedure iteratively we get a path $x_1',\dots,x_T'$ sampled under the law $P(x_1,\dots,x_T\mid z_1,\dots,z_T)$.

\section{Some well-known processes}\label{wellknown}

Before proceeding to the general non-Markovian scenario, let's make these ideas more concrete by exploring what the barycentric variables look like when applying this method to stochastic processes $x_t$ that follow one of a few well-known models in time series analysis, such as AR$(p)$ and ARCH$(q)$.

For a series that follows the simplest stationary AR$(1)$ process

\begin{equation}\label{eq1}
x_{t}=\alpha+\beta x_{t-1}+\epsilon_{t}
\end{equation}
with $\epsilon_{t}$ an iid white noise and $|\beta|<1$ (to ensure stationarity), the OTB method for $x_t$ with covariate $z=x_{t-1}$ yields, by Corollary \ref{1dcor2}, barycentric variables
$$y^*_t=T(x_t,x_{t-1})=\alpha+\beta \mathbb{E}[x_t]+x_t-(\alpha+\beta x_{t-1})=\alpha+\beta \mathbb{E}[x_t]+\epsilon_t$$
with transport cost
$$V=\mathbb{E}[|x_t-y^*_t|^2]=\beta^2Var(x)$$

AR$(1)$ models naturally extend to AR$(p)$ models, which express $x_t$  as a linear function of the last $p$ lags $x_{t-1},\dots,x_{t-p}$, plus a white noise term. For these models, the observation after Lemma \ref{lemma1} means that the OTB should be done with the last $p$ lags as covariates, i.e. $y_t {\perp\!\!\! \perp} z_t=x_{t-1},\dots,x_{t-p}$, since the model is Markov of step $p$.

The simplicity of AR models makes them popular, yet their simplicity entails some limitations for modeling. One of these limitations is that the distribution of $x_{t}$ conditional on the past has constant variance $\sigma_\epsilon^2$, i.e. variance is independent of past history. ARCH and GARCH-type models were developed to relax this, which is especially useful since in financial time series the variance of returns, as a measure of volatility, exhibits persistence, i.e. a spike in volatility at time $t$ makes the expected volatility at $t+1$ higher.

The general form of an ARCH or GARCH process $(\eta_t)_{t\in\mathbb{T}}$ ($\mathbb{T}=\mathbb{N}\text{ or }\mathbb{Z}$) is
$$\eta_t=\sigma_t \epsilon_t$$
where $\epsilon_t$ is an iid standardized white noise (usually Gaussian or Student-$t$) and $\sigma_t$ is a positive function of the past. For an ARCH$(q)$ process, this function is given by

$$\sigma_{t}^2=\omega+\sum_{i=1}^q\alpha_i \eta_{t-i}^2 \quad \text{ ($\omega,\alpha_i>0$)}$$   
with $\sum_i\alpha_i<1$ to ensure stationarity.

If we apply the OTB method to $\eta_t$ with covariates $(\eta_{t-1},\dots,\eta_{t-q})$ we get (from Corollary \ref{1dcor2} again)
\begin{align}y_t&=T(\eta_t,\eta_{t-1},\dots,\eta_{t-q})=\frac{\eta_t}{\sqrt{\omega+\sum_{i=1}^q\alpha_i\eta^2_{t-i}}}\mathbb{E}\left[\sqrt{\omega+\sum_{i=1}^q\alpha_i\eta^2_{t-i}}\right] =\nonumber \\
&=\epsilon_t\mathbb{E}\left[\sigma_t\right]
\end{align}
with transport cost $V=\mathbb{E}[\epsilon_t^2(\sigma_t-\mathbb{E}[\sigma_t])^2]=\mathbb{E}[\epsilon_t^2]\mathbb{E}[(\sigma_t-\mathbb{E}[\sigma_t])^2]=\mathbb{E}[(\sigma_t-\mathbb{E}[\sigma_t])^2]$

\

In the examples we have considered so far, the observed process is Markov. Both the AR$(p)$ process $x_t$ and the ARCH$(q)$ process $\eta_t$ are Markov (of step $p$ and $q$, respectively). However, most of the observed stochastic processes in the real world are not Markov. A simple related example of a model used in time series analysis where the observable process is not Markov of any step is the GARCH$(1,1)$ model, which is defined by\footnote{Notice an ARCH$(1)$ process corresponds to a GARCH$(1,1)$ with $\beta=0$.}.
$$x_t=\sigma_t \epsilon_t,\quad\text{ with $\epsilon_t$ an iid white noise and }\quad \sigma_{t+1}^2=\omega+\alpha x_t^2+\beta \sigma_t^2 \quad \text{ ($\omega,\alpha,\beta>0$)}$$
where $\alpha+\beta<1$ is assumed to ensure stationarity with finite variance. For this model, the squared process $x_t^2$ exhibits strong autocorrelation, and this is the main reason it is used to model volatility, since large squared returns tend to be followed by large squared returns, even when raw returns themselves are serially uncorrelated.

The difference from the previous examples is that now $\sigma_{t+1}$ is a function of $\sigma_t$, so that the process $x_t$ does not satisfy any relation $x_{t+1}=F(x_t,\dots,x_{t-l},\epsilon_t)$ for iid noise $\epsilon_t$. In other words, the process is not Markov of any order. Of course, the joint process $(x_t,\sigma_t)$ is Markov of order 1, but $\sigma_t$ is not observable.

In this scenario, we would like to know if (and how) the barycentric method could be used to sample the conditional law $P(x_t\mid x_{s},s<t)$. To start answering this question, in the next section we will develop a version of the barycentric method that will allow us to handle data that comes from processes exhibiting longer time-dependence.

As a final observation, notice that with a sufficiently large $q$ we can approximate a GARCH$(1,1)$ process fairly well with an ARCH$(q)$ process. To see this, expand $\sigma_t^2$ recursively from the definition of the GARCH$(1,1)$ process to get

\begin{align}\sigma_t^2&=\omega+\alpha x_{t-1}^2+\beta\sigma_{t-1}^2=\omega+\alpha x_{t-1}^2+\beta\omega+\beta\alpha x_{t-2}^2+\beta^2\sigma_{t-2}^2\nonumber \\
&=\omega\sum_{i=0}^{q-1}\beta^i+\alpha \sum_{i=1}^q\beta^{i-1}x_{t-i}^2+\beta^q\sigma_{t-q}^2\nonumber \\
&\approx \omega\frac{1-\beta^{q}}{1-\beta}+\alpha \sum_{i=1}^q\beta^{i-1}x_{t-i}^2\  \text{(for large $q$)}\nonumber
\end{align}

Thus, in principle, we could content ourselves with the fact that a GARCH process can be approximated exponentially well by an ARCH$(q)$ process with large $q$. We could then apply the barycentric method to generate alternative sample paths using $z_t=(x_{t-1},\dots,x_{t-q})$ as covariates.

This Markov approximation will hold for much more general processes than GARCH$(1,1)$. However, high dimensionality is bad and inefficient, both in general\footnote{This is the main reason why GARCH$(1,1)$ was introduced in the first place, since $q=9$ was needed to model the daily volatility of the SP500 well enough.} and specifically for our OTB algorithm, whose covariate dimension will grow with $q$. In the specific case of a GARCH$(1,1)$ process, a single, though hidden, dimension (i.e. volatility) should be enough to capture all relevant information from the past. Ideally, we would like a mechanism that captures such a sufficient statistic whenever it exists.

\section{An adversarial game for time-dependent processes}

The previous section ended with an example of an observed process that is not Markov but which becomes Markov when a hidden process becomes observable. We will next analyze what is arguably the most well-known example of a process with this characteristic, the classical linear Gaussian state space model (LG-SSM), which we will call the {\em Kalman system}. Before doing so, though, a few comments regarding our notation and setting are necessary.

For any collection $\mathcal{C}$ of random variables defined on the same probability space, we write $\sigma(\mathcal{C})$ for the $\sigma$-algebra generated by all random variables in $\mathcal{C}$. Moreover, since we will be working with a stationary process $x_t$ defined on a probability space $(\Omega,\mathcal{F},P)$ we will regard $\sigma$-algebras $\mathcal{A}\subseteq\mathcal{F}$ mod 0, i.e., $\mathcal{A}$ and $\mathcal{A}'$ are in the same equivalence class if they differ by sets of $P$-measure zero, and we will write $\mathcal{A}=\mathcal{A}'$ accordingly (and similarly for inclusions). If we say a random variable $X$ is $\mathcal{A}$-measurable it means the $\sigma$-algebra generated by it, $\sigma(X)$, is included in $\mathcal{A}$ mod 0.

We will also assume that $(\Omega,\mathcal{F},P)$ is rich enough to define in it the bilateral extension (i.e. to $t\in\mathbb{Z}$) of $x_t$. This can always be done by stationarity and Kolmogorov's extension theorem. We will often write $x_{<t}$ instead of $\{x_s:s<t\}$, so that $\sigma(x_{<t})$ will then denote the (equivalence class of the) $\sigma$-algebra generated by $\{x_s:\ s<t\}$.

Finally, when considering processes that involve a white noise $\epsilon_t$ we will assume that such white noise (in addition to being standardized and iid) has an atomless distribution (equivalently, its cdf is continuous).

\subsection{A motivating example: the linear state Gaussian model}

Consider $(x_t,z_t)$ in $\mathbb{R}^2$ a stationary process obeying

\begin{align}\label{eqHMlin}
x_{t}&=\beta z_{t}+\eta_{t} \nonumber\\
z_{t+1}&=\alpha z_t+\epsilon_t
\end{align}
where $\epsilon_t,\eta_t$ are iid Gaussian white noise with standard deviations $\tau_\epsilon$ and $\tau_\eta$ respectively.

We saw in the previous section that, if the joint process $(x_t,z_t)$ is observable, then since it is Markov and stationary, we can apply the barycentric algorithm to $(x_t,z_t)$ with covariates $(x_{t-1},z_{t-1})$  and recover the process from the barycentric variables $y_t=T((x_t,z_t),(x_{t-1},z_{t-1}))$.

Suppose now that $z_t$ is no longer observable. We would like $z_t$ to be the covariate in some version of the barycentric problem. Yet for the barycentric problem $z$ must be given, whereas for our current model $z_t$ is unknown. In fact $z_t$ cannot be known, not even if the whole observable process $\{x_t\}_{t\in\mathbb{N}}$ is known, so the covariate will have to be some different $h_t$ that captures the dependence of the observed $x_t$ on its own past. We would want this $h_t$ to capture as much of the dependence of the observed past as is necessary, and this optimization of the explanatory power of $h_t$ will be at the expense of the variability of $x_t$ captured by the barycenter $y_t$, so that our barycentric problem becomes an adversarial game. This is analogous to the adversarial game that arises in the case of iid data when the barycentric method is used for factor discovery, see \cite{Yang22}. The value $V$ of this adversarial game will be the maximal transport cost that the player that extracts $h_t$ from the past can guarantee:

\begin{equation}\label{eqlatent}V=\max_G\min_{y_t} \mathbb{E}[\|x_{t}-y_{t}\|^2]\quad \text{ s.t. } y_{t}{\perp\!\!\!\perp} h_{t}\quad \text{ with } h_{t}=G(x_{t-1},h_{t-1})\text{ stationary}
\end{equation}

As before, $y_t$ is in principle a random variable in some probability space that extends\footnote{This will be defined more rigorously in the next section, together with the appropriate function space for $G$, to show that the problem is well-posed as written. For the linear Gaussian example, though, an exact solution can be found analytically without worrying too much about these considerations.} $(\Omega,\mathcal{F},P)$ and $G:\mathcal{X}\times\mathcal{H}\to\mathcal{H}$, which models how the outer player extracts relevant information from $x_t$'s past, is taken from an appropriate function space that ensures a stationary solution $h_t$ exists. Modeling long-range dependence with a recurrence equation like the one involving $G$ and $h_t$ is standard in the machine learning literature (e.g. Recurrent Neural Networks like GRUs and LSTMs) and for us it has the additional mathematical benefit of allowing for a formulation that takes into account a single transition pair $(t,t+1)$. Stationarity then ensures that equation (\ref{eqlatent}) is independent of $t$.

Notice that the outer player cannot directly affect the squared distance it wants to maximize. It can only do so by restricting the possible $y_t$'s available to the inner player. Since $y{\perp\!\!\!\perp} w\rightarrow y{\perp\!\!\!\perp} z$ whenever $\sigma(z)\subseteq\sigma(w)$, the first player's payoff is monotone in $\sigma(h_t)$, i.e. it has an incentive to choose $G$ to try to capture as much relevant information about the observed past as possible and thus limit as much as possible the inner player's payoff.

Because the independence constraint $y_t \perp\!\!\!\perp h_t$ operates nonlinearly over the space of probability measures, finding an explicit analytical solution to the maxmin problem (\ref{eqlatent}) is generally intractable. However, to analyze the behavior of the linear Gaussian state  space model, we can temporarily consider a relaxed version of the problem, replacing the strict independence condition with a weaker $L^2$ orthogonality constraint, requiring only that $\mathbb{E}[y_t \mid h_t] = \mathbb{E}[y_t]$, i.e. that $y_t$ is orthogonal to any centered function of $h_t$,

\begin{equation}\label{eqrelax}V_{rel}=\max_G\min_{y_t} \mathbb{E}[\|x_{t}-y_{t}\|^2]\quad \text{ s.t. } \mathbb{E}[y_{t}\mid h_{t}]=\mathbb{E}[y_{t}]\quad \text{ with } h_{t}=G(x_{t-1},h_{t-1}) \text{ stationary}\end{equation}
 An optimal $y_t$ must have $\mathbb{E}[y_t]=\mathbb{E}[x_{t}]=0$ and the constraint on $y_t$ becomes $y_t\in \mathcal{V}(h_{t})^\perp$ where $\mathcal{V}(h_{t})$ is the subspace of square-integrable random variables that are $\sigma(h_{t})$-measurable. Hence, the inner min problem has a solution
 $$y_t^*=y_t^*(G)=T(x_{t},h_t)=x_{t}-\mathbb{E}[x_{t}\mid h_t]$$ 
 The outer max problem is then 
\begin{equation}\label{eqrelout}\max_G \mathbb{E}[|x_t-y_t^*|^2]=\max_G \mathbb{E}[|\mathbb{E}[x_{t}\mid h_t]|^2]=\max_G\mathbb{E}[|P_{\mathcal{V}(h_t)}x_{t}|^2]
\end{equation}

The outer player wants an $h_t$ that maximizes the $L^2$-norm of the projection of $x_{t}$ on $\sigma(h_{t})$-measurable functions. We also need $h_t$ to be causal, i.e. $\sigma(x_{<t})$-measurable, since only information from the past is available to the outer player. Indeed, the following inclusions must hold:
$$\sigma(h_t)\subseteq \sigma(h_0,x_0,\dots,x_{t-1})\subseteq\sigma(x_{< t})$$
In the next section, after introducing the relevant functional class under which the problem is well-posed, we will see that this causal property follows from the construction and uniqueness result in Lemma \ref{lemmaG}. For now we may just impose it.

Since $\sigma(h_t)\subseteq \sigma(x_{<t})$, the projection on the space of $\sigma(h_t)$-measurable functions is dominated by the projection on the space of $\sigma(x_{<t})$-measurable functions. Hence,
$$\|P_{\mathcal{V}(h_t)}(x_{t})\|_{L^2}^2=\mathbb{E}\left[|\mathbb{E}[x_{t}\mid h_t]|^2\right]\leq \mathbb{E}\left[|\mathbb{E}[x_{t}\mid x_{<t}]|^2\right]$$

$G$ would be optimal if the resulting process $h_t$ captures that part of the observed past that is relevant to predict $x_t$. We stress `relevant' because if we want $G$ to belong to a restricted class of functions (say, linear) then it would be unreasonable to expect that all information from the observed past can be recovered through $h_t$.

We also notice that, strictly speaking, $h_0$ or $\{x_t\}_{t<0}$ are not part of the `observed past'. We treat them as part of the observed past because we want the problem to be independent of $t$ and have stationary solutions so that it captures the long-term behavior of the system. The distribution for $h_0$ in a stationary solution carries information about $\{x_t\}_{t<0}$. If $h_0$ were initialized as a constant, the resulting process would generally not be stationary at finite times. In practice, though, implementations will be initialized with constants (both for $h_t$ and $x_t$) and solutions will be computed allowing for a disregarded initial burn-in period that allows the system to `warm up' or get close to the stationary solution.

\begin{observation}
(a) So far we have not used any knowledge of (\ref{eqHMlin}): the linearity of the equations, the parameters or the fact that the white noises are Gaussian. Thus (\ref{eqrelout}) holds for any general process $x_t$ whenever the relaxed constraint $\mathbb{E}[y_t\mid h_t]=\mathbb{E}[y_t] $ is imposed.

(b) There is a gauge invariance or indeterminacy in the strategy of the outer player: given an optimal $G^*$, any measurable bijection of the space $\mathcal{H}$ provides an alternative optimal $\tilde{G}$. This holds both for (\ref{eqlatent}) and for the relaxed problem (\ref{eqrelax}).
\end{observation}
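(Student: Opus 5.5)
Part (a) needs no new argument; it only re-reads the derivation preceding the observation. Every step leading to (\ref{eqrelout}) used the following facts and nothing else: the constraint $\mathbb{E}[y_t\mid h_t]=\mathbb{E}[y_t]$, square integrability of $x_t$, and Hilbert-space projection onto $\mathcal{V}(h_t)$.

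Concretely, for fixed $G$ I will split any feasible $y_t$ as $y_t=\mathbb{E}[y_t]+\tilde y_t$ with $\tilde y_t\in\mathcal{V}(h_t)^\perp$. Expanding $\mathbb{E}\|x_t-y_t\|^2$ gives three pieces: the squared mean discrepancy $\|\mathbb{E}x_t-\mathbb{E}y_t\|^2$, the term $\|(x_t-\mathbb{E}x_t)-\tilde y_t\|^2_{L^2}$, and a cross term that vanishes. Minimizing over $\mathbb{E}[y_t]$ and over $\tilde y_t$ separately yields
$$y_t^*=x_t-\mathbb{E}[x_t\mid h_t]+\mathbb{E}[x_t].$$
By Pythagoras the resulting cost is $\mathbb{E}\|\mathbb{E}[x_t\mid h_t]-\mathbb{E}x_t\|^2$; in the centered case this is exactly (\ref{eqrelout}). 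None of these steps invokes (\ref{eqHMlin}), linearity, the parameters or Gaussianity. The only point needing a sentence is that the minimizer $y_t^*$ is itself a function of $(x_t,h_t)$, so it lives on the original space and no extension is needed.

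For (b), let $\phi:\mathcal{H}\to\mathcal{H}$ be a measurable bijection with measurable inverse; on standard Borel spaces a measurable inverse is automatic by Kuratowski. Set
$$\tilde G(x,h)=\phi\bigl(G^*(x,\phi^{-1}(h))\bigr),\qquad \tilde h_t=\phi(h_t).$$
Substituting shows $\tilde h_t=\tilde G(x_{t-1},\tilde h_{t-1})$. Stationarity of $(x_t,\tilde h_t)$ follows because it is a fixed measurable image of the stationary $(x_t,h_t)$. The key step is $\sigma(\tilde h_t)=\sigma(h_t)$, which holds because $\phi$ and $\phi^{-1}$ are both measurable. Both constraints are then unchanged. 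The independence constraint $y_t\perp\!\!\!\perp h_t$ depends only on $\sigma(h_t)$. The relaxed constraint depends only on $\mathcal{V}(h_t)$, which is determined by $\sigma(h_t)$. Hence for every $y_t$ the inner feasible sets coincide, the inner values coincide, and $\tilde G$ attains the same value as $G^*$. It is therefore optimal for both (\ref{eqlatent}) and (\ref{eqrelax}).

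The main obstacle I expect is one of well-posedness rather than computation. The admissible function class for $G$, to be defined in the next section, must be closed under this conjugation by $\phi$. If it is not (for example, if it requires continuity), then the claim holds only for bijections preserving the class, such as homeomorphisms. I would state the result in that generality to be safe.
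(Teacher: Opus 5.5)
Your proposal is correct and follows the reasoning the paper relies on; the paper gives the observation no separate proof. Part (a) is the projection computation that precedes it, and (b) holds because $\sigma(\phi(h_t))=\sigma(h_t)$ leaves both the independence and the relaxed constraint, and hence the payoff, unchanged.

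Both of your refinements are warranted. The centering repairs the paper's tacit use of $\mathbb{E}[x_t]=0$, which is itself a property of the Kalman model: without it the value in (\ref{eqrelout}) is off by the $G$-independent constant $|\mathbb{E}[x_t]|^2$, so the maximizers are unaffected. The admissibility caveat is also real. The class $\mathcal{G}$ defined later by C1--C3 is not closed under conjugation by a discontinuous measurable bijection, which can destroy C1. The gauges the paper actually exhibits, $h\mapsto ch$ and $h\mapsto\lambda h+\gamma$, are affine and preserve C1 with the same $\Lambda$.
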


Now, $\epsilon_t$ and $\eta_t$ are Gaussian so $x_t$ and $z_t$ are also Gaussian. From the theory of Kalman filters we know the expression for $\mathbb{E}[x_{t}\mid x_{<t}]$, it is a linear function of Gaussians and, hence, itself Gaussian. Moreover, from this Kalman solution we can see that $G$ may be taken linear. Indeed, the Kalman solution to the filtering problem for the latent state is given by
\begin{align}
    \mathbb{E}[z_{t}\mid x_{<t}]&=(1-K\beta )\alpha \mathbb{E}[z_{t-1}\mid x_{<t-1}]+ K \alpha x_{t-1}\nonumber    
\end{align}
with constant $K$ independent\footnote{Notice that the stationarity assumption is essential for this.} of $t$
\begin{align}
    K&=\frac{\beta\hat{S}_{t\mid t-1}}{\beta^2\hat{S}_{t\mid t-1}+\tau_\eta^2}\nonumber\\
    \hat{S}_{t\mid t-1}&= \mathbb{E}\left[\left(z_t-\mathbb{E}[z_t\mid x_{<t}]\right)^2\mid \mathcal{F}_{t-1}^{(x)}\right]\nonumber\\
    &=\frac{1}{2\beta^2}\left[(\alpha^2-1)\tau_\eta^2+\beta^2\tau_\epsilon^2+\sqrt{((\alpha^2-1)\tau_\eta^2+\beta^2\tau_\epsilon^2)^2+4\beta^2\tau_\eta^2\tau_\epsilon^2}\right]\nonumber
\end{align}

Hence, writing $\hat{x}_{t\mid t-1}:=\mathbb{E}[x_{t}\mid x_{<t}]$ we have
\begin{equation}\label{eqK}   \hat{x}_{t\mid t-1}=\beta\mathbb{E}[z_{t}\mid x_{<t}]=(1-K\beta )\alpha \hat{x}_{t-1\mid t-2}+ K \beta\alpha x_{t-1}
\end{equation}

Setting $G(x,h):=(1-K\beta)\alpha h+K\beta\alpha x$ then from (\ref{eqK}) we get $h_t=\hat{x}_{t\mid t-1}$ and clearly $\mathbb{E}[x_{t}\mid h_t]= \mathbb{E}\left[x_{t}\mid\mathbb{E}\left[x_{t}\mid x_{<t}\right]\right]=\mathbb{E}\left[x_{t}\mid x_{<t}\right]$, which in turn implies $y_t^*=x_t-\mathbb{E}[x_{t}\mid h_t]=x_t-\mathbb{E}\left[x_{t}\mid x_{<t}\right]$. So the maximizing $G$ may be taken linear.

The problem has been reduced to the point where the outer player need only find two constants defining a linear function. One may think this is a consequence of the relaxed formulation of (\ref{eqrelax}), but this is not the case.

\begin{lemma}\label{lemmakalman} (a) The optimal $y^*_t=x_t-\hat{x}_{t\mid {t-1}}$ in the solution of the relaxed problem (\ref{eqrelax}) for the linear Gaussian system is independent of $\sigma(x_{<t})$.

(b) The solutions to (\ref{eqrelax}) and (\ref{eqlatent}) for the linear Gaussian system coincide.    
\end{lemma}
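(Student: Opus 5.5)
For part (a) the plan is to use joint Gaussianity. Since $(x_t,z_t)$ is a stationary Gaussian process, the family $\{x_t\}\cup\{x_s:s<t\}$ is jointly Gaussian. The innovation $y^*_t=x_t-\hat{x}_{t\mid t-1}$ is then an $L^2$-limit of finite linear combinations of Gaussian variables, because $\hat{x}_{t\mid t-1}$ is the $L^2$ limit of the Kalman recursion (\ref{eqK}). Hence $(y^*_t,x_{s_1},\dots,x_{s_k})$ is jointly Gaussian for any finite set $s_1,\dots,s_k<t$.

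By definition of conditional expectation, $y^*_t$ is orthogonal to every $x_s$ with $s<t$, i.e. $\operatorname{Cov}(y^*_t,x_s)=0$. For jointly Gaussian vectors, zero covariance implies independence, so $y^*_t\perp\!\!\!\perp(x_{s_1},\dots,x_{s_k})$. A $\pi$-$\lambda$ (monotone class) argument over cylinder sets then extends this to $y^*_t\perp\!\!\!\perp\sigma(x_{<t})$. The one point to check is that the conditional expectation given the full infinite past coincides with the linear projection onto $\overline{\text{span}}\{x_s:s<t\}$. This is standard for Gaussian families: the linear projection residual is uncorrelated with, hence independent of, every finite subfamily, so the linear projection is also the conditional expectation.

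For part (b) the plan is a sandwich argument. First, fix any admissible $G$, with $h_t$ causal so that $\sigma(h_t)\subseteq\sigma(x_{<t})$. Any $y_t$ feasible for (\ref{eqlatent}) satisfies $y_t\perp\!\!\!\perp h_t$, which implies $\mathbb{E}[y_t\mid h_t]=\mathbb{E}[y_t]$. So it is feasible for (\ref{eqrelax}), the inner minimum of the strict problem is at least the relaxed inner minimum, and therefore $V\geq V_{rel}$ is false direction-wise in general; the correct comparison is inner-min-wise, $\min_{\text{strict}}\ge\min_{\text{rel}}$.

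Second, the relaxed inner minimum is at most $\mathbb{E}[|\mathbb{E}[x_t\mid x_{<t}]|^2]$ by (\ref{eqrelout}). This bound is reached by the Kalman $G^*$, so $V_{rel}=\mathbb{E}[\hat{x}_{t\mid t-1}^2]$.

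Third, I need a matching upper bound for the strict problem. For any causal $G$, take the candidate $\tilde y_t=x_t-\hat{x}_{t\mid t-1}$. By (a), $\tilde y_t$ is independent of $\sigma(x_{<t})\supseteq\sigma(h_t)$, so it is feasible for the strict constraint. Its cost is $\mathbb{E}[\hat{x}_{t\mid t-1}^2]$, which gives $\min_{\text{strict}}(G)\le \mathbb{E}[\hat{x}_{t\mid t-1}^2]$ for every $G$, hence $V\le V_{rel}$.

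Fourth, I need the reverse inequality at $G^*$. Here $h_t=\hat{x}_{t\mid t-1}$, and the strict minimum over $y\perp\!\!\!\perp h_t$ is at least the relaxed minimum, which equals $\mathbb{E}[\hat{x}_{t\mid t-1}^2]$. Combined with the third step, this gives equality $V=V_{rel}$, attained by the same $G^*$ and the same $y^*_t$. Uniqueness of $y^*_t$ comes from the uniqueness of the $L^2$ projection in the relaxed problem, since every strict minimizer is also a relaxed minimizer of the same cost.

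I expect the main obstacle to be the technical justification in (a) that the infinite-past conditional expectation is linear and Gaussian. This also requires the stationarity and causality of the Kalman recursion, i.e. $|(1-K\beta)\alpha|<1$, so that $h_t$ is a well-defined $L^2$ limit measurable with respect to $\sigma(x_{<t})$. I would handle it by writing $h_t=\sum_{j\ge0}((1-K\beta)\alpha)^jK\beta\alpha\,x_{t-1-j}$ and invoking the standard steady-state Kalman stability result.
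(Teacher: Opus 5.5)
Your proposal is correct and follows essentially the same route as the paper. For (a), joint Gaussianity upgrades the orthogonality of $x_t-\hat{x}_{t\mid t-1}$ to every finite set of past observations into independence from $\sigma(x_{<t})$. For (b), the innovation is feasible for the strict constraint under every causal $G$, which caps the value at $\mathbb{E}[\hat{x}_{t\mid t-1}^2]$, while the Kalman recursion forces the inner value up to that level.

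One small slip in your first step of (b): the pointwise inequality $\min_{\text{strict}}(G)\ge\min_{\text{rel}}(G)$ does yield $V\ge V_{rel}$ after taking the supremum over $G$, contrary to your aside. This is harmless, since your fourth step recovers exactly that inequality.
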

\begin{proof}
Since $x_t$ and $h_t=\hat{x}_{t\mid {t-1}}$ are jointly Gaussian it follows that $y^*_t$ is Gaussian too. This implies that the relaxed condition $\mathbb{E}[y^*_t\mid h_t]=\mathbb{E}[y^*_t]$ is actually equivalent to independence, the original unrelaxed constraint in (\ref{eqlatent}). Moreover, the optimal $y^*_t=x_t-\hat{x}_{t\mid {t-1}}=x_t-\mathbb{E}[x_t\mid x_{<t}]$ is by construction orthogonal to all $x_i,x_{i+1},\dots,x_{t-1}$ with $i<t$ and because $(y^*_t,x_i,\dots,x_{t-1})$ is multivariate Gaussian, this orthogonality implies independence, that is, $y^*_t$ is independent of the observed past $\sigma(x_{<t})$, proving (a).

Since regardless of the choice of $G$ the resulting $h_t$ will always be a function of the observed past, then  $y^*_t=x_t-\hat{x}_{t\mid {t-1}}$ will always be independent of $h_t$ and the inner player can always choose it to guarantee a payoff of at most $\mathbb{E}[|\hat{x}_{t\mid {t-1}}|^2]$, the value $V_{rel}$ of (\ref{eqrelax}). The best the first player can do is then to choose any $G$ (eg, the linear $G(x,h):=(1-K\beta)\alpha h+K\beta\alpha x$ described earlier) that guarantees the second player does not do strictly better than $\mathbb{E}[|\hat{x}_{t\mid {t-1}}|^2]$, so this value is also the equilibrium solution $V$ for (\ref{eqlatent}), proving (b).
\end{proof}

Hence, in both the original and relaxed problems the space of admissible $G$ may be restricted to linear functions. This restriction greatly reduces the gauge freedom: a bijection of $\mathcal{H}$ that preserves the linearity of the recurrence $h_{t+1}=G(x_t,h_t)$ is itself a linear bijection, which for $\mathcal{H}=\mathbb{R}$ is just a nonzero constant $c$. If $G(x,h)=ah+bx$ is a solution of the outer problem then, writing $\tilde{h}_{t}=ch_t$, we have $\tilde{h}_{t+1}=a\tilde{h}_{t}+bcx_t$ and $\tilde{G}(x,h)=ah+cbx$ is another solution. Moreover, in the problem restricted to linear $G$ all solutions are of this form, as the following Lemma shows.

\begin{lemma}\label{lemmauniquekalman}
    Assume $\alpha\beta\tau_\epsilon\neq 0$ and consider problem (\ref{eqlatent}) with $G$ restricted to functions of the form $G(x,h)=ah+bx$ ($|a|<1$), then the solution set consists of all those $G$ such that $b\neq 0$ and $a=(1-K\beta)\alpha$.
\end{lemma}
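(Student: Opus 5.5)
The plan is to compute, for each linear $G(x,h)=ah+bx$ with $|a|<1$, the value of the inner problem in closed form. We then compare it with the upper bound $\mathbb{E}[|\hat{x}_{t\mid t-1}|^2]$ from Lemma \ref{lemmakalman}, and characterize exactly when equality holds.

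\textbf{Step 1: the inner problem for a fixed linear $G$.} Fix $G(x,h)=ah+bx$ with $|a|<1$. The stationary solution of the recurrence is
$$h_t=b\sum_{k\ge 0}a^k x_{t-1-k},$$
which converges in $L^2$. It is a linear functional of the Gaussian past, so $(x_t,h_t)$ is jointly Gaussian and centered.

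If $b=0$, then $h_t\equiv 0$. The constraint is then vacuous, the inner player takes $y_t=x_t$, and the payoff is $0$.

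If $b\neq 0$, the conditional laws $P(x_t\mid h_t)$ are Gaussian with a common variance and mean $\mathbb{E}[x_t\mid h_t]$. Their barycenter is the centered Gaussian with that variance, and by Theorem \ref{teogeneralotb} the optimal map is the shift $y_t=x_t-\mathbb{E}[x_t\mid h_t]$. The value is therefore
$$\mathbb{E}\big[\mathbb{E}[x_t\mid h_t]^2\big]=\frac{\operatorname{Cov}(x_t,h_t)^2}{\operatorname{Var}(h_t)}.$$

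\textbf{Step 2: comparison with the bound and the equality condition.} Since $x_t-\hat{x}_{t\mid t-1}$ is orthogonal to the past, $\operatorname{Cov}(x_t,h_t)=\operatorname{Cov}(\hat{x}_{t\mid t-1},h_t)$. Cauchy--Schwarz then gives
$$\frac{\operatorname{Cov}(x_t,h_t)^2}{\operatorname{Var}(h_t)}\le \mathbb{E}[\hat{x}_{t\mid t-1}^2].$$
Equality holds if and only if $h_t=c\,\hat{x}_{t\mid t-1}$ a.s. for some $c\neq 0$.

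This bound is attained by the Kalman choice $a^*=(1-K\beta)\alpha$, $b^*=K\beta\alpha$. So the solution set consists exactly of those linear $G$ whose $h_t$ is a nonzero multiple of $\hat{x}_{t\mid t-1}$. The case $b=0$ is excluded because its payoff $0$ is strictly smaller than $\mathbb{E}[\hat{x}^2_{t\mid t-1}]$. That strict inequality holds because $\alpha\beta\tau_\epsilon\neq 0$ forces $\hat{S}>0$, hence $K\neq0$, hence $b^*\neq 0$ and $\operatorname{Var}(\hat{x}_{t\mid t-1})>0$.

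Along the way we also check that $|a^*|<1$, i.e. that the stationary Kalman filter is stable, so that the Kalman $G$ really belongs to the admissible class.

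\textbf{Step 3: identifying coefficients (the main obstacle).} It remains to show that
$$b\sum_k a^k x_{t-1-k}=c\,b^*\sum_k (a^*)^k x_{t-1-k}\quad\text{a.s.}$$
forces $a=a^*$ and $b=cb^*$. Conversely, any $b\neq 0$ with $a=a^*$ works, with $c=b/b^*$.

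The difficulty is that the $x_s$ are correlated, so one cannot simply match coefficients term by term. I would pass to the spectral representation of the stationary process $x$. Its spectral density is
$$f(\omega)=\frac{\beta^2\tau_\epsilon^2}{|1-\alpha e^{-i\omega}|^2}+\tau_\eta^2,$$
which is strictly positive on $[-\pi,\pi]$ given $\beta\tau_\epsilon\neq 0$. Hence a square-summable linear filter applied to $x$ vanishes in $L^2$ only if its transfer function vanishes a.e.

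This reduces the problem to the identity of rational functions
$$\frac{b}{1-ae^{-i\omega}}=\frac{cb^*}{1-a^*e^{-i\omega}}\quad\text{for a.e. }\omega.$$
Comparing poles, or simply cross-multiplying and comparing coefficients of $e^{-i\omega}$, yields $a=a^*$ and $b=cb^*$, which completes the characterization.
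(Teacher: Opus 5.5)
Your proposal is correct and follows essentially the same route as the paper: first reduce optimality of a linear $G$ to $h_t$ being a nonzero multiple of $\hat{x}_{t\mid t-1}$, then use strict positivity of the spectral density to identify the filter coefficients and conclude $a=a^*$. You get the first step from the Cauchy--Schwarz equality case, where the paper normalizes $b=1$ and uses equality of the projections onto $h_t$ and onto $\mathcal{F}_{t-1}$; your explicit checks that $K\neq 0$ (so $b=0$ is strictly suboptimal) and that $|a^*|<1$ fill in details the paper leaves implicit.
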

\begin{proof}See Appendix \ref{app:proofs}.
\end{proof}

 Thus, if we know the parameters of the model ($\alpha,\beta,\tau_\eta,\tau_\epsilon$), we can instead fix $b=K\alpha$, since $b$ is free, and, in this way, recover $h_t=z_{t\mid t-1}=\mathbb{E}[z_{t}\mid x_{\leq t-1}]$ (instead of $h_t=\hat{x}_{t|t-1}$), the solution to the Kalman filtering problem.

Of course, for this filtering problem, whose solution has long been known to be a simple linear recursion with known parameters, applying the barycentric method is an inefficient overkill. However, one could ask if, in the case the parameters are unknown, our approach allows us to recover them. If we do not know the linear parameters but we know (or assume) the system has this linear Gaussian form, we first notice that the parameter tuple $(\alpha,\beta,\tau_\epsilon,\tau_\eta)$ overidentifies the observed process $x_t$, since the process with parameters $(\alpha,1,\beta\tau_\epsilon,\tau_\eta)$ has the same distribution for $x_t$\footnote{To see this define $\tilde{z}_t:=\beta z_t$ and multiply both sides of the second equation in (\ref{eqHMlin}) by $\beta$, obtaining the latent state equation $\tilde{z}_{t+1}=\alpha \tilde{z}_t+\beta \epsilon_t$.}. We may hence assume $\beta=1$, leaving three unknown parameters.

Our method of solving (\ref{eqlatent}) with $G$ restricted to linear functions will provide a system of three nonlinear equations for the three unknown parameters.
\begin{align}\label{threeeqs}
    a &= (1-K)\alpha \quad \text{ (guaranteed by Lemma \ref{lemmauniquekalman})}\nonumber \\
    V &= \mathbb{E}[|\hat{x}_{t\mid {t-1}}|^2]=\frac{\tau_\epsilon^2}{1-\alpha^2}-\frac{K\tau_\eta^2}{1-K}\nonumber \\
    \mathbb{E}[|y_t|^2]&=\frac{\tau_\eta^2}{1-K}
\end{align}
Here $V$ is the transport cost, which as we saw equals $\mathbb{E}[|\hat{x}_{t\mid {t-1}}|^2]$, the variance of the estimator. The formula for the variance of the estimator and of the Kalman innovations (i.e. the barycentric variables $y_t=x_t - \hat{x}_{t\mid {t-1}}$) can be found in any textbook exposition of the Kalman filter.

Finally, observe that the $y_t=x_{t}-\hat{x}_{t\mid {t-1}}$ are iid. This follows from the standard fact that the innovations of a Kalman filter are iid, but can also be seen as a consequence of Lemma \ref{lemmakalman}, that is, the $y_t$ are independent of the observed past $\sigma(x_{<t})$, but any $y_s$ with $s<t$ is a function of $x_s$ and $h_s$, which are $\sigma(x_{<t})$-measurable.

We mention this iid property because, as explained in the previous section, it is a necessary ingredient for our goal of simulating the predictive distribution by sampling the barycentric variable and applying the inverse of $T$ to it.

\subsection{The general case}

The Kalman system is simple enough that we were able to obtain a full analytic expression for the solution of our barycentric problem. For slightly more general $x_t$ this will no longer be the case. Nonetheless, the barycentric method is powerful enough to allow for a useful extraction of independent `idiosyncratic' noise from a process exhibiting long dependence on its own past.

For instance, we may want to apply our methods to stationary, square-integrable systems of the following general form, which subsumes all the examples (AR, GARCH and Kalman) seen so far

\begin{align}\label{eqHMnonlin}
x_{t+1}&=f (z_{t+1},\epsilon_{t+1}) \nonumber\\
z_{t+1}&=g (z_t, x_t, \eta_{t+1})
\end{align}
where $f$ and $g$ are (possibly nonlinear) differentiable functions, usually parametrized by $\beta$ and $\alpha$ (i.e. $f=f_\beta$ and $g=g_\alpha$).

The white noise processes $\epsilon_t$ and $\eta_t$ are iid, independent of each other and of the past, and they will usually be multiplicative or additive, but they are here included  as arguments within the same function as the immediate past of the process to encompass the most general case. Because the noise terms are iid it follows that the joint process $(x_t,z_t)$ is Markov. However, only $x_t$ is assumed to be observable and $x_t$ alone is generally not Markov. Moreover, there will generally be no closed-form solution for the barycenter like the one we found for the simple linear state model. Even for that system (\ref{eqHMlin}) the closed-form solution is lost once we drop the Gaussian assumption for the observation noise $\eta_t$.

Yet, our approach will be the same as in the Kalman case, that is, we will obtain from any square-integrable, stationary $x_t$ a barycentric process $y_t$ that is both iid and independent of $x_t$'s past. In this way, a solution to the adversarial game will return: (i) a decomposition of $x_t$'s variability in terms of its own past and this new idiosyncratic noise captured by $y_t$ and (ii) a recurrent function $G$ that computes the dependence of $x_t$ on its own past. The adversarial formulation will be the same as (\ref{eqlatent}) before\footnote{As in the Kalman case before, $h_0$ and $\{x_t\}_{t<0}$ are regarded as part of the `observed past' so that $h_t$ is stationary and, due to stationarity, the objective function does not depend on $t$.}
\begin{equation}\label{eqlatent2}\max_G\min_{y_t} \mathbb{E}[\|x_{t}-y_{t}\|^2]\quad \text{ s.t. } y_{t}{\perp\!\!\!\perp} h_{t}\quad \text{ with } h_{t}=G(x_{t-1},h_{t-1})\ \text{ stationary }
\end{equation}
where $x_t$ is a stationary, square-integrable process.

This time, however, we must prove the well-posedness of this problem. Among other things, we must determine what conditions a measurable $G$ must satisfy so that a stationary solution $h_t$ to the recurrence equation exists. Before handling that problem, we will focus on the well-posedness of the inner minimization problem, for which we may take $h_t$ as given, and which is just an instance of the optimal transport barycenter problem \cite{Pass}\cite{Tabak25}.

For this problem, the $y_t$ chosen by the inner player will in principle be a random process in an expanded probability space. To be more precise, the solution $\pi^*$ to the optimal transport barycentric problem must be an element of $\Pi((\mathcal{X}\times\mathcal{H})\times\mathcal{X})$, the set of probability measures on $(\mathcal{X}\times\mathcal{H})\times\mathcal{X}$, such that the marginal $\pi_{x_t,h_t}$ on $(\mathcal{X}\times\mathcal{H})$ coincides with the one induced by $P$, which we denote by $\rho$, and the marginal $\pi_{h_t,y_t}^*$ on $\mathcal{H}\times\mathcal{X}$ is a product measure (equivalently, $y$ and $h$ are independent). That is, given $h_t$, the inner problem is
$$\min_{\pi\in\Pi((\mathcal{X}\times\mathcal{H})\times\mathcal{X})}\mathbb{E}_\pi[|x_t-y_t|^2]\quad \text{s.t. }\ \pi_{x_t,h_t}=\rho,\ \pi_{h_t,y_t}=\pi_{h_t}\otimes \pi_{y_t}$$
where the existence of the minimizer is guaranteed by \cite[Theorem 1]{Yang22}.

The solution $\pi$ applies to a single $t$, but can be naturally extended to define a process $y=(y_t)_{t\in\mathbb{N}}$ in the same space as $(x_t,h_t)_{t\in\mathbb{N}}$, i.e., we define a joint probability $\pi_\infty$ on $\mathcal{X}^\mathbb{N}\times \mathcal{H}^\mathbb{N}\times\mathcal{X}^\mathbb{N}$ that makes the $y_t$'s relatively independent by taking the product of the conditional distributions $\pi_{y_t\mid x_t,h_t}(A):=\pi(y_t\in A\mid x_t,h_t)$, i.e. for cylinder sets $B=\prod_s B_s$:
$$\pi_\infty\left(y\in B\mid 
x,h\right)=\prod_{s\in\mathbb{N}} \pi_{y_s\mid x_s,h_s}(B_s)\quad\text{ and }\ \pi_\infty\left((x,h)\in C\right)=P\left((x,h)\in C\right)$$
 where $x=(x_t)_{t\in\mathbb{N}}$ and $h=(h_t)_{t\in\mathbb{N}}$. By construction, $\pi_\infty$ extends $P$ to the new variable and it can be easily verified that its $t$-marginals are $\pi$ for all $t$. Notice that in the case that $\pi$ is given by a Monge solution $\pi_{y_s\mid x_s,h_s}$ is just a delta measure, so that $\pi_\infty$ becomes the marginal of $P$ given by defining $y_t=T(x_t,h_t)$ within the original space $(\Omega,\mathcal{F},P)$.

We now turn to the well-posedness of the outer player's problem and the conditions under which $G$ admits a stationary solution to the recurrence equation.

\begin{definition}
    We will say a measurable $G:\mathcal{X}\times\mathcal{H}\to\mathcal{H}$ satisfies the set of conditions C if there exists a measurable $\Lambda:\mathcal{X}\to\mathbb{R}_{>0}$ such that:
    \begin{enumerate}
        \item C1. (Stochastic Lipschitz condition) For all $x,h,h'$, $$|G(x,h)-G(x,h')|\leq \Lambda(x)|h-h'|$$  
        \item C2. (Exponential forcing term contractivity) 
        $$a_n<\infty,\forall n\geq0,\text{ and }\limsup_{n\to\infty} a_n^{1/n}<1$$
        where $$a_n= E\left[ \left( \prod_{k=1}^n \Lambda(x_{k})^2 \right) |G(x_{0}, 0)|^2 \right]$$
        \item C3. (Contractive Lyapunov exponent)
        $$\limsup_{n\to\infty} E\left[ \prod_{k=1}^n \Lambda(x_{k})^2   \right]^{1/n}<1$$
        
    \end{enumerate}
\end{definition}

\begin{lemma}\label{lemmaG}
    Let $G:\mathcal{X}\times\mathcal{H}\to\mathcal{H}$ be a measurable function such that the condition set C is satisfied. Then there exists a unique stationary and square-integrable solution $(h_t)_{t\in\mathbb{N}}$ to the recurrence equation, $h_{t+1}=G(x_t,h_t)$
\end{lemma}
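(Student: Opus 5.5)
The plan is to build the stationary solution as a backward (pullback) limit on the bilateral extension of $(x_t)_{t\in\mathbb{Z}}$, which the paper assumes is available on $(\Omega,\mathcal{F},P)$, in the style of Letac's principle for iterated random functions. For $n\ge 0$ and each $t$, set $h_t^{(n)}:=G(x_{t-1},G(x_{t-2},\dots,G(x_{t-n},0)\dots))$, meaning we start the recursion at $0$ at time $t-n$ and run it forward to time $t$; take $h_t^{(0)}=0$. Then $h^{(n)}_t=\Phi_n(x_{t-1},\dots,x_{t-n})$ for a fixed measurable $\Phi_n$. By stationarity of $x$, the joint law of $(h^{(n)}_t)_t$ is shift invariant, and $h^{(n)}_{t+1}=G(x_t,h^{(n-1)}_t)$ holds exactly.

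The key step is to show that $(h^{(n)}_t)_n$ is Cauchy in $L^2$, with summable increments. Condition C1, iterated, gives
$$|h^{(n+1)}_t-h^{(n)}_t|\le \Big(\prod_{k=1}^{n}\Lambda(x_{t-k})\Big)\,|G(x_{t-n-1},0)-0|.$$
Squaring, taking expectations and shifting time by stationarity turns this into $E[\prod_{k=1}^{n}\Lambda(x_k)^2\,|G(x_0,0)|^2]$ up to an index relabelling. This is $a_n$ (up to a harmless reindexing of which $\Lambda$'s appear), so condition C2 gives $\|h^{(n+1)}_t-h^{(n)}_t\|_2\le C r^{n/2}$ for some $r<1$. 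Hence $h_t:=\lim_n h^{(n)}_t$ exists in $L^2$, and also almost surely by Borel--Cantelli, since $\sum_n\|\cdot\|_2<\infty$. We also get $\|h_t\|_2\le \|G(x_{t-1},0)\|_2+\sum_n C r^{n/2}<\infty$, where $a_0<\infty$ supplies the first term. The limit is $\sigma(x_{<t})$-measurable, and because it is a fixed measurable function of the shifted sequence, $(x_t,h_t)$ is jointly stationary. Passing to the limit in $h^{(n)}_{t+1}=G(x_t,h^{(n-1)}_t)$ uses C1 once more: $|G(x_t,h^{(n-1)}_t)-G(x_t,h_t)|\le\Lambda(x_t)|h^{(n-1)}_t-h_t|\to0$ almost surely, so $h_{t+1}=G(x_t,h_t)$.

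For uniqueness, let $h'_t$ be another stationary square-integrable solution. Iterating C1 gives $|h_t-h'_t|\le \prod_{k=1}^{n}\Lambda(x_{t-k})\,|h_{t-n}-h'_{t-n}|$. This step is where I expect the main obstacle, because $\Lambda(x_{t-k})$ and $h_{t-n}-h'_{t-n}$ are dependent, so we cannot simply factor the expectation and apply C3. The first thing I would try is to avoid expectations of products and work almost surely instead. C3 controls $E[\prod\Lambda^2]^{1/n}$, which by Jensen gives $\limsup\frac1n\sum_k\log\Lambda(x_{t-k})<0$ in an averaged sense, and together with ergodic/Kingman-type arguments gives $\prod_{k\le n}\Lambda(x_{t-k})\to0$ exponentially fast almost surely. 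Separately, since $h_{t-n}-h'_{t-n}$ is stationary in $n$ and $L^2$, Borel--Cantelli with Chebyshev gives $|h_{t-n}-h'_{t-n}|\le e^{\delta n}$ eventually almost surely, for every $\delta>0$. Combining the two bounds yields $h_t=h'_t$ almost surely.

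If the almost-sure Lyapunov step is awkward without ergodicity, I would fall back on a truncation argument. On $\{|h_{t-n}-h'_{t-n}|\le M\}$ the bound is at most $M\prod\Lambda$, whose $L^1$ norm is at most $M\,(E\prod\Lambda^2)^{1/2}\to0$ by C3. The complement has probability at most $\|h_0-h'_0\|_2^2/M^2$ uniformly in $n$. Letting $n\to\infty$ and then $M\to\infty$ shows $h_t=h'_t$ in probability, hence almost surely. This fallback seems cleaner, and it is the route I would actually write.
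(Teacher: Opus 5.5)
Your proof is correct and matches the paper's: the same pullback iterates started at $0$ at time $t-n$, the same C1 iteration reducing the increments to $a_n$ via stationarity, and the same C1 limit passage in the recurrence (your summability/Borel--Cantelli step even gives a.s.\ convergence of the full sequence, which is slightly cleaner than the paper's subsequence argument). For uniqueness, the dependence you flag is not actually an obstacle: the paper simply applies Cauchy--Schwarz, $\mathbb{E}|h_t-h'_t|\le \mathbb{E}\bigl[\prod_{k=1}^n\Lambda(x_{t-k})^2\bigr]^{1/2}\bigl(\|h_{t-n}\|_2+\|h'_{t-n}\|_2\bigr)\to 0$ by C3, and your truncation argument is a valid but roundabout variant of this (note that you only need this uniform $L^2$ bound, not stationarity of $h-h'$, which is not assumed since the two solutions need not be jointly stationary).
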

\begin{proof}
See Appendix \ref{app:proofs}.\end{proof}

We will write $\mathcal{G}$ for the set of measurable functions $G$ satisfying the set of conditions C. Observe that any $G$ that has finite $\mathbb{E}[G(x_t,0)^2]$ and is Lipschitz on the second argument with constant Lipschitz constant $\lambda<1$ is immediately in $\mathcal{G}$.

We have finished proving that (\ref{eqlatent2}) is well-defined\footnote{It remains to prove the outer supremum is attainable.}. We could now rewrite it as:
$$\max_{G\in \mathcal{G}}\ \min_{\substack{\pi\in\Pi((\mathcal{X}\times\mathcal{H})\times\mathcal{X})\\ \pi_{x_t,h_t}=\rho\\ \pi_{h_t,y_t}=\pi_{h_t}\otimes \pi_{y_t}}}\mathbb{E}_\pi[|x_t-y_t|^2]\quad \substack{\text{ where $\rho$ is the law of $(x_t,h_t)$}\\\text{ with $h_t$ the solution of the recurrence for $G$ }}$$
We will, however, stick to (\ref{eqlatent2}) for its simplicity.

As in the Kalman case, we want the recurrent $G$ to generate a covariate $h_t$ that captures all relevant information about the observed past up to $t-1$. The following result shows that $\mathcal{G}$ is rich enough to guarantee this with $\text{dim}\mathcal{H}=1$.

\begin{theorem}\label{teoopt}
    Let $\mathcal{H}=\mathbb R$. There exists a $G\in\mathcal{G}$ such that if $(h_t)_{t\in\mathbb{N}}$  is a stationary solution to the recurrence equation $h_{t+1}=G(x_t,h_t)$ then $\sigma(h_t)=\sigma(x_{<t})$. 
\end{theorem}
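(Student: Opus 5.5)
The plan is to build $G$ so that it contracts uniformly in $h$ and, on the set where $h_t$ actually lives, is a bijection between $(x,h)$ and the new state. Then $h_t$ determines $(x_{t-1},h_{t-1})$, hence $(x_{t-1},x_{t-2},h_{t-2})$, and so on, while causality gives the reverse inclusion. The device that makes a Lipschitz, contractive $G$ compatible with injectivity is to work on a Cantor set and interleave digits.

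\emph{Construction.} Let $C\subset[0,1]$ be the middle-thirds Cantor set, i.e. the numbers $\sum_{i\ge1}c_i3^{-i}$ with $c_i\in\{0,2\}$. Since $\mathcal{X}$ is a standard Borel space, Kuratowski's theorem gives a Borel injection $\varphi:\mathcal{X}\to C$ with Borel inverse on its image. Define the interleaving map $E:C\times C\to C$ by
$$E\Big(\sum_i a_i3^{-i},\sum_i b_i3^{-i}\Big)=\sum_i a_i3^{-(2i-1)}+\sum_i b_i3^{-2i}.$$
This is a Borel bijection with Borel inverse, the de-interleaving of ternary digits.

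The key estimate concerns the Lipschitz constant of $E$ in its second argument on $C$. Suppose $b,b'\in C$ first differ at digit $i$. The gap structure of $C$ gives $|b-b'|\ge 3^{-i}$. On the other hand, $E(a,b)$ and $E(a,b')$ first differ at position $2i$, so $|E(a,b)-E(a,b')|\le 3^{-2i+1}$. Hence the ratio is at most $3^{1-i}\le 1$. To obtain a strict contraction, I will rescale: either place the $b$-digits at positions $2i+2$ instead of $2i$, or compose with a similitude of ratio $1/3$ mapping $C$ into itself. This gives a constant $\lambda<1$ with
$$|E(a,b)-E(a,b')|\le\lambda|b-b'|$$
uniformly in $a$, while keeping $E$ a Borel bijection onto a Borel subset $C'\subseteq C$. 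For each fixed $a$, I then extend $b\mapsto E(a,b)$ from $C$ to all of $\mathbb{R}$ with the same Lipschitz constant, using McShane's extension $\inf_{c\in C}\{E(a,c)+\lambda|b-c|\}$ truncated to $[0,1]$. Measurability in $(a,b)$ has to be checked; I expect it to follow by restricting the infimum to a countable dense subset of $C$. Finally set $G(x,h):=\tilde E(\varphi(x),h)$.

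\emph{Conditions C.} Take $\Lambda\equiv\lambda<1$, which gives C1 directly. Since $G$ is bounded by $1$, we get $a_n\le\lambda^{2n}$, so C2 holds, and C3 is immediate. By Lemma \ref{lemmaG} there is then a unique stationary square-integrable solution $h_t$. From the proof of Lemma \ref{lemmaG}, I expect this solution to be the almost-sure limit of the backward iterates $G(x_{t-1},G(x_{t-2},\dots G(x_{t-n},0)))$. That representation gives $\sigma(h_t)\subseteq\sigma(x_{<t})$.

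\emph{Reverse inclusion.} The iterates starting from any point of $C$ stay in $C$. Since $C$ is closed, the limit $h_t$ lies in $C$ almost surely. Applying the backward-limit argument with starting point $0\in C$, or arguing directly that the $\lambda$-contraction maps $C$ into itself, the key fact is that on $C$ the extension coincides with $E$. Therefore $h_t=E(\varphi(x_{t-1}),h_{t-1})$ with both arguments in $C$. Because $E^{-1}$ is Borel, $\sigma(h_t)\supseteq\sigma(x_{t-1},h_{t-1})$. Iterating gives $\sigma(h_t)\supseteq\sigma(x_{t-1},\dots,x_{t-k})$ for every $k$, hence $\sigma(h_t)\supseteq\sigma(x_{<t})$.

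\emph{Expected obstacle.} The delicate step is the tension between a strict Lipschitz contraction in $h$ and injectivity of $(x,h)\mapsto G(x,h)$. The Cantor-set digit interleaving resolves this only on $C$, so the steps that need care are:
\begin{enumerate}
\item getting the constant strictly below $1$ via the rescaling above;
\item verifying that the Lipschitz extension off $C$ is jointly measurable;
\item confirming that the stationary solution of Lemma \ref{lemmaG} indeed lives in $C$, so that the bijectivity of $E$ can be used.
\end{enumerate}
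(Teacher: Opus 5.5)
Your proposal is correct and is essentially the paper's proof: embed $\mathcal X$ Borel-measurably into the middle-thirds Cantor set, interleave ternary digits so that $G$ is a strict contraction in $h$ and injective on $\mathcal X\times K$, extend off the Cantor set by a McShane-type formula, show the stationary solution lives in the Cantor set, and then invert the interleaving backwards by induction.

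The differences are cosmetic. First, the paper keeps the unshifted interleaving and proves directly that it is $\frac{7}{12}$-Lipschitz. Your rescaling is not actually needed: only even positions differ, so the tail is at most $\frac94 3^{-2i}$, which already gives constant $\frac34$. Second, the paper uses the $\sup$ form of the extension without truncation. Third, the paper proves $h_t\in K$ for an arbitrary stationary solution via $\mathrm{dist}(h_{t+1},K)\le\frac{7}{12}\,\mathrm{dist}(h_t,K)$ and stationarity. Your closedness argument covers only the constructed solution, and reaches arbitrary stationary solutions through the uniqueness in Lemma \ref{lemmaG}. That step is valid because your truncated $G$ is bounded, so every stationary solution is square-integrable, but you should say so explicitly.
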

\begin{proof} See Appendix \ref{app:proofs}.
\end{proof}

\begin{observation}\label{obsegen} The function $G\in\mathcal{G}$ from Theorem \ref{teoopt} is optimal for the outer player.
\end{observation}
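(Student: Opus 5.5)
The plan is to show that the $G$ of Theorem \ref{teoopt} gives the outer player the largest attainable value, by a monotonicity argument in the $\sigma$-algebra generated by the covariate. Let $V(G)$ denote the value of the inner minimization in (\ref{eqlatent2}) for a given $G\in\mathcal{G}$, with $h_t$ the unique stationary square-integrable solution from Lemma \ref{lemmaG}.

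\textbf{Step 1 (causality).} For any $G\in\mathcal{G}$, show that $\sigma(h_t)\subseteq\sigma(x_{<t})$. I would get this from the construction in the proof of Lemma \ref{lemmaG}. There $h_t$ should be the $L^2$ limit of the backward iterates $G(x_{t-1},G(x_{t-2},\dots G(x_{t-n},0)))$, which converge geometrically by C2 and C3. Each iterate is $\sigma(x_{<t})$-measurable, and an $L^2$ limit (through an a.s.-convergent subsequence) stays measurable mod 0. Uniqueness of the stationary solution then identifies $h_t$ with this limit.

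\textbf{Step 2 (monotonicity).} Let $G^*$ be the function from Theorem \ref{teoopt}, with covariate $h_t^*$ and $\sigma(h^*_t)=\sigma(x_{<t})$. By Step 1, $\sigma(h_t)\subseteq\sigma(h_t^*)$. Hence $h_t=\phi(h_t^*)$ a.s. for some measurable $\phi$, by Doob--Dynkin, valid since $\mathcal{H}=\mathbb{R}$ is standard Borel.

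Now take any coupling $\pi$ feasible for $G^*$, meaning $y_t\perp\!\!\!\perp h_t^*$ with the $(x_t,h_t^*)$-marginal equal to the law under $P$. Realize it on the extended space, as was done for $\pi_\infty$. Then $(y_t,h_t)=(y_t,\phi(h_t^*))$ is independent, so the induced coupling of $(x_t,h_t,y_t)$ is feasible for $G$ and has the same cost. So the feasible set for $G^*$ embeds into that for $G$, and $V(G)\le V(G^*)$. This holds for every $G\in\mathcal{G}$, so $G^*$ attains the supremum, which is then a maximum.

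\textbf{Main obstacle.} The main obstacle is Step 1: making rigorous that the stationary solution is a measurable functional of the past. If the proof of Lemma \ref{lemmaG} instead produces $h_t$ on an extended space, I would argue differently. Any two stationary solutions coincide a.s. by the contraction from C1 and C3: the iterated difference $|h_t-h_t'|$ is bounded by $\prod\Lambda(x_k)\,|h_{t-n}-h'_{t-n}|$, which tends to $0$ in $L^1$ via Cauchy--Schwarz. The pullback limit is one such solution, and it is causal.

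A secondary subtlety is that couplings live on different extended spaces. I would handle this by working with laws, pushing $\pi$ forward under $(x,h^*,y)\mapsto(x,\phi(h^*),y)$. This requires that the joint law of $(x_t,h_t^*)$ determines that of $(x_t,h_t)$, which holds because $h_t=\phi(h_t^*)$ a.s.
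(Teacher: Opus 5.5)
Your proposal is correct and follows the paper's argument. The paper also shows that a response feasible against the fully informative $h_t$ (hence independent of $\sigma(x_{<t})$) stays feasible against any causal $\tilde h_t$, so the inner value for $\tilde G$ cannot exceed that for $G$; the paper merely phrases this as a contradiction and avoids Doob--Dynkin. Your Step 1 usefully justifies what the paper only asserts, namely that $\tilde h_t$ is $\sigma(x_{<t})$-measurable. This needs the backward-iteration construction plus uniqueness from Lemma \ref{lemmaG}, not just the recurrence itself.
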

\begin{proof}
    By contradiction, if  $V:=\min_{y\text{ s.t. }y_t{\perp\!\!\perp }h_t} \mathbb{E}[\|x_t-y_t\|^2]$ we assume that there is another $\tilde{G}\in\mathcal{G}$ that the outer player can choose with a recurrent solution $\tilde{h}_t$ such that 
    $$\min_{y\text{ s.t. }y_t{\perp\!\!\!\perp }\tilde{h}_t} \mathbb{E}[\|x_t-y_t\|^2]> V$$
    Let $y^*$ be a minimizer for $(h_t)_{t\in \mathbb{N}}$, that is, $\mathbb{E}[\|x_t-y_t\|^2]=V$ and $y^*_t{\perp\!\!\!\perp }h_t$, where $(h_t)_{t\in\mathbb{N}}$ is the solution to the recurrence equation for $G$. By Theorem \ref{teoopt}, $\sigma(h_t)=\sigma(x_{<t})$, so $y_t^*$ is independent of $\sigma(x_{<t})$. Now $\tilde{h}_t$ is necessarily $\sigma(x_{<t})$-measurable because it satisfies the recurrence equation for $\tilde{G}$. It follows that $y^*_t{\perp\!\!\!\perp }\tilde{h}_t$. This implies that
    $$\min_{y\text{ s.t. }y_t{\perp\!\!\!\perp }\tilde{h}_t} \mathbb{E}[\|x_t-y_t\|^2]\leq V$$
    contradicting the strict inequality.
\end{proof}

It follows that our problem (\ref{eqlatent2}) can be restated as 
$$\min_{y_t} \mathbb{E}[\|x_{t}-y_{t}\|^2]\quad \text{ s.t. } y_{t}{\perp\!\!\!\perp} \mathcal{F}_{t-1}\quad \text{ with } \mathcal{F}_{t-1}=\sigma(x_s;\ s\leq t-1) $$

The advantage of our recurrent adversarial formulation is that in addition to the barycentric solution it yields $G$, which offers a way of encoding the relevant information from the past, just like the standard Kalman solution to linear Gaussian SSMs, or like recurrent neural networks (RNNs). Indeed, this feature will be crucial when translating these ideas to effective algorithms in the next chapter, where $G$ will be implemented as a GRU, a type of RNN. Of course, in any effective implementation $G$ will be restricted to a much smaller space of functions than $\mathcal{G}$. The proof of Theorem \ref{teoopt} relies on Cantor spaces and Borel isomorphisms, which don't pass stricter requirements of continuity and smoothness. Since standard optimization methods require differentiability to find minima, and any sequence of differentiable functions converging in $L^2$ to one of these measurable isomorphisms on Cantor spaces would necessarily have exploding gradients, these results are of little practical use. In practice, we will need a dimension higher than $1$ for $h_t$ so that it captures a relevant part of the past through sufficiently smooth functions. Yet it is good to know that in an abstract sense the past can always be recovered with a rich enough space of functions.

We now turn to the analysis of the barycentric variables $(y_t^*)_{t\in\mathbb{N}}$. As with the Markov and Kalman examples before, we want the $y_t$ to be an iid sequence and to be the deterministic image of $x_t$ and $h_t$ under some transport map $T$. With a few natural assumptions, we will now see that this comes off as a straightforward consequence of our formulation of the problem.

{\bf Assumption A.} The conditional distribution $P(x_t|x_{<t})$ is almost surely atomless in case $d_x=1$ and absolutely continuous with respect to Lebesgue measure if $d_x\geq 2$.

\begin{lemma}\label{lemmaiid}Under Assumption A the equilibrium solution $(y_t^*)_{t\in\mathbb{N}}$ is unique (in the sense that if $(G^*,(y_t^*)_{t\in\mathbb{N}})$ and $(\widetilde{G}^*,(\widetilde{y}_t^*)_{t\in\mathbb{N}})$ are two solutions of (\ref{eqlatent2}) then $y_t^*=\widetilde{y}^*_t$) and a Monge solution. Moreover, the $y_t$ are iid.
\end{lemma}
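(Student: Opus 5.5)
The proof reduces the equilibrium problem to a single barycenter problem with covariate the full past $\sigma(x_{<t})$, and then applies Theorem \ref{teogeneralotb}.

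\emph{Step 1 (common inner problem).} Let $(G^*,(y_t^*))$ be any equilibrium of (\ref{eqlatent2}), with recurrent solution $h_t^*$, and let $G$ be the map of Theorem \ref{teoopt}, with solution $h_t$ and $\sigma(h_t)=\sigma(x_{<t})$. Since $h^*_t$ is $\sigma(x_{<t})$-measurable, the argument of Observation \ref{obsegen} gives
\[
V(G^*)\le V(G),
\]
because any $y_t$ independent of $h_t$ is independent of $h^*_t$. Optimality of $G^*$ gives the reverse inequality, so $V(G^*)=V(G)=:V$. Let $v^*$ be the solution of the problem with the full-past constraint,
\[
\min_{y_t}\ \mathbb{E}\bigl[\|x_t-y_t\|^2\bigr]\quad\text{s.t. } y_t\perp\!\!\!\perp \sigma(x_{<t}),
\]
that is, the barycenter problem with covariate $h_t$. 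By Theorem \ref{teoopt} and Assumption A, Theorem \ref{teogeneralotb} applies with $z=h_t$: this problem has a unique solution $v^*_t=T_{h_t}(x_t)$, which is Monge and is independent of $\sigma(x_{<t})$. Since $v_t^*$ is admissible for the inner problem at $h^*_t$ and attains value $V$, it is a minimizer there.

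\emph{Step 2 (uniqueness at $G^*$).} I must show that $y_t^*$ coincides with $v_t^*$. This is the main obstacle. The conditional laws given $h^*_t$ alone are mixtures over the past of the laws $P(x_t\mid x_{<t})$. I will argue that they inherit Assumption A: a mixture of atomless laws is atomless, and a mixture of absolutely continuous laws is absolutely continuous. Hence Theorem \ref{teogeneralotb} also applies with covariate $h^*_t$ and gives a unique minimizer of the inner problem. Because $v_t^*$ is a minimizer, we get $y_t^*=v_t^*$. This proves both uniqueness across equilibria and the Monge property, with $y^*_t=T_{h_t}(x_t)$. If the mixture-inheritance step failed, I would instead use strict convexity of the barycenter objective (\ref{eqwasser}) along generalized geodesics in the absolutely continuous case. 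By uniqueness, $v^*$ then also solves the problem at $h^*$.

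\emph{Step 3 (iid).} Identical distribution follows from stationarity of $(x_t,h_t)$ and the time-independence of $T$. For independence, mirror Lemma \ref{lemma1}. The variable $y^*_{t+1}$ is independent of $\sigma(x_{\le t})$ by Step 1. Each $y^*_s$ with $s\le t$ equals $T_{h_s}(x_s)$, which is $\sigma(x_{\le s})\subseteq\sigma(x_{\le t})$-measurable. Therefore $y^*_{t+1}$ is independent of $(y^*_s)_{s\le t}$, and induction over $t$ gives mutual independence.
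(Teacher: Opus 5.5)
Your proposal is correct and follows essentially the same route as the paper's proof. You pass to the full-past encoder of Theorem \ref{teoopt}, obtain the Monge solution there from Assumption A and Theorem \ref{teogeneralotb}, transfer uniqueness to $h_t^*$ because its conditional laws are mixtures of the laws $P(x_t\mid x_{<t})$ and therefore inherit atomlessness or absolute continuity, and deduce the iid property from the independence of $y_{t+1}^*$ from $\sigma(x_{\le t})$ together with the $\sigma(x_{\le t})$-measurability of the earlier $y_s^*$; the fallback via generalized geodesics in Step 2 is unnecessary, since the mixture-inheritance step goes through as stated.
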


\begin{proof} Let $(G^*,(y_t^*)_{t\in\mathbb{N}})$ be a solution of (\ref{eqlatent2}) with value $V=\mathbb{E}[\|x_t-y_t^*\|^2]$. Let $(h_t)_{t\in\mathbb{N}}$ be a solution to the recurrence equation of $G^*$. By observation \ref{obsegen} and Theorem \ref{teoopt} the outer player could have chosen an optimal $\widehat{G}$ such that the stationary solution $\widehat{h}_t$ has the same information as the whole observed past, that is, $\sigma(\widehat{h}_t)=\sigma(x_{<t})$. Let $\widehat{y}_t$ be an optimal response of the inner player to $\widehat{h}_t$. Then $\widehat{y}_t{\perp\!\!\!\perp}\sigma(x_{<t})$. Since $h_t$ is $\sigma(x_{<t})$-measurable it follows that $\widehat{y}_t{\perp\!\!\!\perp} h_t$. Then $(G^*,(\widehat{y}_t)_{t\in\mathbb{N}})$ must be a solution, since $\widehat{y}_t$ is independent of $h_t$ and $V=\mathbb{E}[\|x_t-\widehat{y}_t\|^2]$ (the payoff for the outer player under $\widehat{G}$ must be the same as under $G^*$, since both are optimal).

Since $\sigma(\widehat{h}_t)=\sigma(x_{<t})$, $P(x_t\mid x_{<t})=P(x_t\mid \widehat{h_t})$ and Assumption A implies the condition for a unique, invertible Monge solution from Theorem \ref{teogeneralotb} for $\widehat{h_t}$. By Theorem \ref{teogeneralotb} then there exists a measurable map $T$ such that $\widehat{y}_t=T(x_t,\widehat{h_t})$. We must next show that $\widehat{y}_t=y_t^*$. Since $\widehat{y}_t$ is a Monge solution it is defined in $(\Omega,\mathcal{F},P)$ and hence in the extension where $y_t^*$ is defined. We already established that $(G^*,(\widehat{y}_t)_{t\in\mathbb{N}})$ is a solution, so $(\widehat{y}_t)_{t\in\mathbb{N}}$, like $(y_t^*)_{t\in\mathbb{N}}$, solves the OTBP for $h_t$.

It remains to verify that $P(x_t\in\cdot\mid h_t)$ satisfies the conditions of Theorem \ref{teogeneralotb} for the existence of a unique, invertible Monge solution. Since
$\sigma(h_t)\subseteq\sigma(x_{<t})$, this conditional distribution is a mixture of the distributions $P(x_t\in\cdot\mid x_{<t})$. Now, mixtures of atomless measures are atomless, and mixtures of absolutely continuous measures are absolutely continuous. Hence, the conditions of Theorem \ref{teogeneralotb} are satisfied, giving
$\widehat y_t=y_t^*$.

Finally, to prove the iid property, notice that since $y_s^*=\widehat{y_s}$ is a Monge solution, we have that $y_s^*=T(x_s,h_s)$ is $\sigma(x_{<t})$-measurable for $s<t$. Hence, since $\widehat{y}_t$ is independent of $\sigma(x_{<t})$ it follows that it must be independent of $\{y_s:s<t\}$.

The identical distribution follows from stationarity of $(x_t,h_t)$ and from the fact that the Monge map is time-homogeneous.
\end{proof}

\section{Closed-form results for the one-dimensional OTBP}

We will now state some general results for the solution to the OTBP in one dimension, which will be useful to compute the barycenter of some well-behaved processes in closed form. The proofs are deferred to the Appendix.

\begin{theorem}\label{teobarycenter1d}Let $\mathcal{A}$ be a sub-$\sigma$-algebra of the probability space $(\Omega,\mathcal{F},P)$ in which an $\mathbb{R}$-valued square-integrable random variable $x$ is defined, such that the regular conditional probabilities $\nu_\omega(B)=P(x\in B\mid\mathcal{A})(\omega)$ are atomless. Then the problem
$$\min_{y} \mathbb{E}[\|x-y\|^2]\quad \text{ s.t. } y{\perp\!\!\!\perp} \mathcal{A}$$
has unique solution
$$y = F_Y^{-1}( F_{x\mid\mathcal{A}}(x) )$$
where $F_{x\mid\mathcal{A}}(x')=\nu_\omega((-\infty, x'])=P(x\leq x'\mid\mathcal{A})(\omega)$ is the conditional cdf of $x$
 with respect to $\mathcal{A}$, and
 $$F_Y^{-1}(u) = \mathbb{E} \left[ F_{x|\mathcal{A}}^{-1}(u) \right]$$
 is a quantile function (i.e. a left-continuous, non-decreasing function on $[0,1]$), the inverse cdf of $y$.
 \end{theorem}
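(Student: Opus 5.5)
The plan is to prove a lower bound on the cost of every feasible $y$, and then exhibit the candidate $y^*=F_Y^{-1}(F_{x\mid\mathcal{A}}(x))$ as feasible and attaining it. Write $U:=F_{x\mid\mathcal{A}}(x)$. Because each $\nu_\omega$ is atomless, the probability integral transform holds conditionally: given $\mathcal{A}$, $U$ is uniform on $[0,1]$. Hence $U$ is Uniform$[0,1]$ and $U{\perp\!\!\!\perp}\mathcal{A}$. Moreover $x=F^{-1}_{x\mid\mathcal{A}}(U)$ almost surely, since $\nu_\omega$-a.s. the point lies where the conditional cdf is strictly increasing. It follows that $y^*=F_Y^{-1}(U)$ is a measurable function of $U$ alone, so $y^*{\perp\!\!\!\perp}\mathcal{A}$. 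Next, $F_Y^{-1}(u)=\mathbb{E}[F^{-1}_{x\mid\mathcal{A}}(u)]$ is an average of left-continuous, non-decreasing functions. By monotone convergence it is therefore itself left-continuous and non-decreasing, and so it is a genuine quantile function. Its value is finite for $u\in(0,1)$: integrating over $u$ and applying Jensen's inequality gives $\int_0^1 (F_Y^{-1})^2\,du\le \mathbb{E}[x^2]<\infty$. Consequently $y^*$ is square-integrable and has law $\mu$ with quantile function $F_Y^{-1}$.

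For optimality, I take any feasible $y$, possibly on an extension, with $y{\perp\!\!\!\perp}\mathcal{A}$ and law $\lambda$. Conditioning on $\mathcal{A}$, the conditional law of $(x,y)$ is a coupling of $\nu_\omega$ and $\lambda$. The conditional law of $y$ equals $\lambda$ precisely because of independence. So $\mathbb{E}[|x-y|^2]\ge \mathbb{E}[W_2^2(\nu_\omega,\lambda)]$. In dimension one, $W_2^2(\nu_\omega,\lambda)=\int_0^1 |F^{-1}_{x\mid\mathcal{A}}(u)-F_\lambda^{-1}(u)|^2du$. Applying Fubini and taking the expectation over $\omega$, the bias–variance decomposition in $L^2(du)$ gives
$$\mathbb{E}\int_0^1|F^{-1}_{x\mid\mathcal{A}}-F_\lambda^{-1}|^2du=\mathbb{E}\int_0^1|F^{-1}_{x\mid\mathcal{A}}-F_Y^{-1}|^2du+\int_0^1|F_Y^{-1}-F_\lambda^{-1}|^2du .$$
The first term on the right is exactly $\mathbb{E}[|x-y^*|^2]$. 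This holds because $x-y^*=F^{-1}_{x\mid\mathcal{A}}(U)-F_Y^{-1}(U)$, and $U$ is uniform and independent of $\mathcal{A}$. Hence $y^*$ is optimal. This step matches Theorem \ref{teogeneralotb} and identifies the barycenter $\mu^*$ concretely.

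For uniqueness, I take another minimizer $y$. Equality in the displayed decomposition forces $F_\lambda^{-1}=F_Y^{-1}$ a.e., so $\lambda=\mu$. Equality in the conditional coupling bound forces the conditional coupling of $(x,y)$ given $\mathcal{A}$ to be $W_2$-optimal between $\nu_\omega$ and $\mu$ for a.e. $\omega$. Since $\nu_\omega$ is atomless, that optimal coupling is unique and is induced by the monotone map $F_\mu^{-1}\circ F_{\nu_\omega}$, by the one-dimensional results of Santambrogio cited after Theorem \ref{teogeneralotb}. Therefore $y=F_Y^{-1}(F_{x\mid\mathcal{A}}(x))$ a.s.

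I expect the main obstacle to be measure-theoretic bookkeeping rather than any single inequality. Three points need care. First, the joint measurability of $(\omega,u)\mapsto F^{-1}_{x\mid\mathcal{A}}(u)(\omega)$ must be checked so that Fubini applies; I would handle it via left-continuity in $u$ and countable approximation. Second, the reduction "conditional law of $(x,y)$ given $\mathcal{A}$ is a coupling with second marginal $\lambda$" must be justified on an extended space; here I would use the existence of regular conditional distributions on Polish spaces. Third, the integrability of $F_Y^{-1}$ near $u\in\{0,1\}$ has to be controlled, and the Jensen bound above already handles this.
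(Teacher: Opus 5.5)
Your proposal is correct and follows essentially the same route as the paper: reduce to minimizing $\mathbb{E}[W_2^2(\nu_\omega,\lambda)]$, use the one-dimensional quantile representation with Fubini so that the optimal quantile function is the pointwise mean $\mathbb{E}[F^{-1}_{x\mid\mathcal{A}}(u)]$, and identify the solution through the monotone transport map. The only difference is that you prove the reduction directly, where the paper simply invokes Theorem \ref{teogeneralotb}: you give the conditional-coupling lower bound, check feasibility of $y^*$ via the conditional probability integral transform, and derive uniqueness from the equality cases. This makes your version more self-contained.
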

\begin{proof}
See Appendix \ref{app:proofs}
\end{proof}

\begin{corollary}\label{1dcor}
Let $z$ be a random vector, and $\epsilon$ be a square-integrable, atomless random variable independent of $z$. Let  $x=F(z,\epsilon)$, where $F$ is a real-valued measurable function and, for a.e. $z$, the map $e\mapsto F(z,e)$ is continuous and strictly increasing. Assume $x\in L^2$. Then the barycentric solution to
$$\min_y \mathbb E[|x-y|^2] \quad\text{ s.t. }\quad y\perp\!\!\!\perp z$$
has quantile function
$$F_Y^{-1}(u)=\mathbb E_z\left[F\left(z,F_\epsilon^{-1}(u)\right)\right]
\qquad u\in(0,1)$$
Equivalently, defining $\bar{F}(e):=\mathbb E_z[F(z,e)]$,
the barycentric variable is
$$y=\bar{F}(\epsilon)=\mathbb E_z[F(z,\epsilon)]$$
\end{corollary}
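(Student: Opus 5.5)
The plan is to apply Theorem \ref{teobarycenter1d} with $\mathcal{A}=\sigma(z)$ and compute the two ingredients it requires: the conditional cdf $F_{x\mid z}$ and the averaged quantile function $\mathbb{E}_z[F_{x\mid z}^{-1}(u)]$.

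\emph{Conditional law.} Since $\epsilon\perp\!\!\!\perp z$, the regular conditional distribution of $x$ given $z$ is, for a.e. $z$, the pushforward of $\mu_\epsilon$ under $e\mapsto F(z,e)$. This map is continuous and strictly increasing, and $\mu_\epsilon$ is atomless, so for each $c$ we get $P(x=c\mid z)=P(\epsilon=F(z,\cdot)^{-1}(c))=0$. Hence $\nu_\omega$ is atomless and Theorem \ref{teobarycenter1d} applies.

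\emph{Computing the two functions.} Monotonicity gives $F_{x\mid z}(F(z,e))=P(F(z,\epsilon)\le F(z,e)\mid z)=F_\epsilon(e)$. For the quantile, $F_{x\mid z}^{-1}(u)=F(z,F_\epsilon^{-1}(u))$, because a strictly increasing continuous map commutes with left-continuous generalized inverses: $\inf\{F(z,e'): F_\epsilon(e')\ge u\}=F(z,\inf\{e':F_\epsilon(e')\ge u\})$. Taking expectations gives $F_Y^{-1}(u)=\mathbb{E}_z[F(z,F_\epsilon^{-1}(u))]=\bar F(F_\epsilon^{-1}(u))$, which is the stated formula. Integrability for $u\in(0,1)$ follows from $F_{x\mid z}^{-1}(u)^2\le \mathbb{E}[x^2\mid z]/\min(u,1-u)$ by Chebyshev, together with $x\in L^2$. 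So $\bar F$ is finite on the support of $\epsilon$ up to endpoints.

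\emph{Identifying $y$.} By Theorem \ref{teobarycenter1d},
$$y=F_Y^{-1}(F_{x\mid z}(x))=\bar F\bigl(F_\epsilon^{-1}(F_\epsilon(\epsilon))\bigr).$$
It remains to show $F_\epsilon^{-1}(F_\epsilon(\epsilon))=\epsilon$ a.s. This is the main (if mild) obstacle. In general the composition is $\le\epsilon$, with equality except when $\epsilon$ lies in a flat piece of $F_\epsilon$ strictly to the right of its left endpoint. Such sets are contained in countably many intervals of $\mu_\epsilon$-measure zero, so equality holds almost surely. Since $\bar F$ is nondecreasing, it too is only relevant up to these null sets. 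Therefore $y=\bar F(\epsilon)=\mathbb{E}_z[F(z,\epsilon)]$, where the expectation is over an independent copy of $z$ with $\epsilon$ held fixed.

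As a cross-check, uniqueness follows directly from Theorem \ref{teobarycenter1d}. Independence of $y$ from $z$ is also immediate, since $y$ is a function of $\epsilon$ alone.
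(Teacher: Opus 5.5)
Your proof is correct and follows the paper's route. Both apply Theorem~\ref{teobarycenter1d} with $\mathcal A=\sigma(z)$, use the fact that the strictly increasing map $F(z,\cdot)$ carries quantiles of $\epsilon$ to conditional quantiles of $x$, and average over $z$. The only difference is in identifying $y$: the paper defines $\bar F(\epsilon)$ and checks its quantile function using $\epsilon=F_\epsilon^{-1}(F_\epsilon(\epsilon))$ a.s., while you substitute into the Monge formula $y=F_Y^{-1}(F_{x\mid z}(x))$ and use the same identity. Your version has the small advantage of exhibiting $\bar F(\epsilon)$ as $T_z(x)$ itself, rather than only as a $z$-independent variable with the barycentric law, and your Chebyshev bound supplies an integrability check the paper leaves implicit.
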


\begin{proof} See Appendix \ref{app:proofs}
\end{proof}

\begin{corollary}\label{1dcor2}
Let $z$ be a random vector, and $\epsilon$ be a square-integrable, atomless random variable independent of $z$. Let $x=g(z)+f(z)\epsilon$, where $g,f$ are measurable real-valued functions, $x\in L^2$, and $f(z)\neq 0$ a.s. Then the OTBP
$$\min_y \mathbb E[|x-y|^2]\quad\text{ s.t. }\quad y\perp\!\!\!\perp z$$
has a) a Monge solution
$$y=\mathbb E[g(z)]+\frac{\mathbb E[|f(z)|]}{|f(z)|}\left(x-g(z)\right)=\mathbb E[g(z)]+\mathbb E[|f(z)|]\operatorname{sgn}(f(z))\epsilon$$ 
if $\epsilon$ is symmetric,

and b) a Monge solution
$$y=\mathbb E[g(z)]+\frac{\mathbb E[f(z)]}{f(z)}\left(x-g(z)\right)
=\mathbb E[g(z)]+\mathbb E[f(z)]\epsilon$$
if $f(z)>0$
\end{corollary}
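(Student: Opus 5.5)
We deduce Corollary \ref{1dcor2} from Corollary \ref{1dcor} by writing $x=F(z,\epsilon)$ with $F(z,e):=g(z)+f(z)e$, after possibly changing the noise variable so that $e\mapsto F(z,e)$ is increasing. The problem depends only on the joint law of $(x,z)$, and Theorem \ref{teobarycenter1d} says the optimum is unique. So it is enough to exhibit, in each case, a variable $y$ that is independent of $z$, is a measurable function of $(x,z)$, and agrees with the quantile formula of Corollary \ref{1dcor}. Throughout, the conditional law of $x$ given $z$ is that of $g(z)+f(z)\epsilon$ with $f(z)\neq0$. Since $\epsilon$ is atomless, this conditional law is atomless, so Theorem \ref{teobarycenter1d} (with $\mathcal A=\sigma(z)$) applies.

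\textbf{Case b)} ($f(z)>0$ a.s.). Here $e\mapsto g(z)+f(z)e$ is continuous and strictly increasing, and $\epsilon$ is square-integrable, atomless and independent of $z$. Corollary \ref{1dcor} therefore gives
$$y=\bar F(\epsilon)=\mathbb E_z[g(z)+f(z)\epsilon]=\mathbb E[g(z)]+\mathbb E[f(z)]\,\epsilon .$$
The expectation is taken over $z$ only, with $\epsilon$ held fixed. We need $g(z),f(z)\in L^1$. This follows from $x\in L^2$ and independence: $\mathbb E[x\mid z]=g(z)+f(z)\mathbb E\epsilon$ and $\operatorname{Var}(x\mid z)=f(z)^2\operatorname{Var}(\epsilon)$ are both integrable, provided $\operatorname{Var}\epsilon>0$, which holds because $\epsilon$ is atomless. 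Finally, substituting $\epsilon=(x-g(z))/f(z)$ gives the Monge form stated in the corollary.

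\textbf{Case a)} ($\epsilon$ symmetric, $f$ of arbitrary sign). Set $\tilde\epsilon:=\operatorname{sgn}(f(z))\,\epsilon$. The key step is to show that $\tilde\epsilon$ has the same law as $\epsilon$ and is independent of $z$. To see this, condition on $z$: because $\epsilon$ is symmetric and independent of $z$, the conditional law of $\tilde\epsilon$ given $z$ is the law of $\epsilon$ for every value of $z$. Then $x=g(z)+|f(z)|\tilde\epsilon$, and we are in the setting of case b) with $f$ replaced by $|f|>0$ and $\epsilon$ replaced by $\tilde\epsilon$. This gives
$$y=\mathbb E[g(z)]+\mathbb E|f(z)|\,\tilde\epsilon=\mathbb E[g(z)]+\mathbb E|f(z)|\operatorname{sgn}(f(z))\,\epsilon=\mathbb E[g(z)]+\frac{\mathbb E|f(z)|}{|f(z)|}\,(x-g(z)).$$

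The main obstacle is this change of noise in case a): one must check that independence from $z$ survives the sign flip, which the conditioning argument above handles. The remaining details are routine: the integrability checks, and noting that Corollary \ref{1dcor} requires the monotone representation only almost surely in $z$.
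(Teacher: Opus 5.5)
Your proof is correct, and your case b) is exactly the paper's argument. In case a) you and the paper both reduce to the increasing map $e\mapsto g(z)+|f(z)|e$ using symmetry, but you use the symmetry at a different point. You pass to $\tilde\epsilon=\operatorname{sgn}(f(z))\epsilon$ and check up front that $\tilde\epsilon\stackrel{d}{=}\epsilon$ and $\tilde\epsilon\perp\!\!\!\perp z$. That gives the exact identity $x=g(z)+|f(z)|\tilde\epsilon$, so Corollary \ref{1dcor} applies verbatim and case a) becomes an instance of case b).

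The paper instead applies Corollary \ref{1dcor} to $g(z)+|f(z)|\epsilon$, which only has the same conditional law as $x$ given $z$, to obtain the barycentric quantile. It then computes $F_{x\mid z}(x)=F_\epsilon\bigl((x-g(z))/|f(z)|\bigr)$ and the map $F_Y^{-1}\circ F_{x\mid z}$ of Theorem \ref{teobarycenter1d} explicitly, and checks that $\operatorname{sgn}(f(z))\epsilon$ is independent of $z$ only at the end. Your route is tidier, and it also supplies the integrability of $g(z)$ and $f(z)$, which the paper leaves implicit. The paper's route has the advantage of displaying the solution directly as the monotone rearrangement $F_Y^{-1}\circ F_{x\mid z}$.

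One correction to your framing. It is not enough to exhibit a $y$ that is independent of $z$, measurable in $(x,z)$, and has the barycentric quantile function. In case b) with symmetric $\epsilon$, the variable $\mathbb E[g(z)]-\mathbb E[f(z)]\epsilon$ meets all three conditions but has strictly larger cost: the cost exceeds the optimum by $4(\mathbb E[f(z)])^2\,\mathbb E[\epsilon^2]$. What certifies optimality is that $y$ is, in addition, a nondecreasing function of $x$ for a.e.\ fixed $z$, i.e.\ $y=F_Y^{-1}(F_{x\mid z}(x))$. Your actual argument never uses the faulty criterion, since it takes the optimality of $\bar F(\epsilon)$ from Corollary \ref{1dcor}. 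So nothing breaks, but that sentence should be fixed.
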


\begin{proof}
See Appendix \ref{app:proofs}
\end{proof}

These results are useful for computing the exact formula of the barycenter of random variables $X$ with covariate $Z$ when the underlying law is known. When coupled with a recurrent function $G$ that generates a variable $h_t$ encoding the past, this naturally extends to the computation of the Monge barycenter of stochastic processes $x_t$ in terms of $h_t$. However, just like the Rosenblatt transform, it requires full knowledge or perfect estimation of the conditional probabilities, thus providing no practical extraction procedure for the more general case where the data-generating process is unknown.

\section{A nonlinear example revisited}

As a simple, nonlinear example of (\ref{eqHMnonlin}) we will now look again at the GARCH$(1,1)$ model from Section \ref{wellknown}, which was defined by 
$$x_t=\sigma_t \epsilon_t,\quad\text{ with $\epsilon_t$ an iid white noise and }\quad \sigma_{t+1}^2=\omega+\alpha x_t^2+\beta \sigma_t^2 \quad \text{ ($\omega,\alpha,\beta>0$)}$$

In this model's equation the latent variable $\sigma_t$ depends both on its own past and on the past of the observed variable, while its evolution is deterministic. Most importantly, $x_t$ has multiplicative, not additive noise. We will apply the same philosophy as before and the barycentric method will allow us to recover parametric information from this GARCH$(1,1)$ process.

Recall the formulation of the problem as a maxmin game  (\ref{eqlatent2})
\begin{equation}\label{eqlatentgarch}\max_G\min_{y} \mathbb{E}[\|x_{t}-y_{t}\|^2]\quad \text{ s.t. } y_t{\perp \!\!\! \perp}h_t\quad \text{ with } h_{t}=G(x_{t-1},h_{t-1})
\end{equation}
Unlike the Kalman scenario we cannot simplify the analysis by resorting first to the relaxed barycentric problem (\ref{eqrelax}) because the constraint $\mathbb{E}[y_t\mid h_t]=\mathbb{E}[y_t]$ trivializes the problem: by construction, the noise is multiplicative, so $x_t$ already satisfies $\mathbb{E}[x_t\mid z_t]=\mathbb{E}[x_t]$, and the solution for the inner minimization is then $x_t=y_t$: the outer player cannot prevent $y$ from capturing all the variability in $x$.

We must stick then to (\ref{eqlatentgarch}), which is generally harder, but for this model, as in the Kalman case, we can find an explicit solution analytically.

\begin{lemma}\label{lemmagarch1}Let $G^*(x,z)=\omega+\alpha x^2+\beta z$ and $y_t^*:=\frac{x_t}{\sigma_t}\mathbb{E}[\sigma_t]$. Then $(G^*,y^*_t)$ is a solution of (\ref{eqlatentgarch}) and its value is $V:=\mathbb{E}[(\sigma-\mathbb{E}[\sigma])^2]$
\end{lemma}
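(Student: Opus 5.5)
The plan is to show that $G^*$ recovers the latent variance, and then to reduce the game to a one-dimensional barycenter problem solved by Corollary \ref{1dcor2}. Concretely, the recurrence $h_{t+1}=G^*(x_t,h_t)$ with $h_t=\sigma_t^2$ is exactly the GARCH variance equation. So the first step is to check that $G^*\in\mathcal{G}$ and invoke Lemma \ref{lemmaG}.

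For C1 we can take the constant $\Lambda\equiv\beta<1$, since $|G^*(x,h)-G^*(x,h')|=\beta|h-h'|$. C3 is then immediate. C2 needs $\mathbb{E}[G^*(x_0,0)^2]=\mathbb{E}[(\omega+\alpha x_0^2)^2]<\infty$, i.e. a finite fourth moment of $x_t$. I expect this to be the one genuine side condition to state: the lemma implicitly requires the usual condition guaranteeing $\mathbb{E}[x_t^4]<\infty$, or else one should note that square-integrability of $h_t=\sigma_t^2$ is what the framework demands.

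By uniqueness of the stationary solution, $h_t=\sigma_t^2$ a.s. The standard GARCH expansion $\sigma_t^2=\omega/(1-\beta)+\alpha\sum_{i\ge1}\beta^{i-1}x_{t-i}^2$ is itself a stationary solution, so it must coincide with the one given by Lemma \ref{lemmaG}. In particular $h_t$ is $\sigma(x_{<t})$-measurable.

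Next comes the inner problem for this $h_t$. We have $x_t=\sqrt{h_t}\,\epsilon_t$ with $\epsilon_t\perp\!\!\!\perp h_t$, because $h_t$ is a function of $x_{<t}$, which depends only on $\epsilon_{<t}$. Applying Corollary \ref{1dcor2}(b) with $z=h_t$, $g=0$ and $f(z)=\sqrt{z}>0$ gives the Monge solution $y_t^*=\mathbb{E}[\sigma_t]\epsilon_t=\frac{x_t}{\sigma_t}\mathbb{E}[\sigma_t]$. Its cost is
$$\mathbb{E}[\epsilon_t^2(\sigma_t-\mathbb{E}\sigma_t)^2]=\mathbb{E}[(\sigma_t-\mathbb{E}\sigma_t)^2],$$
using independence and $\mathbb{E}\epsilon_t^2=1$. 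This yields the value $V$.

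Finally, we need optimality of $G^*$ for the outer player, which I would argue as in Observation \ref{obsegen}. The key fact is that $y_t^*=\mathbb{E}[\sigma_t]\epsilon_t$ is independent of all of $\sigma(x_{<t})$, since $x_{<t}$ is a function of $\epsilon_{<t}$ (and $\sigma$ at $-\infty$, via the stationary causal representation). For any $\tilde G\in\mathcal{G}$, the resulting $\tilde h_t$ is $\sigma(x_{<t})$-measurable, so $y_t^*$ is feasible against $\tilde h_t$. Hence the inner minimum under $\tilde G$ is at most $V$, and $\sup_{\tilde G}\le V$, which is attained by $G^*$.

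The step needing the most care is establishing that $\epsilon_t$ is independent of $\sigma(x_{<t})$ in the bilateral stationary setting. This holds for the causal stationary GARCH solution, in which $\sigma_t$ is a measurable function of $\epsilon_{<t}$; I would state this explicitly as the standard representation of stationary GARCH$(1,1)$.
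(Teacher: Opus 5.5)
Your argument is correct, and its outer half is the same as the paper's. Since $y_t^*=\mathbb{E}[\sigma_t]\epsilon_t$ is independent of $\sigma(x_{<t})$, it is feasible against every recurrent $h_t$, so no $G$ can push the value above $V$.

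The two proofs differ in how they show that $y_t^*$ is the best response to $G^*$. The paper uses Lemma \ref{lemmagarch}: it relaxes independence to the single moment condition $\mathbb{E}[y_t^2(\sigma_t-\mathbb{E}[\sigma_t])]=0$ and completes a square. That argument is elementary, needing neither the quantile representation nor atomlessness. It also proves something stronger, namely that $y_t^*$ is already optimal under this weak constraint, and this is what later motivates the test pair $\varphi(y)=y^2$, $\psi\propto\sigma_t-\mathbb{E}[\sigma_t]$ in the duality discussion. You instead apply Corollary \ref{1dcor2}(b) with $z=h_t$, $g=0$, $f(z)=\sqrt{z}$. The paper itself concedes, right after Lemma \ref{lemmagarch}, that this shortcut would suffice, and it is quicker given the one-dimensional barycenter theory.

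Your route is also more careful about admissibility. Checking C1--C3 exposes a hypothesis the paper's proof uses tacitly when it lets the outer player ``choose $G^*$''. Condition C2 at $n=0$ requires $\mathbb{E}[(\omega+\alpha x_0^2)^2]<\infty$, that is, $\mathbb{E}[x_t^4]<\infty$. For GARCH$(1,1)$ this amounts to $\beta^2+2\alpha\beta+\alpha^2\mathbb{E}[\epsilon_t^4]<1$, which $\alpha+\beta<1$ does not guarantee. Without it $G^*\notin\mathcal{G}$, so the pair $(G^*,y_t^*)$ is not literally admissible. The value $V$ is still attained, for example by the $G$ of Theorem \ref{teoopt}.
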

\begin{proof}
By construction any $h_t$ is $\sigma(x_{<t})$-measurable. If the inner player chooses $y_t=y_t^*=\frac{x_t}{\sigma_t}\mathbb{E}[\sigma_t]$ then $y_t$ is independent of $h_t$ regardless of the choice of $G$, and for $y_t^*$ the squared distance is $\mathbb{E}[|\sigma-\mathbb{E}[\sigma]|^2]$. It follows that the outer player cannot do better than $V$. On the other hand, the outer player can choose $G^*$, so that $h_t=\sigma_t^2$. Then whichever $y_t$ is the optimal response of the second player, it must be independent of $\sigma_t$. In particular it must satisfy $\mathbb{E}[y_t^2 (\sigma_t-\mathbb{E}[\sigma_t])]=0$ since $y_t^2$ and $(\sigma_t-\mathbb{E}[\sigma_t])$ are functions of $y_t$ and $\sigma_t^2$, respectively. Lemma \ref{lemmagarch} below shows $y_t^*$ minimizes the squared distance to $x_t$ subject to this restriction, so it must also minimize it subject to the stronger condition of independence.

It follows that $y_t^*$ is an optimal response to $G^*$ and the value of this game is $V=\mathbb{E}[(\sigma_t-\mathbb{E}[\sigma_t])^2]$. 
\end{proof}

Notice that $V$ equals  $\mathbb{E}[\sigma_t^2]-\mathbb{E}[\sigma_t]^2$. Since $\mathbb{E}[x_t^2]=\mathbb{E}[\sigma_t^2]$ and $\mathbb{E}[y_t^{*2}]=\mathbb{E}[\sigma_t]^2$ it follows that $V=\mathbb{E}[x_t^2]-\mathbb{E}[y_t^{*2}]$, i.e. the value of this game is the difference between the variance of $x_t$ and that of the barycenter $y_t^*$ (see \cite[Theorem 3]{Yang22}).

\begin{lemma}\label{lemmagarch}
    If $y_t^*$ is the solution to the problem
    $$\min_{y_t}\mathbb{E}[\|x_t-y_t\|^2]\quad\text{ s.t. }\mathbb{E}[y_t^2 (\sigma_t-\mathbb{E}[\sigma_t])]=0$$
    with $x_t=\sigma_t\epsilon_t$ a stationary GARCH$(1,1)$ process then $y_t^*=\frac{x_t}{\sigma_t}\mathbb{E}[\sigma_t]$
\end{lemma}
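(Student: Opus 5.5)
The plan is a Lagrange-multiplier argument in which the multiplier is guessed explicitly, so that the constrained problem becomes a pointwise unconstrained one. Write $m:=\mathbb{E}[\sigma_t]$ and $c_t:=\sigma_t-m$; note that $m>0$ and $\sigma_t\geq\sqrt{\omega}>0$. For any $\lambda\in\mathbb{R}$ and any feasible $y_t$, i.e. any $y_t\in L^2$ with $y_t^2 c_t$ integrable and $\mathbb{E}[y_t^2 c_t]=0$, we have
$$\mathbb{E}[|x_t-y_t|^2]=\mathbb{E}\left[|x_t-y_t|^2+\lambda c_t y_t^2\right].$$
I will choose $\lambda=1/m$, so that the pointwise coefficient of $y_t^2$ becomes $1+c_t/m=\sigma_t/m>0$. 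Completing the square pointwise gives the identity
$$|x_t-y_t|^2+\frac{c_t}{m}y_t^2=\frac{\sigma_t}{m}\left(y_t-\frac{m}{\sigma_t}x_t\right)^2+x_t^2\left(1-\frac{m}{\sigma_t}\right).$$
This holds $\omega$ by $\omega$, also on any extension of the probability space on which $y_t$ happens to be defined.

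Taking expectations, for every feasible $y_t$ I get
$$\mathbb{E}[|x_t-y_t|^2]=\mathbb{E}\left[\frac{\sigma_t}{m}\left(y_t-\frac{m}{\sigma_t}x_t\right)^2\right]+\mathbb{E}\left[x_t^2\left(1-\frac{m}{\sigma_t}\right)\right].$$
The second term does not depend on $y_t$, and it is finite because $x_t\in L^2$ and $m/\sigma_t\leq m/\sqrt{\omega}$. The first term is nonnegative, and it vanishes if and only if $y_t=\frac{m}{\sigma_t}x_t=m\epsilon_t$ almost surely.

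It remains to check that this candidate is feasible. We have $y_t^*=m\epsilon_t\in L^2$, and $\mathbb{E}[(y_t^*)^2c_t]=m^2\,\mathbb{E}[\epsilon_t^2]\,\mathbb{E}[\sigma_t-m]=0$. This uses that $\epsilon_t$ is independent of $\sigma_t$, which holds because $\sigma_t$ is $\sigma(x_{<t})$-measurable, or more directly a function of $\epsilon_{<t}$ in the stationary GARCH solution. Hence $y_t^*$ attains the lower bound, so it is a minimizer. Since $\sigma_t/m>0$ almost surely, the first term is zero only for $y_t=y_t^*$, so the minimizer is also unique. This is exactly the formula $y_t^*=\frac{x_t}{\sigma_t}\mathbb{E}[\sigma_t]$, and the optimal value is $\mathbb{E}[\sigma_t^2\epsilon_t^2(1-m/\sigma_t)^2]=\mathbb{E}[(\sigma_t-m)^2]$, consistent with Lemma \ref{lemmagarch1}.

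The main obstacle I expect is not the algebra but the bookkeeping about the admissible class. The constraint $\mathbb{E}[y_t^2 c_t]=0$ only makes sense when $y_t^2\sigma_t$ is integrable, so I would state explicitly that the feasible set consists of such $y_t$. In the application inside Lemma \ref{lemmagarch1}, any $y_t$ independent of $\sigma_t$ with $y_t\in L^2$ automatically satisfies this, since $\mathbb{E}[y_t^2\sigma_t]=\mathbb{E}[y_t^2]\,\mathbb{E}[\sigma_t]<\infty$. Under that convention, taking expectations termwise in the completed-square identity is justified because every term is integrable.
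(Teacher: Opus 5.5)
Your proof is correct and follows the paper's argument: the paper also adds the vanishing constraint term with multiplier $1/m$, completes the square to get $\frac{\sigma_t}{m}(y_t-m\epsilon_t)^2+\sigma_t^2\epsilon_t^2-m\sigma_t\epsilon_t^2$, and reads off the lower bound $\mathbb{E}[\sigma_t^2]-m^2$, attained at $y_t=m\epsilon_t$. Your explicit feasibility check, uniqueness argument and integrability convention for the admissible class are details the paper leaves implicit.
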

\begin{proof}

Let $m=\mathbb{E}[\sigma_t]$. For any feasible $y_t$ we have
$$\mathbb{E}[(x_t-y_t)^2]=\mathbb{E}[(x_t-y_t)^2]+\frac{1}{m}\mathbb{E}[(\sigma_t-m)y_t^2]$$

Now,
\begin{align}
    (x_t-y_t)^2+\frac{1}{m}(\sigma_t-m)y^2&=(\epsilon_t\sigma_t-y_t)^2+\frac{1}{m}(\sigma_t-m)y^2\nonumber\\
    &=\frac{\sigma_t}{m}(y_t-m\epsilon_t)^2+\sigma_t^2\epsilon_t^2-m\sigma_t\epsilon_t^2\nonumber
\end{align}
so taking expectations and using $\mathbb{E}[\epsilon_t^2]=1$ and independence of $\epsilon_t$ and $\sigma_t$
$$\mathbb{E}[(x_t-y_t)^2]=\mathbb{E}\left[\frac{\sigma_t}{m}(y_t-m\epsilon_t)^2\right]+\mathbb{E}[\sigma_t^2]-m^2\geq \mathbb{E}[\sigma_t^2]-m^2$$
with equality if $y_t=y_t^*=m\epsilon_t=\frac{x_t}{\sigma_t}\mathbb{E}[\sigma_t]$



\end{proof}

As with the Kalman filter case, in the scenario where we know or assume the system is GARCH$(1,1)$ but do not know the parameters, we could try to recover parametric information through the barycentric method by restricting the function space for $G$. In this case, since $G(x,h)=\omega+\alpha x^2+\beta h$ yields an optimal solution for the maximizing player we would naturally consider $G$ of the form $G(x,h)=A+Bx^2+Ch$ ($A,B,C>0$, $B+C<1$). Let $P$ be the space of such quadratic polynomials. Notice too that, as in the Kalman case with linear $G$, there is a gauge invariance even after restricting to $P$. Indeed, if $G(x,h)=\omega+\alpha x^2+\beta h$ and $h_{t+1}=\omega+\alpha x_t^2+\beta h_t$ then for any $\lambda,\gamma>0$ we may define $\tilde{h}_t=\lambda h_t+\gamma$ and $\tilde{h}_{t+1}=(\lambda\omega+\gamma(1-\beta))+\lambda\alpha x_t^2+\beta \tilde{h}_t=:\tilde{G}(x_t,\tilde{h}_t)$ yields an alternative solution $\tilde{G}$ whenever $\lambda\alpha+\beta<1$. $\beta$, however, remains unchanged in $\tilde{G}$, and the following lemma shows this characterizes the set of solutions for the restricted problem.

\begin{lemma}\label{lemmauniquegarch}
    Consider problem (\ref{eqlatentgarch}) with $G$ restricted to $P$. Assume additionally that the noise distribution's cdf $F_\epsilon$ is absolutely continuous and strictly increasing on a neighbourhood of 0. Then the solution set consists of all those $G$ such that $C=\beta$.
\end{lemma}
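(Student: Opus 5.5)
The plan is to compare, for each $G(x,h)=A+Bx^2+Ch$ in $P$, the inner player's optimal cost against the value $V=\mathbb{E}[(\sigma_t-\mathbb{E}[\sigma_t])^2]$ of Lemma \ref{lemmagarch1}. The proof of that lemma already shows that no $G$ yields more than $V$: the response $y_t^*=m\epsilon_t$, with $m=\mathbb{E}[\sigma_t]$, is admissible against every $h_t$. So $G$ is a solution of the restricted problem iff its inner cost equals $V$. Unrolling the recurrence (with $C<1$) gives
\begin{equation*}
h_t=\frac{A}{1-C}+B\sum_{k\ge0}C^k x_{t-1-k}^2,\qquad \sigma_t^2=\frac{\omega}{1-\beta}+\alpha\sum_{k\ge0}\beta^k x_{t-1-k}^2 .
\end{equation*}

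\textbf{The case $C=\beta$.} Here $h_t=\lambda\sigma_t^2+\gamma$ with $\lambda=B/\alpha>0$ and an explicit constant $\gamma$. Hence $\sigma(h_t)=\sigma(\sigma_t)$. Any $y$ independent of $h_t$ is then independent of $\sigma_t$, so the argument of Lemma \ref{lemmagarch1}, via Lemma \ref{lemmagarch}, applies verbatim and the cost is exactly $V$. Thus every such $G$ is a solution.

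\textbf{The case $C\neq\beta$: strict inequality.} The goal is to show the inner cost is strictly below $V$. I would split this into two claims.

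\emph{Claim (i).} If $\sigma_t$ is not a.s.\ a function of $h_t$, then the cost is $<V$. The inner cost is $\min_\mu\mathbb{E}_h W_2^2(P_{x|h},\mu)$. Take $\mu=\mathrm{law}(m\epsilon)$. Given $h$, couple $x_t=\sigma_t\epsilon_t$ with $m\epsilon_t$; this gives $W_2^2(P_{x|h},\mu)\le\mathbb{E}[(\sigma_t-m)^2\epsilon_t^2\mid h]$, and averaging over $h$ yields exactly $V$. For strictness, this coupling must fail to be optimal on a set of $h$ of positive probability. In one dimension the optimal coupling is the monotone one. On a positive-probability set of $h$, the conditional law of $\sigma_t$ given $h$ is non-degenerate, so it charges two values $s_1<s_2$. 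Since $F_\epsilon$ has positive density near $0$, I can pick $e_1,e_2$ near $0$ with $s_1e_1>s_2e_2$ but $me_1<me_2$. Such pairs occur with positive probability, so the coupling $(\sigma\epsilon,m\epsilon)$ is not comonotone and is strictly suboptimal. Making the "positive probability" quantifiers rigorous requires care with the regular conditional distributions given $h$.

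\emph{Claim (ii).} If $C\ne\beta$, then $\sigma_t^2$ is not $\sigma(h_t)$-measurable; this is where I expect the real work. Condition on $\mathcal{F}_{t-3}$; this fixes $\sigma_{t-2}$ and the tail $x_{t-3-k}$, $k\ge0$. Then
\begin{equation*}
h_t=c_1+B\,(u+Cv),\qquad \sigma_t^2=c_2+\alpha\,(u+\beta v),
\end{equation*}
where $u=x_{t-1}^2=\sigma_{t-1}^2\epsilon_{t-1}^2$, $v=x_{t-2}^2$, and $c_1,c_2$ are $\mathcal{F}_{t-3}$-measurable. Since $\sigma_{t-1}^2$ depends on $v$, $u$ is not free; but the map $(\epsilon_{t-1},\epsilon_{t-2})\mapsto(u,v)$ is a local diffeomorphism away from the axes. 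The density assumption near $0$ then shows that $(u,v)$ has a law with positive density on a small open rectangle. Because $C\ne\beta$, the two linear forms $u+Cv$ and $u+\beta v$ are independent. So within a level set of $h_t$, $\sigma_t^2$ still varies over an interval with positive conditional probability. This contradicts $\sigma_t^2=\phi(h_t)$ a.s., by disintegrating along the level lines of $u+Cv$ (e.g.\ via Fubini after a linear change of variables).

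\textbf{Conclusion.} Claims (i) and (ii) together give inner cost $<V$ whenever $C\neq\beta$, so such $G$ are not solutions. Combined with the case $C=\beta$, the solution set is exactly $\{C=\beta\}$.
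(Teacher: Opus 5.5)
Your proof is correct, and its skeleton matches the paper's. For $C=\beta$ you use the affine relation between $h_t$ and $\sigma_t^2$. For $C\neq\beta$ you condition on $\mathcal F_{t-3}$, so that $h_t$ and $\sigma_t^2$ become the independent linear forms $u+Cv$ and $u+\beta v$ of a pair with an absolutely continuous conditional law. Your triangular-Jacobian remark, that $u=\sigma_{t-1}^2\epsilon_{t-1}^2$ depends on $v$, actually justifies more carefully the paper's bare assertion that $(x_{t-1}^2,x_{t-2}^2)$ has a conditional density.

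The genuine difference is the step from ``$\sigma_t$ is not a function of $h_t$'' to ``inner cost $<V$''. The paper argues by contradiction through uniqueness of the inner barycenter. Optimality of $G$ would force $y_t^{(G)}=m\epsilon_t$. Equating this with the quantile form $F_Y^{-1}(F_{x\mid h_t}(x_t))$ of Theorem \ref{teobarycenter1d} gives $F_{x\mid h_t}(x)=F_\epsilon(x/\sigma_t)$. Inverting this at a small fixed $x_0$, using strict monotonicity of $F_\epsilon$ near $0$, writes $\sigma_t$ explicitly as a function of $h_t$.

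You instead keep the candidate barycenter $\mathrm{law}(m\epsilon)$ fixed. You show that the conditional coupling $(\sigma_t\epsilon_t,m\epsilon_t)$ given $h_t$ has a crossing pair in its support whenever $\sigma_t$ is non-degenerate given $h_t$, so the monotone plan strictly beats it. Your version needs only uniqueness of one-dimensional optimal plans between two fixed measures, not uniqueness of the barycenter or the explicit quantile map. It also avoids upgrading an almost-sure identity between conditional cdfs to one holding at every argument. The paper's version is shorter once Theorem \ref{teobarycenter1d} is available, and it exhibits the dependence of $\sigma_t$ on $h_t$ concretely.

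Some wording in your write-up should be tightened:
\begin{itemize}
\item A non-degenerate conditional law of $\sigma_t$ has two points $s_1<s_2$ in its support; it need not ``charge'' them.
\item With your pairing ($s_1$ with $e_1$, $s_2$ with $e_2$), the crossing condition forces $e_1<e_2<0$.
\item In Claim (ii), an absolutely continuous, strictly increasing $F_\epsilon$ does not guarantee a density that is a.e.\ positive on an open rectangle. None is needed: an absolutely continuous component of positive mass suffices, since in the coordinates $(u+Cv,\,u+\beta v)$ the graph of a measurable function is Lebesgue-null.
\end{itemize}
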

\begin{proof} See Appendix \ref{app:proofs}.
\end{proof}

In dealing with the barycenter solution for a GARCH input $x_t$ we could have spared ourselves the proofs of Lemmas \ref{lemmagarch1} and \ref{lemmagarch} and make a direct appeal to Corollary \ref{1dcor2}. Yet Lemma \ref{lemmagarch} shows an interesting property. The minimization of the squared distance to $x_t$ subject to a correlation constraint (i.e. a constraint much weaker than full independence) makes $y_t$ independent nonetheless. In other words, if there were an outer maximizing player trying to choose functions of $h_t$ and $y$ for a correlation restriction on the minimization problem:
$$\max_{G,\varphi,\psi}\min_{y_t}\mathbb{E}[(x_t-y_t)^2]\quad\text{ s.t. }\mathbb{E}[\psi(h_t)\varphi(y)]=\mathbb{E}[\psi(h_t)]\mathbb{E}[\varphi(y)]$$
then it would suffice to choose $\varphi(y)=y^2$ and $\psi(h_t)=\sigma_t$ to force the inner player to adopt an independent solution, i.e. the solution to 
 $$\min_{y_t}\max_{G,\varphi,\psi}\mathbb{E}[(x_t-y_t)^2]\quad\text{ s.t. }\mathbb{E}[\psi(h_t)\varphi(y)]=\mathbb{E}[\psi(h_t)]\mathbb{E}[\varphi(y)]$$

This situation is unique, in that the problem is not concave-convex and the inequality $\sup\inf\leq\inf\sup$ will usually be strict. Yet it highlights an important problem that will arise in the second part of this work, when we try to develop an algorithmic implementation of this adversarial barycentric game.

\bigskip

\chapter{Algorithmic implementation}

We now turn to the problem of developing effective algorithmic solutions to our adversarial problem.

\section{Optimization schedule and duality}

To incorporate the independence constraint in (\ref{eqlatent2}) into an objective function we could do
$$\max_G\min_{y_t}\sup_{\psi,\varphi} \mathbb{E}\left[\|x_{t}-y_{t}\|^2\right]+\mathbb{E}\left[(\psi(h_t)-\bar{\psi})\varphi(y_t)\right]\quad  \text{ with } h_{t}=G(x_{t-1},h_{t-1}),$$
which is equivalent to imposing independence on the minimizing player because once the correlation term is nonzero the test functions $\psi$ or $\varphi$ can be scaled to make the supremum $\infty$. Now, Observation \ref{obsegen} implies this is equivalent to 
\begin{equation}\label{eqlagr}\min_{y_t}\max_G\sup_{\psi,\varphi} \mathbb{E}\left[\|x_{t}-y_{t}\|^2\right]+\mathbb{E}\left[(\psi(h_t)-\bar{\psi})\varphi(y_t)\right]\quad  \text{ with } h_{t}=G(x_{t-1},h_{t-1})
\end{equation}
since the optimal $G$ will be the one that captures the whole past, irrespective of what $y_t$ is chosen.

We could then naively try to implement this $\text{minmax}$ (or rather $\text{infsup}$) optimization. The problem is that the inner maximization is necessarily unbounded, so algorithms trying to implement it will often fail to control the instability caused by the tendency to blowup whenever the constraint is non-zero.

If the functional $\mathcal{L}(G,\psi,\phi,y_t)=\mathbb{E}\left[\|x_{t}-y_{t}\|^2\right]+\mathbb{E}\left[(\psi(h_t)-\bar{\psi})\varphi(y_t)\right]$ were concave-convex, we could appeal to some duality Max-Min theorem and compute the $\text{supinf}$. The objective, however, is neither convex on $y_t$ nor concave on $G,\varphi,\psi$ and, in general, duality fails, ie. the inequality $\sup\inf\le\inf\sup$ is strict.

There are circumstances, though, where the supinf does equal the infsup of the barycenter problem. Generally speaking, this will happen when the barycenter $Y^*$ and the covariate $Z$ satisfy $$X-Y^*=f(Z)g(Y^*)$$ for some admissible functions $f$ and $g$.

The following theorem makes this precise. Before stating it, define a pair of test functions $(\psi,\varphi)$ as admissible if $\psi(Z)$ is in $L^1_0$, $\varphi$ is differentiable and $\psi(Z)\varphi(Y)$ is in $L^1$ whenever a square-integrable $Y$ is independent of $Z$ (this is guaranteed, for instance, if $|\varphi(y)|<C+Dy^2$).

\begin{theorem}[Criterion for duality]\label{teorankone}
Let $(X,Z)$ be a pair of random variables with $X\in L^2(\mathbb R)$, and assume that the conditional laws $P(X\in\cdot\mid Z=z)$ are atomless. Let
$$Q_z(u):=F^{-1}_{X\mid Z=z}(u),\qquad\bar Q(u):=\mathbb E_Z[Q_Z(u)]$$
and define the centered conditional quantile displacement
$$D(z,u):=Q_z(u)-\bar Q(u)$$
Let the barycentric Monge solution be $Y^*=\bar Q(U)$ with $U:=F_{X\mid Z}(X)\sim\operatorname{Unif}(0,1)$ independent of $Z$
and
$$V:=\mathbb E[|X-Y^*|^2]=\inf_{Y:\,Y\perp\!\!\!\perp Z}\mathbb E[|X-Y|^2]$$

Let $\mathcal{A}$ be a class of admissible pairs of test functions. Assume that $D$ has a representation
$$D(z,u)=\psi(z)r(\bar Q(u))$$
for some $(\psi,\varphi)\in\mathcal{A}$, where
$\varphi'(y)=2r(y)$.
Assume moreover that, for a.e. $z$, the map
$T_z(y):=y+\psi(z)r(y)$
is nondecreasing.
Then the pointwise Lagrangian
$$\ell_{\psi,\varphi}(z,u;y):=|Q_z(u)-y|^2+\psi(z)\varphi(y)$$
is well-defined from below as an extended-real functional, and
$$\inf_Y\mathbb E[\ell_{\psi,\varphi}(Z,U;Y)]=V$$
Consequently,
$$\sup_{(\psi,\varphi)\in\mathcal A}\inf_Y
\mathbb{E}\left[|X-Y|^2+\psi(Z)\varphi(Y)\right]=V$$

Conversely, suppose that
$$\sup_{(\psi,\varphi)\in\mathcal A}\inf_Y
\mathbb E\left[|X-Y|^2+\psi(Z)\varphi(Y)\right]=V$$
and suppose that the supinf is realized by an admissible pair $(\psi,\varphi)$. Then
$$D(z,u)=\frac{1}{2}\psi(z)\varphi'(\bar Q(u))$$
for a.e. $(z,u)$.
\end{theorem}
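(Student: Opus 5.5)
The plan is to prove the direct part pointwise in $(z,u)$, then integrate, and to get the converse from a first-order (Euler--Lagrange) condition.

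\emph{Direct part.} Because $(\psi,\varphi)$ is admissible, $\mathbb{E}[\psi(Z)\varphi(Y)]=\mathbb{E}[\psi(Z)]\,\mathbb{E}[\varphi(Y)]=0$ for every square-integrable $Y$ independent of $Z$. Hence
$$\inf_Y \mathbb{E}[\ell_{\psi,\varphi}(Z,U;Y)]\le \mathbb{E}[\ell_{\psi,\varphi}(Z,U;Y^*)]=V.$$
For the reverse inequality I minimize $y\mapsto |Q_z(u)-y|^2+\psi(z)\varphi(y)$ for fixed $(z,u)$ and show that $y=\bar Q(u)$ is a global minimizer. Its derivative is $2\bigl(y+\psi(z)r(y)-Q_z(u)\bigr)=2\bigl(T_z(y)-Q_z(u)\bigr)$. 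The representation gives $Q_z(u)=\bar Q(u)+\psi(z)r(\bar Q(u))=T_z(\bar Q(u))$, so the derivative vanishes at $\bar Q(u)$. Since $T_z$ is nondecreasing, the derivative is $\le0$ to the left of $\bar Q(u)$ and $\ge0$ to the right. The function is therefore quasi-convex with global minimum at $\bar Q(u)$. This holds even though $\varphi$ need not be convex, because monotonicity of $T_z$ is exactly what replaces convexity.

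So for any $Y$, $\ell(z,u;Y)\ge \ell(z,u;\bar Q(u))$ pointwise, which also gives the ``well-defined from below'' claim. Taking expectations gives
$$\mathbb{E}[\ell(Z,U;Y)]\ge \mathbb{E}\bigl[|X-Y^*|^2\bigr]+\mathbb{E}\bigl[\psi(Z)\varphi(Y^*)\bigr]=V,$$
where the cross term vanishes because $Y^*=\bar Q(U)$ is independent of $Z$. Here $X=Q_Z(U)$ a.s. by the atomless assumption, as in Theorem~\ref{teobarycenter1d}. So the inf equals $V$. For the sup--inf, weak duality gives $\sup\inf\le\inf\sup=V$, since the inner sup is $+\infty$ off the independence set by scaling. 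This pair attains $V$, so the sup equals $V$.

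\emph{Converse.} Suppose the sup is attained by $(\psi,\varphi)$ with $\inf_Y \mathbb{E}[\ell(Z,U;Y)]=V$. The candidate $Y^*$ gives value $V$ (again the cross term vanishes), so $Y^*$ is a minimizer over all $Y$ measurable in $(Z,U)$. I would then perturb: take $Y^*+s\,\xi(Z,U)$ for bounded $\xi$ and differentiate at $s=0$. Differentiability of $\varphi$ plus a domination argument (dominated convergence using the growth bound $|\varphi|\le C+Dy^2$, or restricting $\xi$ to sets where $\varphi'$ is bounded) gives
$$\mathbb{E}\bigl[\bigl(-2(Q_Z(U)-\bar Q(U))+\psi(Z)\varphi'(\bar Q(U))\bigr)\xi\bigr]=0.$$
Since $\xi$ is arbitrary, $D(z,u)=\tfrac12\psi(z)\varphi'(\bar Q(u))$ a.e.

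\emph{Main obstacle.} I expect the hardest step to be the technical justification of differentiating under the expectation in the converse. Pointwise minimization gives a cleaner route: since $\mathbb{E}[\ell(Z,U;Y)]\ge\mathbb{E}[\inf_y \ell(Z,U;y)]$, with equality achievable by a measurable selection, the equality with $V$ at $Y^*$ forces $\bar Q(u)$ to be a pointwise minimizer a.e. The first-order condition then holds pointwise by ordinary calculus. I would adopt that route, invoking a measurable selection theorem to justify choosing pointwise minimizers.
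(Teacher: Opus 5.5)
Your proposal is correct and follows the paper's proof. For the direct part, both arguments show pointwise minimality of $\bar Q(u)$ from the monotonicity of $T_z$ (the paper writes $\ell(z,u;y)-\ell(z,u;y^*)=2\int_{y^*}^{y}(T_z(s)-T_z(y^*))\,ds$ where you use the sign of the derivative). For the converse, both upgrade global minimality of $Y^*$ to a.e.\ pointwise minimality and then apply the first-order condition; the paper avoids measurable selection, along with the issues of non-attained infima and square-integrability of the selected variable, by replacing $Y^*$ with $q\mathbf 1_A+Y^*\mathbf 1_{A^c}$ for a rational $q$ and a set $A$ of positive probability on which $q$ lowers the Lagrangian by at least $1/n$.

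One correction: $\mathcal A$ is an arbitrary class of admissible pairs, not necessarily closed under scaling or rich enough to detect dependence, so your scaling argument for $\inf\sup=V$ is not available. You do not need it, though: evaluating at $Y^*$ gives $\inf_Y\mathbb E[|X-Y|^2+\tilde\psi(Z)\tilde\varphi(Y)]\le V$ for every $(\tilde\psi,\tilde\varphi)\in\mathcal A$, hence $\sup\inf\le V$ directly.
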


    {\bf Note:} At the population level, the independence condition can be enforced using bounded measurable functions $\psi$ and $\varphi$, which in fact characterize independence and avoid integrability issues. In the theorem above, however, the class $\mathcal A$ may contain unbounded functions. Yet it is not used to define the independence constraint.  It is used as a class of test functions because in simple examples where duality holds the supporting test function may not be bounded. For instance, in GARCH$(1,1)$ processes the natural test function is $\varphi(y)=y^2$. In finite samples the objective is always well-defined. At the population level, unbounded test functions should be interpreted through their pointwise Lagrangians, as in the above theorem, where assumptions guarantee that this pointwise Lagrangian is well-defined as an extended-real functional.

\begin{proof}
    See Appendix \ref{app:proofs}
\end{proof}

The following corollary applies the theorem to our time-dependent setting, where, as before, $(x_t)_{t\in\mathbb Z}$ is a square-integrable stationary process with atomless conditional laws $P(x_t\in\cdot\mid \mathcal{F}_{t-1})$ ($\mathcal{F}_t$ the filtration generated by $x_t$).

In this setting we will say a class $\mathcal A$ of triples $(G,\psi,\varphi)$ is {\em admissible} if $\psi(h_t^G)$ is centered for $h_t^G=G(x_{t-1},h_{t-1}^G)$ the stationary solution to the recurrence, and the pair $(\psi(h_t^G),\varphi)$ is admissible in the sense of Theorem \ref{teorankone} for $h_t^G$, i.e. $\psi(h_t^G)\in L^1_0$, $\varphi$ is differentiable, and $\psi(h_t^G)\varphi(y_t)$ is in $L^1$ whenever $y_t\in L^2$ and $y_t\perp\!\!\!\perp \mathcal F_{t-1}$.

\begin{corollary}[Recurrent criterion for duality]\label{correcurrentduality}
Let $x_t$ be as above and write $\omega$ for a generic element of the underlying probability space. Let $y_t^*=\bar{Q}(U_t)$ be the barycentric Monge solution to (\ref{eqlatent2}) given by Theorem \ref{teobarycenter1d}, where
$U_t:=F_{x_t\mid \mathcal{F}_{t-1}(\omega)}(x_t)$ is $\operatorname{Unif}(0,1)$ and $U_t\perp\!\!\!\perp \mathcal{F}_{t-1}$.
Let
$$Q_\omega(u):=F^{-1}_{x_t\mid \mathcal{F}_{t-1}(\omega)}(u)$$
and $\bar Q(u):=\mathbb E[Q_\omega(u)]$.

Let $\mathcal A$ be an admissible class of triples $(G,\psi,\varphi)$.

Define$D(\omega,u):=Q_\omega(u)-\bar Q(u)$ and set
$$V=V_t:=\mathbb E[|x_t-y_t^*|^2]=\inf_{y_t:\,y_t\perp\!\!\!\perp\mathcal F_{t-1}}\mathbb E[|x_t-y_t|^2]$$

Assume that
$$D(\omega,u)=\psi(h_t^G(\omega))r(\bar Q(u))$$
for some $(G,\psi,\varphi)\in\mathcal A$, where $\varphi'(y)=2r(y)$.

Assume also that, for a.e. $\omega$, the map
$$T_\omega(y):=y+\psi(h_t^G(\omega))r(y)$$
is nondecreasing. Then
$$\sup_{(G,\psi,\varphi)\in\mathcal A}\inf_{y_t}
\mathbb E\left[|x_t-y_t|^2+\psi(h_t^G)\varphi(y_t)\right]=V$$

Conversely, suppose that
$$\sup_{(G,\psi,\varphi)\in\mathcal A}\inf_{y_t}
\mathbb E\left[|x_t-y_t|^2+\psi(h_t^G)\varphi(y_t)\right]=V$$
and suppose that the supinf is realized by a triple $(G,\psi,\varphi)\in\mathcal{A}$. Then
$$D(\omega,u)=\frac12\psi(h_t^G(\omega))\varphi'(\bar Q(u))$$
for a.e. $(\omega,u)$.
\end{corollary}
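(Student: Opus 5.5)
The plan is to reduce the corollary to Theorem \ref{teorankone} applied at a fixed time $t$. The role of $Z$ is played by the full past: either by $\mathcal F_{t-1}$ itself (through a regular conditional probability) or, more concretely, by the one-dimensional encoder $\widehat h_t$ of Theorem \ref{teoopt}, for which $\sigma(\widehat h_t)=\mathcal F_{t-1}$. The first step is to note that, since $\sigma(\widehat h_t)=\mathcal F_{t-1}$, the independence constraints $y_t\perp\!\!\!\perp \mathcal F_{t-1}$ and $y_t\perp\!\!\!\perp\widehat h_t$ coincide. The conditional quantiles then satisfy $Q_\omega=Q_{\widehat h_t(\omega)}$, so $D(\omega,u)$ is a measurable function of $(\widehat h_t,u)$. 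By Theorem \ref{teobarycenter1d} and Observation \ref{obsegen}, $y_t^*=\bar Q(U_t)$ is the barycentric Monge solution for $Z=\widehat h_t$, and $V$ is the common value.

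For the direct implication, fix the triple $(G,\psi,\varphi)$ from the hypothesis. Since $h_t^G$ is $\mathcal F_{t-1}$-measurable (it is the stationary solution of the recurrence, Lemma \ref{lemmaG}), the function $\tilde\psi(\widehat h_t):=\psi(h_t^G)$ is a well-defined measurable function of $\widehat h_t$ by the Doob--Dynkin lemma. Admissibility of the triple gives exactly that $(\tilde\psi,\varphi)$ is an admissible pair for $Z=\widehat h_t$. The factorization $D=\tilde\psi(Z)\,r(\bar Q(u))$ and the monotonicity of $T_\omega$ are then precisely the hypotheses of Theorem \ref{teorankone}, which yields $\inf_{y_t}\mathbb E[|x_t-y_t|^2+\psi(h_t^G)\varphi(y_t)]=V$ for this triple, so the supremum is at least $V$. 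For the reverse bound, take any triple in $\mathcal A$ and test with $y_t=y_t^*$. Because $y_t^*\perp\!\!\!\perp\mathcal F_{t-1}\supseteq\sigma(h_t^G)$, $\psi(h_t^G)$ is centered, and the product is integrable, the penalty term vanishes. Each inner infimum is therefore at most $V$, and the supremum equals $V$.

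For the converse, if the supinf equals $V$ and is realized by $(G,\psi,\varphi)$, the same transfer produces an admissible pair $(\tilde\psi,\varphi)$ for $Z=\widehat h_t$ whose inner infimum equals $V$. Here one should also check that the supremum over the larger class of pairs for $Z$ still equals $V$, which follows from the same upper bound via $y_t^*$. The converse part of Theorem \ref{teorankone} then gives $D(z,u)=\tfrac12\tilde\psi(z)\varphi'(\bar Q(u))$ for a.e. $(z,u)$. Composing with $z=\widehat h_t(\omega)$ gives the claim for a.e. $(\omega,u)$, since the law of $\widehat h_t$ is the pushforward of $P$.

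The main obstacle is the bookkeeping of this transfer rather than any new estimate. Two points need checking. First, "a.e. $(\omega,u)$" must correspond to "a.e. $(z,u)$" under the product of $P$ (or its image) with Lebesgue measure on $[0,1]$, which uses $U_t\perp\!\!\!\perp\mathcal F_{t-1}$. Second, the admissibility and integrability conditions in Theorem \ref{teorankone}, stated for $Y$ independent of $Z$, must match the corollary's conditions, stated for $y_t\perp\!\!\!\perp\mathcal F_{t-1}$. These classes coincide exactly because $\sigma(\widehat h_t)=\mathcal F_{t-1}$, and this is why I would route the argument through Theorem \ref{teoopt} rather than through $h_t^G$ directly. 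With $h_t^G$ alone, independence from $h_t^G$ is weaker than independence from the past, and the quantile factorization is stated in terms of $\mathcal F_{t-1}$.
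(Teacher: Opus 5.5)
Your proposal is correct and is essentially the paper's own argument. The paper proves this statement as the case $k=1$ of Corollary \ref{correcurrentdualityk}, applying Theorem \ref{teorankk} (that is, Theorem \ref{teorankone} when $k=1$) with $X=x_t$ and with the covariate given by the past $\mathcal F_{t-1}$, encoded by some measurable function. Your use of the encoder $\widehat h_t$ from Theorem \ref{teoopt} is one concrete choice of that encoding; any encoding generating $\mathcal F_{t-1}$, such as the whole past path, would also work. Your checks that admissibility, the factorization, monotonicity and the a.e.\ statements transfer (via $\sigma(\widehat h_t)=\mathcal F_{t-1}$ and Doob--Dynkin) supply details the paper leaves implicit.
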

\begin{proof} See the proof of Corollary \ref{correcurrentdualityk}, since this is the special case $k=1$.
\end{proof}

The first observation we can make is that the examples of stochastic processes that we have considered so far satisfy the rank one condition. Indeed for an AR$(q)$ process
$$x_t=\sum_{j=1}^q a_jx_{t-j}+\epsilon_t,
\qquad \epsilon_t\perp\!\!\!\perp \mathcal F_{t-1}$$
write $m_t:=\sum_{j=1}^q a_jx_{t-j}$. Then
$$Q_\omega(u)=m_t(\omega)+Q_\epsilon(u),\qquad \bar Q(u)=\mathbb E[m_t]+Q_\epsilon(u)$$
Hence
$$D(\omega,u)=Q_\omega(u)-\bar Q(u)=m_t(\omega)-\mathbb E[m_t]$$
Thus the rank one representation holds with $h_t=(x_{t-1},\dots,x_{t-q})$, $\psi(h_t)=m_t-\mathbb E[m_t]$, $r(y)=1$ and $\varphi(y)=2y$. The associated map is $T_\omega(y)=y+\psi(h_t(\omega))$, which is increasing.

For a GARCH$(1,1)$ process
$$x_t=\sigma_t\epsilon_t,\qquad
\sigma_t^2=\omega_0+\alpha x_{t-1}^2+\beta\sigma_{t-1}^2,
\qquad \epsilon_t\perp\!\!\!\perp\mathcal F_{t-1},$$
we have
$$Q_\omega(u)=\sigma_t(\omega)Q_\epsilon(u),\qquad\bar Q(u)=\bar\sigma Q_\epsilon(u)$$
Where $\bar\sigma:=\mathbb E[\sigma_t]$.

Therefore
$$D(\omega,u)=(\sigma_t(\omega)-\bar\sigma)Q_\epsilon(u)=\left(\frac{\sigma_t(\omega)}{\bar\sigma}-1\right)\bar Q(u)$$
and the rank one representation holds with
$\psi(h_t)=\frac{\sigma_t}{\bar\sigma}-1$, $r(y)=y$, $\varphi(y)=y^2$

The associated map is
$$T_\omega(y)=y+\left(\frac{\sigma_t(\omega)}{\bar\sigma}-1\right)y=\frac{\sigma_t(\omega)}{\bar\sigma}y$$
This map is increasing because $\sigma_t>0$.

Finally, for the linear Gaussian SSM case, the Kalman filter equations imply that the stationary conditional law of $x_t$ given the past is Gaussian with conditional mean and constant conditional variance given by:
$$x_t\mid\mathcal F_{t-1}\sim \mathcal{N}(m_t, s^2),\qquad s^2=\beta^2\hat{S}_{t\mid t-1}+\tau_\eta^2$$
where $m_t=\hat{x}_{t\mid t-1}$ is $\mathcal F_{t-1}$-measurable and $s^2$ is deterministic. Hence
$$Q_\omega(u)=m_t(\omega)+s\Phi^{-1}(u),\qquad\bar Q(u)=\mathbb E[m_t]+s\Phi^{-1}(u)$$
Therefore
$$D(\omega,u)=m_t(\omega)-\mathbb E[m_t]$$
So the Kalman example has the same rank one structure as the AR$(q)$ case, that is, $\psi(h_t)=m_t-\mathbb E[m_t]$, $r(y)=1$, $\varphi(y)=2y$
with $T_\omega(y)=y+\psi(h_t(\omega))$ increasing.

It follows that the supinf must equal the infsup of (\ref{eqlagr}), so that an algorithm that solves for the supinf should converge to the Monge barycenter. Since for these processes we know the formula of the Monge barycenter, we will use them as a test case for our supinf (or maxmin) solver, which we introduce in the next section.

\section{A recurrent maxmin solver}

Our first attempt was therefore to implement a numerical solver for the supinf (or maxmin) problem
\begin{equation}\label{eq33}\sup_{G,\psi,\varphi}\inf_{y_t}\mathbb E(x_t-y_t)^2+\mathbb E[(\psi(h_t^{(G)})-\bar \psi)\varphi(y_t)]
\end{equation}
via the following algorithm

\begin{algorithm}
\caption{Recurrent Barycentric Maxmin Solver}
\label{algo1}
\begin{algorithmic}
\Require $(x_t)_{1\leq t\leq T}$, $N$
\State Initialize $h_0$, $\lambda$, $(y_t)_{1\leq t\leq T}$, $\varphi_\theta$ and $G_\theta$
\State $n \gets 0$
\For{$n=1\dots N$}
    \State Compute $(z_t)_{1\leq t\leq T}$ using $G_\theta$, $h_0$ and $(x_t)_{1\leq t\leq T}$
    \State $z_t\gets \frac{z_t-\bar{z}}{\operatorname{sd}(z)} $
    \State $L\gets\sum_t \|x_t-y_t\|^2+\lambda\sum_t z_t\varphi_\theta(y_t)$
    \State Update $(y_t)_{1\leq t\leq T}$ to numerical near optimality minimizing $L$
    \State Update $\theta$ and $\lambda$ doing a gradient ascent step wrt $L$
\EndFor

\Return $y_t$
\end{algorithmic}
\end{algorithm}
\noindent
Here $z_t$ is a linear readout of the recurrent function $G_\theta$, and $G_\theta$ and $\varphi_\theta$ were implemented as a Gated Recurrent Unit (GRU) and a MLP with two hidden layers, respectively. The $y_t$ update used LBFGS and $\theta$ was updated using Adam.

A few remarks are in order.

First, as we anticipated, the algorithm implements a solution to the supinf problem, i.e., at each iteration, while a single gradient step is done for the maximizing parameters $\theta,\lambda$, the minimizing variable $y_t$ is updated (approximately) to the optimal response $y_t^*(\theta,\lambda)$. Hence, right before each gradient ascent step, $y_t$ is at the optimal response $y^*(\theta,\lambda)$ and, because of the envelope theorem, the gradient $\nabla_{\theta,\lambda}L$ at $(\theta,\lambda,y_t^*(\theta,\lambda))$ coincides with the gradient $\nabla_{\theta,\lambda}L^*(\theta,\lambda)$ ($L^*(\theta,\lambda)=L(\theta,\lambda,y^*(\theta,\lambda))$) that incorporates the minimization's best response $y^*(\theta,\lambda)$, i.e., there is no need to compute $\nabla_{\theta,\lambda}y^*(\theta,\lambda)$.

Secondly, $\psi(h_t^{(G)})$ in (\ref{eq33}) has been replaced with $\lambda z_t$ in the algorithm. The reason for this is that it reduces non-identifiability. Indeed, as mentioned above, $G_\theta$ is implemented\footnote{The variable $\theta$ denotes all the parameters involved in defining this recurrence.} as a GRU (Gated Recurrent Unit). Given an input $u_t=x_{t-1}$ and a hidden recurrent state $h_{t-1}\in\mathbb R^d$ ($d$ is called the ``hidden dimension''), a GRU defines $h_t\in\mathbb R^d$ by
$$R_t=\sigma(W_Ru_t+U_Rh_{t-1}+b_R)$$
$$S_t=\sigma(W_Su_t+U_Sh_{t-1}+b_S),$$
$$\widetilde h_t=\tanh(W_Hu_t+b_H+R_t\odot(U_Hh_{t-1}+c_H)),$$
and
$$h_t=(1-S_t)\odot \widetilde h_t+S_t\odot h_{t-1},$$
where $\sigma(a)=(1+e^{-a})^{-1}$ is applied componentwise and $\odot$ denotes componentwise multiplication. 

The correlation constraint then involves not $h_t$ itself but a linear readout $z_t=Ah_t+a$. Thus the algorithm searches over recurrent statistics of the past through
$$z_t=A\,G_\theta(x_{t-1},h_{t-1})+a$$
The $z_t$ is then normalized and the multiplier $\lambda$ is there to compensate for the normalization of $z_t$. This separation between the recurrent state $h_t$ and the tested statistic $z_t$ is useful because $h_t$ is not identifiable. Any invertible change of coordinates in the recurrent state can be compensated by changing the recurrence parameters, while the constraint only depends on the scalar quantity whose correlation with a function of $y_t$ is being tested. Thus, the readout $z_t$, after being centered and normalized over the sample, becomes the object that plays the role of $\psi(h_t)$ in the theoretical formulation.

Thirdly, all our results so far have involved expectations, whether of squared error $\mathbb{E}[(x_t-y_t)^2]$ or correlations $\mathbb{E}[\psi(h_t)\varphi(y_t)]$. For an algorithmic implementation we had to replace expectation with averages of finite samples. The convergence of these averages to the corresponding expectation requires an extra assumption that was not needed to establish the results of the previous sections: ergodicity. For the specific examples considered so far (Kalman, GARCH and AR), the conditions for stationarity also imply ergodicity, but in general this is definitely not true.

It should also be noted that while pointwise averages are guaranteed to converge under ergodicity, a true justification of our finite-sample procedure, both for our work and for the general OTBP problem, requires conditions on $x_t$ and the functional spaces that guarantee some form of uniform ergodicity to ensure consistency, i.e. that finite-sample solutions approximate population-level solutions. This is generally absent in the literature, and exceeds the scope of our work too. The methodology would ultimately be tested on empirical grounds.

Our first experiment was to run and test this algorithm with processes that, as we have shown, satisfy the conditions of Corollary \ref{correcurrentduality}: AR, Kalman and GARCH.

Since for these processes we know the closed form formula for the barycenter, the first metric of our test was the squared error between the learned $\widehat y_t$'s at the end of the algorithm and the oracle solution, i.e. the finite-sample forms $y_t^{*,f}$ of the known barycenters $y_t^*$. These are
\begin{align}y_{t}^{*,f}:&=
x_t-\sum_{j=1}^q a_jx_{t-j}+\frac1T\sum_{s=1}^T\sum_{j=1}^q a_jx_{s-j}\approx x_t-\mathbb E[x_t\mid\mathcal F_{t-1}]+\mathbb E[\mathbb E[x_t\mid\mathcal F_{t-1}]]\nonumber\\
&=\epsilon_t+\mathbb E[m_t]
\end{align}
for AR$(q)$, where $m_t:=\sum_{j=1}^q a_jx_{t-j}$,

$$y_{t}^{*,f}:=
\left(\frac1T\sum_{s=1}^T\sigma_s\right)\frac{x_t}{\sigma_t}
\approx
\frac{\mathbb E[\sigma_t]}{\sigma_t}x_t=\mathbb E[\sigma_t]\epsilon_t
$$

for GARCH$(1,1)$, and

$$y_{t}^{*,f}:=x_t-m_t+\frac1T\sum_{s=1}^T m_s\approx x_t-\mathbb E[x_t\mid\mathcal F_{t-1}]+\mathbb E[\mathbb E[x_t\mid\mathcal F_{t-1}]]$$
for scalar Kalman, where $m_t:=\mathbb E[x_t\mid\mathcal F_{t-1}]$

\begin{figure}[htbp]
    \centering

    \begin{subfigure}[b]{0.45\textwidth}
        \centering
         \includegraphics[width=\textwidth]{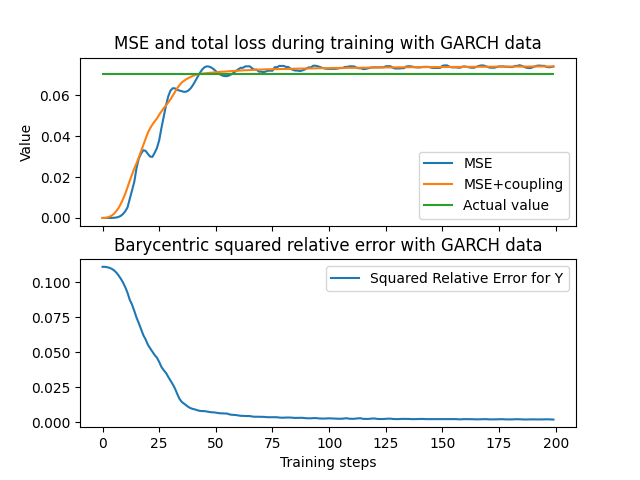}
        \caption{GARCH$(1,1)$}
        \label{fig:subgarch}
    \end{subfigure}
    \hfill
    \begin{subfigure}[b]{0.45\textwidth}
        \centering
        \includegraphics[width=\textwidth]{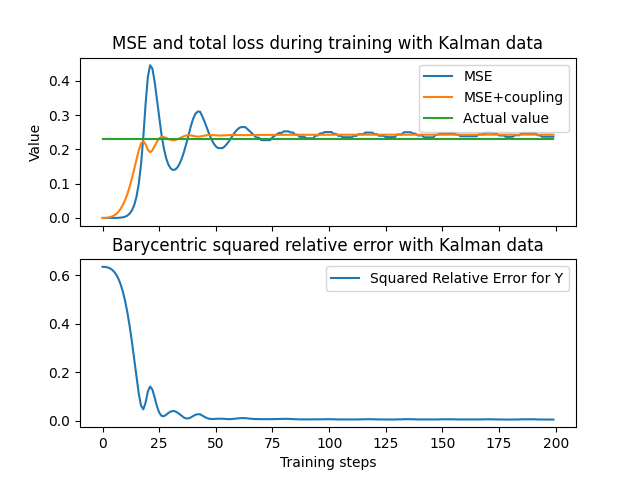}
        \caption{Kalman}
        \label{fig:subkalman}
    \end{subfigure}
    
    \hfill
    \begin{subfigure}[b]{0.45\textwidth}
        \centering
        \includegraphics[width=\textwidth]{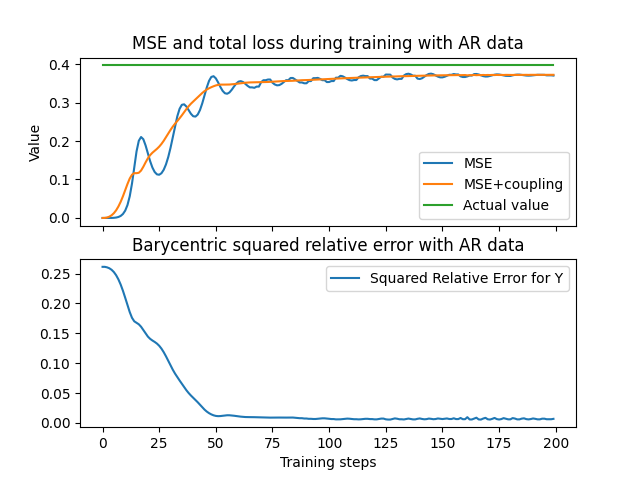}
        \caption{AR$(2)$}
        \label{fig:subar}
    \end{subfigure}
    
    \caption[Maxmin solver performance on synthetic data]{Maxmin solver performance on synthetic data. Plots showing the MSE $T^{-1}\sum_t(x_t-\widehat y_t)^2$ and the squared barycentric relative error $\frac{\sum_t(\widehat y_t-y_t^{*,f})^2}{\sum_t(y_t^{*,f})^2}$ during a run of Algorithm \ref{algo1}. Input data was initialized with a burn-in of $500$ observations to approach stationarity. Input size after burn-in was $4000$. $G_\theta$ was a GRU with hidden dim $d=2$ and linear readout, and $\varphi_\theta$ was a MLP with two hidden layers of dimension 7.  }
    \label{figu1}
\end{figure}

Figure \ref{figu1} shows the barycentric squared relative error
$$SRE_{\widehat y}=\frac{\sum_t(\widehat y_t-y_t^{*,f})^2}{\sum_t(y_t^{*,f})^2}$$ through 200 training steps of Algorithm \ref{algo1} with synthetically generated input data $(x_t)_{t=1\dots T}$ of length $T=4000$ with law a) GARCH$(1,1)$ with parameters $\omega=0.05$, $\alpha=0.18$, $\beta=0.75$ and Gaussian noise, b) Kalman (LG-SSM) with parameters $\alpha=0.8$, $\beta=1$, $\tau_\eta=\tau_\epsilon=0.4$, and c) AR$(2)$ with parameters $\phi_0=0$, $\phi_1=0.5$, $\phi_2=-0.2$ and Gaussian noise.

We also show the evolution of the mean squared error (MSE) $T^{-1}\sum_t(x_t-\widehat y_t)^2$ and compare it with the oracle equilibrium value $V$ for the transport cost $\sum_t(x_t-y_t^{*,f})^2$. The value for the MSE plus the learned penalty term $T^{-1}\sum_t \widehat z_t\hat\varphi_\theta(y_t)$ is also plotted. Table \ref{tab:final_values} summarizes the final values for these runs.

The results show that the algorithm effectively computes the  barycenter for all these processes.

\begin{table}[htbp]
    \centering
    \caption{Summary of final metrics across stochastic models}
    \label{tab:final_values}
    \begin{tabular}{llcccc}
        \toprule
        \textbf{Model} & \textbf{Parameters} & \textbf{Eq. $V$}  & \textbf{MSE} & \textbf{Coupling} & $\frac{\|\widehat y-y^*\|^2}{\|y^*\|^2}$ \\
        \midrule
        Kalman & $\alpha=0.8, \beta=1.0, \tau_\eta=0.4, \tau_\epsilon=0.4$ & $0.2301$  & $0.23728$ & $0.00600$ & $0.0059$ \\
        GARCH & $\omega=0.05, \alpha=0.18, \beta=0.75$ & $0.0702$ & $0.07413$  & $0.00005$ & $0.0019$ \\
        AR$(2)$ & $\phi_1=0.5, \phi_2=-0.2, \sigma^2=1.5$ & $0.3970$  & $0.37017$ & $0.00195$ & $0.0067$ \\
        \bottomrule
    \end{tabular}
\end{table}

\section{Duality failure and extension to higher rank}

 The previous section shows that our algorithm can effectively extract barycentric variables from samples of some well-known processes.

 This algorithm, however, would start to fail as soon as we apply it to input data coming from processes that do not satisfy the conditions of Corollary \ref{correcurrentduality} or for which maxmin duality fails, since the Algorithm solves the supinf problem, and for such processes the supinf value is strictly smaller than the infsup value that the barycenter solves.

 It is not hard to produce an example of a process that will fail to meet the conditions of Corollary \ref{correcurrentduality} and for which duality fails. Consider the following process, which we may call AR--GARCH

 \begin{equation}\label{eqargarch0}x_{t}=\phi_1 x_{t-1}+\phi_2 x_{t-2}+\eta_t,\qquad \eta_t=\sigma_t\epsilon_t,\qquad \sigma_t^2=\omega+\alpha\eta_{t-1}^2+\beta\sigma_{t-1}^2\quad (\alpha,\beta\geq 0)
 \end{equation}
where $\epsilon_t$ is an iid white noise independent of the past, and $|\phi_1|+|\phi_2|<1$, $\alpha+\beta<1$ to ensure a stationary solution exists.

To see that this process cannot have a rank one representation  $D(\omega,u)=f(\omega)g(u)$ for the quantile displacement, write the conditional quantile as
$$Q_\omega(u)=m_t(\omega)+\sigma_t(\omega)Q_\epsilon(u)$$
where
$m_t:=\phi_1x_{t-1}+\phi_2x_{t-2}$ and $Q_\epsilon$ is the quantile function of $\epsilon_t$. The centered conditional quantile displacement is therefore
\begin{equation}\label{eqargarch}D(\omega,u)=Q_\omega(u)-\bar Q(u)=(m_t(\omega)-\bar m)+(\sigma_t(\omega)-\bar\sigma)Q_\epsilon(u)
\end{equation}
where $\bar{Q}(u)=\mathbb{E}[m_t]+\mathbb{E}[\sigma_t] Q_\epsilon(u)$.

This generically cannot have a representation $D(\omega,u)=f(\omega)g(u)$. Indeed, if it did, then by (\ref{eqargarch}) the two past-measurable coefficients
$m_t-\bar m$
 and $\sigma_t-\bar\sigma$ would have to be in $\operatorname{span}(f)$, since $\sigma_t-\bar\sigma=f\frac{g(u_1)-g(u_2)}{Q_\epsilon(u_1)-Q_\epsilon(u_2)}$ for fixed $u_1, u_2$ such that $Q_\epsilon(u_1)\neq Q_\epsilon(u_2)$, and $m_t-\bar m=fg(u_1)-Q_\epsilon(u_1)(\sigma_t-\bar\sigma)$.

In other words, there are constants $a,b$, not both zero, such that
$$a(m_t-\bar m)+b(\sigma_t-\bar\sigma)=0
\qquad\text{a.s.}$$
But in the AR--GARCH model $m_t$ depends linearly on the past values $x_{t-1},x_{t-2}$, while $\sigma_t$  depends on the past squared innovations through $\sigma_t^2=\omega+\alpha\eta_{t-1}^2+\beta\sigma_{t-1}^2$. These cannot be linearly dependent random variables (except in degenerate  cases). Thus $D$ cannot have a rank one representation.

At the practical level, an algorithm like Algorithm \ref{algo1} that is a finite-sample solver for the population-level problem
$$\sup_{\psi,\varphi}\inf_{y_t}\mathbb E(x_t-y_t)^2+\mathbb E[(\psi(m_t,\sigma_t)-\bar \psi)\varphi(y_t)]$$
will fail to compute the barycenter $y_t^*=\bar m + \bar\sigma\epsilon_t$ and will return an inverse map of the restricted form
$$x_t=y_t+\frac{1}{2}(\psi(m_t,\sigma_t)-\bar \psi)\varphi'(y_t)$$

As a consequence, if the supremum in the maxmin problem is attained then it cannot equal the barycentric infsup value. Indeed, let
$$f(\omega):=\psi(m_t(\omega),\sigma_t(\omega))-\bar\psi$$
If the population-level maxmin problem attained the same value as the infsup problem, then the barycentric solution $y_t^*=\bar m+\bar\sigma\epsilon_t$ would have to be a minimizer of the inner maxmin problem for some choice of $f$ and $\varphi$, since $y_t^*$ being independent of the past implies the constraint term vanishes at $y_t^*$. Therefore equality of the maxmin and infsup values would force $y_t^*$ to satisfy 
$$2(y_t^*-x_t)+f(\omega)\varphi'(y_t^*)=0$$
Since $x_t-y_t^*=(m_t-\bar m)+(\sigma_t-\bar\sigma)\epsilon_t$, this gives 
$$f(\omega)\varphi'(\bar m+\bar\sigma\epsilon_t)=
2[(m_t-\bar m)+(\sigma_t-\bar\sigma)\epsilon_t]$$
The LHS is the product of a measurable function of the past and a function of the noise $\epsilon_t$. Evaluating this equation at two distinct values of $\epsilon_t$\footnote{In a subset of full $P_\epsilon$-measure of the essential support of $\epsilon_t$}, we get that both $m_t-\bar m$ and $\sigma_t-\bar\sigma$ must be in the span of $f$, and, hence, linearly dependent. As shown above, this is false for the AR--GARCH model. It follows that an attained maxmin value cannot equal the infsup value for the AR--GARCH process.

On the other hand, it is evident that for the AR--GARCH process the quantile discrepancy $D$ does have a rank two representation of the form 
$$D(\omega,u)=f_1(\omega)g_1(u)+f_2(\omega)g_2(u)$$
as equation (\ref{eqargarch}) shows. This suggests a natural substitute for Algorithm \ref{algo1} that is suitable for any scenario where $D$ has a rank 2 or, more generally, a rank $k$ representation for some finite $k$, i.e.
$$D(\omega,u)=f_1(\omega)g_1(u)+\dots+f_k(\omega)g_k(u),$$

This algorithm should be a finite-sample solver for the population-level problem
\begin{equation}\label{supinfk}\sup_{G,\{\psi^{(i)},\varphi^{(i)}\}_{i=1}^k}\inf_{y_t}\mathbb E(x_t-y_t)^2+\sum_{i=1}^k\mathbb E[(\psi^{(i)}(h_t^{(G)})-\bar \psi^{(i)})\varphi^{(i)}(y_t)]
\end{equation}

Indeed, a straightforward extension of Theorem \ref{teorankone} and Corollary \ref{correcurrentduality} shows that for such processes, with the usual regularity assumptions, the value of (\ref{supinfk}) equals the corresponding $\inf\sup$ value, whose unique solution for $y_t$ is the Monge representative of the barycentric distribution.

For the extension of Theorem  \ref{teorankone} we will write
$$\psi=(\psi^{(1)},\ldots,\psi^{(k)}),\qquad
\varphi=(\varphi^{(1)},\ldots,\varphi^{(k)})
$$
for a pair of vector-valued test functions and call the pair
{\em admissible} if $\psi^{(i)}(Z)$ is in $L^1_0$, $\varphi^{(i)}$ is differentiable and $\psi^{(i)}(Z)\varphi^{(i)}(Y)$ is in $L^1$ whenever a square-integrable $Y$ is independent of $Z$, for every $i=1,\ldots,k$.

\begin{theorem}[Criterion for rank $k$ duality]\label{teorankk}
Fix an integer $k\geq 1$. Let $(X,Z)$ be a pair of random variables with $X\in L^2(\mathbb R)$, and assume that the conditional laws $P(X\in\cdot\mid Z=z)$ are atomless.

Let $Q_z(u)$, $\bar Q(u)$, $D(z,u)$, $U$, $Y^*$ and $V$ be as in Theorem \ref{teorankone}, and let $\mathcal{A}_k$ be a class of admissible pairs of vector-valued test functions. 

Assume that $D$ has a representation
$$D(z,u)=\sum_{i=1}^k\psi^{(i)}(z)r_i(\bar Q(u))$$
for some $(\psi,\varphi)\in\mathcal{A}_k$, where
$(\varphi^{(i)})'(y)=2r_i(y)$ for $i=1,\ldots,k$. Assume moreover that, for a.e. $z$, the map
$T_z(y):=y+\sum_{i=1}^k\psi^{(i)}(z)r_i(y)$
is nondecreasing.

Then the pointwise Lagrangian
$$\ell_{\psi,\varphi}(z,u;y):=|Q_z(u)-y|^2+\sum_{i=1}^k\psi^{(i)}(z)\varphi^{(i)}(y)$$
is well-defined from below as an extended-real functional, and
$$\inf_Y\mathbb E[\ell_{\psi,\varphi}(Z,U;Y)]=V$$
Consequently,
$$\sup_{(\psi,\varphi)\in\mathcal A_k}\inf_Y\mathbb{E}\left[|X-Y|^2+\sum_{i=1}^k\psi^{(i)}(Z)\varphi^{(i)}(Y)\right]=V$$

Conversely, suppose that
$$\sup_{(\psi,\varphi)\in\mathcal A_k}\inf_Y
\mathbb E\left[|X-Y|^2+\sum_{i=1}^k\psi^{(i)}(Z)\varphi^{(i)}(Y)\right]=V$$
and suppose that the supinf is realized by an admissible pair $(\psi,\varphi)$. Then
$$D(z,u)=\frac{1}{2}\sum_{i=1}^k\psi^{(i)}(z)(\varphi^{(i)})'(\bar Q(u))$$
for a.e. $(z,u)$.
\end{theorem}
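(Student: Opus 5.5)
The plan is to mirror the proof of Theorem \ref{teorankone}, replacing the scalar product $\psi(z)r(y)$ by the finite sum. Set
\[
\Phi_z(y):=\sum_{i=1}^k\psi^{(i)}(z)\varphi^{(i)}(y),\qquad R_z(y):=\tfrac12\partial_y\Phi_z(y)=\sum_{i=1}^k\psi^{(i)}(z)r_i(y).
\]
Then $T_z(y)=y+R_z(y)$ is nondecreasing by hypothesis, and the representation of $D$ says precisely that $Q_z(u)=\bar Q(u)+R_z(\bar Q(u))=T_z(\bar Q(u))$.

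\textbf{Direct part.} The key observation is pointwise convexity. Write
\[
\ell(z,u;y)=|Q_z(u)-y|^2+\Phi_z(y),\qquad \tfrac12\partial_y\ell=y+R_z(y)-Q_z(u)=T_z(y)-Q_z(u).
\]
Since $T_z$ is nondecreasing, $y\mapsto\ell(z,u;y)$ is convex, and its derivative vanishes at $y=\bar Q(u)$ because $T_z(\bar Q(u))=Q_z(u)$. Hence, pointwise in $(z,u)$,
\[
\ell(z,u;y)\ \ge\ \ell(z,u;\bar Q(u))=|D(z,u)|^2+\Phi_z(\bar Q(u)).
\]
This lower bound is a function of $(z,u)$. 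Its integrability from below follows from $X\in L^2$ and admissibility: $\bar Q(U)=Y^*$ is square integrable and independent of $Z$. This is exactly the "well-defined from below" claim. Taking expectations over $(Z,U)$, with $U\perp\!\!\!\perp Z$, gives
\[
\inf_Y \mathbb E[\ell]\ \ge\ \mathbb E|X-Y^*|^2+\sum_i\mathbb E[\psi^{(i)}(Z)]\,\mathbb E[\varphi^{(i)}(Y^*)]=V,
\]
since each $\psi^{(i)}(Z)$ is centered. The choice $Y=Y^*$ attains this, so $\inf_Y\mathbb E[\ell]=V$. For the "consequently" clause, weak duality gives $\sup\inf\le\inf\sup=V$, because any $Y$ independent of $Z$ kills the penalty terms. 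The pair exhibited above achieves $V$, so equality holds.

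\textbf{Converse.} Suppose the supinf equals $V$ and is realized by $(\psi,\varphi)$, so $\inf_Y\mathbb E[\ell_{\psi,\varphi}]=V$. Since $Y^*\perp\!\!\!\perp Z$, the penalty vanishes at $Y^*$ and $\mathbb E[\ell(Y^*)]=V$; thus $Y^*$ is a minimizer of the inner problem. Consider perturbations $Y^*+s\,\eta(Z,U)$ for bounded measurable $\eta$ (or $\eta=\eta(X,Z)$, since $Q_Z(U)=X$). Minimality at $s=0$ together with differentiability of the $\varphi^{(i)}$ gives the first-order condition
\[
\mathbb E\Big[\eta\cdot\big(2(Y^*-X)+\textstyle\sum_i\psi^{(i)}(Z)(\varphi^{(i)})'(Y^*)\big)\Big]=0\quad\text{for all such }\eta,
\]
so $2(\bar Q(U)-Q_Z(U))+\sum_i\psi^{(i)}(Z)(\varphi^{(i)})'(\bar Q(U))=0$ almost surely. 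Since $(Z,U)$ has product law, this is exactly the claimed identity for $D$ for a.e.\ $(z,u)$.

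\textbf{Main obstacle.} The delicate step is justifying differentiation under the expectation when the $\varphi^{(i)}$ are unbounded. I would first restrict $\eta$ to bounded functions supported where $|\bar Q(U)|\le M$ and $|\psi^{(i)}(Z)|\le M$, and use local boundedness of the $(\varphi^{(i)})'$ there via the mean value theorem and dominated convergence. I would then let $M\to\infty$. This truncation is the same device as in the rank one case, so I expect it to go through verbatim once the sum over $i$ is handled termwise.
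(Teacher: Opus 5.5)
Your direct part is the paper's argument. The paper writes $\ell_{\psi,\varphi}(z,u;y)-\ell_{\psi,\varphi}(z,u;\bar Q(u))=2\int_{\bar Q(u)}^{y}\bigl(T_z(s)-T_z(\bar Q(u))\bigr)\,ds\geq 0$, which is your observation that the derivative is monotone and vanishes at $\bar Q(u)$. The remaining steps also coincide: integrability of $\ell(Z,U;Y^*)$ from admissibility, centering of $\psi^{(i)}(Z)$, and plugging $Y^*$ into an arbitrary pair for the upper bound.

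The gap is in the converse. Admissibility only says each $\varphi^{(i)}$ is differentiable, and a differentiable function need not have a locally bounded derivative. For example, $y^2\sin(1/y^2)$ has a derivative that behaves like $1/|y|$ near $0$. On your region $\{|\bar Q(U)|\le M,\ |\psi^{(i)}(Z)|\le M\}$ the mean value theorem therefore gives no uniform bound on $s^{-1}\bigl(\varphi^{(i)}(Y^*+s\eta)-\varphi^{(i)}(Y^*)\bigr)$, so dominated convergence is unavailable. Worse, $(\varphi^{(i)})'(Y^*)$ need not even be integrable there, so your first-order identity $\mathbb E[\eta\cdot(\cdots)]=0$ may not be defined. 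The step ``differentiate under the expectation'' fails as stated. It is also not what the paper does for $k=1$: the paper's proof, which covers $k=1$, never differentiates under the integral.

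The paper proves pointwise global minimality first. Suppose $\bar Q(u)$ is not a minimizer of $\ell_{\psi,\varphi}(z,u;\cdot)$ on a set of positive measure. Continuity in $y$ then gives $q\in\mathbb Q$, $n\in\mathbb N$ and a set $A$ with $\mathbb P(A)>0$ on which $\ell(Z,U;q)\le\ell(Z,U;\bar Q(U))-1/n$. The competitor $\widetilde Y=q\mathbf 1_A+Y^*\mathbf 1_{A^c}$ then strictly lowers the expected Lagrangian, contradicting minimality of $Y^*$. Integrability on $A$ uses only $X\in L^2$, $\psi^{(i)}(Z)\in L^1$, and the fact that $\varphi^{(i)}(q)$ is a constant. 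Differentiability of $\ell(z,u;\cdot)$ at its pointwise minimizer then yields the identity for $D$ directly, with no domination issues.

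If you want to keep the variational route, truncate on sets where the difference quotients themselves are bounded. Set $g_s(z,u)=s^{-1}\bigl(\ell(z,u;\bar Q(u)+s)-\ell(z,u;\bar Q(u))\bigr)$ and $B_m=\{\sup_{0<|s|\le 1/m,\,s\in\mathbb Q}|g_s|\le m\}$. The sets $B_m$ increase to full measure because $g_s$ converges pointwise as $s\to 0$. Taking $\eta=\pm\mathbf 1_{B_m\cap C}$ for arbitrary measurable $C$, bounded convergence then goes through.
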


\begin{proof}
See Appendix \ref{app:proofs}.
\end{proof}

As in the rank one case, the following corollary applies the theorem to our time-dependent setting, where, as before, $(x_t)_{t\in\mathbb Z}$ is a square-integrable stationary process with atomless conditional laws $P(x_t\in\cdot\mid \mathcal{F}_{t-1})$.

In this setting we will say a class $\mathcal A_k$ of triples $(G,\psi,\varphi)$ (where $\psi=(\psi^{(1)},\ldots,\psi^{(k)})$ and $\varphi=(\varphi^{(1)},\ldots,\varphi^{(k)})$)
is {\em admissible} if each $\psi^{(i)}(h_t^G)$ is centered for $h_t^G$, and the pair $(\psi(h_t^G),\varphi)$ is admissible in the sense of Theorem \ref{teorankk} for $h_t^G$.

\begin{corollary}[Recurrent criterion for rank $k$ duality]\label{correcurrentdualityk}
Fix an integer $k\geq1$. Let $x_t$ be as above and write $\omega$ for a generic element of the underlying probability space. Let $y_t^*$, $U_t=F_{x_t\mid \mathcal{F}_{t-1}(\omega)}(x_t)$, $Q_\omega(u)$, $\bar Q(u)$, $D(\omega,u)$ and $V=V_{t}$ be as in Corollary \ref{correcurrentduality}.

Assume that
$$D(\omega,u)=\sum_{i=1}^k\psi^{(i)}(h_t^G(\omega))r_i(\bar Q(u))$$
for some $(G,\psi,\varphi)\in\mathcal A_k$, where
$$(\varphi^{(i)})'(y)=2r_i(y),\qquad i=1,\ldots,k.$$

Assume also that, for a.e. $\omega$, the map
$$T_\omega(y):=y+\sum_{i=1}^k\psi^{(i)}(h_t^G(\omega))r_i(y)$$
is nondecreasing. Then
$$\sup_{(G,\psi,\varphi)\in\mathcal A_k}\inf_{y_t}
\mathbb E\left[|x_t-y_t|^2+\sum_{i=1}^k\psi^{(i)}(h_t^G)\varphi^{(i)}(y_t)\right]=V$$

Conversely, suppose that
$$\sup_{(G,\psi,\varphi)\in\mathcal A_k}\inf_{y_t}
\mathbb E\left[|x_t-y_t|^2+\sum_{i=1}^k\psi^{(i)}(h_t^G)\varphi^{(i)}(y_t)\right]=V$$
and suppose that the supinf is realized by a triple $(G,\psi,\varphi)\in\mathcal{A}_k$. Then
$$D(\omega,u)=\frac12\sum_{i=1}^k\psi^{(i)}(h_t^G(\omega))(\varphi^{(i)})'(\bar Q(u))$$
for a.e. $(\omega,u)$.
\end{corollary}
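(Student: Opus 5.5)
The plan is to reduce Corollary \ref{correcurrentdualityk} to Theorem \ref{teorankk} by taking the covariate $Z$ to be the whole observed past. Concretely, I would set $X:=x_t$ and let $Z$ be a random element generating $\mathcal F_{t-1}$, for instance the sequence $x_{<t}$, which is a random variable in a standard Borel space. By Theorem \ref{teoopt} one may also take $Z=\widehat h_t$ with $\sigma(\widehat h_t)=\mathcal F_{t-1}$. Then $P(X\in\cdot\mid Z)=P(x_t\in\cdot\mid\mathcal F_{t-1})$ is atomless by hypothesis. The objects $Q_z$, $\bar Q$, $D$, $U$, $Y^*$ and $V$ of Theorem \ref{teorankk} coincide with $Q_\omega$, $\bar Q$, $D(\omega,u)$, $U_t$, $y^*_t$ and $V_t$ once $\omega$ is identified with $z=Z(\omega)$. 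In particular, by Theorem \ref{teobarycenter1d}, $V_t=\inf_{y_t\perp\!\!\!\perp\mathcal F_{t-1}}\mathbb E|x_t-y_t|^2$ is exactly the barycentric value for this $Z$.

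For the direct part, fix the triple $(G,\psi,\varphi)\in\mathcal A_k$ from the hypothesis. Since $h_t^G$ is $\mathcal F_{t-1}$-measurable (Lemma \ref{lemmaG}), we can write $h_t^G=H(Z)$ for a measurable $H$ by Doob--Dynkin. Set $\tilde\psi^{(i)}:=\psi^{(i)}\circ H$. The assumed representation of $D$ and the monotonicity of $T_\omega$ then become exactly the hypotheses of Theorem \ref{teorankk} for $(\tilde\psi,\varphi)$. Admissibility of $(\tilde\psi,\varphi)$ in the sense of Theorem \ref{teorankk} is the definition of $\mathcal A_k$: centering holds, and integrability of $\tilde\psi^{(i)}(Z)\varphi^{(i)}(Y)$ holds for $Y\perp\!\!\!\perp Z$, which is the same as $Y\perp\!\!\!\perp\mathcal F_{t-1}$. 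Theorem \ref{teorankk} then gives
\[
\inf_{y_t}\mathbb E\Big[|x_t-y_t|^2+\sum_i\psi^{(i)}(h^G_t)\varphi^{(i)}(y_t)\Big]=V,
\]
so the supremum over $\mathcal A_k$ is at least $V$. For the upper bound, evaluate at $y_t=y_t^*$. Because $y^*_t\perp\!\!\!\perp\mathcal F_{t-1}$ and each $\psi^{(i)}(h^G_t)$ is centered and $\mathcal F_{t-1}$-measurable, each penalty term factorizes and vanishes, so every inner infimum is at most $V$. Together these give equality.

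For the converse, take the realizing triple $(G,\psi,\varphi)$. Transport it to $(\tilde\psi,\varphi)$ as above and apply the converse half of Theorem \ref{teorankk}. That half requires the supinf over the pulled-back class to equal $V$ and to be realized by $(\tilde\psi,\varphi)$. The pulled-back class is $\{(\psi\circ H_G,\varphi)\}$, which is a subclass of the admissible pairs for $Z$. Realization by this pair gives $\inf_Y\mathbb E[\ell_{\tilde\psi,\varphi}]=V$, and this is all the converse argument actually uses: the first-order condition at the minimizer $Y^*$ yields $D=\tfrac12\sum\tilde\psi^{(i)}(\varphi^{(i)})'(\bar Q)$. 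Substituting back gives the stated identity a.e. in $(\omega,u)$.

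The main obstacle is this last bookkeeping step. Theorem \ref{teorankk} is phrased with a supremum over a fixed class $\mathcal A_k$, while the recurrent class varies with $G$. I would handle it by checking in the proof of Theorem \ref{teorankk} that the converse only uses the single-pair statement $\inf_Y\mathbb E[\ell_{\psi,\varphi}]=V$. I would also check that the change of variables $\omega\mapsto Z(\omega)$ preserves the a.e. statements, since $\sigma$-algebras are taken mod 0. Everything else is a direct substitution.
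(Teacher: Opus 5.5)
Your proposal is correct and takes the same route as the paper: the paper likewise applies Theorem~\ref{teorankk} with $X=x_t$ and a covariate encoding $\mathcal F_{t-1}$, and transfers the rank $k$ representation and its converse directly. Your Doob--Dynkin pullback $\psi\circ H$ and your matching of the two admissibility notions simply make that reduction explicit. The obstacle you flag in the converse is also milder than you expect. Theorem~\ref{teorankk} allows an arbitrary class of admissible pairs, so you can apply it verbatim to the image class $\{(\psi\circ H_G,\varphi):(G,\psi,\varphi)\in\mathcal A_k\}$, whose supinf value and realizer coincide with those of the recurrent problem, without reopening its proof.
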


\begin{proof}
Apply Theorem \ref{teorankk} with $X=x_t$ and with the covariate given by the past $\mathcal F_{t-1}$, which can always be encoded by some measurable function. The conditional quantile is
$$Q_\omega(u)=F^{-1}_{x_t\mid\mathcal F_{t-1}(\omega)}(u),$$
and the barycentric Monge solution is $y_t^*=\bar Q(U_t)$.

Thus the assumption
$$D(\omega,u)=\sum_{i=1}^k\psi^{(i)}(h_t^G(\omega))r_i(\bar Q(u))$$
corresponds to the rank $k$ representation assumption in Theorem \ref{teorankk}.

The converse follows from the converse part of Theorem \ref{teorankk}.
\end{proof}

Having proved the necessary extension of the duality characterization for processes with higher rank $k$, we are now ready to introduce the solver based on it. 

\begin{algorithm}
\caption{Recurrent Barycentric Maxmin Solver with $k$ channels}
\label{algo2}
\begin{algorithmic}
\Require $(x_t)_{1\leq t\leq T}$, $N$, $k$
\State Initialize $\{\lambda_i\}_{i=1}^k, h_0\in\mathbb{R}^d$, $(y_t)_{1\leq t\leq T}$, $\varphi^{(1)}_\theta,\dots,\varphi^{(k)}_\theta$ and $G_\theta$
\State $n \gets 0$
\For{$n=1\dots N$}
    \State Compute $(z_t)_{1\leq t\leq T}\subseteq\mathbb{R}^k$ using $G_\theta$, $h_0$ and $(x_t)_{1\leq t\leq T}$
    \State $z_t^{(i)}\gets \frac{z_t^{(i)}-\bar{z}^{(i)}}{\operatorname{sd}(z^{(i)})} $ for $i=1\dots k$
    \State $L\gets\sum_t \|x_t-y_t\|^2+\sum_{i=1}^k\lambda_i\sum_t z_t^{(i)}\varphi_\theta^{(i)}(y_t)$
    \State Update $(y_t)_{1\leq t\leq T}$ to numerical near optimality minimizing $L$
    \State Update $\theta,\{\lambda_i\}_{i=1}^k$ doing a gradient ascent step wrt $L$
\EndFor

\Return $(y_t)_{1\leq t\leq T}$
\end{algorithmic}
\end{algorithm}
\noindent

Here, as before, $G_\theta$ is implemented as a GRU and coupled with a readout $z_t=Ah_t+a$ with $A\in\mathbb{R}^{k\times d}$ and $a\in\mathbb{R}^k$ so that $z_t\in\mathbb{R}^k$. For $\varphi$ we tried two implementations, one uses a MLP with two hidden layers, the other is monomial and fixes $\varphi^{(k)}(y)=y^k$.

Figure \ref{figu2} shows two performance metrics ($\text{MSE}/V$ and SRE of the learned barycenter $\widehat y_t$) for 4 typical runs of Algorithm \ref{algo2} with neural/MLP $\varphi$ on synthetic input data generated with the AR--GARCH equations (\ref{eqargarch0}) with parameters $\phi_1=0.5,\phi_2=-0.2,\omega=0.05,\alpha=0.18,\beta=0.75$ . Notice that for the AR--GARCH model the oracle finite-sample barycenter is $$y_t^{*,f}=\phi_1\bar x_{-1}+\phi_2\bar x_{-2}+\left(\frac{1}{T}\sum_{s=1}^T\sigma_s\right)\frac{\eta_t}{\sigma_t}.$$
We ran the algorithm on the same seed to test and compare the two-channel ($k=2$) setup, where two pairs of test functions or channels, $(z_t^{(1)},\varphi^{(1)})$ and $(z_t^{(2)},\varphi^{(2)})$, are used, against $k=1$, which is equivalent to the setup of Algorithm \ref{algo1}. For both setups we considered two possibilities for the hidden dimension of the GRU, $d=2$ and $d=4$. The plots suggest that for $k=2$ the GRU with hidden dimension 2 is not expressive enough to find the optimal pair of test functions $z_t^{(1)},z_t^{(2)}$, which should capture both the mean, $m_t$, and scale, $\sigma_t$, components. Its performance is therefore close to that of the $k=1$ algorithm, whose optimization equals that of Algorithm \ref{algo1}. Because $k=1$ optimizes with the wrong penalty, the barycentric variables are not forced to be independent of both the location and scale components of the past, and the transport cost ends up below the barycentric value, while the squared relative error against the oracle barycenter $y_t^{*,f}$ is of order $10^{-1}$ at the end of the simulation. On the other hand, when $d=4$ and $k=2$, the algorithm has both enough expressivity ($d=4$) and channel capacity ($k=2$) to be able to represent both components of the rank-two displacement, and the squared relative error falls below $10^{-2}$. Notice that when $d=4$ and $k=1$ there is no improvement in performance, so that expressivity alone, without channel capacity, cannot alter the intrinsic limitations of Algorithm \ref{algo1}. The final values for these runs are presented in Table \ref{tab:argarch}.

\begin{figure}[htbp]
        \centering
         \includegraphics[width=0.65\textwidth]{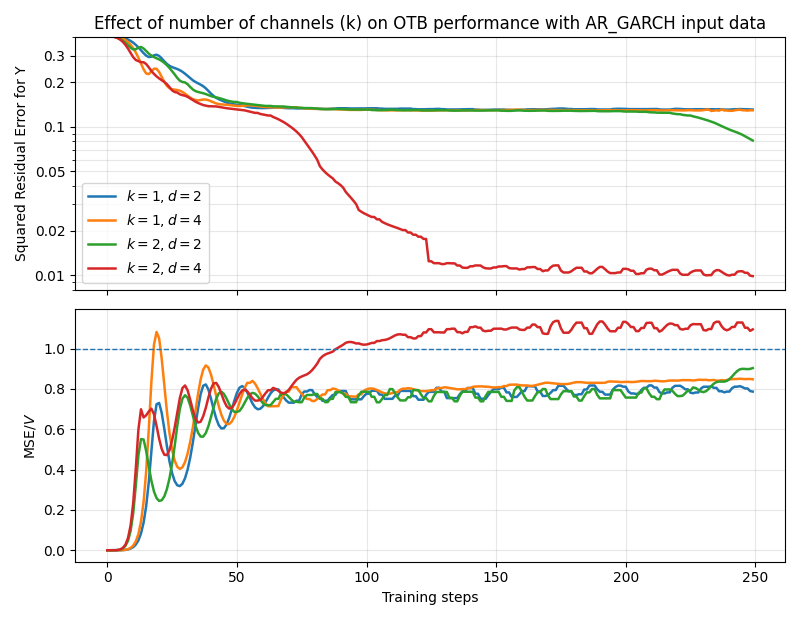}
    \hfill
    \caption[One and two-channel Maxmin solver with AR--GARCH data]{One and two-channel Maxmin solver with AR--GARCH data. Plot of the MSE/V $\frac{\sum_t(x_t-\widehat y_t)^2}{\sum_t(x_t-y_t^*)^2}$ and the squared barycentric relative error $\frac{\sum_t(\widehat y_t-y_t^{*,f})^2}{\sum_t(y_t^{*,f})^2}$ during a 250-step run of Algorithm \ref{algo2} for different numbers of channels, $k=1$ and $k=2$. The same input path of size $T=4000$ generated by the AR--GARCH model was used for all cases. $\varphi_\theta$ was a MLP with two hidden layers of dimension 7. Notice the vertical axis for the SRE plot has a log-scale. }
    \label{figu2}
\end{figure}
 
\begin{table}[ht]
\centering
\begin{tabular}{c c c c c}
\hline
$k$ & $d$ & Final $\|\widehat y-y^*\|^2/\|y^*\|^2$ & Best $\|\widehat y-y^*\|^2/\|y^*\|^2$ & Final $\mathrm{MSE}/V$ \\
\hline
1 & 2 & 0.1306 & 0.1295 & 0.7867 \\
1 & 4 & 0.1290 & 0.1277 & 0.8477 \\
2 & 2 & 0.0807 & 0.0807 & 0.9034 \\
2 & 4 & 0.0099 & 0.0099 & 1.0952 \\
\hline
\end{tabular}
\caption[Experiment comparing one-channel ($k$=1) and two-channel ($k=2$) recurrent solvers on AR--GARCH input data]{Experiment comparing one-channel ($k$=1) and two-channel ($k=2$) recurrent solvers on AR--GARCH input data. Parameters were $\phi_1=0.5,\phi_2=-0.2,\omega=0.05,\alpha=0.18,\beta=0.75$. The oracle value is $V=0.2517$. The runs with $k=1$ channel remain far from the oracle barycentric $y_t^*$, while the solver with $k=2$ channels $d=4$ approximates it much more accurately.}
\label{tab:argarch}
\end{table}
 
Of course not all processes will have a rank $k$ representation for some finite $k$. For such processes, the supinf problem (\ref{supinfk}) will not coincide with the $\inf\sup$ barycentric problem.

The following simple model, sometimes used to model volatility of financial returns, is generally not expected to admit a rank $k$ representation for any $k$. Let
\begin{align}
    r_t&=e^{(h_t/2)}\epsilon_t\nonumber\\
    h_{t+1}&=\mu+\phi(h_t-\mu)+\eta_t
\end{align}
where $r_t$ is the observed process, $h_t$ the unobserved state, $\phi$ and $\mu$ are constants and $\epsilon_t$ and $\eta_t$ are standard Gaussian iid noise independent of each other\footnote{Notice that, as in the GARCH model, returns are modeled as the product of an iid noise $\epsilon_t$ and a latent volatility. Unlike the GARCH model, however, this latent volatility $e^{h_t/2}$ depends both on its past value and on an independent noise $\eta_t$. This makes it a state space model like the Kalman system.}. The model can be linearized by defining $\gamma=\mathbb{E}[\log\epsilon_t^2]$ and $x_t=\log r_t^2=h_t+\log\epsilon_t^2$, and centered to obtain  
\begin{align}
     x_t^c&=h_t^c+\zeta_t\nonumber\\
    h_{t+1}^c&=\phi h_t^c+\eta_t
\end{align}
where $\zeta_t=\log\epsilon_t^2-\gamma$ and $h^c_t=h_t-\mu$.

We thus arrive at a linear state space model. Unlike the Kalman system, however, it is not Gaussian, since $\zeta_t$ is not Gaussian. It is in fact highly skewed, and this affects the computation of the posterior distribution $P(h_t\mid x_{\leq t})$, which, unlike the Kalman posterior, is not Gaussian and cannot even be confined to any finite-dimensional manifold in the space of probability distributions. More generally, exact finite-dimensional filters are known to exist only for special classes of nonlinear state space models, so that the posterior distribution in a general nonlinear or non-Gaussian model must be treated as an infinite-dimensional, measure-valued state.

 However, in such cases Algorithm \ref{algo2} with sufficient capacity $k$ can still be numerically effective to approximate the barycentric solution. A maximizing player with enough channel capacity (equivalently, enough test functions $\psi^{(i)},\varphi^{(i)}$) should be able force the $y$ variable to be `nearly-independent' of the past.

\section{Predictive sampling: the Recurrent Barycentric Encoder}

\subsection{Construction of the RBE from the learned recurrence and transport map}

We are now ready to introduce the main application of the previous construction. As we have seen, the a.s. atomlessness of $P(\cdot\mid z)$ \footnote{This condition has to be strengthened to absolute continuity for $x$ of dimension 2 or higher.} not only guarantees the existence of the Monge map $T_z:=T(\cdot,z)$ that solves the OTBP but also its a.s. invertibility. We can therefore use the inverse of this map to pull-back the barycentric distribution to the conditional distribution $P(x\in \cdot\mid z)$. For our time-dependent version of the OTBP, $z$ is replaced by some variable encoding the past $x_{\le t}$, and the conditional distribution that we will sample is $P(x_{t+1}\mid x_{\le t})$, i.e. we will sample the predictive distribution of the next observation.

Computing the inverse map $T_{x_{\le t}}^{-1}$ is relatively straightforward from our chosen algorithm. Indeed, for the $k$-channel objective given $z$, $\varphi$ and $\lambda_i$'s
$$L(y)=\sum_{t=1}^T |x_t-y_t|^2+\sum_{i=1}^k\sum_{t=1}^T \lambda_iz_t^{(i)}\varphi^{(i)}(y_t)$$
the first-order condition of the optimal response $y_t$ is
$$2(y_t-x_t)+\sum_{i=1}^k \lambda_iz_t^{(i)}\partial_y\varphi^{(i)}(y_t)=0,\qquad \text{ for all $t$}$$
Equivalently,
$$x_t=y_t+\frac12\sum_{i=1}^k\lambda_i z_t^{(i)}\partial_y\varphi^{(i)}(y_t)$$

Thus, after learning the multiplier $\widehat\lambda_i$, test function $\varphi_{\widehat \theta}^{(i)}$ and the
GRU with linear readout $G_{\widehat \theta}$  at the end of training with Algorithm \ref{algo2}, the learned inverse map $\widehat T_{x'_{\le t}}^{-1}$ associated with a path $x'_{\le t}=x'_0,\dots,x'_{t}$ is constructed as\footnote{Strictly speaking, $\widehat T_{x'_{\le t}}^{-1}$ defines an inverse for a transport map only when it is strictly increasing as a function of $y$. In any case, this issue did not arise in practice, since all the experiments below yielded learned maps that were strictly increasing on the range of the learned barycentric variables.}

\begin{equation}\label{eqinverse}\widehat T_{x'_{\le t}}^{-1}(y):=y+\frac12\sum_{i=1}^k\widehat\lambda_i \widehat z^{(i)}_{t+1}\partial_y\varphi_{\hat\theta}^{(i)}(y)
\end{equation}
where $\widehat z^{(i)}$ is obtained from applying the learned $G_{\widehat \theta}$ to the new sample path $x'_{\le t}$.

The learned predictive distribution for a new path $(x'_t)_{t=1\dots T'}$ at each time $t+1$ is then
$$\widehat{P}_{t+1}=\left(\widehat{T}^{-1}_{x'_{\leq t}}\right)_\#\widehat{\mu}$$
where, in our implementation, $\widehat\mu$ will be the empirical distribution of the learned barycentric variables $\widehat{y}_t$. This could be replaced with a smoothing kernel or some other density approximator based on the empirical distribution of the $\widehat{y}_t$. The construction of $\widehat T^{-1}$ and $\widehat{P}$ from the learned artifacts $G_{\widehat \theta},\varphi_{\widehat\theta}$ and $\widehat\lambda$ constitute the {\em Recurrent Barycentric Encoder}. 

Notice the alternative path $x'_{\le t}$ may or may not overlap with the $x_{1:T}$ used for training.

Of course our main interest is in using this conditional sampler with samples $x_t'$ that are different from the training sample. For such test samples, we expected the push forward under $\hat{T}_{x'_{\leq t}}^{-1}$ of the learned $\widehat y_t$ to be a good approximation of the distribution of $x'_{t+1}$ conditioned on the past $\{x_{s}'\}_{s\leq t}$.

To test whether this is true we ran a series of experiments. For the first experiment we used input data generated by some of the models we have seen. After training the recurrent unit and the barycentric variables with Algorithm \ref{algo2} on a path of length $T=4000$, we generated a new synthetic path $x'_{1:T}$ and ran the trained recurrence $G_{\widehat\theta}$ through  $x'_1,\dots,x'_{T-1}$ to compute the normalized readout $\widehat z_{T}$. Next, we sampled independent values $y^{(1)},\dots,y^{(M)}$ from the previously obtained $\{\widehat y_s\}$, and returned
$$\widehat x_T^{(j)}=\widehat T_{x'_{\le T-1}}^{-1}(y^{(j)}),\qquad j=1,\dots,M$$
We then tested how close the empirical distribution of these samples was to the true conditional distribution $P(x_T\in\cdot\mid x'_{\leq T-1})$
by generating samples $x_T^{*(j)}$, $j=1,\dots,M$ using this true conditional distribution conditioned on $x'_{1:T-1}$, which we can do because the true conditional distribution is known to us in closed form when using synthetic data from our models of choice, which were AR--GARCH, Kalman, AR$(2)$ and GARCH, all with standard Gaussian innovations $\epsilon_t$.

\begin{table}[ht]
\centering
\begin{tabular}{lcccccc}
\hline
Process
& $\widehat{\mu}$
& $\widehat{\sigma}$
& $\mu_{\mathrm{oracle}}$
& $\sigma_{\mathrm{oracle}}$
& $W_1/\sigma_{\mathrm{oracle}}$
& KS \\
\hline
AR--GARCH
& $-0.2338$
& $0.6441$
& $-0.2507$
& $0.6582$
& $0.0266$
& $0.0136$ \\

GARCH
& $0.0193$
& $0.5856$
& $0.0000$
& $0.5742$
& $0.0401$
& $0.0291$ \\

AR
& $0.4687$
& $1.2132$
& $0.5751$
& $1.2247$
& $0.0911$
& $0.0369$ \\

Kalman
& $-0.2143$
& $0.6094$
& $-0.2352$
& $0.6158$
& $0.0337$
& $0.0234$ \\
\hline
\end{tabular}
\caption[One-step conditional sampling diagnostics]{One-step conditional sampling diagnostics. Here
$\widehat{\mu}$ and $\widehat{\sigma}$ are the mean and standard deviation
of the learned RBE distribution, while
$\mu_{\text{oracle}}$ and $\sigma_{\text{oracle}}$ are the corresponding quantities for the sample generated by the true conditional. Sample size is $M=3000$ in both cases.}
\label{tab3}
\end{table}

Table \ref{tab3} summarizes the results of these tests for the four processes. Since all conditional distributions are Gaussian, they are completely determined by their mean $\mu$ and their standard deviation $\sigma$. The table shows the sample mean $\widehat \mu$ and sample standard deviation $\widehat \sigma$ for the $\widehat T^{-1}_{x'_{\le T-1}}$-generated samples $\{\widehat x_T^{(j)}\}_{j=1}^M$, against the sample mean and standard deviation, $\mu_{\text{oracle}}$ and $\sigma_{\text{oracle}}$, of the oracle samples $\{x_T^{*(j)}\}_{j=1}^M$. The $W_1/\sigma_{\text{oracle}}$ column shows the 1-Wasserstein distance between the two distributions normalized by the oracle standard deviation, while the KS column corresponds to the Kolmogorov-Smirnov distance.

Figure \ref{figu3} shows the histograms for the $\widehat T^{-1}_{x'_{\le T-1}}$-generated samples $\{\widehat x_T^{(j)}\}_{j=1}^M$ against the oracle samples $\{x_T^{*(j)}\}_{j=1}^M$ generated by the true conditional distribution on the same past.

\begin{figure}[htbp]
    \centering

    \begin{subfigure}[b]{0.45\textwidth}
        \centering
         \includegraphics[width=\textwidth]{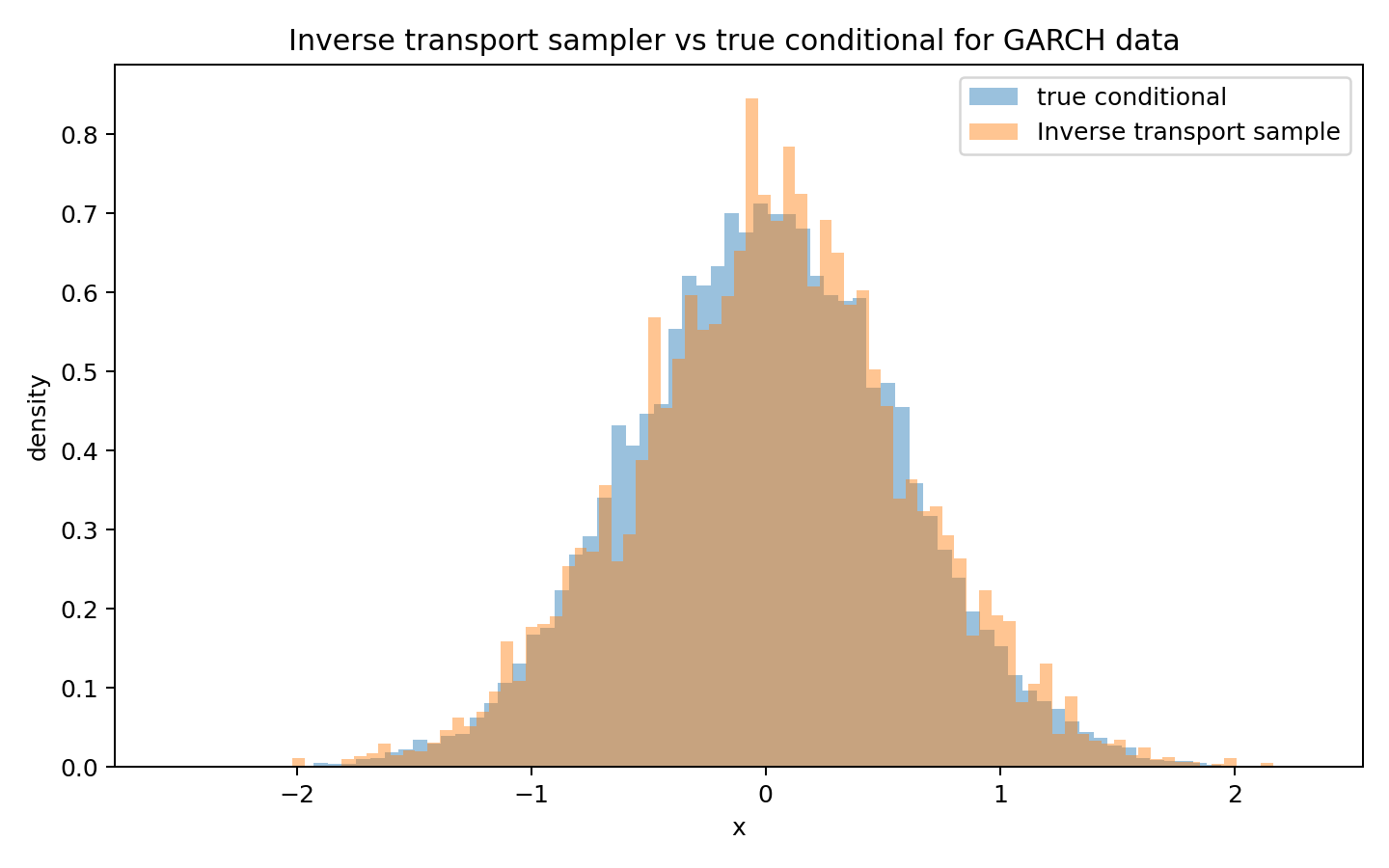}
        \caption{GARCH$(1,1)$}
        \label{fig:subgarchinv}
    \end{subfigure}
    \hfill
    \begin{subfigure}[b]{0.45\textwidth}
        \centering
        \includegraphics[width=\textwidth]{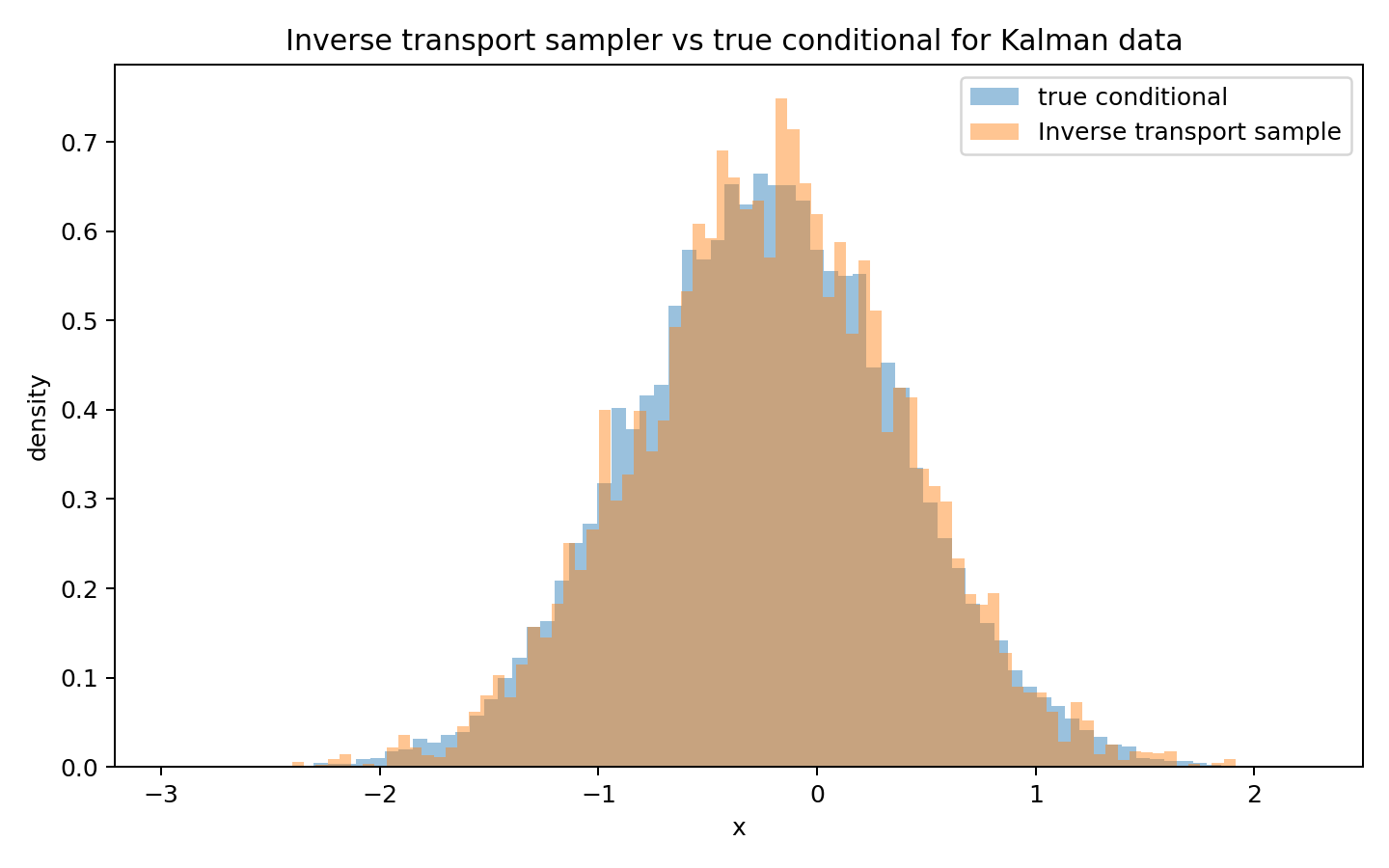}
        \caption{Kalman}
        \label{fig:subkalmaninv}
    \end{subfigure}
    
    \hfill
    \begin{subfigure}[b]{0.45\textwidth}
        \centering
        \includegraphics[width=\textwidth]{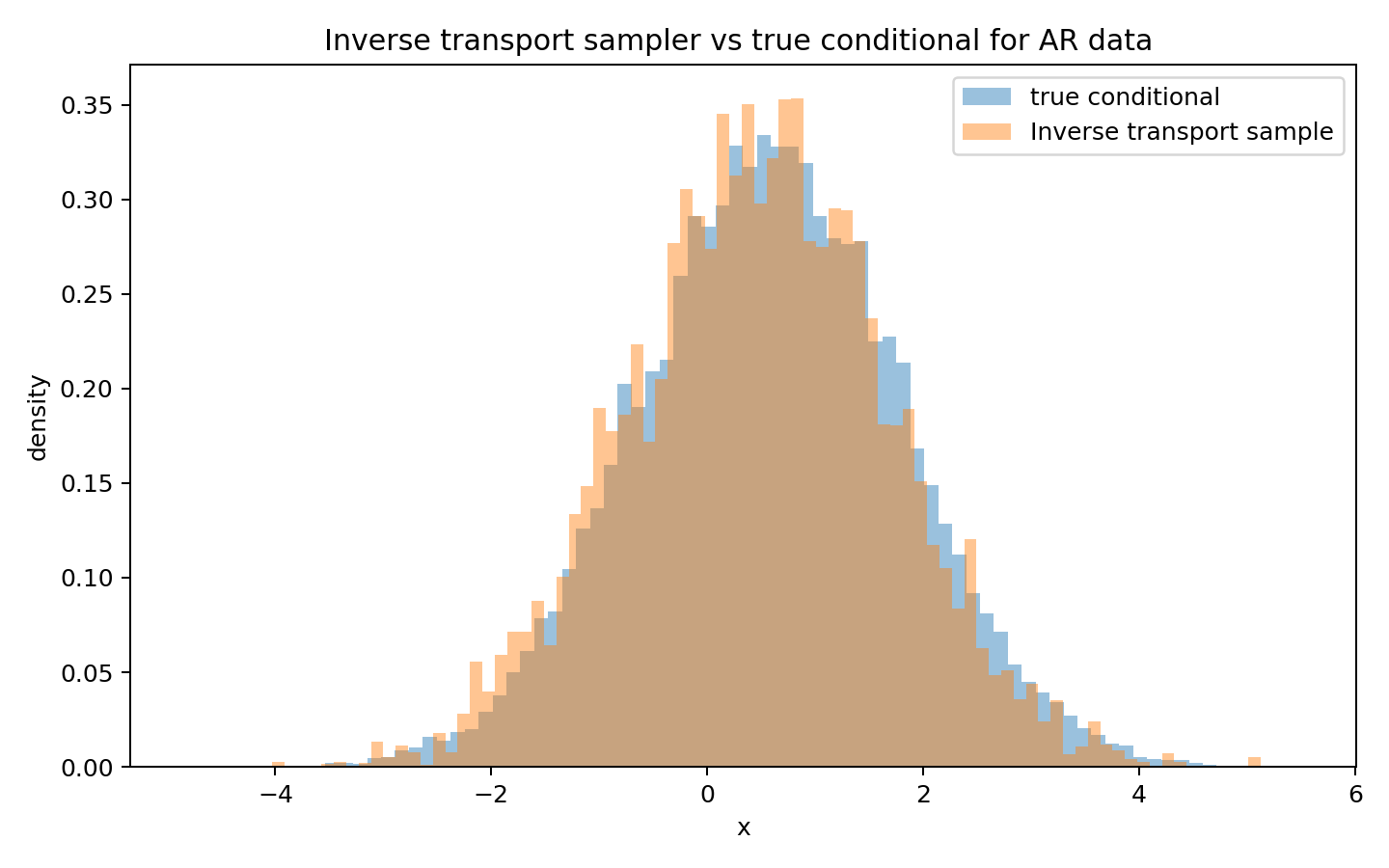}
        \caption{AR$(2)$}
        \label{fig:subarinv}
    \end{subfigure}
    \hfill
    \begin{subfigure}[b]{0.45\textwidth}
        \centering
        \includegraphics[width=\textwidth]{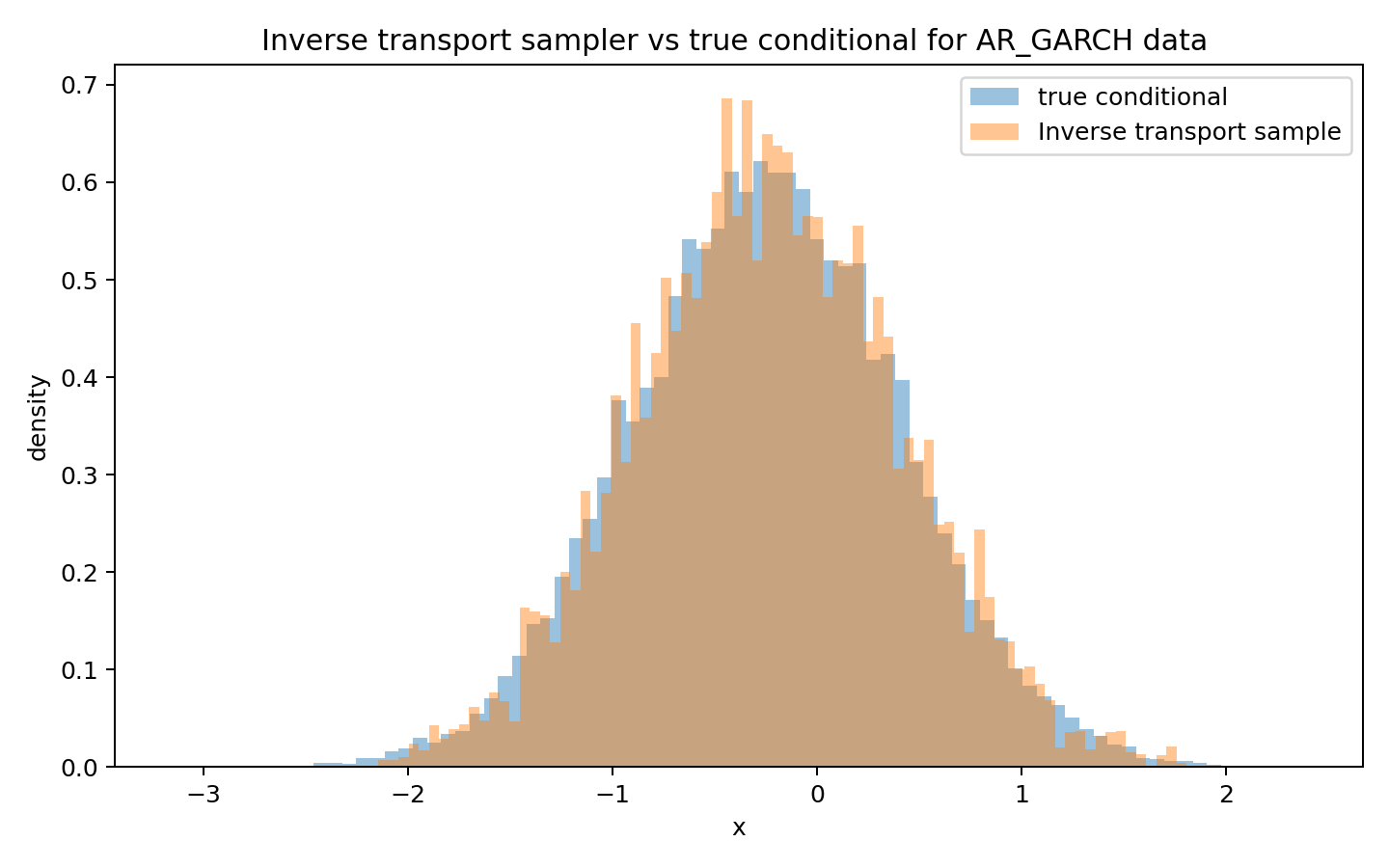}
        \caption{ARGARCH}
        \label{fig:subargarchinv}
    \end{subfigure}

    \caption{Histograms for the RBE sampler against the true conditional sampler.  }
    \label{figu3}
\end{figure}

\subsection{Systematic calibration test}

To generalize these results we performed a more systematic experiment. As before, we trained the algorithm on a path of length $T=4000$ and then generated a new independent test path of length 4000. From each test path we sampled $Q=250$ distinct times $t_q$, each having at least 300 observations of available past. For each of these histories, we drew a fixed sample of $M=3000$ values from the learned barycentric variables $\{\widehat y_s\}$ and centered this realized sample so that $\frac{1}{M}\sum_{j=1}^M y^{(j)}=0$

We then applied the learned inverse map corresponding to each test history,
$$\widehat x_{t_q}^{(j)}=\widehat T_{x'_{\leq t_q-1}}^{-1}(y^{(j)}),\qquad j=1,\dots,M$$
and proceeded to compute several discrepancy metrics.

For each history we computed the normalized mean and scale errors $$e_{\mu,q}=\frac{\widehat\mu_q-\mu_q}{\sigma_q},
\qquad e_{\sigma,q}=\frac{\widehat\sigma_q-\sigma_q}{\sigma_q}$$
where $\widehat\mu_q$ and $\widehat\sigma_q$ are the sample mean and standard deviation of the generated samples, and $\mu_q$ and $\sigma_q$ are the mean and standard deviation of the true conditional distribution
$P(x_{t_q}\in\cdot\mid x'_{\leq t_q-1})$,

We also computed the normalized Wasserstein distance
$$\frac{W_1\!\left(\widehat P_q^M,P(x_{t_q}\in\cdot\mid x'_{\leq t_q-1})\right)}{\sigma_q}$$
where 
$$\widehat P_q^M=\frac{1}{M}\sum_{j=1}^M\delta_{\widehat x_{t_q}^{(j)}}$$

Notice that because the true conditional distribution is Gaussian for all four models, $\mu_q$, $\sigma_q$ and the Wasserstein distances can be computed directly against the true Gaussian conditional distribution with no need to generate an additional finite oracle sample. The same is true of the Kolmogorov-Smirnov (KS) distance. Both metrics are shown on Table \ref{tableInverse}. The results confirm that the RBE sampler stays close to the true conditional distribution on out-of-sample paths. The mean normalized 1-Wasserstein distance ranges from $0.0515$ for GARCH to $0.0835$ for AR--GARCH, while its $95$th percentile remains below $0.17$ for all processes, and the mean KS distance remains below $0.045$.

\begin{table}[htbp]
\centering
\begin{tabular}{lccccccc}
\hline
& \multicolumn{4}{c}{$W_1/\sigma_{\mathrm{oracle}}$}
& \multicolumn{3}{c}{KS} \\
\cline{2-5}\cline{6-8}
Process
& Mean
& Median
& $q_{0.95}$
& Maximum
& Mean
& $q_{0.95}$
& Maximum \\
\hline
AR$(2)$
& $0.0694$
& $0.0530$
& $0.1406$
& $1.0590$
& $0.0373$
& $0.0681$
& $0.4412$ \\

GARCH
& $0.0515$
& $0.0439$
& $0.1056$
& $0.2053$
& $0.0254$
& $0.0438$
& $0.0872$ \\

Kalman
& $0.0549$
& $0.0471$
& $0.1133$
& $0.1807$
& $0.0298$
& $0.0535$
& $0.0826$ \\

AR--GARCH
& $0.0835$
& $0.0753$
& $0.1693$
& $0.4755$
& $0.0444$
& $0.0818$
& $0.2565$ \\
\hline
\end{tabular}
\caption[Statistics and discrepancy metrics over the same $Q=250$ alternative test paths]{Statistics and discrepancy metrics over the same $Q=250$ alternative test paths. For such path, the RBE predictive distribution has $M=3000$ observations. The Wasserstein metric was normalized by the oracle conditional standard deviation $\sigma_q$.}
\label{tableInverse}
\end{table}

It must be noted that the maxima for both the $W_1$ and the KS metric with AR$(2)$ data are considerably larger than its mean, median and $95$th percentile. This is caused by a small number of extreme test histories for which the learned GRU readout is far outside the region most frequently observed during training. The median normalized 1-Wasserstein distance is only $0.0530$, and its $95$th percentile is $0.1406$, showing that this large maximum is not representative of its usual out-of-sample performance. AR--GARCH also has some larger errors in unusually high $\sigma_q$ states.

We also computed the coefficients of determination $R_\mu^2$ and $R_\sigma^2$ between the generated and oracle conditional means and standard deviations across the $Q$ histories. These are shown on Table \ref{tabsystematic}. Notice that for GARCH the true conditional mean is constant, and for AR$(2)$ and Kalman the same is true of the conditional standard deviation, so that the corresponding coefficients of determination are not defined. These are hence omitted.

The results show that the conditional means are recovered accurately whenever they vary with the past. The corresponding coefficients of determination are $0.9592$ for AR$(2)$, $0.9953$ for Kalman and $0.9420$ for AR--GARCH. For GARCH the true conditional mean is identically zero, and its mean absolute normalized estimation error is only $0.0008$.

\begin{table}[htbp]
\centering
\begin{tabular}{lcccccc}
\hline
Process
& $\mathrm{SRE}_{\mathrm{train}}$
& $R_\mu^2$
& $R_\sigma^2$
& $\overline{|e_\mu|}$
& $\overline{|e_\sigma|}$ \\
\hline
AR$(2)$
& $0.0065$
& $0.9592$
& ---
& $0.0595$
& $0.0275$ \\

GARCH
& $0.0033$
& ---
& $0.9421$
& $0.0008$
& $0.0496$ \\

Kalman
& $0.0056$
& $0.9953$
& ---
& $0.0370$
& $0.0312$ \\

AR--GARCH
& $0.0114$
& $0.9420$
& $0.7939$
& $0.0587$
& $0.0594$ \\
\hline
\end{tabular}
\caption[Conditional mean and std calibration test over $Q=250$ out-of-sample histories.]{Conditional mean and std calibration test over $Q=250$ out-of-sample histories. The final training SRE is, as before, the relative squared error of the learned barycentric variables with respect to the oracle barycentric variables. The last two columns show the mean absolute normalized conditional mean and scale errors. Notice the $R^2$ value is left blank when the corresponding oracle parameter is constant.}
\label{tabsystematic}
\end{table}

The conditional scale $\sigma_q$ is also accurately recovered for GARCH, with $R_\sigma^2=0.9421$. For AR$(2)$ and Kalman the true conditional standard deviation is constant, so $R_\sigma^2$ is not defined, but their mean absolute normalized scale errors are only $0.0275$ and $0.0312$, respectively. The scale component of AR--GARCH is more difficult to recover. The mean absolute normalized scale error remains $0.0594$, but its $R_\sigma^2$, $0.7939$, is lower. Most of this discrepancy comes from an underestimation of the conditional scale $\sigma_q$ for some of the largest values, as can be seen in the corresponding plot (Figure \ref{fig:inverse_calibration_argarch}). A possible explanation lies in the fact that the squared residual error from training was noticeably higher ($0.0114$) for the AR--GARCH data than for the other processes ($0.0033-0.0065$).

Figures \ref{fig:inverse_calibration_ar}--\ref{fig:inverse_calibration_argarch} give a general picture of all calibration results. The first row compares the generated and oracle conditional means and standard deviations and shows the normalized mean error $e_{\mu,q}$ for each of the four processes. The second row shows the normalized scale error $e_{\sigma,q}$, the normalized 1-Wasserstein distance and the KS distance. Notice the GARCH conditional mean has a vertical plot because its oracle conditional mean is identically zero. Similarly, the oracle conditional standard deviation is constant for AR$(2)$ and Kalman.

\begin{figure}[p]
    \centering
    \includegraphics[width=\textwidth]
    {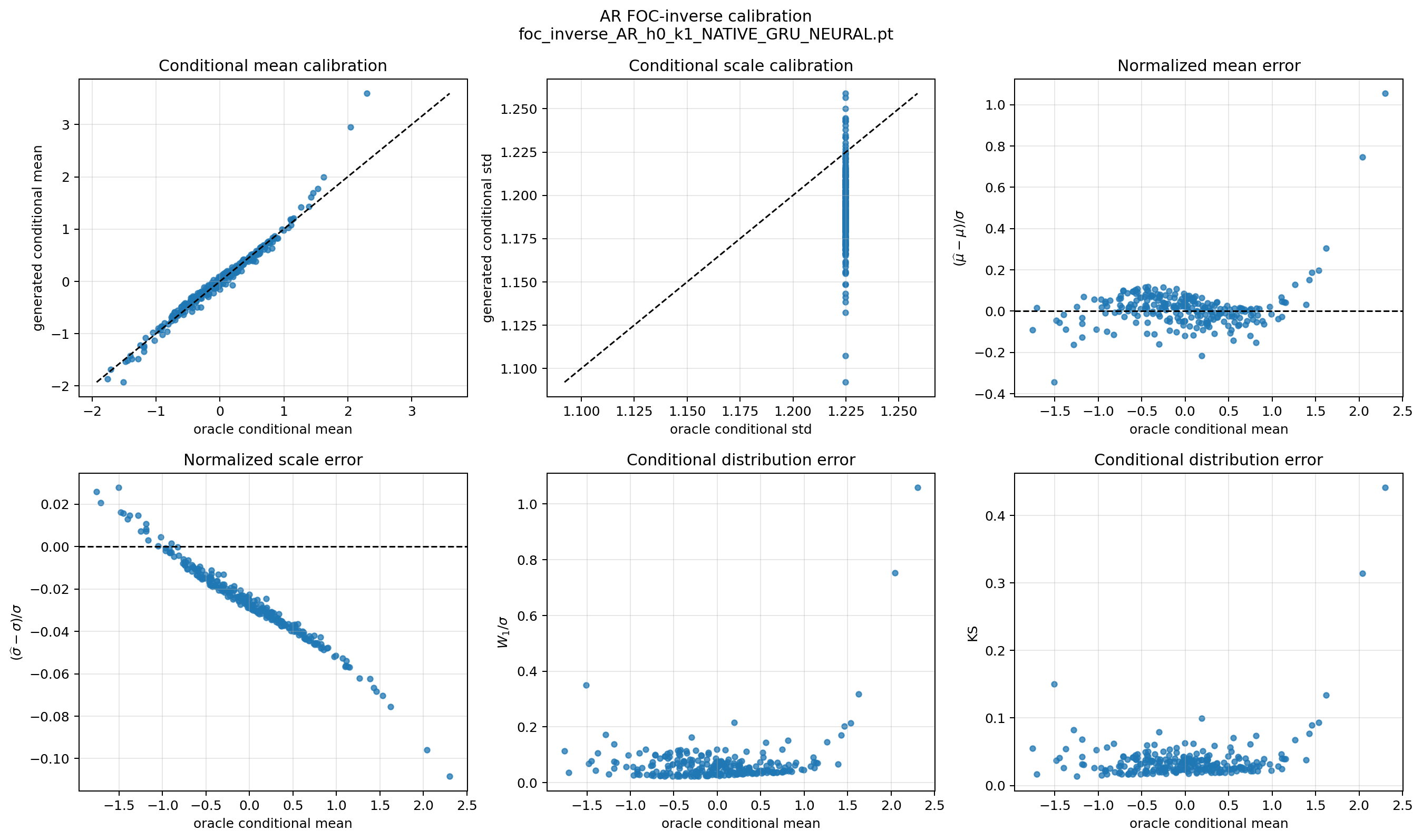}
    \caption[Systematic calibration test for the RBE with AR$(2)$ data]{Systematic calibration tests for the RBE with AR$(2)$ data. Each point represents one of $250$ out-of-sample histories.}
    \label{fig:inverse_calibration_ar}
\end{figure}

\begin{figure}[p]
    \centering
    \includegraphics[width=\textwidth]
    {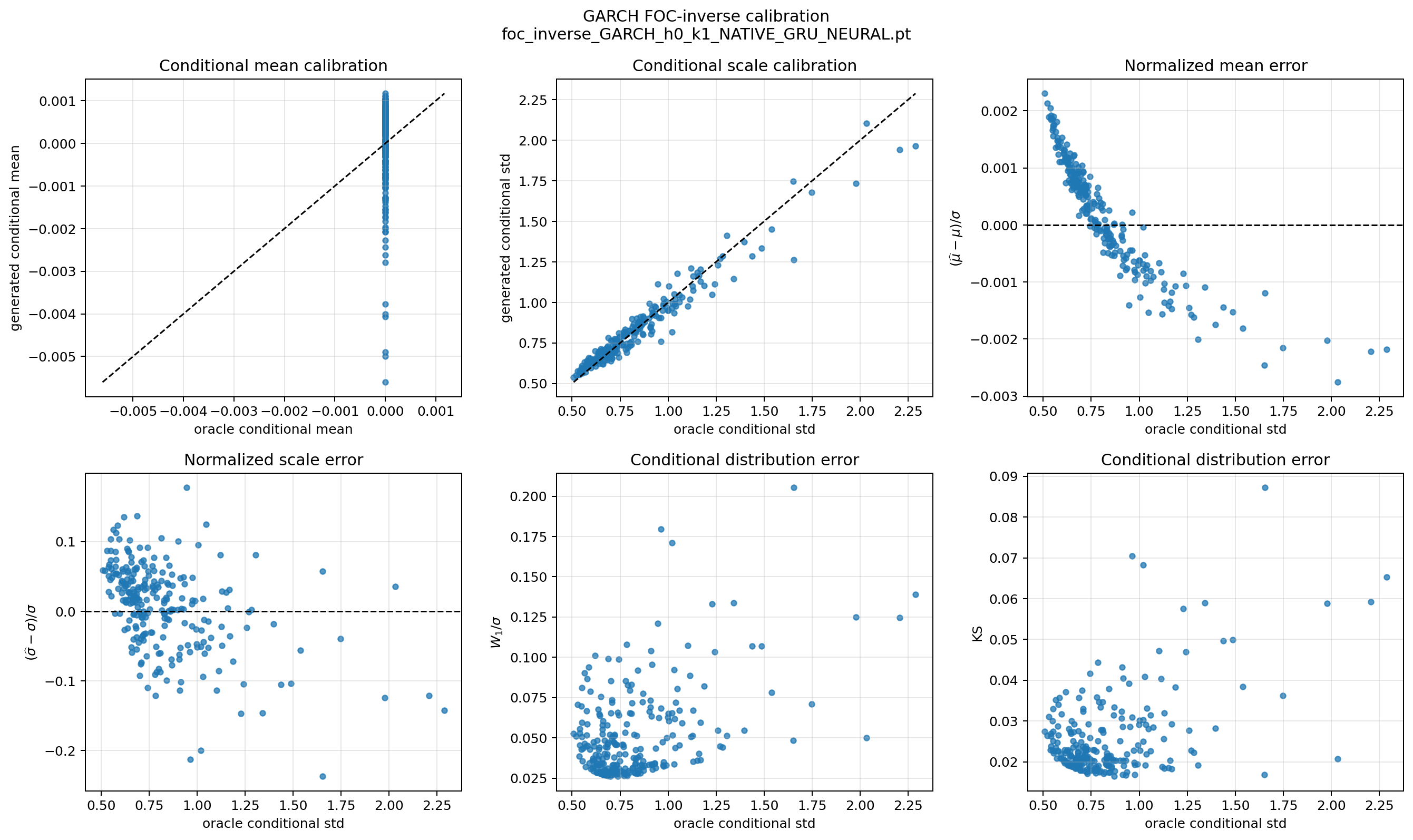}
    \caption[Systematic calibration test for the RBE with GARCH data]{Systematic calibration tests for the RBE with GARCH data. Each point represents one of $250$ out-of-sample histories.}
    \label{fig:inverse_calibration_garch}
\end{figure}

\begin{figure}[p]
    \centering
    \includegraphics[width=\textwidth]
    {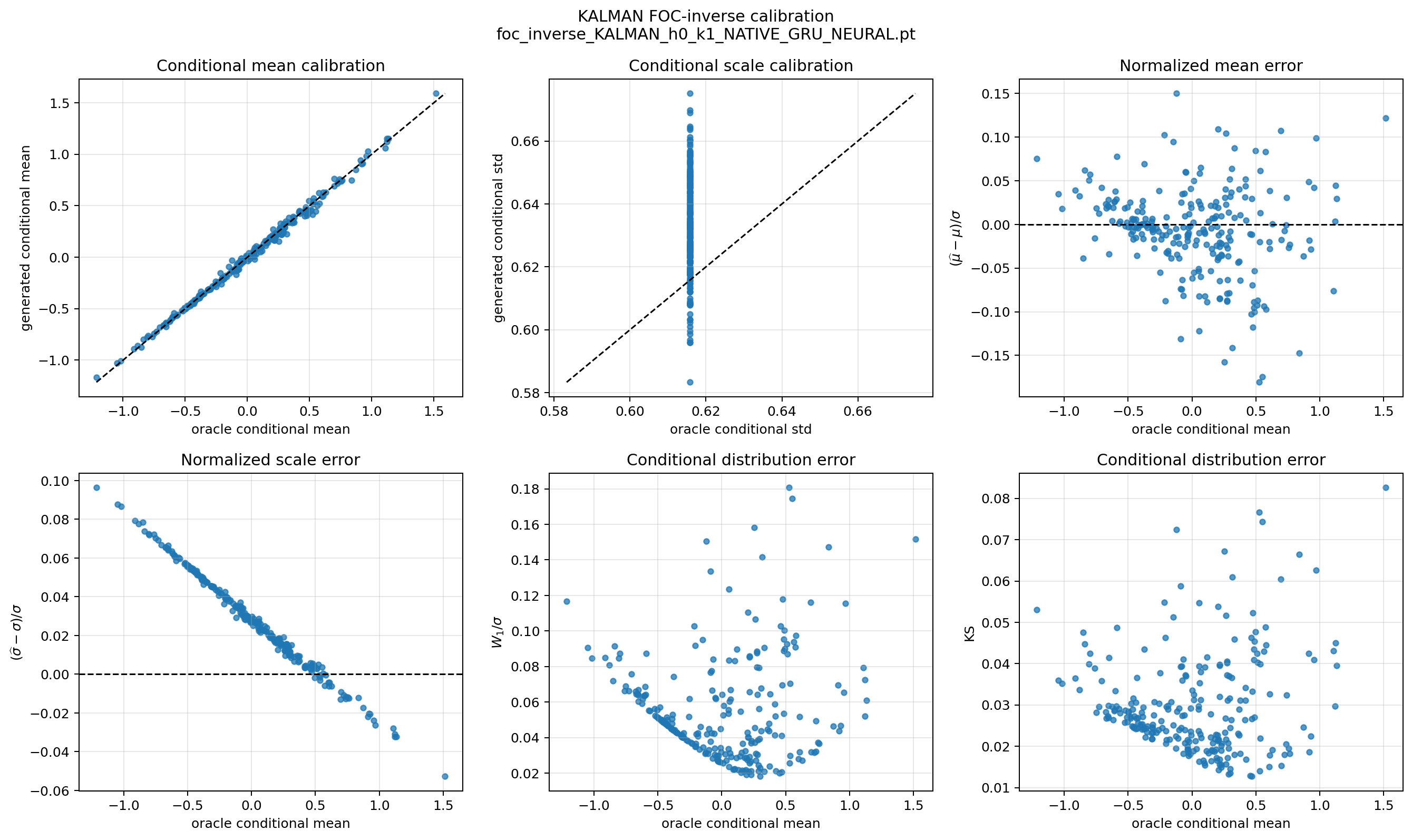}
    \caption[Systematic calibration test for the RBE with Kalman data]{Systematic calibration tests for the RBE with Kalman data. Each point represents one of $250$ out-of-sample histories.}
    \label{fig:inverse_calibration_kalman}
\end{figure}
\begin{figure}[p]
    \centering
    \includegraphics[width=\textwidth]
    {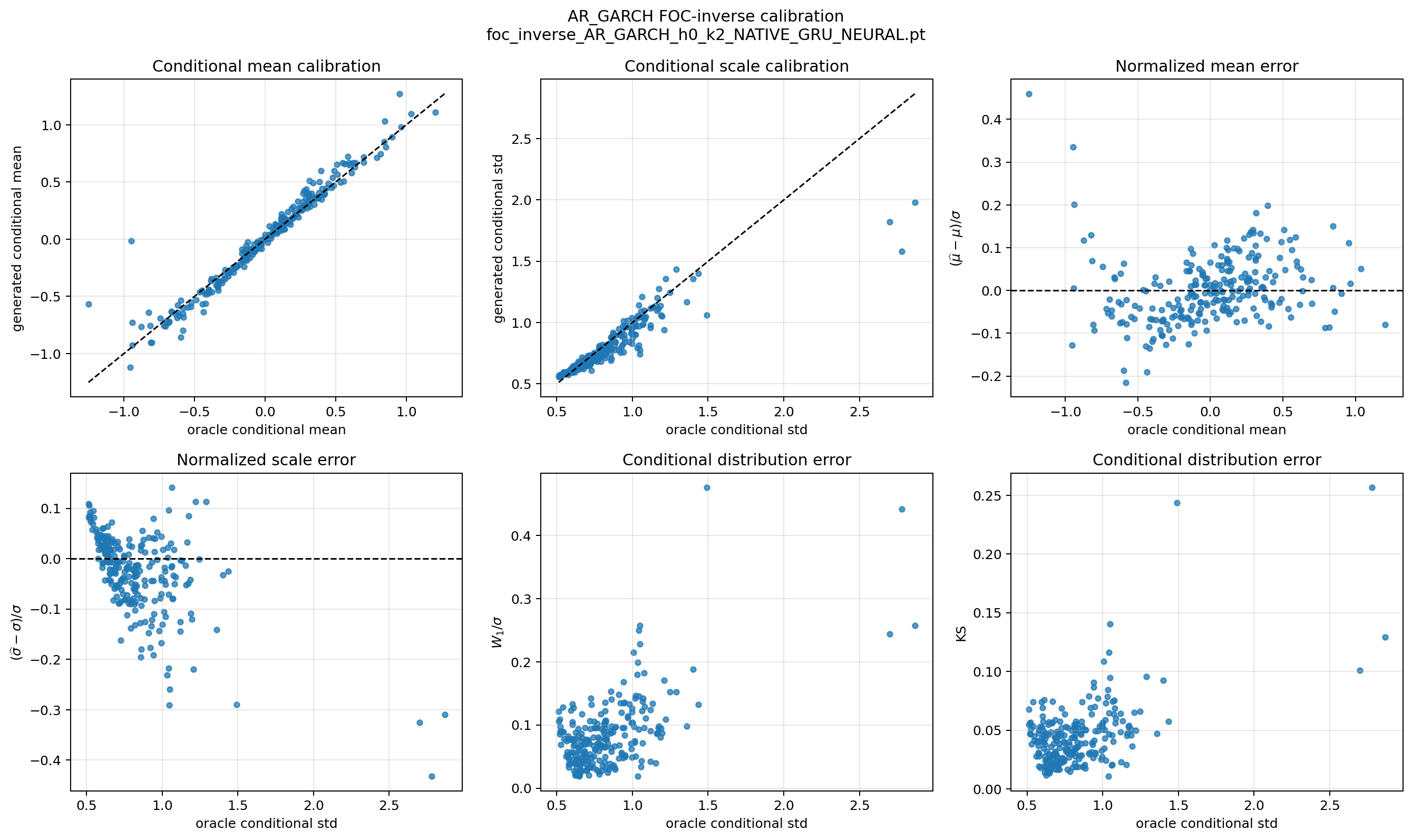}
    \caption[Systematic calibration test for the RBE with AR--GARCH data]{Systematic calibration tests for the RBE with AR--GARCH data. Each point represents one of $250$ out-of-sample histories.}
    \label{fig:inverse_calibration_argarch}
\end{figure}

These results indicate that the learned conditional predictive distribution of the RBE remains well calibrated over a large number of alternative past histories with different conditional means and standard deviations. The main loss of accuracy occurs in rare histories that are near the tails of the learned recurrent representation, particularly for the scale component of AR--GARCH data.

\subsection{Shifting the predictive horizon}

The preceding experiments concern one-step-ahead prediction, but the same construction may be applied at any predictive horizon $h\geq 1$. For a fixed $h$, we can use $x_{t+h}$ as the target variable and the past $x_{\leq t}$ or a variable that encodes it as the covariate, yielding a predictive sampler that looks further ahead into the future. That is, the RBE sampler will be trained to approximate
$$P(x_{t+h}\in\cdot\mid x_{\leq t})$$

Given a new test history $x'_{\leq T}$, we then draw values $y^{(1)},\dots,y^{(M)}$ from the empirical distribution of the learned barycentric variables with horizon $h$ and compute
$$\widehat x_{T+h}^{(j)}=\widehat T_{h,x'_{\leq T}}^{-1}(y^{(j)}),\qquad j=1,\dots,M$$
The empirical distribution of these generated values then approximates the $h$-step conditional distribution $P(x_{T+h}\in\cdot\mid x'_{\leq T})$.

Notice that for $h>1$ the learned barycentric variables need not be iid. Indeed, the variables $y_t$, $y_{t'}$ associated with $|t-t'|<h$ depend on overlapping future intervals and may therefore remain dependent over the first $h-1$ lags. On the other hand, $y_t^{(h)}$ is independent of $\mathcal F_t$ and is measurable with respect to $\mathcal F_{t+h}$, so that $y_t^{(h)}$ and $y_s^{(h)}$ are independent whenever $t-s\geq h$. More importantly for the sampler, the sequence $\{y_t^{(h)}\}$ remains stationary and ergodic.  As a consequence, Tucker's generalization of Glivenko-Cantelli's theorem\cite{Tucker59} ensures that their empirical cdf converges uniformly a.s. to the stationary population cdf, i.e. to the cdf of the barycentric distribution $\mu_h$. More precisely, if
$$\widehat F_{h,T}(a)=\frac{1}{T-h}\sum_{t=1}^{T-h}\mathbf 1_{\{y_t^{(h)}\leq a\}}$$
then 
$$\sup_a|\widehat F_{h,T}(a)-F_h(a)|\longrightarrow 0 \qquad\text{a.s.}$$
 
Thus the serial dependence of the horizon-$h$ barycentric variables should not prevent us from using their empirical distribution for our RBE sampler.

Notice too that for the processes studied so far, since the stationarity conditions imply not only ergodicity but also mixing, the conditional distribution with horizon $h$ given the history $x'_{\leq T}$ should converge to the stationary distribution as $h\to\infty$. In other words, the effect of the particular observed history should therefore diminish rapidly as the predictive horizon increases.

To verify this behavior, we repeated the previous experiment with AR--GARCH data for increasing values of $h$. At each horizon $h$, we generated samples both with the learned RBE sampler and with an oracle sampler that uses the true conditional distribution. Figure \ref{fighorizonhisto} shows the resulting histograms, and Figure \ref{fighorizon} reports the corresponding distance metrics. As $h$ increases, both predictive distributions approach the stationary marginal distribution, and their conditional means and standard deviations approach their stationary values. In fact, this convergence occurs relatively quickly, which is not surprising since under our parameter choice with $|\phi_1|+|\phi_2|<1$ and $\alpha+\beta<1$ the AR--GARCH model is geometrically $\beta$-mixing\cite{meitz08}.

\begin{figure}
    \centering
    \includegraphics[width=0.7\linewidth]{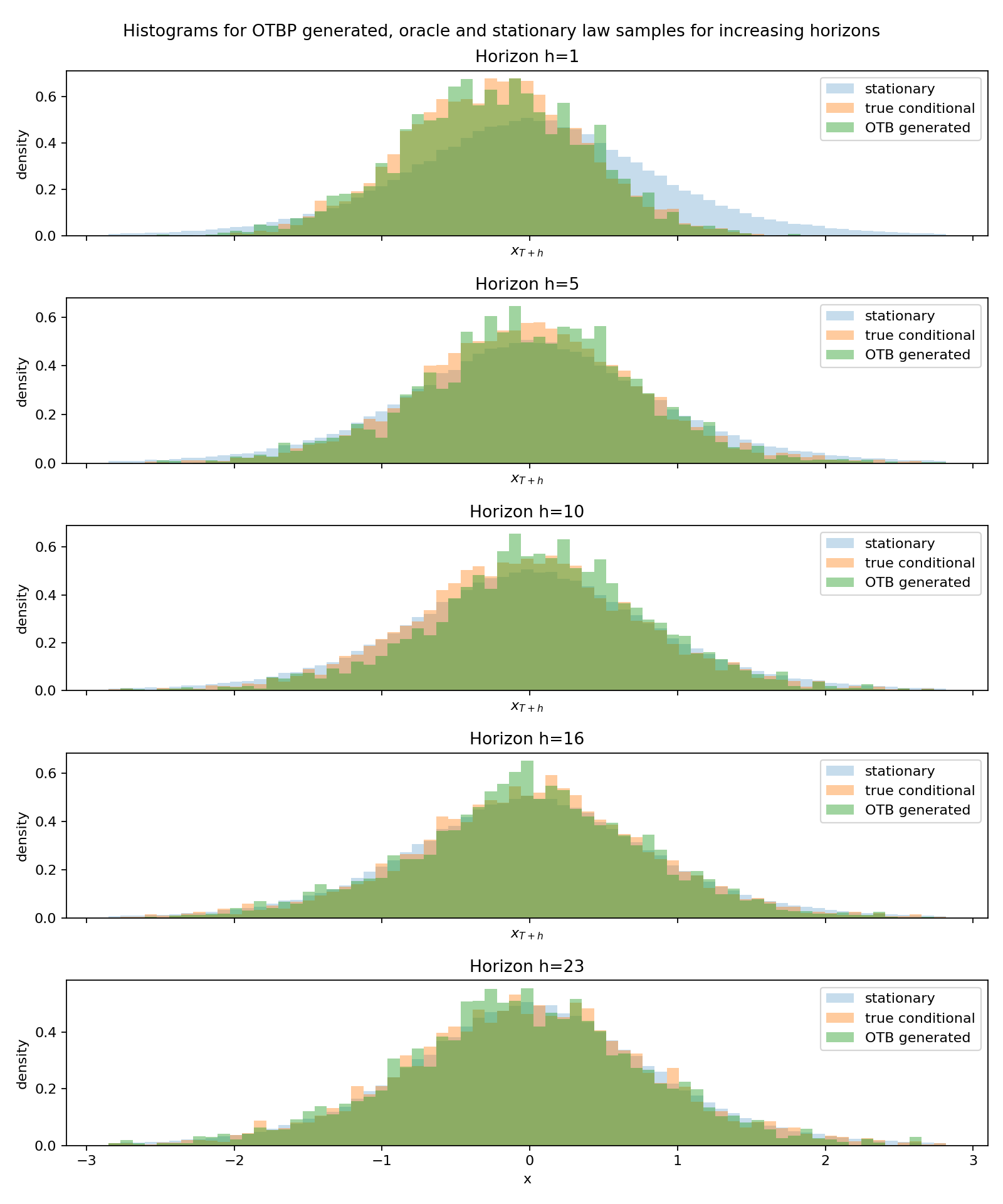}
    \caption{Histograms for sampled $x_{T+h}$ with increasing horizon $h$ for AR--GARCH data.}
    \label{fighorizonhisto}
\end{figure}

\begin{figure}
    \centering
    \includegraphics[width=0.7\linewidth]{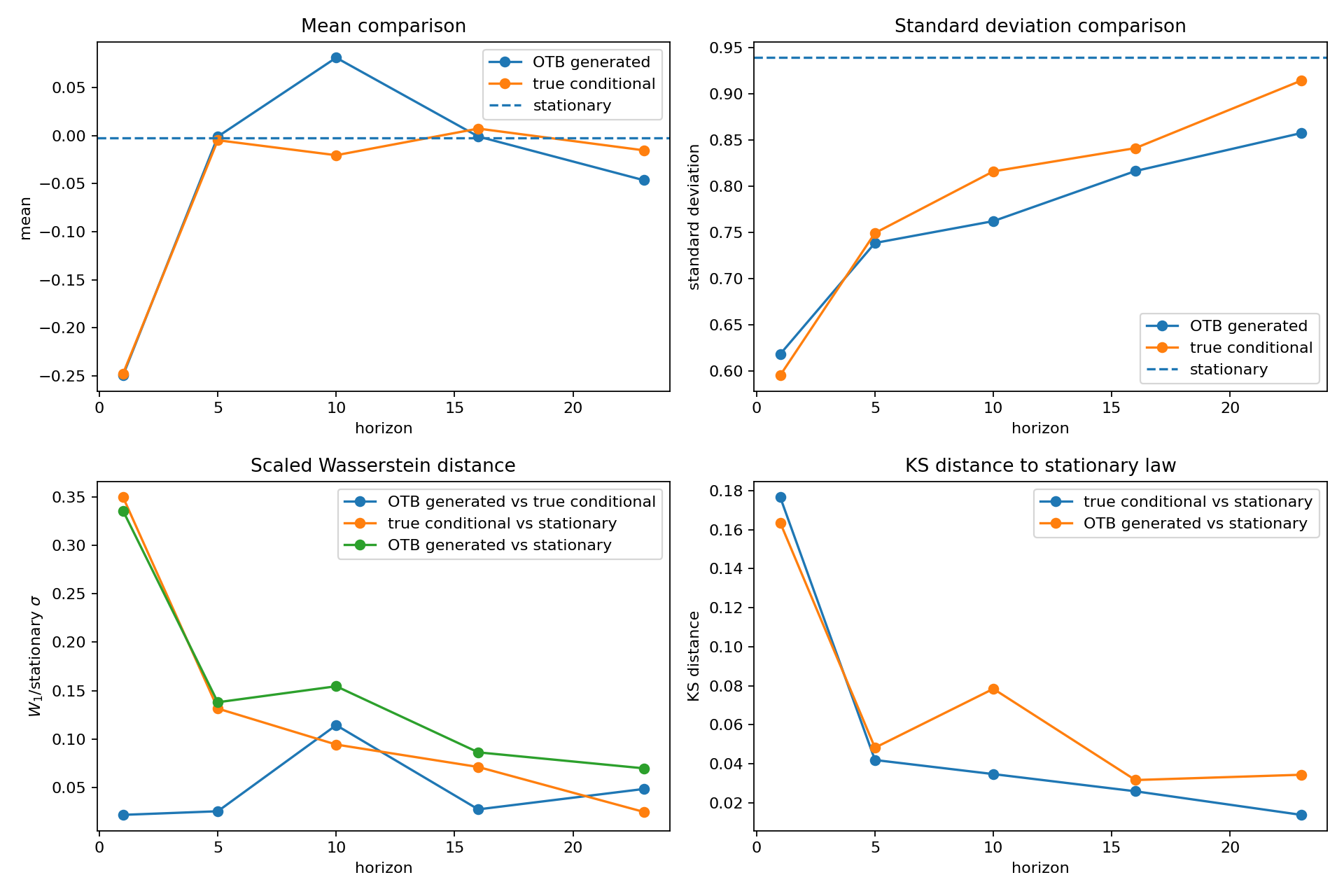}
    \caption{Metrics of convergence to the stationary distribution for the RBE predictive distribution with AR--GARCH data at increasing horizon $h$.}
    \label{fighorizon}
\end{figure}

\section{Application to historical time series}\label{historical}

\subsection{Data and experimental setup}

We finally arrive at the point of applying the RBE sampler to real-world data. For this purpose we chose three historical time series: 4-hour Bitcoin log returns, daily SP500 log returns and the daily North Atlantic Oscillation (NAO) index. We implemented $G_\theta$ as a GRU with linear readout and hidden dimension 4. For the test functions $\varphi$ we considered two choices. The first was to use two monomial channels, i.e. $\varphi^{(1)}(y)=y$ and $\varphi^{(2)}(y)=y^2$. The second had three neural channels. The details are in Appendix \ref{appimplementation}.

Notice that for the choice with two monomial channels the inverse map is
\begin{equation}\label{locscale}
x=\frac{1}{2}\lambda_1 Z_t^{(1)}+\left(1+\lambda_2 Z_t^{(2)}\right)y
\end{equation}
so the methodology is akin to estimating a location-scale model.

We trained five restarts and evaluated both the five resulting RBE samplers and their averaged ensemble, which is defined as
$$\widehat F_t^{\mathrm{ens}}(x)=\frac{1}{5}\sum_{r=1}^5\widehat F_{t,r}(x)$$
where $\widehat F_{t,r}$ is the predictive cdf obtained from restart $r$.

In the diagnostics shown below, the ensemble is used as the primary predictive distribution. Notice though that there is no ensemble barycentric variable, since averaging conditional cdfs does not define a common transport map in any natural way. Diagnostics that involve the barycentric variables are therefore presented separately for each restart.

To evaluate the predictive distributions the dataset was split into training and test samples. For Bitcoin and the NAO index we used a training sample of length $5000$, of which the first $500$ were discarded as a warm-up for the recurrent state, followed by a test sample of $1000$ observations. The SP500 file had only $2511$ observations, so we split it into $2009$ training and $502$ test observations. To prevent any leakage, the data was normalized using the mean and standard deviation from the training block alone. See Appendix \ref{appimplementation} for more details.

\subsection{Predictive diagnostics: CRPS}\label{predictive}
For historical data the true conditional law is unknown, so the 1-Wasserstein and oracle errors used in previous subsections for evaluation are not available. We therefore had to use other metrics to evaluate the predictive RBE distribution. The first diagnostic we computed is the continuous ranked probability score (CRPS), defined as
$$\operatorname{CRPS}(F,x)=\int_{\mathbb R}\left(F(u)-\mathbf 1_{\{x\leq u\}}\right)^2du=\mathbb E|X'-x|-\frac12\mathbb E|X'-X''|$$
where $X'$ and $X''$ are independent random variables with distribution $F$. Lower values indicate better predictive power. For empirical cdfs $\widehat F$ defined by finite samples $\{x_i\}_{i=1}^M$ the CRPS is computed as
$$\operatorname{CRPS}(\widehat F,x)=\frac{1}{M}\sum_{i=1}^{M}\left|x_{i}-x\right|-\frac{1}{2M^2}\sum_{i=1}^{M}\sum_{j=1}^{M}\left|x_i-x_j\right|$$
Notice that CRPS is invariant under translation and linear under positive scaling, so that the CRPS ratios used for comparison below are invariant under normalization.

For comparison we calculated the ratio of the RBE ensemble cdf's mean CRPS
$$\frac{1}{T_{test}}\sum_{t=1}^{T_{test}}\operatorname{CRPS}(\widehat F_t^{\text{ens}},x_t')$$
on the test sample $x'_{1:T_{\text{test}}}$ with the mean CRPS of alternative predictive models on the same sample. The alternative models' distributions were: the unconditional empirical distribution of the training sample, rolling empirical and rolling Gaussian distributions with windows of $250$, $500$ and $1000$ observations, $K$-nearest-neighbor (KNN) conditional distributions, and a Gaussian GARCH model for the two financial series. For NAO we additionally used AR$(p)$ conditional samplers with order $p=1,4,7,20$. In the KNN model with parameters $(p,K)$, the $K$ training histories $H_s=(X_{s-1},\dots,X_{s-p})$ (indexed by the end time $s$) closest to the current vector of $p$ lags $(x'_{t-1},\dots,x'_{t-p})$ form the neighborhood $\mathcal N_{p,K}$, and their corresponding target observations form the predictive empirical distribution

$$\widehat F_{t,p,K}^{\mathrm{KNN}}(x)=\frac{1}{K}\sum_{s\in\mathcal N_{p,K}(t)}\mathbf 1_{\{X_s\leq x\}}$$

Table \ref{tablecrps} summarizes the results from this experiment. The ``best alternative'' column corresponds to the model with the lowest mean CRPS among all the alternatives considered\footnote{The selection is conservative in that it favors the alternatives rather than the RBE.}.

\begin{table}[htbp]
\centering
\small
\resizebox{\textwidth}{!}{%
\begin{tabular}{lccccccc}
\hline
Series
& Monomial RBE
& Neural RBE
& Unconditional
& Best alternative
& Alternative CRPS
& Monomial gain
& Neural gain\\
\hline
Bitcoin
& $0.506175$
& $0.504803$
& $0.513228$
& KNN $(3,250)$
& $0.512080$
& $1.15\%$
& $1.42\%$ \\

SP500
& $0.434881$
& $0.433724$
& $0.446289$
& KNN $(5,100)$
& $0.442959$
& $1.82\%$
& $2.08\%$ \\

NAO
& $0.141409$
& $0.140584$
& $0.639432$
& AR$(4)$
& $0.141430$
& $0.015\%$
& $0.60\%$ \\
\hline
\end{tabular}
}
\caption[Mean out-of-sample CRPS of the monomial and neural RBE ensemble predictive distribution and the main alternatives.]{Mean out-of-sample CRPS of the monomial and neural RBE ensemble predictive distribution and the main alternatives. The gains are $1-\operatorname{CRPS}_{\mathrm{RBE}}/\operatorname{CRPS}_{\mathrm{alternative}}$. For Bitcoin and SP500 the best alternatives were KNN conditional empirical distributions. For NAO it was an AR$(4)$ conditional sampler.}
\label{tablecrps}
\end{table}

Both RBE predictive models have lower mean CRPS than every alternative baseline for all three series: the monomial ensemble has $1.15\%$ against the best alternative for Bitcoin and $1.82\%$ for the SP500. For the neural ensemble the corresponding gains are $1.42\%$ and $2.08\%$. The improvement of the NAO scores relative to the unconditional distribution is much larger, but this is mostly a consequence of the strong persistence of the series. Indeed, the monomial RBE ensemble and the AR$(4)$ model have essentially the same score, differing only by $0.015\%$, while the neural ensemble has a gain of $0.60\%$ against the AR$(4)$ model. Still, these gains are sample mean differences that may arise from sampling variation even when the two models have the same expected CRPS. We therefore constructed confidence intervals for the expected CRPS difference and tested whether the reported gains are statistically significant. Since the differences may have serial dependence, we used a paired circular block bootstrap rather than an iid bootstrap to compare each RBE model with the best alternative. For each $m\in\{\mathrm{mon},\mathrm{neu}\}$, define 
$$
d_t^{(m)}=\operatorname{CRPS}(\widehat F_t^{C},x_t')-
\operatorname{CRPS}(\widehat F_t^{(m)},x_t')
$$
where $\widehat F_t^{C}$ is the predictive cdf of the  alternative model. 

After fixing a block length $l$, for each bootstrap sample we drew $B=\lfloor T_{\text{test}}/l\rfloor$ blocks by choosing a starting position $J$ independently from $\{1,\ldots,T_{\mathrm{test}}\}$ and forming a  block $(d_J,d_{J+1},\ldots,d_{J+\ell-1})$  using circular concatenation, i.e. interpreting indices modulo $T_{\mathrm{test}}$. We used $10000$ bootstrap samples for each block length $l$.

If $q_\alpha$ denotes the empirical $\alpha$-quantile of $\overline d^{*,r}-\overline d$ for $r=1,\ldots,10000$, the $95\%$ one-sided bootstrap confidence interval for $\mathbb{E}[d_t]$ conditional on the selected best-performing alternative\footnote{Indeed, the interval does not account for the fact that the alternative was itself selected using the test sample. We should regard it only as a conditional interval given the best alternative.} is $
\left[\overline d-q_{0.95},\infty\right)$. This interval corresponds to the test
$$
H_0:\mathbb{E}[d_t]\leq0 \qquad\text{ with }\qquad H_1:\mathbb{E}[d_t]>0
$$
at level $5\%$. Thus, a positive lower endpoint for the interval is equivalent to a $p$-value lower than 0.05, where $p$-values are computed as the proportion of bootstrap samples for which
$$
\overline d^{*,r}-\overline d\geq \overline d
$$

Since the $p$-values can depend on the block length, we repeated the calculation for each dataset using different values of $l$ chosen according to some natural scale for each series, that span from short to moderately persistent dependence. For Bitcoin we chose $l=6,12,42$, which correspond to a day, two days and a week. For SP500 we use $l=5,10,20$,  corresponding approximately to one, two and four trading weeks, and for the NAO index we chose $l=7,14,30$, corresponding to one and two weeks and to a month, respectively. .

\begin{table}[htbp]
\centering
\small
\resizebox{\textwidth}{!}{%
\begin{tabular}{llcccc}
\hline
Series
& RBE variant
& Block length $l$
& Mean difference $\overline d$
& $95\%$ lower bound
& $p$-value \\
\hline
Bitcoin & Monomial & $6$  & $0.005905$ & $0.001856$ & $0.0121^{*}$ \\
Bitcoin & Monomial & $12$ & $0.005905$ & $0.001742$ & $0.0146^{*}$ \\
Bitcoin & Monomial & $42$ & $0.005905$ & $0.001450$ & $0.0193^{*}$ \\
Bitcoin & Neural   & $6$  & $0.007277$ & $0.003004$ & $0.0064^{*}$ \\
Bitcoin & Neural   & $12$ & $0.007277$ & $0.002572$ & $0.0093^{*}$ \\
Bitcoin & Neural   & $42$ & $0.007277$ & $0.002059$ & $0.0166^{*}$ \\
\hline
SP500 & Monomial & $5$  & $0.008078$ & $0.002409$ & $0.0104^{*}$ \\
SP500 & Monomial & $10$ & $0.008078$ & $0.003031$ & $0.0061^{*}$ \\
SP500 & Monomial & $20$ & $0.008078$ & $0.003336$ & $0.0028^{*}$ \\
SP500 & Neural   & $5$  & $0.009235$ & $0.002057$ & $0.0222^{*}$ \\
SP500 & Neural   & $10$ & $0.009235$ & $0.003340$ & $0.0078^{*}$ \\
SP500 & Neural   & $20$ & $0.009235$ & $0.003650$ & $0.0048^{*}$ \\
\hline
NAO & Monomial & $7$  & $0.000021$ & $-0.000913$ & $0.4762$ \\
NAO & Monomial & $14$ & $0.000021$ & $-0.000837$ & $0.4801$ \\
NAO & Monomial & $30$ & $0.000021$ & $-0.000855$ & $0.4716$ \\
NAO & Neural   & $7$  & $0.000846$ & $0.000005$ & $0.0495^{*}$ \\
NAO & Neural   & $14$ & $0.000846$ & $0.000106$ & $0.0306^{*}$ \\
NAO & Neural   & $30$ & $0.000846$ & $0.000067$ & $0.0364^{*}$ \\
\hline
\end{tabular}%
}
\caption[Circular block bootstrap comparisons for each RBE ensemble against the best alternative in Table \ref{tablecrps}]
{Circular block bootstrap comparisons for each RBE ensemble against the best alternative in Table \ref{tablecrps}. The mean difference $\overline d$ is the alternative's CRPS minus the RBE CRPS, so a positive value favors the RBE model. An asterisk indicates rejection of the null hypothesis of nonpositive difference. }
\label{tablecrpsbootstrap}
\end{table}

We see that the confidence intervals for all considered block lengths are strictly positive for SP500 and Bitcoin data with both RBE variants. The evidence that both RBE ensembles have lower mean CRPS than their best alternatives is therefore robust to the block length. For NAO, the monomial RBE and AR$(4)$ scores are statistically indistinguishable. The neural RBE has a larger positive difference, with one-sided $p$-values between $0.031$ and $0.049$. This would suggest some borderline evidence in favor of the neural model for this series.

Finally, we did a separate block bootstrap comparing the neural and monomial RBE ensembles. This produced $95\%$ intervals containing zero for all three series and every block length considered, indicating there is no clear evidence that either RBE variant has better predictive accuracy than the other for these series.

The fact that we cannot distinguish statistically the performance of our 3-channel neural RBE model from that of the 2-channel monomial RBE model and the fact that (\ref{locscale}) makes the latter a location-scale model led us to try one final, alternative benchmark: a recurrent location-scale model of the form
$$
X_{t+1}=a_t+b_t\varepsilon_{t+1},\qquad \varepsilon_{t+1}\sim N(0,1)
$$
where a shared GRU with the same hidden dimension as the GRU of the RBE models (i.e. 4) was run through $X_{\leq t}$. A linear readout gave $a_t$, while a second linear readout $c_t$ gave $b_t=10^{-3}+\operatorname{softplus}(c_t)$.

To have a comparable setup with that of the RBE models, we trained five restarts of this model with Gaussian quasi log-likelihood (see Appendix \ref{appimplementation} for more details). Table \ref{tablecrpsrnn} below compares the obtained mean CRPS for the equally-weighted mixture of these restarts against the mean CRPS of the RBE models.

\begin{table}[htbp]
\centering
\small
\begin{tabular}{lccc}
\hline
Series
& Monomial RBE
& Neural RBE
& Location-scale mixture \\
\hline
Bitcoin & $0.506175$ & $0.504803$ & $0.523276$ \\
SP500   & $0.434881$ & $0.433724$ & $0.434915$ \\
NAO     & $0.141409$ & $0.140584$ & $0.139499$ \\
\hline
\end{tabular}
\caption[Mean CRPS comparison of both RBE variants with the recurrent location-scale ensemble]
{Mean CRPS comparison of both RBE variants with the recurrent location-scale ensemble.}
\label{tablecrpsrnn}
\end{table}

As before, we ran block bootstrap comparisons to evaluate the statistical significances of these differences. Since in this case either model managed to outperform the other, we did two-sided tests. Table \ref{tablecrpsrnnbootstrap} shows the results.

\begin{table}[htbp]
\centering
\small
\resizebox{\textwidth}{!}{%
\begin{tabular}{llcccc}
\hline
Series
& RBE variant
& Block length $l$
& Mean difference $\overline d$
& $95\%$ basic interval
& Two-sided $p$-value \\
\hline
Bitcoin & Monomial & $6$  & $0.017101$ & $[0.012606,\,0.021648]$ & $0.0001^{*}$ \\
Bitcoin & Monomial & $12$ & $0.017101$ & $[0.012638,\,0.021836]$ & $0.0001^{*}$ \\
Bitcoin & Monomial & $42$ & $0.017101$ & $[0.012171,\,0.022090]$ & $0.0001^{*}$ \\
Bitcoin & Neural   & $6$  & $0.018473$ & $[0.014129,\,0.022888]$ & $0.0001^{*}$ \\
Bitcoin & Neural   & $12$ & $0.018473$ & $[0.014209,\,0.022785]$ & $0.0001^{*}$ \\
Bitcoin & Neural   & $42$ & $0.018473$ & $[0.013793,\,0.023499]$ & $0.0001^{*}$ \\
\hline
SP500 & Monomial & $5$  & $0.000033$ & $[-0.004130,\,0.003877]$ & $0.9887$ \\
SP500 & Monomial & $10$ & $0.000033$ & $[-0.004225,\,0.003818]$ & $0.9893$ \\
SP500 & Monomial & $20$ & $0.000033$ & $[-0.003894,\,0.003693]$ & $0.9875$ \\
SP500 & Neural   & $5$  & $0.001191$ & $[-0.005744,\,0.006895]$ & $0.6946$ \\
SP500 & Neural   & $10$ & $0.001191$ & $[-0.004191,\,0.005487]$ & $0.6229$ \\
SP500 & Neural   & $20$ & $0.001191$ & $[-0.003745,\,0.005022]$ & $0.5990$ \\
\hline
NAO & Monomial & $7$  & $-0.001910$ & $[-0.003212,\,-0.000268]$ & $0.0144^{*}$ \\
NAO & Monomial & $14$ & $-0.001910$ & $[-0.003237,\,-0.000258]$ & $0.0133^{*}$ \\
NAO & Monomial & $30$ & $-0.001910$ & $[-0.003245,\,-0.000226]$ & $0.0163^{*}$ \\
NAO & Neural   & $7$  & $-0.001085$ & $[-0.002915,\,0.001182]$ & $0.2820$ \\
NAO & Neural   & $14$ & $-0.001085$ & $[-0.002909,\,0.001204]$ & $0.2945$ \\
NAO & Neural   & $30$ & $-0.001085$ & $[-0.002946,\,0.001210]$ & $0.3034$ \\
\hline
\end{tabular}%
}
\caption[Block bootstrap comparison between each RBE ensemble and the recurrent location-scale (RLS) ensemble]
{Block bootstrap comparison between each RBE ensemble and the recurrent location-scale (RLS) ensemble. We used $10000$ bootstrap samples and set
$d_t=\operatorname{CRPS}_{\mathrm{RLS},t}
-\operatorname{CRPS}_{\mathrm{RBE},t}$, so a positive value favors the RBE. The reported
$p$-values are for the two-sided test $H_0:\mathbb{E}[d_t]=0$. An asterisk indicates rejection of the null hypothesis $\mathbb{E}[d_t]=0$.}
\label{tablecrpsrnnbootstrap}
\end{table}

We see that the lower CRPS for both RBE models with Bitcoin data is statistically robust for all considered block lengths. On the other hand, all three models are statistically indistinguishable for SP500 in terms of CRPS. For NAO, the recurrent location-scale mixture outperforms the monomial RBE for NAO, while its difference from the neural RBE is not statistically significant. This indicates our RBE models do not have a detectable advantage over a recurrent location-scale mixture benchmark for SP500 and NAO datasets.

\subsection{Calibration and innovation diagnostics: PIT Kolmogorov-Smirnov and Ljung-Box tests}

The second set of diagnostics we computed are based on the probability integral transform (PIT)
$$U_t^{(\widehat F)}=\widehat F_t^{\text{ens}}(X_t)$$
If $\widehat F_t^{\text{ens}}$ were the true conditional cdf and if it were continuous, then the sequence $U_t=U_t^{(\widehat F)}$ would be iid $\operatorname{Unif}(0,1)$.

To test for independence we applied the Ljung-Box test. The Ljung-Box statistic at maximum lag $m$ has an asymptotic $\chi^2$ distribution under the null hypothesis of no autocorrelation through lag $m$, so that a rejection at $p$-value below $\alpha=0.05$ means a rejection of no autocorrelation at level $0.05$ for this lag. We therefore consider it a `pass' if the null is not rejected. For our tests we used maximum lags $m=5,10,20,40$, with $20$ used as the primary lag.

For restart $r$, let $\widehat y_{t,r}$ be the training barycentric variable and let
$$\widehat y_{t,r}^{\text{test}}=\widehat T_{x_{<t}',r}(x'_t)$$
be the variable obtained by applying the learned transport map to an out-of-sample $x_t'$. Equivalently, 
$$\widehat F_{y,r}(\widehat y_{t,r}^{\text{test}})=\widehat F_{t,r}(x'_t)$$

We applied the Ljung-Box test to the PIT variables $U_t$, the training barycentric variables $\widehat y_{t,r}$, the test sample barycentric variables $\widehat y_{t,r}^{\text{test}}$, and the centered squares of all these sequences. We also applied the test to the residuals of some alternative models such as AR and GARCH. The reason for testing the centered squares is that while the direct autocorrelation test detects linear serial dependence, conditional scale dependence can only be captured by a quadratic test\footnote{For example, a simple GARCH model would pass a linear autocorrelation test, but it clearly has quadratic dependence.}. Table \ref{tableLB} shows the results for lag $m=20$. Only the monomial RBE results are shown, since the neural RBE results are very similar and lead to the same rejection decisions in all cases. Indeed, we use only the monomial RBE values for the analysis and discussion in the next subsection\footnote{Notice too that for the neural variant we didn't compute the test sample barycentric variables $\widehat y_{t,r}^{\text{test}}$, which require a numerical solver to estimate the transport map from the first-order conditions.}.

It should be noted that the training sample $p$-values we use are the usual asymptotic Ljung-Box reference distribution $p$-values. We do not adjust for applying the estimation of the transport map on the training sample used for estimation, since we have no analogue of classical degrees of freedom corrections for fitted residuals of statistical models (such as AR$(p)$)\footnote{It should be possible (but costly) to attempt to derive the corresponding post-estimation null distribution}. In any case, our conclusions will not depend on a borderline training sample $p$-value. For out-of-sample tests no such post-estimation correction is needed.

We next tested that the marginal distribution is uniform. For this we considered the empirical probability integral transform distribution
$$\widehat G_n(u)=\frac{1}{n}\sum_{t=1}^n\mathbf 1_{\{U_t\leq u\}}$$
the Kolmogorov-Smirnov statistic is
$$D_n=\sup_{u\in[0,1]}|\widehat G_n(u)-u|$$
Under the null hypothesis that the $U_t$ are iid and uniform, the distribution of $\sqrt{n}D_n$ converges to the Kolmogorov distribution. In our setting the $U_t$'s arise by applying the PIT to the discrete RBE predictive distribution. This is a discrete distribution obtained from the learned barycentric variables $\widehat y_t$ and the learned transport map $\widehat T$, so we know it is not strictly speaking a uniform random variable, since it is not even continuous. We can nonetheless use the KS test as a useful marginal calibration diagnostic. Another important caveat is that the iid assumption is precisely what is being tested in the other set of diagnostics, using the Ljung-Box test. When serial independence fails the statistic $D_n$ is still a meaningful measure of marginal calibration, since $D_n\to 0$ almost surely if the $(U_t)$ are stationary and ergodic with uniform marginal. However, if the $U_t$ are no longer iid then the Kolmogorov distribution is no longer the limit distribution of $\sqrt{n}D_n$. 

In addition to the KS distance, sample mean and standard deviation of the PIT reported in Table \ref{tablepit}, we computed the empirical coverage of central predictive intervals. For a continuous predictive cdf and any level $\alpha\in[0,1]$
$$C_\alpha=P\left(\frac{1-\alpha}{2}\leq U_t\leq\frac{1+\alpha}{2}\right)=P\left(q_{t,(1-\alpha)/2}\leq x'_t\leq q_{t,(1+\alpha)/2}\right)=\alpha$$
where $q_{t,\beta}$ is the quantile of level $\beta$ of that predictive distribution. For our RBE learned distribution we have
$$\widehat q_{t,\beta}=\inf\{x:\widehat F_t^{\mathrm{ens}}(x)\geq\beta\}$$
Its level $\alpha$ central interval coverage is
$$\widehat C_\alpha=\frac1{T_{\text{test}}}\sum_{t=1}^{T_{\text{test}}}\mathbf 1_{\{\widehat q_{t,(1-\alpha)/2}\leq x'_t\leq\widehat q_{t,(1+\alpha)/2}\}}$$
and, for a correctly calibrated predictive distribution, it should be close to $C_\alpha=\alpha$. We therefore computed it for $\alpha=0.25,0.50,0.75,0.90$ and $0.98$. Results are shown in Table \ref{tablecoverage}. We see that the RBE $\widehat C_\alpha$'s are closer to the true value than alternatives like GARCH and AR$(4)$.

Notice, however, that central coverage does not detect every departure from uniformity, since it only measures the sum of the two tail frequencies. A $90\%$ interval could contain $90\%$ of the observations while $8\%$ fall below it and only $2\%$ above it, instead of the correct proportions $5\%$ and $5\%$. The central coverage would then be correct even though the predictive distribution is displaced. Indeed, we shall see that the RBE predictive distribution for Bitcoin has almost exact central coverage while its PIT mean ($0.4751$) is below $0.5$.

\begin{table}[htbp]
\centering
\scriptsize
\resizebox{\textwidth}{!}{%
\begin{tabular}{llcccc}
\hline
Series
& Method
& Mean $U$
& $\operatorname{sd}(U)$
& $D_{\mathrm{KS}}$
& $p_{\mathrm{KS}}$ \\
\hline
Bitcoin
& RBE ensemble
& $0.47508$
& $0.28817$
& $0.04907$
& $0.01567^{*}$ \\

Bitcoin
& Empirical unconditional cdf
& $0.47551$
& $0.29456$
& $0.05900$
& $0.00181^{*}$ \\
\hline
SP500
& RBE ensemble
& $0.50451$
& $0.28033$
& $0.04195$
& $0.33098$ \\
\hline
NAO
& RBE ensemble
& $0.50843$
& $0.29242$
& $0.03407$
& $0.19174$ \\

NAO
& AR$(4)$
& $0.50918$
& $0.29257$
& $0.03615$
& $0.14289$ \\
\hline
\end{tabular}
}
\caption[Diagnostics for the out-of-sample PIT variables.]
{Diagnostics for the out-of-sample PIT variables. The KS column contains the  $p$-values for the Kolmogorov-Smirnov test. An asterisk indicates rejection of the uniformity null hypothesis at level $0.05$. RBE values correspond to the monomial option. The empirical unconditional row for Bitcoin uses the empirical cdf of the whole training sample, while the AR$(4)$ row for NAO uses the empirical cdf of its training residuals. Notice the standard deviation of a uniform random variable is $1/\sqrt{12}\simeq0.28868$. }
\label{tablepit}
\end{table}

\begin{table}[htbp]
\centering
\begin{tabular}{llccccc}
\hline
Series & Method
& $25\%$ & $50\%$ & $75\%$ & $90\%$ & $98\%$ \\
\hline
Bitcoin & RBE ensemble
& $24.0\%$ & $50.1\%$ & $74.7\%$ & $91.1\%$ & $98.5\%$ \\

Bitcoin & Unconditional
& $22.8\%$ & $47.5\%$ & $71.6\%$ & $89.7\%$ & $98.2\%$ \\

Bitcoin & GARCH
& $35.6\%$ & $62.1\%$ & $81.7\%$ & $91.5\%$ & $96.3\%$ \\

SP500 & RBE ensemble
& $24.9\%$ & $52.0\%$ & $77.7\%$ & $91.6\%$ & $97.4\%$ \\

NAO & RBE ensemble
& $23.9\%$ & $49.7\%$ & $74.3\%$ & $88.4\%$ & $97.8\%$ \\

NAO & AR$(4)$
& $23.4\%$ & $48.7\%$ & $74.8\%$ & $88.5\%$ & $97.8\%$ \\
\hline
\end{tabular}
\caption[Empirical coverage $\widehat C_\alpha$ of central intervals of different predictive distributions.]{Empirical coverage $\widehat C_\alpha$ of central intervals of different predictive distributions. `Unconditional' refers to the empirical distribution of the training data.}
\label{tablecoverage}
\end{table}

Finally, Figure \ref{fig:historical_pit} shows the empirical PIT cdf and the PIT variables autocorrelations up to lag $40$ for all three times series.

\begin{figure}[htbp]
    \centering
    \includegraphics[width=\textwidth]{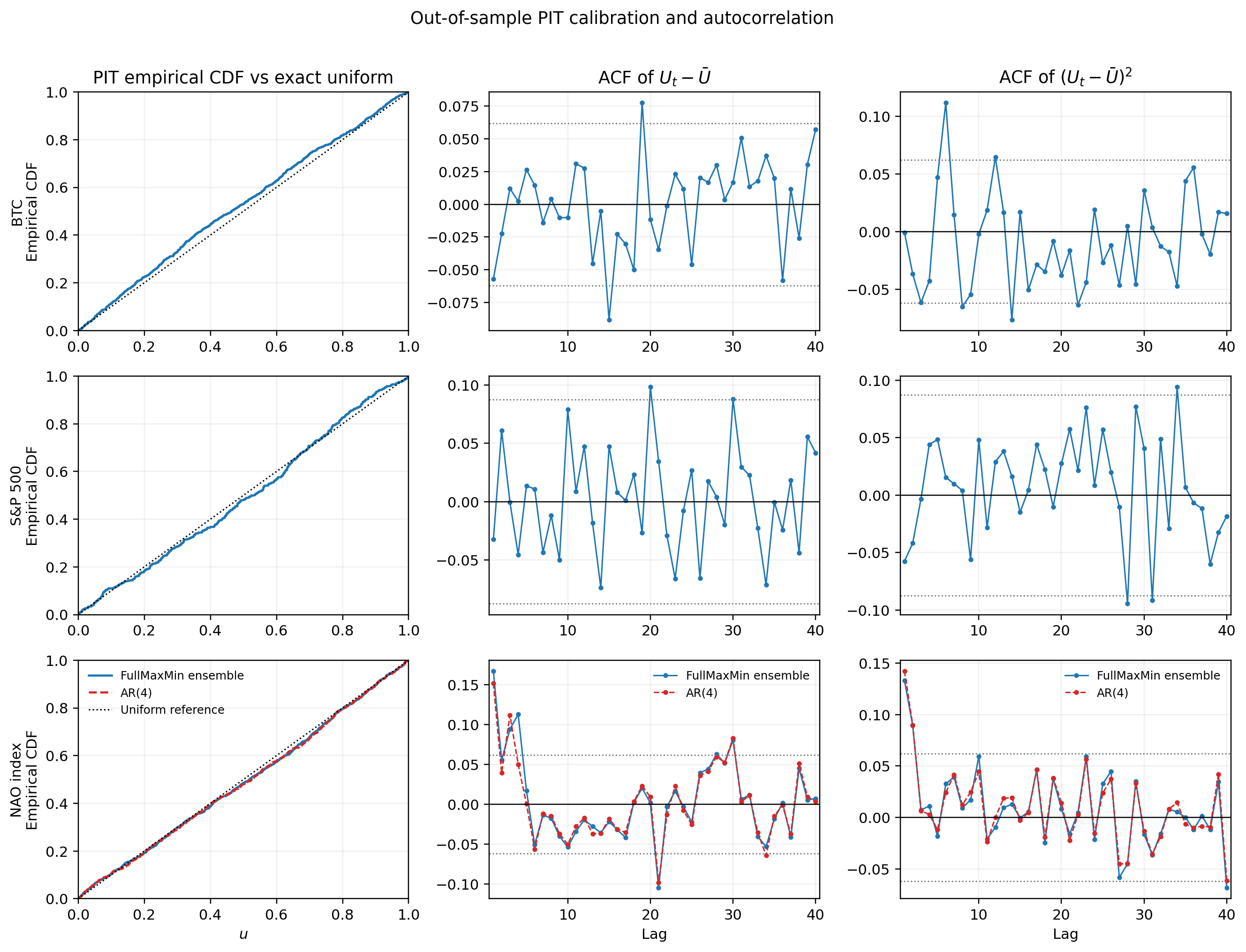}
    \caption[Out-of-sample PIT calibration and serial dependence]{Out-of-sample PIT calibration and serial dependence. The left column shows the empirical cdf of the RBE PIT $U_t$ against the uniform cdf. The center and right columns show the autocorrelation function (ACF) of $U_t-\bar U$ and $(U_t-\bar U)^2$, respectively. The horizontal dotted lines are the pointwise bounds $\pm1.96/\sqrt n$. For NAO, the dashed red curve corresponds to the AR$(4)$ model.}
    \label{fig:historical_pit}
\end{figure}

\subsection{Evaluation of the calibration diagnostics}

Of all three datasets, the SP500 gives the clearest positive result. Its PIT variables pass the KS test for the uniform distribution with $p=0.3310$, and together with the training and out-of-sample barycentric variables they also pass both the linear and the squared Ljung-Box tests for all five restarts, indicating that the extracted barycentric variables have no detectable dependence. 

For Bitcoin, the standard deviation of the PIT is almost exactly the uniform value, and its central empirical distribution is close to uniform, but its mean at $0.4751$ is significantly below the expected $1/2$, which corresponds to an upward bias in the predictive distribution\footnote{To see this, notice that if $F_0$ is the training distribution and $G$ is the test distribution, then for the unconditional PIT $U=F_0(X)$,
$$\mathbb E[U]= P(Y\leq X),\qquad Y\sim F_0,\quad X\sim G$$
with $X$ and $Y$ independent. In other words, $\mathbb E[U]=0.4755$ means a test observation is larger than an independent training observation $47.55\%$ of the time, instead of the expected $50\%$.}. In fact, the KS test rejects uniformity of the marginal distribution with $p=0.0157$ (see Table \ref{tablepit}). It must be noted, however, that the same  upward bias affects the unconditional distribution, whose PIT mean is $0.4755$ and which also fails the KS test, indicating that the discrepancy was already present between the training and test blocks in the input data and was not created by the RBE. A natural explanation is that there is a structural change in the underlying law after the train/test cut. Nevertheless, our metrics cannot distinguish such a nonstationary change in the distribution from a transition between persistent latent regimes within a stationary law that the train/test samples do not capture because persistence makes the effective sample size much smaller than $n=4500$.

Regarding serial dependence for Bitcoin, at first sight, the training sample Ljung-Box results appear somewhat less satisfactory than the out-of-sample results. While all RBE restarts pass both out-of-sample Ljung-Box tests at maximum lag $m=20$, four of the five training sequences reject the Ljung-Box no autocorrelation null for $\widehat y_t$, and five out of five also reject it for the centered squares. To put these results in perspective, we compared with the standardized residuals of the alternative GARCH model
$$\widehat \epsilon_t=\frac{x_t- \widehat \mu}{\widehat\sigma_t}$$

These pass the Ljung-Box test, but their centered squares, like the RBE restarts, reject. Given that several of the statistics are close to the $5\%$ critical value, we could try to explain these patterns by looking not only at the pass/reject decision, but also at the Ljung-Box statistics themselves, and comparing them with the Ljung-Box statistics of the original Bitcoin returns. These are shown in Table \ref{tableLB}, 

\begin{table}[htbp]
\centering
\scriptsize
\resizebox{\textwidth}{!}{%
\begin{tabular}{lllcccc}
\hline
Series
& Sample
& Sequence $R_t$
& $Q_{20}(R)$
& $p_{\mathrm{LB}}(R)$
& $Q_{20}((R-\bar R)^2)$
& $p_{\mathrm{LB}}((R-\bar R)^2)$ \\
\hline
Bitcoin
& Training
& Original returns
& $54.42$
& ${5.01\times10^{-5}}^{*}$
& $357.05$
& ${1.57\times10^{-63}}^{*}$ \\

Bitcoin
& Training
& RBE's $\widehat y_t$
& $[30.85,38.85]$
& $[0.00696^{*},0.05715]$
& $[34.29,39.24]$
& $[0.00622^{*},0.02424^{*}]$ \\

Bitcoin
& Training
& GARCH residuals
& $30.81$
& $0.05776$
& $33.23$
& $0.03185^{*}$ \\

Bitcoin
& Test
& Original returns
& $21.28$
& $0.38096$
& $155.11$
& ${6.58\times10^{-23}}^{*}$ \\

Bitcoin
& Test
& RBE's $\widehat y_t$
& $[23.02,26.24]$
& $[0.15793,0.28759]$
& $[16.91,27.47]$
& $[0.12245,0.65903]$ \\

Bitcoin
& Test
& GARCH residuals
& $23.06$
& $0.28609$
& $19.21$
& $0.50852$ \\

Bitcoin
& Test
& RBE ensemble PIT
& $27.30$
& $0.12699$
& $46.56$
& $0.00067^{*}$ \\
\hline
SP500
& Training
& RBE's $\widehat y_t$
& $[21.00,24.87]$
& $[0.20662,0.39742]$
& $[11.67,19.10]$
& $[0.51525,0.92684]$ \\

SP500
& Test
& RBE's $\widehat y_t$
& $[10.24,12.72]$
& $[0.88920,0.96356]$
& $[11.68,26.06]$
& $[0.16386,0.92667]$ \\

SP500
& Test
& RBE ensemble PIT
& $20.20$
& $0.44540$
& $11.32$
& $0.93748$ \\
\hline
NAO
& Training
& RBE's $\widehat y_t$
& $[17.17,30.35]$
& $[0.06435,0.64221]$
& $[614.58,889.82]$
& $[{1.2\times10^{-175}}^{*},{2.4\times10^{-117}}^{*}]$ \\

NAO
& Test
& RBE's $\widehat y_t$
& $[37.29,47.12]$
& $[0.0006^{*},0.0108^{*}]$
& $[43.81,92.80]$
& $[{2.40\times10^{-11}}^{*},0.00160^{*}]$ \\

NAO
& Test
& RBE ensemble PIT
& $68.32$
& ${3.42\times10^{-7}}^{*}$
& $38.15$
& $0.00849^{*}$ \\

NAO
& Train
& AR$(4)$ residuals
& $26.681$
& $0.05869$
& $824.402$
& ${9.3132\times10^{-162}}^{*}$ \\

NAO
& Test
& AR$(4)$ residuals
& $33.80$
& $0.0275^{*}$
& $74.0067$
& ${3.9859\times10^{-8}}^{*}$ \\

NAO
& Test
& AR$(4)$ PIT
& $54.50$
& ${4.88\times10^{-5}}^{*}$
& $39.33$
& $0.00606^{*}$ \\

\hline
\end{tabular}
}
\caption[Ljung-Box statistics with $m=20$ for different sequences $R_t$ and their centered squares]
{Ljung-Box statistics with $m=20$ for different sequences $R_t$ and their centered squares. An asterisk indicates rejection of the no autocorrelation null hypothesis at level $0.05$. For RBE, the intervals show the range of values for the five restarts of the monomial variant. The endpoints of each $p$-value interval are marked separately, so that an interval with only its lower endpoint marked contains both rejecting restarts and restarts that pass. }
\label{tableLB}
\end{table}

The original returns already contain significant linear dependence in the training period, with $Q_{20}=54.42$ and $p=5.01\times10^{-5}$. The five RBE restarts reduced this to $Q_{20}\in[30.85,38.85]$. Thus, all restarts remove a significant part of the original linear dependence. For GARCH the corresponding statistic is $30.81$, which is very close to the $\chi_{20}^{2,5\%}$ critical value $31.41$. Notice that the best RBE value, $30.85$, is almost identical to the GARCH value. It would therefore be a mistake to interpret the fact that GARCH and one RBE restart fall immediately below the critical value while the other four RBE restarts fall above it (three of which do so with a $p$-value above $2\%$) as evidence of a qualitative difference between the two methods. On the contrary, their similarity becomes more apparent when we compare the scale component dependence captured by the centered squares. The centered squares Ljung-Box statistic of the original training returns is $Q_{20}=357.05$, whereas the RBE values are in the range $34.29-39.24$. Gaussian GARCH gives $Q_{20}=33.23$. This indicates that both methods remove most of the dependence in squared returns, although the much longer training sample still detects a small residual dependence, both for GARCH and for the RBE. We may then say that the RBE makes the centered squares Ljung-Box statistic fall by approximately an order of magnitude, quite similar to what a model designed specifically for conditional scale does.

This interpretation is reinforced by the out-of-sample results. Indeed, the test sample original returns pass the Ljung-Box test ($Q_{20}=21.28$ with $p=0.3810$), so it is not surprising the RBE barycentric variables pass for all five restarts. In contrast, the original squared returns do reject, with $Q_{20}=155.11$ and $p$-value $6.58\times10^{-23}$, while the squared centered barycentric variables learned by our method have $Q_{20}$ statistics  between $16.91$ and $27.47$, none of which reject. GARCH, for its part, gives $Q_{20}=19.21$. We can therefore say that the RBE sampler removes the detectable out-of-sample scale dependence as effectively as GARCH (all while achieving a lower CRPS, as we saw in the previous section).

These diagnostic discrepancies compel us to identify the sources for the different training and out-of-sample results with Bitcoin data. We find two. First, the training sample has size $n=4500$, whereas the testing sample has size $n=1000$. This is an important difference, because the Ljung-Box statistic is
$$Q_m=n(n+2)\sum_{k=1}^{m}\frac{\widehat\rho_k^2}{n-k}$$
so that a fixed sequence of autocorrelations produces a statistic that grows approximately linearly with sample size $n$. Thus, $n=4500$ will have a scale of approximately $0.0292$, compared to $0.0620$ for $n=1000$. Hence, residual autocorrelations that are detected in the training sequence may remain undetected in a test sample less than $25\%$ its size.

The second source is that the autocorrelation structure of the original returns seems to change through time. To evaluate this hypothesis we divided the training sample into four disjoint blocks of length $1000$ and applied Ljung-Box diagnostics to them. Results are shown in Table \ref{tableBlocks}. The original returns pass in the first block, with $p=0.0722$, but reject in the next three, with $p$-values $0.00070$, $0.00207$ and $0.01741$. The out-of-sample returns again do not reject, with $p=0.3810$. On the other hand, all five RBE restarts pass on the first, third and fourth training block. Thus, the discrepancy between the training and test Ljung-Box results is likely a reflection both of the greater power in the larger training sample and also of some non-homogeinity or variability across time in the autocorrelation structure.

\begin{table}[htbp]
\centering
\scriptsize
\resizebox{\textwidth}{!}{%
\begin{tabular}{clcccc}
\hline
Block
& Sequence $R_t$
& $Q_{20}(R)$
& $p_{\mathrm{LB}}(R)$
& $Q_{20}((R-\bar R)^2)$
& $p_{\mathrm{LB}}((R-\bar R)^2)$ \\
\hline
1
& Original returns
& $29.86$
& $0.07222$
& $107.38$
& $5.89\times10^{-14}$ \\

1
& RBE $\widehat y_t$
& $[26.00,28.13]$
& $[0.10630,0.16589]$
& $[21.71,26.26]$
& $[0.15726,0.35669]$ \\

1
& GARCH residuals
& $26.00$
& $0.16582$
& $24.32$
& $0.22877$ \\
\hline
2
& Original returns
& $46.45$
& $0.00070$
& $46.32$
& $0.00073$ \\

2
& RBE $\widehat y_t$
& $[32.67,39.20]$
& $[0.00630,0.03665]$
& $[8.51,14.10]$
& $[0.82549,0.98796]$ \\

2
& GARCH residuals
& $29.98$
& $0.07013$
& $9.43$
& $0.97738$ \\
\hline
3
& Original returns
& $42.97$
& $0.00207$
& $226.28$
& $6.65\times10^{-37}$ \\

3
& RBE $\widehat y_t$
& $[23.57,29.56]$
& $[0.07734,0.26182]$
& $[21.36,31.22]$
& $[0.05239,0.37616]$ \\

3
& GARCH residuals
& $28.11$
& $0.10672$
& $35.71$
& $0.01665$ \\
\hline
4
& Original returns
& $35.54$
& $0.01741$
& $32.74$
& $0.03605$ \\

4
& RBE $\widehat y_t$
& $[15.31,24.28]$
& $[0.23026,0.75865]$
& $[19.31,24.81]$
& $[0.20899,0.50159]$ \\

4
& GARCH residuals
& $26.17$
& $0.16013$
& $21.33$
& $0.37789$ \\
\hline
\end{tabular}
}
\caption[Ljung-Box statistics in Bitcoin training split blocks]
{Ljung-Box statistics in Bitcoin training split blocks. Se used maximum lag $m=20$ for four disjoint training blocks of length $1000$ (the first $500$ observations of the original block were omitted). The intervals for the RBE represent the range for the five restart values.}
\label{tableBlocks}
\end{table}

More insight about this remaining source of dependence is provided by looking at how the Ljung-Box decision changes with the maximum lag $m$. The linear test for the original returns does not reject at $m=5$ ($p$ is $0.1156$), but rejects at $m=10$ with $p=0.00133$ (see Table \ref{tablelags}). Moreover, all five RBE training centered-square sequences pass at $m=5$, with $p$-values between $0.4029$ and $0.6949$, whereas all five reject at $m=10$, with $p$-values between $0.0054$ and $0.0273$. The corresponding GARCH squared-residual $p$-value decreases from $0.2553$ at $m=5$ to $0.0719$ at $m=10$, $0.0319$ at $m=20$ and $0.0010$ at $m=40$.

There is a natural tentative explanation for this. The dataset contains six four-hour observations per day, so lag $6$ is the first lag corresponding to a complete daily cycle. Its exclusion at $m=5$ and inclusion at $m=10$ suggests that the remaining dependence might contain a periodic component. To test this hypothesis we plotted the autocorrelation functions up to lag $84$, which corresponds to two weeks. The results are shown in Figure \ref{fig:btc_innovation_acf}.

\begin{table}[htbp]
\centering
\scriptsize
\resizebox{\textwidth}{!}{%
\begin{tabular}{lcccc}
\hline
Sequence 
& $m=5$
& $m=10$
& $m=20$
& $m=40$ \\
\hline
Original returns

& $0.11557$
& $0.00133^{*}$
& $(5.01\times10^{-5})^{*}$
& $0.00241^{*}$ \\

Squared original returns

& $(1.77\times10^{-44})^{*}$
& $(1.02\times10^{-55})^{*}$
& $(1.57\times10^{-63})^{*}$
& $(2.97\times10^{-68})^{*}$ \\
\hline
RBE learned $\widehat y_t$

& $[0.26378,0.63427]$
& $[0.02832^{*},0.17259]$
& $[0.00696^{*},0.05715]$
& $[0.04862^{*},0.28194]$ \\

Squared RBE $\widehat y_t^2$

& $[0.40287,0.69495]$
& $[0.00541^{*},0.02730^{*}]$
& $[0.00622^{*},0.02424^{*}]$
& $[0.00199^{*},0.00786^{*}]$ \\
\hline
GARCH residuals

& $0.57297$
& $0.17802$
& $0.05776$
& $0.14311$ \\

Squared GARCH residuals

& $0.25531$
& $0.07191$
& $0.03185^{*}$
& $0.00097^{*}$ \\
\hline
\end{tabular}
}
\caption[Ljung-Box $p$-values for the Bitcoin training sample at maximum lags $m=5,10,20,40$]
{Ljung-Box $p$-values for the Bitcoin training sample at maximum lags $m=5,10,20,40$. An asterisk indicates rejection of the null hypothesis of no autocorrelation at level $0.05$. The intervals for RBE are the ranges across the five restarts.}
\label{tablelags}
\end{table}

\begin{figure}[htbp]
    \centering
    \includegraphics[width=\textwidth]
    {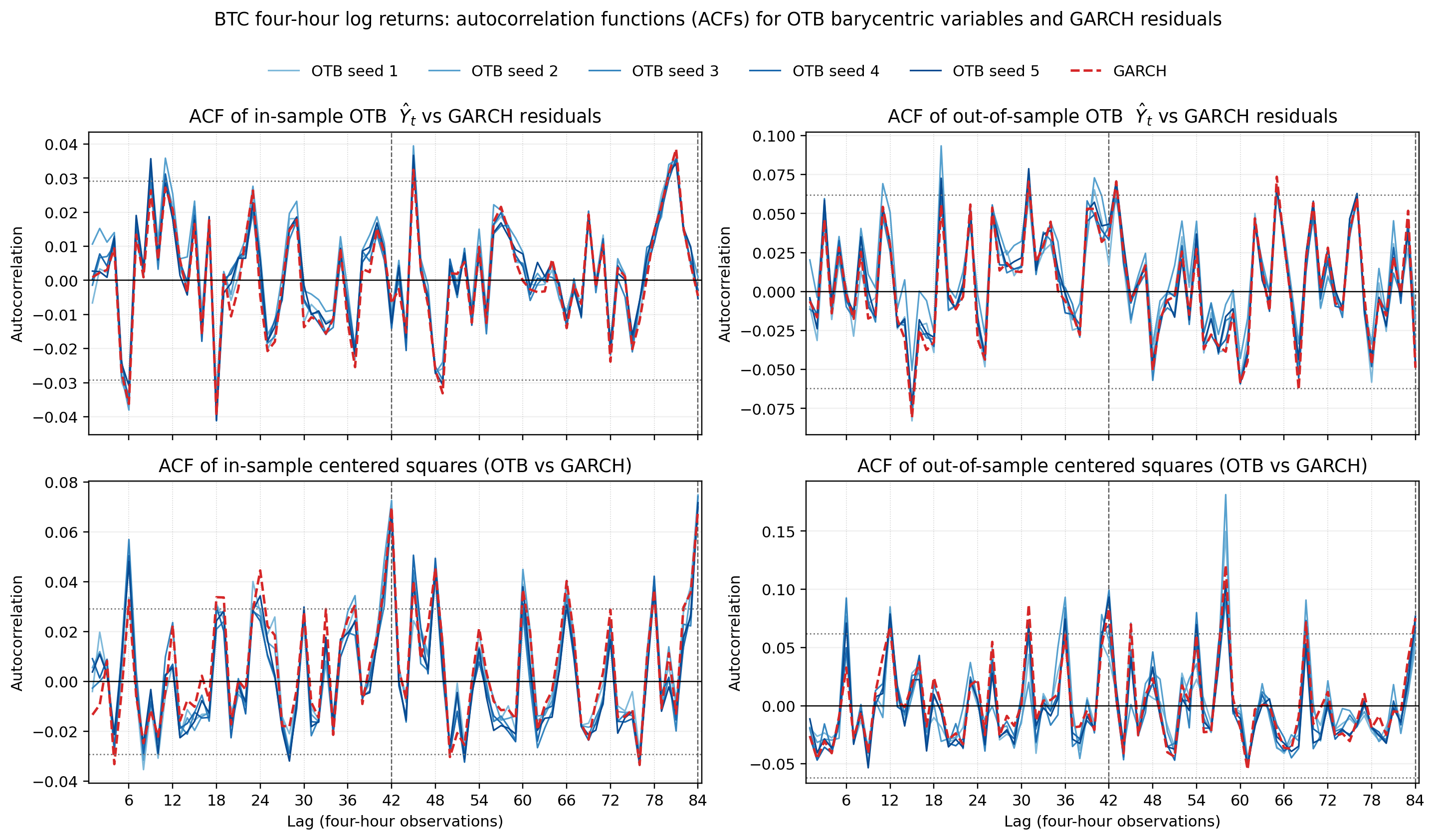}
    \caption[Autocorrelations of Bitcoin RBE and GARCH innovations]
    {Autocorrelations of Bitcoin RBE and GARCH innovations. Sample autocorrelations of the five RBE barycentric sequences, the GARCH standardized residuals and their centered squares. The plots on the left show the training sequence and those on the right use the out-of-sample sequence of length $1000$. Vertical grid lines mark a day (six 4-hour lags). Dashed lines mark lags $42$ and $84$, corresponding to one and two weeks. The horizontal dotted lines are the pointwise bounds $\pm1.96/\sqrt n$.}
    \label{fig:btc_innovation_acf}
\end{figure}

The training sample centered-square plots support our interpretation. There are positive autocorrelations at several multiples of $6$, including a noticeable peak at lag $6$ and particularly large peaks at lags $42$ and $84$, corresponding to one and two weeks, respectively. These peaks appear in both the RBE and GARCH innovations, which show remarkably similar autocorrelation patterns\footnote{The strong similarity should not make us conclude that the two methods estimate the same conditional distribution. Their interval behavior differs, and GARCH has mean CRPS $0.513909$, compared with $0.506175$ for the RBE ensemble.}. It is clear then that part of the remaining dependence is a feature of the data not extracted by either method.

A simple mechanism for this periodic pattern is the following. Suppose that, after conditional mean is removed, the process has a residual innovation of the form $R_t=s_t\epsilon_t$, where the $\epsilon_t$ are iid with unit variance but $s_t$ depends on time of day or day of the week and is hence periodic. The expected sample autocovariance of the centered squares is then approximately 
$$\frac{1}{n-k}\sum_{t=1}^{n-k}
 (s_t^2-\overline{s^2})(s_{t+k}^2-\overline{s^2})$$
If $s_{t+6}=s_t$ or $s_{t+42}=s_t$, corresponding to dependence on time of the day or day of the week, then the expression would be positive at $k=6$ or $k=42$.

In any event, the plots suggest a clear periodic component that was not removed, at least not significantly, by either the RBE or the GARCH fitting. This shouldn't be surprising, since both the RBE and the GARCH model lack clock or calendar covariates.  

For NAO the PIT passes the uniform KS test, but both $U_t$ and $(U_t-\bar U)^2$ reject the Ljung-Box null. The training barycentric variables do not show linear autocorrelation, but their centered squares reject for all restarts. Additionally, both the out of sample barycentric variables $\widehat y_{t,r}^{\text{test}}$ and their centered squares reject for all restarts. This might seem like a negative result at first, but it is informative to make a comparison with the AR$(4)$ model. First, the AR$(4)$ model's mean CRPS was statistically indistinguishable from the monomial RBE ensemble: $0.141430$ against $0.141409$ for the RBE. Their PIT means, standard deviations, KS distances and average $90\%$ interval widths are also almost identical. It seems that with NAO data the monomial RBE learned essentially the same conditional distribution as the AR$(4)$ model. Indeed, just as with the learned barycentric variables, the AR$(4)$ training residuals do not reject linear autocorrelation at lag $20$ ($p=0.0587$), but their squares reject with $p<10^{-150}$, and the out of sample residuals and their squares reject too, with $p=0.0275$ and $p=3.99\times10^{-8}$, respectively (see Table \ref{tableLB}).

A likely explanation for this failure by both the RBE predictive distribution and the AR$(4)$ model is that the conditional dynamics of NAO are not time-homogeneous on the period considered. To see why this could produce exactly the observed pattern, consider the simple model
$$x_{t+1}=ax_t+s(t)\epsilon_{t+1}$$
where the $\epsilon_t$ are iid, centered and have unit variance, but $s(t)$ is a deterministic function of time. An OTBP problem whose covariate is a recurrent state that does not identify $t$ or the temporal variable $s(t)$ may remove the autoregressive location and still return
$$\widehat y_{t+1}-\text{mean}(\widehat y)\simeq s(t)\epsilon_{t+1}$$
For $k\neq0$ these variables will be uncorrelated, since $\mathbb E[\widehat y_t\widehat y_{t-k}]=0$,
but their squares satisfy $\mathbb E[\widehat y_{t+1}^2]=s(t)^2$. Thus, the autocovariance of $\widehat y_t^2$ reproduces the temporal factor $s(t)^2$ and a Ljung-Box test will likely reject for the square, even if it passes for $\widehat y_t$. If, additionally, the autoregressive coefficient itself varies with time, then fitting a recurrent optimal transport barycentric solver that is blind to time can leave extra linear dependence in the out-of-sample innovations.

Figure \ref{fig:nao_stability} explores this possibility. The upper plot shows AR$(4)$ coefficients estimated in rolling windows of $1000$ observations, shifted by $50$ observations at a time. The lower plot shows the $180$-observation rolling standard deviation of residuals from the fixed AR$(4)$ model fitted on the training block. The vertical line marks the beginning of the test period. While this is technically not a test for parameter change (notice the coefficients overlap), the plots nonetheless give clear evidence against a time-homogeneous conditional scale. The rolling standard deviation of the AR$(4)$ residuals shows persistent, almost periodic oscillations through both the training and the testing periods, with a peak around 2019. The rolling autoregressive coefficients also show a large excursion that coincides with an episode of unusually high volatility around 2019. These patterns show no discontinuity at the train/test cut, so it's more natural to interpret them as some recurrent form of time-inhomogeneity, rather than as a single structural break.

\begin{figure}[htbp]
    \centering
    \includegraphics[width=\textwidth]{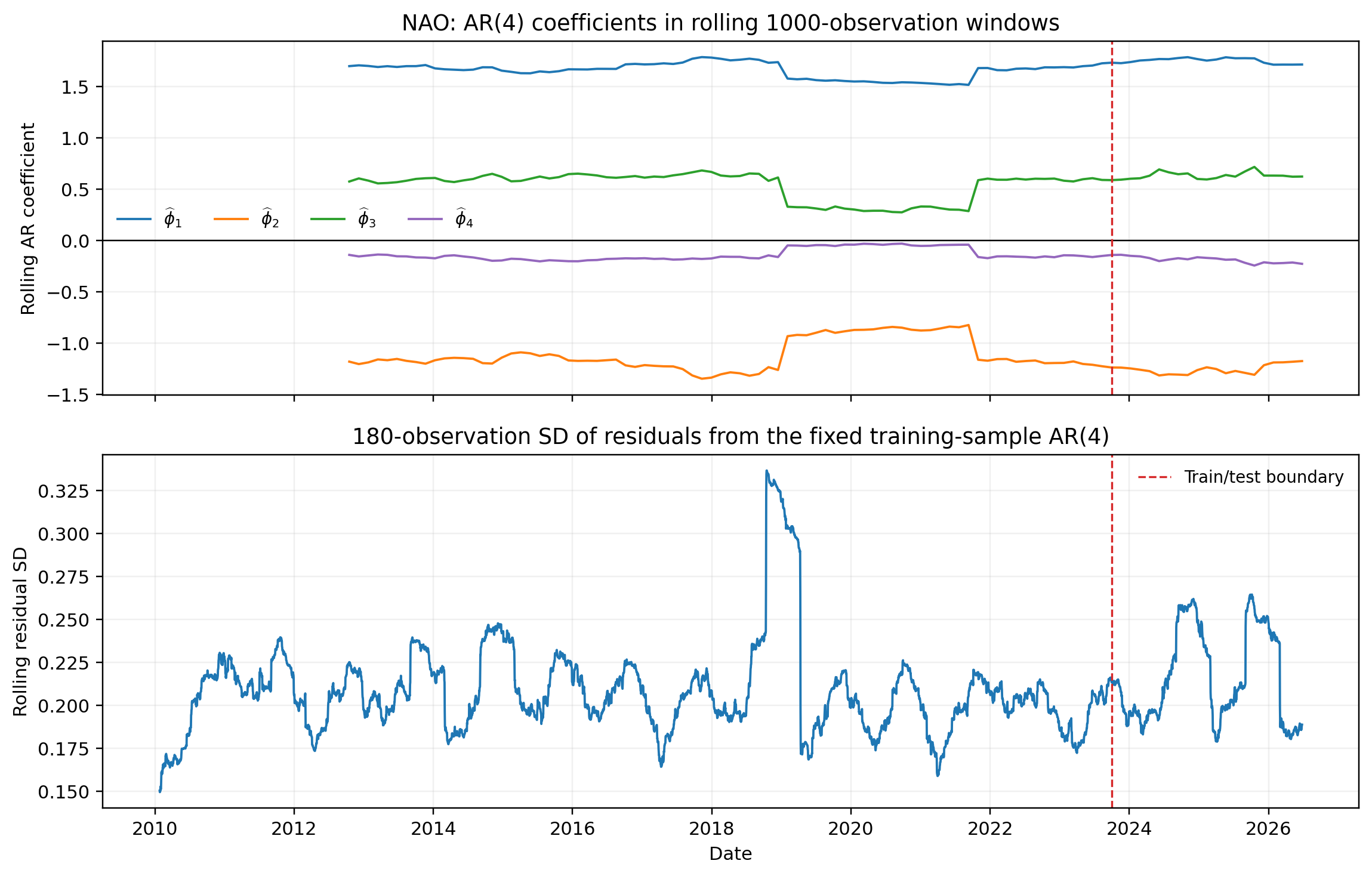}
    \caption[Rolling stability diagnostics for NAO]{Rolling stability diagnostics for NAO. AR$(4)$ coefficient estimates and standard deviation of residuals on rolling windows. The dashed vertical line separates the training block from the test block.}
    \label{fig:nao_stability}
\end{figure}

\subsection{Concluding remarks}\label{concluding}

The three series illustrate both the versatility and some limitations of the method. For the SP500, the OTB/RBE framework is successful both in improving the predictive score against all alternatives other than the location-scale ensemble, and in producing innovations for which standard tests can't detect any remaining serial dependence. For Bitcoin there is also an improvement in CRPS against all alternative models. However, some of the Ljung-Box tests show dependence, and the KS test for uniformity rejects. For the NAO data the method recovers the conditional mean dependence, although it offers no material improvement over an AR$(4)$ model that extracts the same structure, and performs worse than the location-scale ensemble when implemented with two monomial test functions.

The KS and Ljung-Box rejections for Bitcoin suggest that our stationarity assumption may not hold in practice. The autocorrelation patterns that show peaks at daily and weekly lags indicate that there is some periodic component, while the PIT upward bias suggests either a regime change or a structural break between the distributions before and after the train/test cut. Similarly, the rolling diagnostics for NAO suggest some variation across time in the scale of innovations and maybe also in their location. While the theory was developed for stationary processes, the method could be enriched by adding deterministic time covariates, especially periodic functions, such as time of day, day of week or seasonal phase, or some sinusoidal function of time, that would account for the evident time-inhomogeneity in the input data. Our population theory could be extended to cyclostationary processes to account for the inclusion of such covariates. For non periodic time dependence extending the population theory is a harder challenge. 

Another desirable improvement within this experimental setup would be to add a validation stage for model selection, especially for the CRPS comparisons. Currently, parametric choices for the RBE such as the number of channels $k$ and the hidden dimension $d$ were made intuitively based on synthetic results, and the choice for a best alternative was done on the same test sample used to compute the bootstrap significance tests. A cleaner experiment design would add a third, middle validation block to choose both $k$, $d$ and the $\varphi$ variant for the RBE and to choose a best alternative among different KNN$(p,K)$ models, rolling window models of different window sizes or AR$(q)$ models for different $q$. 

\section{An implementation of path simulation}\label{pathsimulation}

We turn now to the problem of path simulation, that is, of generating a new path $x_1',\dots,x_T'$ sampled under the law $P$ that generated the training path $x_1,\dots,x_T$.

Suppose first that $z_t$ is a sufficient statistic for the past and that the
exact transport map $T_z$ and barycentric distribution $\mu$ are known. By
construction, $(T_z)_\#P(\cdot\mid z)=\mu$
and hence
$$(T_z^{-1})_\#\mu=P(\cdot\mid z)$$
If $y_t'$ is then sampled independently from $\mu$ and we set $x_t'=T_{z_t'}^{-1}(y_t')$
then, conditionally on $z_t'=z$, the generated observation has distribution
$P(\cdot\mid z)$. If the state is updated correctly and $x_0'$ is sampled from the appropriate initial distribution, iterating this procedure reproduces the joint law of the process.

For our recurrent barycentric encoder, however, the state used by the inverse map would itself have to be constructed from the simulated path:
$$h_t'=\widehat G(h_{t-1}',x_{t-1}'),\qquad x_t'=\widehat T_{z_t'}^{-1}(y_t'),\qquad y_t'\overset{\mathrm{iid}}{\sim}\widehat\mu$$
That is, during prediction the recurrent state was computed from the observed history, but for path simulation it is computed from observations generated by the model itself. Approximation errors in the inverse map
thus change the subsequent state and, through it, all later conditional
distributions. Controlling the resulting discrepancy between the true and learned path laws would require stability assumptions on both the recurrent state update and the inverse transport maps. This is an interesting challenge that we will explore in further work, but which falls beyond the scope of this thesis.

We therefore did not use the recurrent barycentric encoder for path simulation and, instead, restricted our path simulations to synthetic Markov processes, for which the sufficient statistic for the past is known in advance. We did a run of the OTBP algorithm from \cite{Tabak25} on synthetic data generated by an ARCH$(1)$ model,
$$x_t=\sigma_t\epsilon_t,\qquad \sigma_t^2=\omega+\alpha x_{t-1}^2$$
with $\epsilon_t$ iid Gaussian standard variables and parameters $\omega=0.01$ and $\alpha=0.25$. This is a first-order Markov process, and can be regarded as a special case of the GARCH$(1,1)$ model, as explained in Section \ref{wellknown}.

As with the GARCH model, the exact Monge solution of the barycentric problem is known in closed form
$$y_t=T(x_t,z_t)=\frac{x_t}{\sqrt{\omega+\alpha z_t^2}}\mathbb{E}\left[\sqrt{\omega+\alpha z_t^2}\right]=\mathbb{E}\left[\sqrt{\omega+\alpha z_t^2}\right]\epsilon_t$$
with $z_t=x_{t-1}$.

As a first diagnostic of this alternative OTBP algorithm, Figure \ref{fig:1} plots the learned barycentric $\widehat y_t$ returned by the algorithm against the iid noise $\epsilon_t$ driving $x_t$. As predicted by our theory, the plot follows a straight line, with constant slope approximately equal to $\mathbb{E}[\sigma_t]=\mathbb{E}\left[\sqrt{0.01+0.25x_t^2}\right]$.

\begin{figure}
    \centering
    \includegraphics[width=0.5\linewidth]{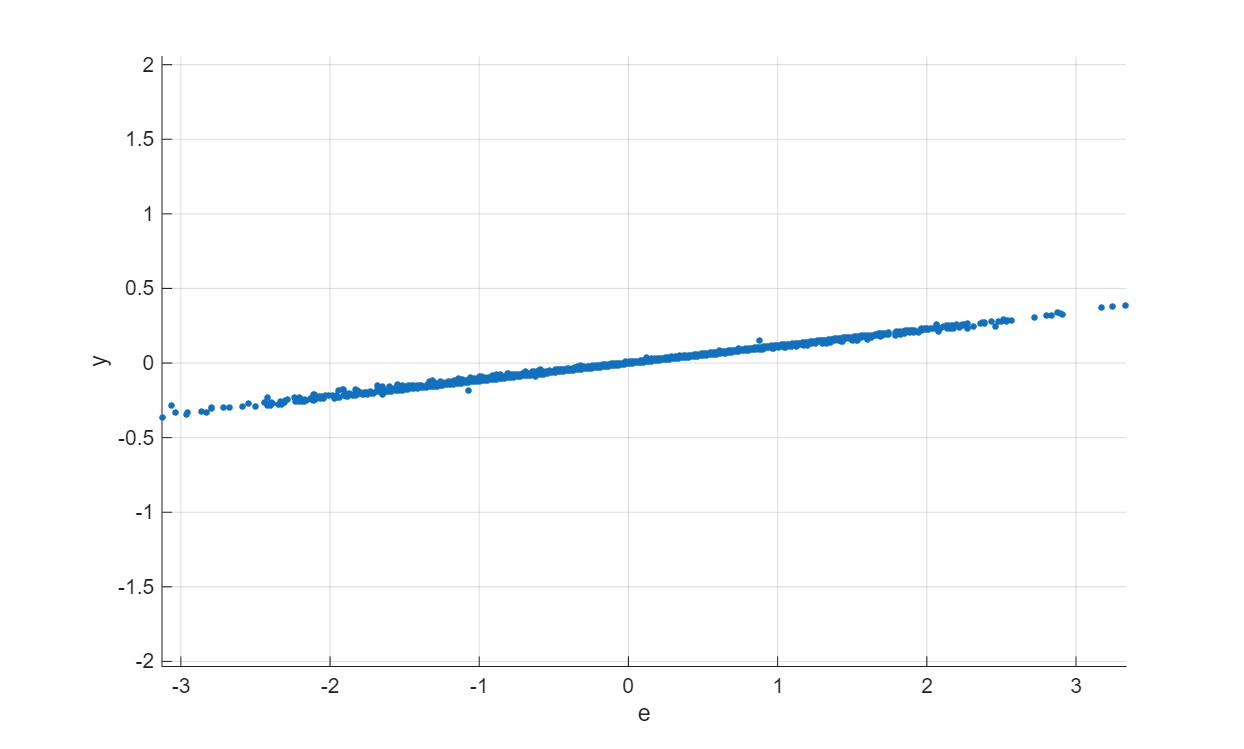}
    \caption[Learned barycentric variables plotted against the driving noise $\epsilon_t$ of an ARCH$(1)$ process]{Learned barycentric variables plotted against the driving noise $\epsilon_t$ of an ARCH$(1)$ process. Parameters  were $\omega=0.01$ and $\alpha=0.25$. The approximately linear relation agrees with the population identity $y_t=\mathbb E[\sigma_t]\epsilon_t$.}
    \label{fig:1}
\end{figure}

We then proceeded to simulate a new path as explained above by drawing $y_1',\ldots,y'_{T_{\text{test}}}$ independently from the empirical barycentric distribution
$$\widehat\mu_T=\frac{1}{T}\sum_{t=1}^T\delta_{\widehat y_t}$$

Starting from an initial
value $x_0'$, the simulated path was then generated recursively according to
$$x_t'=\widehat T_{x_{t-1}'}^{-1}(y_t')$$
Notice that for the exact population-level transport map this recursion reduces to
$$x_t'=\frac{\sigma(x_{t-1}')}{\mathbb E[\sigma_t]}y_t'=\sigma(x_{t-1}')\epsilon_t'$$
which is precisely the original ARCH$(1)$ transition.

Next, in order to verify whether the sampling method using the OTBP inverse transport map can  simulate the original stochastic law we compared its ARCH likelihood with that of a path of equal length generated directly from the ARCH model. With Gaussian innovations the conditional log-likelihood, up to an additive constant, is
$$\ell_{T_{\text{test}}}(\omega,\alpha)=-\frac{1}{2}\sum_{t=1}^{T_{\text{test}}}\left[\log\left(\omega+\alpha x_{t-1}^2\right)+\frac{x_t^2}{\omega+\alpha x_{t-1}^2}\right]$$

The log likelihood offers a direct evaluation of whether the simulated path reproduces the nonlinear conditional relation between $x_t$ and $x_{t-1}$. Figures \ref{fig:2} and \ref{fig:3} show the surfaces and contour lines of the likelihood functions on the parameters $\omega$ and $\alpha$ for one representative pair of paths of the ARCH-model-generated and the OTBP simulated paths.

\begin{figure}
    \centering
    \includegraphics[width=0.5\linewidth]{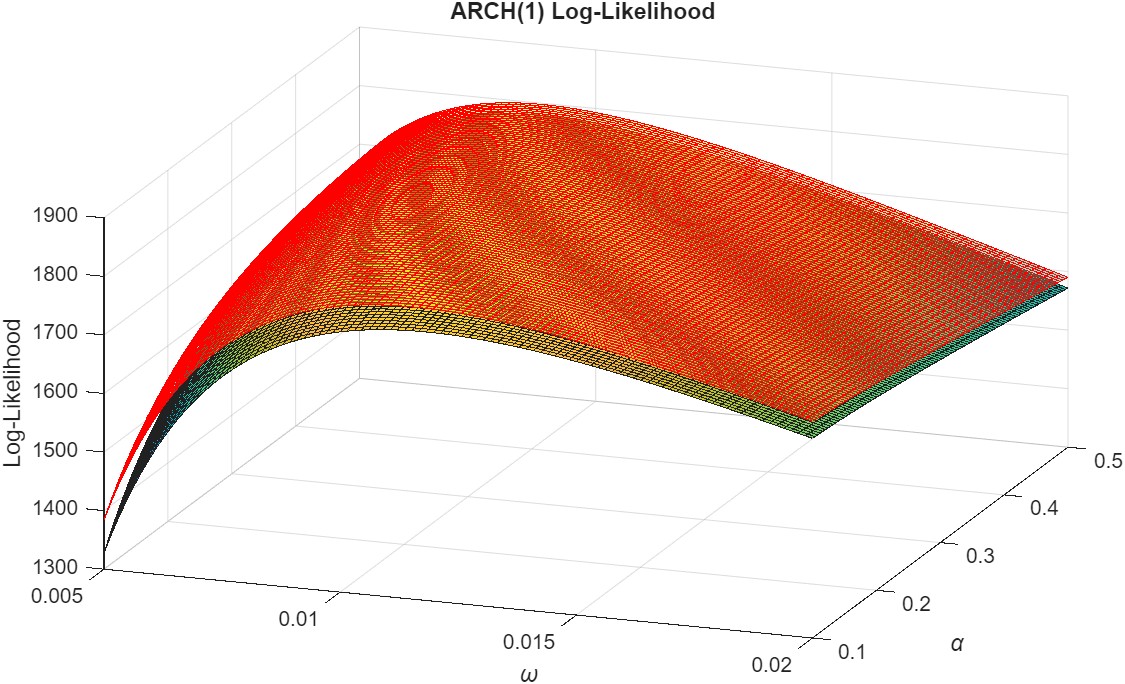}
    \caption[Likelihood surfaces for the OTB simulated path and the ARCH-model generated path]{Likelihood surfaces for the OTB simulated path and the ARCH-model generated path}
    \label{fig:2}
\end{figure}

\begin{figure}
    \centering
    \includegraphics[width=0.5\linewidth]{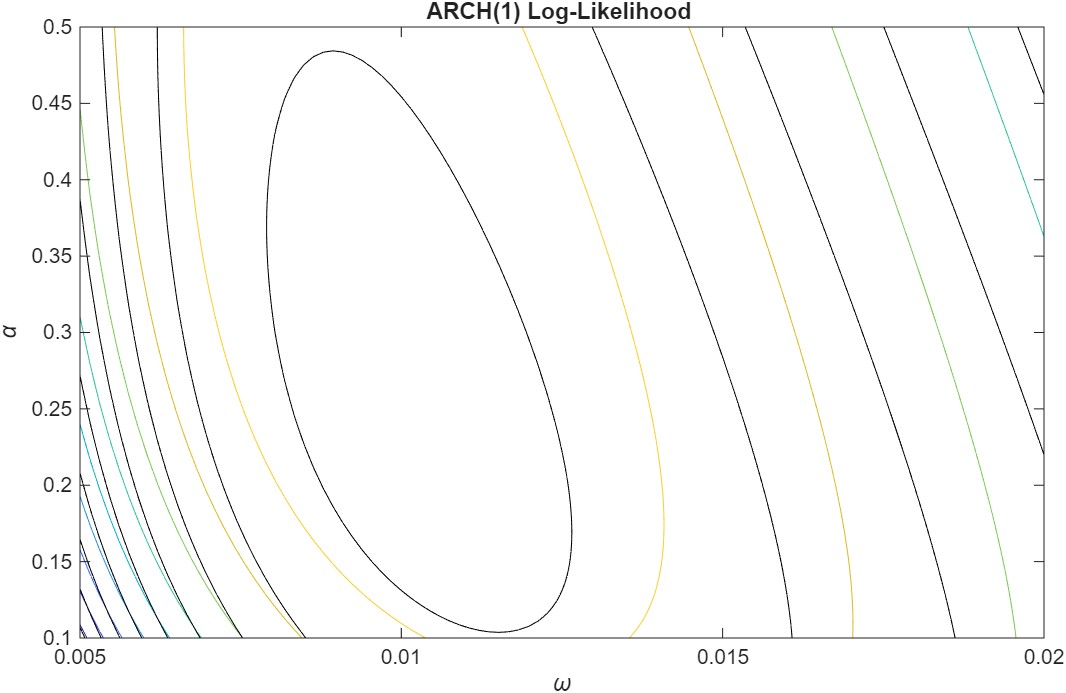}
    \caption[Likelihood contours for the OTB simulated path and the ARCH-model generated path]{Likelihood contours for the OTB simulated path and the ARCH-model generated path}
    \label{fig:3}
\end{figure}

The surfaces and contour lines exhibit a close alignment, suggesting that the method is effective at simulating paths that obey the law of a given ARCH$(1)$ process. To strengthen this conclusion beyond a single run (which could show alignment simply by chance), we ran a more systematic Monte Carlo experiment. In this experiment, the learned transport map and barycentric variables were held fixed, and $50$ independent paths of length $T_{\mathrm{test}}=2400$ were generated using the OTB inverse sampler. For comparison, $50$ paths of the same length were generated directly from the ARCH$(1)$ model, with an initial burn-in discarded sample of $500$ observations. A Gaussian ARCH$(1)$ model was then fitted by maximum likelihood to each path. Table \ref{tabarchMC} shows the results.

\begin{table}[htbp]
\centering
\scriptsize
\resizebox{\textwidth}{!}{%
\begin{tabular}{lcccccc}
\hline
Generator
& Sample mean
& $\widehat\omega$
& $\widehat\alpha$
& Sample variance
& $\widehat\rho_1(x_t^2)$
& $\ell(\widehat\omega,\widehat\alpha)/T_{\mathrm{test}}$ \\
\hline
Theoretical ARCH$(1)$
& $0$
& $0.010000$
& $0.250000$
& $0.013333$
& $0.250000$
& -- \\

Simulated ARCH
& $0.000254\;(0.002651)$
& $0.009961\;(0.000481)$
& $0.252758\;(0.033635)$
& $0.013329\;(0.000528)$
& $0.250612\;(0.048438)$
& $0.764923\;(0.018141)$ \\

OTBP inverse
& $-0.002871\;(0.002426)$
& $0.009633\;(0.000386)$
& $0.263266\;(0.034771)$
& $0.013135\;(0.000553)$
& $0.254624\;(0.031937)$
& $0.776687\;(0.017205)$ \\
\hline 
\end{tabular}%
}
\caption[Comparison of diagnostics for paths generated from the ARCH$(1)$ model against those generated by the OTBP inverse.]{Comparison of diagnostics for paths generated from the ARCH$(1)$ model against those generated by the OTBP inverse. Each entry in the second and third rows is the mean over $50$ paths, with the Monte Carlo standard deviation in parentheses. Each path contains $2400$ retained observations after a burn-in of $500$ observations. The OTBP map was learned once and remained fixed across simulations.}
\label{tabarchMC}
\end{table}

All likelihood optimizations converged. The simulations generated directly by the ARCH$(1)$ model are used as a finite sample benchmark. The paths generated by the OTB inverse have average parameter estimates, variance and squared lag $1$ autocorrelation that remain close to their population values. Their average variance is $0.013135$, compared with the theoretical value $0.013333$, while their average squared first-order autocorrelation is $0.254624$, compared with $0.25$. Thus, the nonlinear conditional variance dependence of the ARCH process is recovered. 

However, the experiment shows small discrepancies: the average estimate of $\omega$ for the OTBP paths is approximately $3.7\%$ below its true value, while the average estimate of $\alpha$ is approximately $5.3\%$ above its true value. The generated paths also have a small negative sample mean.
To put these discrepancies in perspective, we tested the statistical significance of the difference of these estimates with those arising from the paths generated directly by the ARCH$(1)$ model. We found that for the average of $\hat\omega$ and of the sample mean they are indeed statistically significant at the $1\%$ level. For $\alpha$, the variance, and the squared lag-1 autocorrelation they are not significant at the $5\%$ level\footnote{Writing $\bar{\widehat\theta}_{\text{ARCH}}$ and $\bar{\widehat\theta}_{\text{OTBP}}$
for the Monte Carlo means of any of parameter estimates and $s^2_{\theta,\text{OTBP}}$ and $s^2_{\theta,\text{ARCH}}$ for the corresponding Monte Carlo variances and using 
$$\widehat{\text{se}}(\delta_\theta)=\sqrt{(s^2_{\theta,\text{OTBP}}+s^2_{\theta,\text{ARCH}})50^{-1}}$$
for $\delta_\theta=\bar{\widehat\theta}_{\text{ARCH}}-\bar{\widehat\theta}_{\text{OTBP}}$, then 
the resulting absolute test statistics for $\widehat\omega$, $\widehat\alpha$, the sample mean, the variance and the squared lag-1 autocorrelation are 3.76, 1.54, 6.15, 1.79 and 0.49, respectively.}

Still, the positive evidence is that the likelihood surfaces\footnote{Notice that the larger mean log likelihood for the OTBP paths should not be interpreted as evidence that they fit the ARCH model better than paths generated directly from it. The likelihood function is evaluated on different simulated paths, and the absolute level depends on the scale of the samples. In particular, the OTBP paths have a slightly smaller variance.} are centered around the true parameters and that the estimated parameters and diagnostics are close to the true ARCH values, showing that the learned OTB sampler offers a very close approximation to the law of the ARCH$(1)$ model.

\chapter{Conclusion, future work and connections to existing literature}

In this thesis we extended the optimal transport barycenter problem to time-dependent processes via a recurrent adversarial formulation. This formulation allows us to learn both a recurrence $\widehat G$, whose role is to encode the information from the observed past that is relevant to predict $x_{t+1}$, and barycentric variables $\widehat y_t$ that capture the remaining variability. We then showed how these variables, the learned transport map and the learned recurrence can be combined to build the Recurrent Barycentric Encoder, a predictive sampler.

We ran several experiments on the RBE, with both synthetic and historical data, to show that the method can recover the conditional laws of processes with different forms of dependence. The RBE predictive distributions were close to the oracle distributions for the synthetic processes and, among the historical applications, we obtained positive predictive results with SP500 and Bitcoin data. For Bitcoin, the RBE models had an out-of-sample mean CRPS that was detectably lower than every alternative considered, while they were matched only by a recurrent location-scale ensemble in the case of SP500 data.

The NAO results, while not negative, were not as positive. The monomial RBE seemed to have recovered  the same predictive distribution as the AR$(4)$ residual bootstrap model, and had a detectably higher CRPS than the recurrent location-scale ensemble. On the other hand, the neural 3-channel RBE did not have a detectable difference with this RLS ensemble, and performed slightly better than the AR$(4)$. At the same time, both the NAO and the Bitcoin diagnostics showed persistent dependence that suggests the stationarity assumption does not hold. In the Bitcoin dataset in particular, the training and test distributions were different, and some clear daily and weekly periodic scale patterns remained in the barycentric variables. In cases where stationarity fails, observations from different parts of the sample may not follow the same conditional law, and pooling them to estimate a single conditional law is not appropriate.

A natural next step in our research is therefore to extend the population theory and the RBE implementation to include time covariates, especially periodic time covariates, as mentioned in Subsection \ref{concluding}. An extension to non periodic time covariates would likely entail a more challenging adaptation of our stationary population theory. 

The inclusion of periodic time covariates in the algorithmic implementation would also open the way to the inclusion of exogenous covariates, such as external market or atmospheric variables, which may contain predictive information not recoverable from the past of $x_t$. Indeed, extending both the population theory and the RBE construction to include other covariates along the lines of Lemma \ref{lemmaX} is a direction currently pursued in our research\footnote{Unfortunately, this extension remained incomplete at the time of thesis submission}.

A different, simple but important direction for improvement in our research is to split our samples in three to add an intermediate validation stage for model selection in our empirical tests, as explained in subsection \ref{concluding}.

There is also considerable work to do on path simulation. The experiment in Section \ref{pathsimulation} was restricted to a Markov ARCH$(1)$ process to be able to use the non-recurrent OTBP inverse sampler, for which the covariate must be given. Still, the OTBP generated sample had a detectably different maximum-likelihood estimated parameter when compared with directly generated ARCH$(1)$ samples. More generally, making the RBE capable of accurate recurrent path simulation will require studying how errors in the learned recurrence and inverse map accumulate along a simulated path and ensuring we can establish some bounds for them. 

Finally, it behooves us to explain in what sense our method relates to what other recurrent methods for predictive sampling do. As mentioned before, choosing a recurrence $h_{t+1}=G(x_t,h_t)$ to capture a process' history is standard in the literature. In the last decade and a half, different RNN-based architectures that predict whole conditional distributions have been developed. Probabilistic RNNs\cite{Graves14} model the output as a sample from a distribution whose parameters are a function of the RNN's recurrent state $h_t$, so that the prediction $\hat{x}_{t+1}\sim\mathcal{D}(g_\theta(h_{t+1}))$ for some distribution family $\mathcal{D}$, usually a Gaussian or a Gaussian mixture parametrized with a Mixture Density Network.

An influential specialization of this construction aimed at time-series forecasting was introduced as DeepAR in \cite{SalinasEtAl20}. While keeping the main probabilistic RNN structure, DeepAR's main additional goal was to share one model across many related series. Notice that each restart of the recurrent location-scale model that we used to build and ensemble benchmark in Subsection \ref{predictive} is a simple single-series instance of this DeepAR principle.

A different distributional construction was introduced using conditional normalizing flows for time series analysis in \cite{RasulEtAl21}, where the authors model a multivariate observation as the image of a standard Gaussian $z_t$ under an invertible map $T^{-1}_{\theta,h_t}:z_t\mapsto \tilde{x}_t$ that is conditioned on the recurrent state $h_t$. The map and the RNN are fit using change-of-variables likelihood as objective, and then forecasting paths can be simulated by mapping independent innovations through the inverse flow and feeding each generated forecast observation back into the recurrence. This is structurally close to the RBE, although the innovation law is fixed in advance.

In the scalar case, autoregressive quantile flows (AQF)\cite{SiEtAl22}, for their part, model the conditional quantile map
$$
\widehat x_{t+1}=Q_{\theta}(u_{t+1}\mid h_{t+1}),
\qquad u_{t+1}\sim\operatorname{Unif}(0,1),
$$
and train it using integrated quantile loss, which is proportional to CRPS. Notice that unlike likelihood-trained methods such as CNFs, AQFs learn the map in the direction used for sampling, i.e. from the innovation $u$ to $x_t$, and therefore need not compute an inversion of the learned map.

In different ways, all these methods admit a representation
$$
\widehat x_{t+1}=T_{\theta,h_{t+1}}(\varepsilon_{t+1}),\qquad \varepsilon_{t+1}\sim\mu_0
$$
If the recurrent state is sufficient and the conditional law is learned without error, this means that target observations can be expressed as deterministic functions of the past and of innovations independent of the past.

In other words, recurrence, conditional sampling and innovation extraction do not by themselves distinguish our RBE. Rather, the distinctive feature of the RBE is how this representation is arrived at. Instead of fixing $\mu_0$, our abstract population problem uses the Wasserstein barycenter of the conditional laws and the transport maps associated to them. This choice of innovation, together with the adversarial framework used to enforce independence from the past, are the main distinguishing differences between the RBE and these alternatives. Clearly, a natural direction for future work would include a comparison of our methods with CNF and AQF. In any case, the experiments we carried out establish the viability of the RBE as a framework for building predictive distributions, which at the same time has a distinct and clear interpretation in terms of Wasserstein barycenters.

\appendix
\addtocontents{toc}{\protect\contentsline{chapter}{APPENDICES}{\protect\pageref{app:proofs}}{}}
\chapter{Deferred Proofs}\label{app:proofs}
\section{Proof of Lemma \ref{lemmauniquekalman}}

\begin{proof}
    Write $a^*=(1-K\beta)\alpha$. From the previous discussion and (\ref{eqK}) it is clear that $G(x,h)=a^* h+bx$ is a solution for any $b\neq 0$ ($b=0$ gives the stationary solution $h_t=0$ for all $t$). We must show that all solutions are of this form.  For any scalar coefficient $a$ with $|a|<1$ write $h_t^{(a)}=\sum_{i=1}^{\infty}a^{i-1}x_{t-i}$ for the solution to the linear recursion with $b=1$ (where $x_t$ is extended bilaterally if necessary), which is always a Gaussian process, so that $y_t\perp h_t^{(a)}$ implies $y_t{\perp\!\!\!\perp} h_t^{(a)}$ (and viceversa).

Let $G(x,h)=a'h+b'x$ be any solution to (\ref{eqlatent}) with $G$ restricted to linear functions. We want to prove that $a'=a^*$. As we have seen, if $G$ is a solution then $\tilde{G}(x,h)=a'h+x$ is also a solution, which (by our previous derivation of the solution $y_t^*$ to the inner problem) means the squared norm of the orthogonal projection of $x_t$ on the subspace of $h_t^{(a)}$-measurable functions achieves the maximum value, i.e. $$\mathbb{E}\left[\left|P_{h_t^{(a')}}x_t\right|^2\right]=\mathbb{E}\left[\left|P_{h_t^{(a^*)}}x_t\right|^2\right]=\mathbb{E}\left[\left|P_{\mathcal{F}_{t-1}}x_t\right|^2\right]$$
Because $P_{h_t^{(a')}},P_{h_t^{(a^*)}}\leq P_{\mathcal{F}_{t-1}}$, this implies that $P_{h_t^{(a')}}x_t=P_{h_t^{(a^*)}}x_t$. Since $x_t$ and $h_t^{(a)}$ are jointly Gaussian this implies that $P_{h_t^{(a^*)}}x_t$ is a scalar multiple of $h_t^{(a^*)}$ (and the scalar is zero iff they are independent). The same is true for $h_t^{(a')}$ so $h_t^{(a')}$ and $h_t^{(a^*)}$ are scalar multiples of each other. That is, there must be a constant $k$ such that $h_t^{(a')}=kh_t^{(a^*)}$, and
$$Y:=\sum_{i=1}^\infty((a')^{i-1}-k(a^*)^{i-1})x_{t-i}=h_t^{(a')}-kh_t^{(a^*)}=0$$
For the random variable $Y$ defined by this infinite sum to equal $0$ its variance must be $0$. We can write the variance using the spectral decomposition of $x_t$ as follows. Write
$$R(h):=\mathbb{E}[x_tx_{t-h}]=\int_{-\pi}^{\pi} e^{ih\lambda} d\mu $$
where $\mu$ is the measure given by Bochner's theorem. This measure has a density $\mu=fd\lambda$ since, from \eqref{eqHMlin},
$$
z_t=\sum_{r=0}^{\infty}\alpha^r\epsilon_{t-1-r},
\qquad
R(h)=\frac{\beta^2\tau_\epsilon^2}{1-\alpha^2}\alpha^{|h|}
+\tau_\eta^2\mathbf{1}_{\{h=0\}}.
$$
with $R\in\ell^1(\mathbb Z)$ (because $|\alpha|<1$,), which implies that the spectral measure is absolutely continuous with density
$$
f(\lambda)=\frac{1}{2\pi}\left(
\tau_\eta^2+
\frac{\beta^2\tau_\epsilon^2}
{1+\alpha^2-2\alpha\cos\lambda}
\right)>0
$$

Then
\begin{align}
Var(Y)&= \sum_{j,l=0}^\infty ((a')^j-k(a^*)^j)((a')^l-k(a^*)^l)R(|l-j|)\nonumber\\
&= \int_{-\pi}^{\pi} \left[ \sum_{j,l=0}^\infty ((a')^j-k(a^*)^j)((a')^l-k(a^*)^l) e^{ij\lambda} e^{-il\lambda} \right] f(\lambda) d\lambda \nonumber\\
&= \int_{-\pi}^{\pi} \left( \sum_{j=0}^\infty ((a')^j-k(a^*)^j)e^{ij\lambda} \right) \left( \sum_{l=0}^\infty ((a')^l-k(a^*)^l)e^{-il\lambda} \right) f(\lambda) d\lambda
\end{align}

Because the coefficients $((a')^i-k(a^*)^i)$ are real, the two series inside the integral are exact complex conjugates of one another. If we define $C(\lambda) = \sum_{j=0}^\infty ((a')^j-k(a^*)^j)e^{-i\lambda j}$, then the first sum is $\overline{C(\lambda)}$ and the second sum is $C(\lambda)$, yielding
$$Var(Y)= \int_{-\pi}^{\pi} \left|\sum_{j=0}^\infty ((a')^j-k(a^*)^j)e^{-i\lambda j}\right|^2 f(\lambda) d\lambda$$

Because $f(\lambda)>0$ for all $\lambda$, then $Y=0$ implies $\left|\sum_{j=0}^\infty ((a')^j-k(a^*)^j)e^{-i\lambda j}\right|^2=0$ for Lebesgue-a.e. $\lambda$. By uniqueness of Fourier coefficients, this implies $(a')^j-k(a^*)^j=0$ for all $j$, from which $a'=a^*$ (and $k=1$) follows.

\end{proof}

\section{Proof of Lemma \ref{lemmaG}}
\begin{proof}     We will work with bilateral processes, so we first extend  $x_t$ bilaterally to $t\in\mathbb{Z}$ and write $x=(x_t)_{t\in\mathbb{Z}}$. We will also write $\theta$ for the shift operator $(\theta x)_t=x_{t+1}$ (notice that $\theta$ is invertible for bilateral processes). Next, we define a sequence of random variables $z^{(n)}=z^{(n)}(x)$ as follows:
    \begin{align}
        z^{(0)}(x)&=0\nonumber\\
        z^{(n)}(x)&=G(x_{-1},z^{(n-1)}(\theta^{-1}x))
    \end{align}

    Let's see that $z^{(n)}$ is a Cauchy sequence in $L^2$ write $D_n(x):=|z^{(n)}(x)-z^{(n-1)}(x)|$. By condition C1 we have
    $$D_{n+1}(x)\leq \Lambda(x_{-1})\cdot |z^{(n)}(\theta^{-1}x)-z^{(n-1)}(\theta^{-1}x)|=\Lambda(x_{-1}) D_n(\theta^{-1}x)$$
    and iterating this inequality we get
    $$D_{n+1}(x)\leq \prod_{k=1}^n \Lambda(x_{-k}) D_1(\theta^{-n}x)=\prod_{k=1}^n \Lambda(x_{-k}) |G(x_{-n-1},0)|$$
    where in the last equality we use $|G((\theta^{-n}x)_{-1},0)-0|=|G(x_{-n-1},0)|$. From $z^{(0)}=0$ and the finiteness of $a_n$ for all $n\geq 0$ in condition C2 we get that the sequence is in $L^2$. Moreover, we get the $L^2$ bound (for $m>n$)
    \begin{align}\|z^{(m)}-z^{(n)}\|_2&\leq \sum_{k=n}^{m-1}\|z^{(k+1)}-z^{(k)}\|_2=\sum_{k=n}^{m-1}\mathbb{E}\left[|D_{k+1}(x)|^2\right]^{1/2}\nonumber\\
    &\leq \sum_{k=n}^{m-1} \mathbb{E}\left[ \prod_{j=1}^k \Lambda(x_{-j})^2 |G(x_{-k-1},0)|^2\right]^{1/2}= \sum_{k=n}^{m-1} a_k^{1/2}\leq\sum_{k=n}^{m-1} K\rho^{k/2} 
    \end{align}

where in the last inequality $K$ is a constant and $\rho<1$ is given by the limsup condition for the $a_n$ in C2 (and stationarity). Since the geometric series in the RHS goes to 0 as $n\to \infty$ it follows that $z^{(n)}(x)$ is Cauchy. Let $z_*=z_*(x)$ be the limit. We define the solution process $h$ as $h_t=z_*(\theta^t x)$. It is immediate to check that $\mathbb{E}[h_t^2]=\mathbb{E}[z_*(\theta^t x)^2]=\mathbb{E}[z_*(x)^2]<\infty$, and stationarity then follows from $\theta^s x\overset{d}{=}x$.

We must still verify this $(h_t)_{t\in\mathbb{Z}}$ satisfies the recurrence equation. By stationarity it suffices to verify it for $t=1$, which by the definition of $h_1$ is equivalent to showing $G(x_0,z_*(x))=z_*(\theta x)$. Because $z_*$ is a limit in $L^2$ we can take a subsequence $z^{(n_k)}(x)$ that converges to $z_*(x)$ almost surely. Equivalently, the set $A':=\{x:z^{(n_k)}(x)\to z_*(x)\}$ has full measure. By stationarity of $x$, $\theta^{-1} A$ has the same measure as $A$ for any $x$-measurable set $A$, in particular for $A'$. So $\theta^{-1} A'=\{x:z^{(n_k)}(\theta x)\to z_*(\theta x)\}$ has full measure. Equivalently, $z^{(n_k)}(\theta x)$ converges to $z_*(\theta x)$ almost surely. Now, $z^{(n_k-1)}$ converges in $L^2$ so we can take a sub-subsequence (we still call it $z^{(n_k-1)}$) so that $z^{(n_k-1)}$ converges almost surely. Now by construction 
$$z^{(n_k)}(\theta x)=G(x_0,z^{(n_k-1)}(x))$$
We just saw the LHS converges almost surely to $z_*(\theta x)$. To show the RHS converges a.s. to $G(x_0,z_*(x))$, from C1 we get   
$$|G(x_0,z^{(n_k-1)}(x))-G(x_0,z_*(x))|\leq \Lambda(x_0)|z^{(n_k-1)}(x)-z_*(x)|\to 0 \quad\text{ a.s.} $$
This proves that $G(x_0,z_*(x))=z_*(\theta x)$.

To prove uniqueness, write
$$\rho(\Lambda) := \limsup_{n\to\infty} \left( E\left[ \prod_{k=1}^n \Lambda(x_k)^2 \right] \right)^{1/n} < 1$$
  
  Assume there are two stationary, square-integrable solutions $(w_t)_{t\in\mathbb{Z}}$ and $(v_t)_{t\in\mathbb{Z}}$ and  write $\delta_t=|w_t-v_t|$. Notice that $\delta_t$ must be square integrable because so are $w_t$ and $v_t$. Then, iterating backwards, we get

  $$\delta_0\leq \prod_{j=-t}^{-1}\Lambda(x_j)\delta_{-t}$$
  and, writing $A_t:=\prod_{j=-t}^{-1}\Lambda(x_j)$ and using Cauchy-Schwarz, we have
  $$E[\delta_0]\leq E\left[A_t\delta_{-t}\right]\leq E\left[A_t^2\right]^{1/2}\mathbb{E}[\delta^2_{-t}]^{1/2}$$
  The second factor is uniformly bounded, since $\delta_{-t}^2=|w_{-t}-v_{-t}|^2\leq 2|w_{-t}|^2+2|v_{-t}|^2$, and $w_{-t}$ and $v_{-t}$ are square-integrable.
  
  For the first factor, by stationarity, we have
  $$\mathbb{E}[A_t^2]=\mathbb{E}\left[\prod_{j=-t}^{-1}\Lambda(x_j)^2\right]=\mathbb{E}\left[\prod_{j=1}^{t}\Lambda(x_j)^2\right]\leq (\rho(\Lambda)+\eta)^{t}\to 0\quad \text{ as }t \to\infty$$ for $\eta$ any positive real such that $\rho(\Lambda)+\eta<1$.

  Hence, $\mathbb{E}[\delta_0]=0$. Because $\delta_0\geq 0$ this implies that $\delta_0=0$ a.s. Repeating the argument with any $t$ instead of 0 yields $\delta_t=0$ a.s., proving that $w_t=v_t$.

\end{proof}

\section{Proof of Theorem \ref{teoopt}}
\begin{proof}

    The proof works in general when $\mathcal{X}$ is any uncountable standard Borel space, of which $\mathbb R^{d_x}$ is an example, since they are all Borel isomorphic.

    Let $K\subseteq [0,1]$ be the middle third Cantor set and $\phi:\mathcal{X}\to K$ be a Borel isomorphism, which exists because $K$ is an uncountable standard Borel space. Let $F:K\times K\to K$ be the bijection defined as follows on the triadic expansion of two elements $k=\sum_{i=1}^\infty a_i3^{-i}$ and $k'=\sum_{i=1}^\infty a_i'3^{-i}$ ($a_i,a_i'\in\{0,2\}$) in $K$

    $$F(k,k')=\sum_{j=1}^\infty c_j 3^{-j}\quad\text{ where }\begin{cases}
        c_{2i-1}=a_i\\
        c_{2i} = a_i'
    \end{cases}$$
    That is, $F$ interleaves the coefficients of $k$ and $k'$ starting with $k$.
    
    We define $G$ as
    \begin{equation}\label{defG}G(x,z)=\sup_{k\in K}\left(F(\phi(x),k)-\frac{7}{12}|z-k|\right)
    \end{equation}

    The proof has two parts.

    {\em Part 1.} $G$ satisfies $C$. Since $G(x,0)$ is bounded (and, therefore, square-integrable), it suffices to prove that $G$ is Lipschitz with a constant independent of $x$ strictly smaller than 1.
    
    We first prove that for any fixed $x$ $F$ is 7/12-Lipschitz on the second coordinate. Indeed, let $j\geq 1$ be the first coordinate on which $k$ and $k'$ differ, then $|k-k'|\geq 3^{-j}$. Consider first the case where $j>1$. We have
    \begin{align}|F(\phi(x),k)-F(\phi(x),k')|&\leq\sum_{i=1}^\infty|a_i-a_i'|3^{-2i}\leq\sum_{i=j}^\infty|a_i-a_i'|3^{-2i}
    \nonumber\\
    &\leq 2\frac{3^{-2j}}{1-3^{-1}}=3^{-2j+1}\leq 3^{-j-1}\leq 3^{-1}|k-k'|\nonumber
    \end{align}

    For $j=1$ assume without loss of generality that $a_1=2$ and $a_1'=0$. Then, writing $d_i=(a_i-a_i')/2$ we have $|F(\phi(x),k)-F(\phi(x),k')|= 2\cdot 3^{-2}\left|1+\sum_{i=1}^\infty d_{i+1} 3^{-2i}\right|$ and similarly $|k-k'|=2\cdot 3^{-1}\left|1+\sum_{i=1}^\infty d_{i+1} 3^{-i}\right|$. We therefore need to prove that 
    \begin{equation}\label{ineqlip}\frac{1}{3}\frac{\left|1+\sum_{i=1}^\infty d_{i+1} 3^{-2i}\right|}{\left|1+\sum_{i=1}^\infty d_{i+1} 3^{-i}\right|}=3^{-1}\frac{1+\sum_{i=1}^\infty d_{i+1} 3^{-2i}}{1+\sum_{i=1}^\infty d_{i+1} 3^{-i}}\leq \frac{7}{12}
    \end{equation}
    where the absolute value has been removed because, since $d_j\in\{-1,0,1\}$, the series in the denominator is bounded in absolute value by $\frac{3^{-1}}{1-3^{-1}}=1/2$ and the one in the numerator by $\frac{9^{-1}}{1-9^{-1}}=1/8$, so that both numerator and denominator are strictly positive. Write $C_i=(7-4\cdot 3^{-i})$. Then by multiplying both sides of (\ref{ineqlip}) by $12(1+\sum_{i=1}^\infty d_{i+1} 3^{-i})$ and subtracting we see the inequality is equivalent to the inequality $0\leq 3+\sum_{i=1}^\infty d_{i+1}3^{-i}C_i$. Clearly $C_i>0$ for $i\geq 0$ so the RHS of this inequality is minimized when $d_{i+1}=-1$ for all $i$. We must therefore prove $\sum_{i=1}^\infty 3^{-i}C_i\leq 3$. We have
    $$\sum_{i=1}^\infty 3^{-i}C_i=7\sum_{i=1}^\infty 3^{-i}-4\sum_{i=1}^\infty 3^{-2i}=\frac 72-\frac12=3$$
    as wanted. Since $F$ is 7/12-Lipschitz it follows, from the definition $(\ref{defG})$, that $G(x,k)=F(\phi(x),k)$ for all $k\in K$. Now for any two real numbers $u,v$ and any $\epsilon>0$ we take $k\in K$ such that $G(x,u)\leq F(\phi(x),k)-\frac{7}{12}|u-k|+\epsilon$. By definition of the supremum, $ F(\phi(x),k)-\frac{7}{12}|v-k|\leq G(x,v)$. Substracting both inequalities we get
    $$G(x,u)-G(x,v)\leq \frac{7}{12}(|v-k|-|u-k|)+\epsilon\leq \frac{7}{12}|v-u|+\epsilon$$
    Since this was true for any $\epsilon$ we get $G(x,u)-G(x,v)\leq \frac{7}{12}|u-v|$. Doing the same for $k'$ that approximates $G(x,v)$ we get $G(x,v)-G(x,u)\leq \frac{7}{12}|u-v|$, completing the proof of the Lipschitz bound.

    {\em Part 2.} $\sigma(h_t)=\sigma(x_{<t})$ for any stationary solution $h_t$.

    We need to prove $\sigma(h_t)\supseteq \sigma(x_{<t})$, since $\sigma(h_t)\subseteq \sigma(x_{<t})$ is already clear by construction.

    Let $(h_t)_{t\in\mathbb{Z}}$ be a stationary solution. Let's show first that $h_t\in K$ a.s. Define $Y_t:=\text{dist}(h_t,K)$. Let $u_t\in K$ be a projection, i.e. $|u_t-h_t|=Y_t$. Then, since $G(x_t,u_t)\in K$,
    $$Y_{t+1}=\text{dist}(G(x_t,h_t),K)\leq |G(x_t,h_t)-G(x_t,u_t)|\leq\frac{7}{12}|h_t-u_t|=\frac{7}{12}Y_t$$
    But then $0\leq \mathbb{E}[Y_{t+1}]\leq \frac{7}{12}\mathbb{E}[Y_t]=\frac{7}{12}\mathbb{E}[Y_{t+1}]$, by stationarity. So $\mathbb{E}[Y_t]=0$, i.e. $h_t\in K$ almost surely.

    Since $\phi$ is a Borel isomorphism and $F$ is a Lipschitz bijection we have that $G_{\mid \mathcal{X}\times K}$ is a measurable bijection and, by Suslin's theorem, a Borel isomorphism. Since $h_{t+1}$ is a.s. in $K$ it follows that $\sigma(h_{t+1})=\sigma(x_t,h_t)\ (\!\!\!\!\mod 0)$. The claim follows by induction.\end{proof}

\section{Proof of Theorem \ref{teobarycenter1d}}

\begin{proof}
    Let $\mu_Y$ denote the marginal distribution of $y$. Since $y \perp\!\!\!\perp \mathcal{A}$, the conditional distribution of $y$ given $\mathcal{A}$ is almost surely equal to $\mu_Y$.

From Theorem \ref{teogeneralotb}, solving this constrained  square distance minimization over $y$ is equivalent to (\ref{eqwasser}), i.e. to finding the measure $\mu_Y$ that minimizes the expected 2-Wasserstein distance to the conditional distributions $\nu_\omega = P(x \mid \mathcal{A})(\omega)$. In our current notation this is:
$$ \min_{\mu_Y} \mathbb{E} \left[ W_2^2(\nu_\omega, \mu_Y) \right] $$

It is a standard fact about the $p$-Wasserstein distance in dimension one that 
$$ W_{p}(\mu _{1},\mu _{2})=\left(\int _{0}^{1}\left|F_{1}^{-1}(q)-F_{2}^{-1}(q)\right|^{p}\,\mathrm {d} q\right)^{\frac {1}{p}}$$
Thus, the squared 2-Wasserstein distance between two measures equals the $L^2$ distance between their quantile functions. Furthermore, because each conditional probability $\nu_\omega$ is atomless, the optimal coupling that achieves this distance with a fixed $\mu_Y$ is unique and given by a Monge map, which in dimension 1 is (see \cite[Theorem 4.3]{Galichon})
$$ y = T_\omega(x) = F_Y^{-1}( F_{x\mid\mathcal{A}}(x))$$

After substituting the quantile representation of the Wasserstein distance in the minimizing objective we get
$$ \min_{F_Y^{-1}} \mathbb{E} \left[ \int_0^1 \left| F_{x\mid\mathcal{A}}^{-1}(u) - F_Y^{-1}(u) \right|^2 du \right] $$

By Fubini, we can interchange the expectation and the integral, so that minimizing this functional is equivalent to minimizing the inner expectation pointwise for almost every $u \in [0, 1]$:
$$ \min_{F_Y^{-1}(u)} \mathbb{E} \left[ \left| F_{x\mid\mathcal{A}}^{-1}(u) - F_Y^{-1}(u) \right|^2 \right] $$

Since expectation is the minimizer of the expected squared distance to a random variable, the unique optimal quantile function for the barycenter must be
$$ F_Y^{-1}(u) = \mathbb{E} \left[ F_{x|\mathcal{A}}^{-1}(u) \right] $$
 as wanted. Combined with the optimal transport map, this yields the solution
$$y = F_Y^{-1}( F_{x\mid\mathcal{A}}(x) ).$$
\end{proof}

\section{Proof of Corollary \ref{1dcor}}
\begin{proof}
    We apply Theorem \ref{teobarycenter1d} with $\mathcal A=\sigma(z)$. Since $\epsilon$ is independent of $z$, the conditional distribution of $x$ given $z$ is the law of $F(z,\epsilon)$. For fixed $z$, the map $e\mapsto F(z,e)$ is continuous and strictly increasing, so it preserves quantiles. Hence
$$F_{x\mid z}^{-1}(u)=F\left(z,F_\epsilon^{-1}(u)\right)$$
By Theorem \ref{teobarycenter1d}, the barycenter quantile is obtained by averaging the conditional quantiles:
$$F_Y^{-1}(u)=\mathbb E_z\left[F_{x\mid z}^{-1}(u)\right]=\mathbb E_z\left[F\left(z,F_\epsilon^{-1}(u)\right)\right]$$
If $\bar{F}(e):=\mathbb E_z[F(z,e)]$ then
$$F_Y^{-1}(u)=\bar{F}(F_\epsilon^{-1}(u))$$

Define the random variable $y=\bar{F}(\epsilon)$, which is clearly independent of $z$. It suffices to show its quantile function is $F_Y^{-1}$.

Since each $F(z,\cdot)$ is increasing, $\bar{F}$ is increasing. Moreover, by the atomlessness of $\epsilon$, $U:=F_\epsilon(\epsilon)$ is uniformly distributed on $(0,1)$ and $\epsilon=F_\epsilon^{-1}(U)$ a.s. Therefore
$$y=\bar{F}(\epsilon)=\bar{F}(F_\epsilon^{-1}(U))=F_Y^{-1}(U)$$
Since $U$ is uniform, the random variable $F_Y^{-1}(U)$ has quantile function $F_Y^{-1}$, as required.

\end{proof}
\section{Proof of Corollary \ref{1dcor2}}
\begin{proof}
    Case b) is an immediate consequence of Corollary \ref{1dcor} once we notice that $\mathbb{E}[g(z)]+\mathbb{E}[f(z)]\epsilon=\mathbb{E}_z[F(z,\epsilon)]$ for $F(z,e)=g(z)+f(z)e$.

It remains to prove a). For each fixed $z$, the map $e\mapsto g(z)+|f(z)|e$ is continuous and strictly increasing. Applying Corollary \ref{1dcor} with $F(z,e)=g(z)+|f(z)|e$ gives the barycenter quantile
\begin{align}F_Y^{-1}(u)&=\mathbb E\left[g(z)+|f(z)|F_\epsilon^{-1}(u)\right]\nonumber\\
&=\mathbb{E}[g(z)]+\mathbb{E}[|f(z)|]F_\epsilon^{-1}(u)\nonumber
\end{align}

It remains to identify a representative $y$ that arises as a function of $x$ and $z$. Since $\epsilon$ is symmetric, $f(z)\epsilon\stackrel{d}{=}|f(z)|\epsilon$ conditionally on $z$. Therefore the conditional law of $x$ given $z$ is the law of $g(z)+|f(z)|\epsilon$, which implies
$$F_{x\mid z}(x)=F_\epsilon\left(\frac{x-g(z)}{|f(z)|}\right)$$

Now, by Theorem \ref{teobarycenter1d}, the optimal transport from the law of $x$ conditioned on $z$ to the barycenter law is
\begin{align}
    T_z(x)&=F_Y^{-1}(F_{x\mid z}(x))\nonumber\\
    &=F_Y^{-1}\left(
F_\epsilon\left(\frac{x-g(z)}{|f(z)|}\right)\right)\nonumber\\
&=\mathbb{E}[g(z)]+
\mathbb{E}[|f(z)|]\,
F_\epsilon^{-1}\left(
F_\epsilon\left(\frac{x-g(z)}{|f(z)|}\right)\right)\nonumber\\
&=\mathbb{E}[g(z)]+
\mathbb{E}[|f(z)|] \frac{x-g(z)}{|f(z)|}\quad\text{ (atomlessness of $\epsilon$)}\nonumber\\
&=\mathbb{E}[g(z)]+
\mathbb{E}[|f(z)|]\operatorname{sgn}(f(z))\epsilon
\end{align}
 because
$$\frac{x-g(z)}{|f(z)|}=\operatorname{sgn}(f(z))\epsilon$$

Finally, since $\epsilon$ is symmetric and independent of $z$, 
$\operatorname{sgn}(f(z))\epsilon$ is independent of $z$. Hence $y=T_z(x)$ is independent of $z$ and its law is the barycentric law found above with quantile function $F_Y^{-1}$.

\end{proof}

\section{Proof of Lemma \ref{lemmauniquegarch}}

\begin{proof}
For any $G$ in $P$ the induced solution to the recurrence $h_t^{(G)}$ is
\begin{equation}\label{recu1}
h_t^{(G)}=\sum_{i=1}^\infty C^{i-1}(A+B x_{t-i}^2)=\frac{A}{1-C}+B\sum_{i=1}^\infty C^{i-1} x_{t-i}^2
\end{equation}
We know that $\sigma_t^2$ is a solution, and
\begin{equation}\label{recu2}\sigma_t^2=\frac{\omega}{1-\beta}+\alpha\sum_{i=1}^\infty \beta^{i-1} x_{t-i}^2
\end{equation}
If $C=\beta$ then $\sigma_t^2=f(h_t^{(G)})$, where $f(z)=\frac{\omega}{1-\beta}+\alpha B^{-1}\left(z-\frac{A}{1-C}\right)$, so $y_t{\perp\!\!\!\perp} h_t^{(G)}$ iff $y_t{\perp\!\!\!\perp} \sigma_t^2$, which implies $G$ is a solution.

It remains to prove that any solution of the restricted problem is of this form. Suppose then that $G(x,h)=A+Bx^2+Ch$ is a solution and let's prove that $C=\beta$. Suppose $C\neq \beta$. To reach a contradiction it suffices to show that $\mathbb{E}[|x_t-y_t^*|^2]>\mathbb{E}[|x_t-y_t^{(G)}|^2]$, where $y^*_t=\epsilon_t\mathbb{E}[\sigma_t]$ and $y_t^{(G)}$ is the solution to
\begin{equation}\label{eqalt}\min_y\mathbb{E}[|x_t-y_t|^2]\quad\text{ s.t. }\quad y_t{\perp\!\!\!\perp}h_t^{(G)}
\end{equation}
since this contradicts the assumption that $G$ maximizes the objective. It will suffice to show that $y_t^{(G)}\neq y_t^*$, since a barycenter problem like (\ref{eqalt}) has a unique solution and $y_t^*$ satisfies the constraint because $\epsilon_t\mathbb{E}[\sigma_t]\perp\!\!\!\perp h_t^{(G)}$.

Assume then, for the sake of contradiction, that $y_t^{(G)}= y_t^*$ and write $h_t=h_t^{(G)}$. Notice that $x_t$ is atomless conditioned on $h_t^{(G)}$, since $\epsilon_t$ is atomless and independent of the past, and, hence, of $h_t^{(G)}$, making $x_t=\sigma_t\epsilon_t$ conditioned on $h_t^{(G)}$ a mixture of continuous random variables. From Theorem \ref{teobarycenter1d} we then have
$$y_t^{(G)}=F_Y^{-1}\left(F_{x\mid h_t}(x_t)\right)$$
where 
$$F_Y^{-1}(u) = \mathbb{E} \left[ F_{x\mid h_t}^{-1}(u) \right]\quad\text{ is the quantile function of $y_t^{(G)}$}$$
Since we assumed $y_t^{(G)} = \mathbb{E}[\sigma_t] \epsilon_t$, the cdf $F_Y$ is simply the distribution of the scaled noise, $F_Y(y) = F_\epsilon \left( \frac{y}{\mathbb{E}[\sigma_t]} \right)$. Substitute and invert this $F_Y^{-1}$ and our assumed $y_t^{(G)}$ into the optimal transport map to get
$$F_\epsilon(\epsilon_t) = F_{x|h_t}(x_t) = F_{x|h_t}(\sigma_t\epsilon_t)$$
Because $\epsilon_t$ is independent of both $\sigma_t$ and $h_t$ it follows that $F_\epsilon(z) =  F_{x\mid h_t}(\sigma_tz)$ for all $z$\footnote{If follows from Fubini that if $g(X,Y)=0$ a.s. for two independent random variables $X$ and $Y$ with distributions $\mu_X$ and $\mu_Y$ then for $\mu_Y$-a.e. $y$ $g(X,y)=0$ $\mu_X$-a.s. Since $F_\epsilon$ and $F_{x\mid h_t}$ are continuous, a.e. $z$ extends to all $z$.}. Writing $x=\sigma_t z$ we get
$$F_{x\mid h_t}(x) = F_\epsilon \left( \frac{x}{\sigma_t} \right)$$
Now take an interval $(-a,a)$ of 0 such that $F_\epsilon$ is strictly increasing on it. We can take a fixed $0<x_0<a\sqrt{\omega}$ so that $x_0/\sigma_t\in(-a,a)$ a.s., and we can then use the inverse of the cdf to get 
$$\sigma_t =  \frac{x_0}{F_\epsilon^{-1}\left(F_{x\mid h_t}(x_0)\right)} $$
which shows that $\sigma_t$ is $h_t$-measurable, and hence $\sigma_t^2$ is $h_t$-measurable. Let us now prove this implies $C= \beta$, which will complete the proof.

Let $X = (x_{t-1}^2, x_{t-2}^2, x_{t-3}^2, \dots)$ denote the infinite random vector of past squared returns. Notice the support of $X$ lies within the strictly positive cone $\mathbb{R}_+^{\mathbb{N}}$.

Because $A,B,\omega,\alpha$ are constants we can remove them from equations (\ref{recu1}) and (\ref{recu2}) without affecting measurability, and our condition is equivalent to saying $L_0(X)$ is $L(X)$-measurable, where $L_0$ and $L$ are linear functionals defined on the space of trajectories:
$$L_0(X)= \sum_{i=1}^\infty \beta^{i-1} X_i,\quad L(X)= \sum_{i=1}^\infty C^{i-1} X_i$$

Write
$$L(X)=X_1+C X_2+R_C$$
$$L_0(X)=X_1+\beta X_2+R_\beta$$
where $R_C=\sum_{i=3}^{\infty}C^{i-1}X_i$, $R_\beta=\sum_{i=3}^{\infty}\beta^{i-1}X_i$

Condition on the deeper past $\mathcal{P}=\sigma(x_{t-3},x_{t-4},\dots)$. Since the innovations are absolutely continuous and the volatility is a.s. nonzero, the law of $(X_1,X_2)$ conditioned on $\mathcal{P}$ is nondegenerate and has a density on $\mathbb{R}_+^2$.

On this conditional space, $R_C,R_\beta$ are fixed, and 
$$L_0(X)=L(X)+(\beta-C)X_2+(R_\beta-R_C)$$
If $C\neq\beta$, then $L_0(X)$ still varies with $X_2$ along a level set of $L(X)$, because a fixed $L(X)=c$ only fixes $X_1+CX_2$. Since the conditional law of $(X_1,X_2)$ has a density on $\mathbb{R}_+^2$, 
$$\operatorname{Var}(X_2|\mathcal{P},X_1+CX_2)>0$$
on a set of positive probability, implying that either $\beta=C$ or
$$\operatorname{Var}(L_0(X)|\mathcal{P},X_1+CX_2)>0,$$
which contradicts that $L_0(X)$ is measurable with respect to $L(X)$. Therefore $C=\beta$.
\end{proof}

\section{Proof of Theorem \ref{teorankone} and Theorem \ref{teorankk}}

\begin{proof}
Theorem \ref{teorankone} is just Theorem \ref{teorankk} for $k=1$ its proof, so we need only prove the latter.

Let
$U:=F_{X\mid Z}(X)$. By atomlessness of the conditional laws,
$$U\sim\operatorname{Unif}(0,1),\qquad U\perp\!\!\!\perp Z,\quad\text{ and }X=Q_Z(U)\quad\text{a.s.}$$
By the one-dimensional barycenter theorem, the Monge solution is
$Y^*=\bar Q(U)$
and its value is
$V=\mathbb E[|X-Y^*|^2]$

Assume first that for some $(\psi,\varphi)\in\mathcal{A}_k$ there is a representation
$$D(z,u)=\sum_{i=1}^k\psi^{(i)}(z)r_i(\bar Q(u)),
\quad\text{ where }
(\varphi^{(i)})'(y)=2r_i(y),\quad i=1,\ldots,k.$$
For fixed $(z,u)$ set
$$y^*=\bar Q(u)$$
Then
$$Q_z(u)=y^*+\sum_{i=1}^k\psi^{(i)}(z)r_i(y^*)=T_z(y^*)$$
For arbitrary $y$,
\begin{align}
\ell_{\psi,\varphi}(z,u;y)-\ell_{\psi,\varphi}(z,u;y^*)
&=|Q_z(u)-y|^2-|Q_z(u)-y^*|^2+\sum_{i=1}^k\psi^{(i)}(z)\left(\varphi^{(i)}(y)-\varphi^{(i)}(y^*)\right)\nonumber\\
&=2\int_{y^*}^{y}\left(s-Q_z(u)+\sum_{i=1}^k\psi^{(i)}(z)r_i(s)\right)ds\nonumber\\
&=2\int_{y^*}^{y}\left(s-y^*+\sum_{i=1}^k\psi^{(i)}(z)\left(r_i(s)-r_i(y^*)\right)
\right)ds\nonumber\\
&=2\int_{y^*}^{y}\left(T_z(s)-T_z(y^*)\right)ds
\end{align}
Since $T_z$ is nondecreasing, the last integral is nonnegative. Hence
$$\ell_{\psi,\varphi}(z,u;y)\geq \ell_{\psi,\varphi}(z,u;y^*)$$
for all $y$ and for a.e. $(z,u)$.

Thus the pointwise Lagrangian is bounded below by its value at $Y^*$. Since $(\psi,\varphi)$ is admissible, $Y^*\in L^2$, and $Y^*\perp\!\!\!\perp Z$, each product $\psi^{(i)}(Z)\varphi^{(i)}(Y^*)$ is integrable. Therefore
$$\ell_{\psi,\varphi}(Z,U;Y^*)=
|X-Y^*|^2+
\sum_{i=1}^k\psi^{(i)}(Z)\varphi^{(i)}(Y^*)$$
is integrable, and the expectation of $\ell_{\psi,\varphi}$ is well-defined from below as an extended-real functional.

Since $Y^*$ minimizes $\ell_{\psi,\varphi}$ pointwise, it follows that
$$\inf_Y\mathbb E[\ell_{\psi,\varphi}(Z,U;Y)]
=\mathbb E[\ell_{\psi,\varphi}(Z,U;Y^*)]$$
But $Y^*$ is independent of $Z$, and each $\psi^{(i)}(Z)$ is centered. Hence, by admissibility,
$$
\mathbb E\left[\sum_{i=1}^k\psi^{(i)}(Z)\varphi^{(i)}(Y^*)\right]
=\sum_{i=1}^k\mathbb E\left[\psi^{(i)}(Z)\varphi^{(i)}(Y^*)
\right]=0
$$
Therefore
$$\inf_Y\mathbb E[\ell_{\psi,\varphi}(Z,U;Y)]
=\mathbb E[|X-Y^*|^2]=V$$

Now let $(\tilde\psi,\tilde\varphi)\in\mathcal A_k$ be arbitrary. Since $Y^*$ is square-integrable and independent of $Z$, admissibility gives
$$
\mathbb E\left[\sum_{i=1}^k\tilde\psi^{(i)}(Z)\tilde\varphi^{(i)}(Y^*)\right]=0
$$
and therefore
$$
\inf_Y\mathbb E\left[|X-Y|^2+\sum_{i=1}^k\tilde\psi^{(i)}(Z)\tilde\varphi^{(i)}(Y)\right]
\leq
\mathbb E\left[|X-Y^*|^2+\sum_{i=1}^k\tilde\psi^{(i)}(Z)\tilde\varphi^{(i)}(Y^*)\right]=V
$$
Taking the supremum over $(\tilde\psi,\tilde\varphi)\in\mathcal A_k$ gives
$$
\sup_{(\tilde\psi,\tilde\varphi)\in\mathcal A_k}\inf_Y
\mathbb E\left[|X-Y|^2+\sum_{i=1}^k\tilde\psi^{(i)}(Z)\tilde\varphi^{(i)}(Y)\right]\leq V
$$
The pair $(\psi,\varphi)$ constructed above belongs to $\mathcal A_k$ and has inner value equal to $V$, so the reverse inequality also holds. Thus
$$\sup_{(\psi,\varphi)\in\mathcal A_k}\inf_Y
\mathbb E\left[|X-Y|^2+\sum_{i=1}^k\psi^{(i)}(Z)\varphi^{(i)}(Y)\right]=V$$

Conversely, suppose that
$$\sup_{(\psi,\varphi)\in\mathcal A_k}\inf_Y
\mathbb E\left[|X-Y|^2+\sum_{i=1}^k\psi^{(i)}(Z)\varphi^{(i)}(Y)\right]=V$$
and that the supremum is realized by an admissible pair $(\psi,\varphi)$. Since $Y^*$ is independent of $Z$ and $(\psi,\varphi)$ is admissible,
$$\mathbb E\left[|X-Y^*|^2+\sum_{i=1}^k\psi^{(i)}(Z)\varphi^{(i)}(Y^*)\right]=V$$
The realized inner infimum is also equal to $V$, so $Y^*$ minimizes the realized inner problem:
$$Y^*\in\arg\inf_Y
\mathbb E\left[|X-Y|^2+\sum_{i=1}^k\psi^{(i)}(Z)\varphi^{(i)}(Y)\right]$$
Equivalently, in the quantile coordinates $(Z,U)$, the random variable
$Y^*=\bar Q(U)$ minimizes
$$Y\longmapsto \mathbb E[\ell_{\psi,\varphi}(Z,U;Y)]$$
We claim that this implies pointwise minimality:
$$\bar Q(u)\in \operatorname*{arg\,min}_{y\in\mathbb R}
\ell_{\psi,\varphi}(z,u;y)\quad\text{ for a.e. $(z,u)$}$$

Indeed, if this failed, then, since
$y\mapsto \ell_{\psi,\varphi}(z,u;y)$ is continuous, there would exist
$q\in\mathbb Q$, $n\in\mathbb N$, and a measurable set $A$ of positive probability such that
$$\ell_{\psi,\varphi}(Z,U;q)
\leq
\ell_{\psi,\varphi}(Z,U;\bar Q(U))-\frac{1}{n}
\qquad\text{on }A$$
Define
$$\widetilde Y:=q\,{\bf 1}_A+Y^*\,{\bf 1}_{A^c}$$

Then $\widetilde Y\in L^2$. Moreover, on $A$,
$$\ell_{\psi,\varphi}(Z,U;q)=|X-q|^2+ \sum_{i=1}^k\psi^{(i)}(Z)\varphi^{(i)}(q)$$
which is integrable, since $X\in L^2$, each $\psi^{(i)}(Z)\in L^1$, each $\varphi^{(i)}(q)$ is a constant, and $k$ is finite. Moreover,
$$\ell_{\psi,\varphi}(Z,U;\widetilde Y)=\ell_{\psi,\varphi}(Z,U;q){\bf 1}_A+\ell_{\psi,\varphi}(Z,U;Y^*){\bf 1}_{A^c}$$
and both terms are integrable. Hence
\begin{align}
\mathbb E[\ell_{\psi,\varphi}(Z,U;\widetilde Y)]
&\leq\mathbb E[\ell_{\psi,\varphi}(Z,U;Y^*)]
-\frac{\mathbb P(A)}{n}\nonumber\\
&<\mathbb E[\ell_{\psi,\varphi}(Z,U;Y^*)]\nonumber
\end{align}
contradicting the minimality of $Y^*$.

Therefore
$$\bar Q(u)\in \operatorname*{arg\,min}_{y\in\mathbb R}
\ell_{\psi,\varphi}(z,u;y)\quad\text{ for a.e. $(z,u)$ }$$

Since each $\varphi^{(i)}$ is differentiable, the function
$$y\longmapsto|Q_z(u)-y|^2+\sum_{i=1}^k\psi^{(i)}(z)\varphi^{(i)}(y)$$
is differentiable. Therefore its derivative vanishes at $y=\bar Q(u)$.
Thus
$$0=2(\bar Q(u)-Q_z(u))+\sum_{i=1}^k\psi^{(i)}(z)(\varphi^{(i)})'(\bar Q(u))\quad\text{ for a.e. $(z,u)$}$$

Hence
$$D(z,u)=Q_z(u)-\bar Q(u)=\frac12\sum_{i=1}^k\psi^{(i)}(z)(\varphi^{(i)})'(\bar Q(u))\quad\text{ for a.e. $(z,u)$}$$
\end{proof}

\chapter{Implementation details for the historical data experiments}
\label{appimplementation}

\section{Data and preprocessing}

Bitcoin 4-hour prices were retrieved from Binance monthly archive\footnote{\url{https://data.binance.vision/data/spot/monthly/klines/BTCUSDT/4h/}, accessed 4 July 2026} and concatenated into a single .csv file. The SP500 data was downloaded from FRED\footnote{\url{https://fred.stlouisfed.org/series/SP500}, accessed 1 July 2026.}. The NAO data were the daily normalized CDAS 500-hPa index published by the NOAA Climate Prediction Center\footnote{\url{https://ftp.cpc.ncep.noaa.gov/cwlinks/norm.daily.nao.cdas.z500.19500101_current.csv}, accessed 29 June 2026.}.

For Bitcoin and the SP500 we formed log returns

$$r_t=\log P_t-\log P_{t-1}$$

For the NAO index we used the given level values.

For all datasets we used a burn-in period of 500 observations to `warm up' the recurrent state. For Bitcoin and the NAO index the training sample size (after discarding the warmup observations) was $4500$, while the test sample size was $1000$ observations. The SP500 file had only $2511$ observations, so we split it into $2009$ training (including burn-in) and $502$ test observations. The Bitcoin training and test periods run from October 2023 to January 2026 and from January to June 2026, respectively. For the SP500 they run from July 2016 to June 2024 and from June 2024 to June 2026, and for NAO from January 2010 to September 2023 and from October 2023 to June 2026. The Bitcoin and SP500 files end on June 30th 2026, while NAO ends on 26th June 2026.

The normalization for all series was

$$\widetilde x_t=\frac{x_t-\bar x_{\mathrm{tr}}}{s_{\mathrm{tr}}}$$

where $\bar x_{\mathrm{tr}}$ and $s_{\mathrm{tr}}$ are the mean and standard deviation of the training block. The same transformation was applied to the test block and to each of the alternative models.

\section{RBE implementation and alternative models}

The recurrent function $G_\theta$ was implemented as a GRU with hidden dimension $d=4$ and a $k$-dimensional linear readout.

For the choice of neural test functions with $k=3$ each $\varphi^{(i)}$ was implemented as a MLP with two hidden layers of dimension 7 and $\tanh$ activations.

For the choice with $k=2$ monomial test functions notice the first order conditions of the inner minimization yield an affine function of $y$, which allow for $y$ to be cleared and computed exactly, which we did. For the choice with neural $\varphi$ we approximated the solution with LBFG. The outer loop used $200$ Adam updates for $(\lambda,\theta)$, with learning rates $10^{-2}$ for $\theta$ and $10^{-3}$ for $\lambda$. For each dataset we trained five different restarts with seeds $101$, $202$, $303$, $404$ and $505$, and their predictive cdfs were then averaged to form the ensemble predictive cdf

$$\widehat F_t^{\mathrm{ens}}(x)=\frac15\sum_{r=1}^5\widehat F_{t,r}(x)$$

For comparison we used the following alternative distributions:

\begin{itemize}
\item the empirical distribution of the full training block
\item rolling empirical distributions and zero-mean Gaussian distributions with rolling standard deviations, using rolling windows of size $250,500$ and $1000$
\item KNN empirical distributions using lags $p=3,5,10$ and neighborhood sizes $K=50,100,250$
\item a Gaussian GARCH$(1,1)$ model of constant mean for Bitcoin and the SP500
\item AR$(p)$ models with $p=1,4,7,20$ for NAO
\end{itemize}

For the KNN, the predictive cdf was

$$\widehat F_{t,p,K}^{\mathrm{KNN}}(x)=\frac1K\sum_{s\in\mathcal N_{p,K}(t)}\mathbf 1_{{\widetilde x_s\leq x}}$$

where $\mathcal N_{p,K}(t)$ contains the indices $s$ of the $K$ $p$-lag history vectors $H_s=( x_{s-1},\dots, x_{s-p})$ that are nearest to the current vector of $p$ lags $(x'_{t-1},\dots,x'_{t-p})$.

We used quasi MLE to fit the GARCH model, after which 3000 equidistributed quantile points from the standard Normal distribution were used as residuals to form an empirical predictive distribution. We used least squares to fit the AR models, after which their training residuals were added to the predicted conditional mean to form an empirical predictive distribution. All models and empirical distributions were fitted using only the training sample. 

The best alternative reported in Subsection \ref{predictive} is the candidate that had the lowest mean CRPS on the test sample.

For the second comparison against a recurrent location-scale benchmark, we trained the model 

$$
X_{t+1}=a_t+b_t\varepsilon_{t+1},\qquad \varepsilon_{t+1}\sim N(0,1),
$$

with the Gaussian quasi log-likelihood objective
$$
L=\frac{1}{T_{\mathrm{train}}-500}
\sum_{t=500}^{T_{\mathrm{train}}-1}
\left(\log b_t+\frac{(x_{t+1}-a_t)^2}{2b_t^2}\right).
$$
To imitate the RBE setup, we did five restarts with the same five seeds as the RBE and trained each restart for $1000$ Adam iterations with learning rate decreasing from $0.01$ to $0.001$. We then kept the iterate with the lowest $L$ on the training set. As with the RBE models, we built the ensemble as an equally weighted mixture of each of the five restarts' predictive distributions, given as $N(a_{
t,r},b_{t,r}^2)$. The CRPS was then computed analytically assuming Gaussian innovations $\epsilon_t$. We also ran a version of the predictive distribution of the recurrent location-scale ensemble in which we sampled 3000 observations by bootstrapping the training residuals and then computed the finite-sample CRPS. While the obtained CRPS values differed slightly from those obtained analytically assuming Gaussianity, the conclusions were the same: the recurrent location scale ensemble had a statistically significant advantage on the NAO dataset, while the RBE had a statistically distinguishable advantage on Bitcoin data and there was no detectable difference on the SP500 dataset.

\clearpage
\bibliographystyle{acm}
\bibliography{biblio}

@Article{Yang22,
  author={H. Yang and E. Tabak},
  title={Conditional Density Estimation, Latent Variable Discovery, and Optimal Transport},
  journal={Comm. Pure Appl. Math.},
  year={2022},
  volume={75},
  number={3},
  pages={610--663}
}

@Book{Galichon,
  author    = {A. Galichon},
  title     = {Optimal Transport Methods in Economics},
  publisher = {Princeton University Press},
  year      = {2016}
}

@article{Tabak25,
  doi = {10.48550/ARXIV.2507.03669},
  author = {A. Lipnick and E. Tabak and G. Trigila and Y. Wang and X. Ye and W. Zhao},
  title = {The {M}onge optimal transport barycenter problem},
  journal = {arXiv},
  publisher = {Cornell University},
  year = {2025},
  eprint       = {math.CO/2507.03669},
  pages = {2507.03669},
  pubstate      = {\bibstring{prepublished}},
}

@article{Graves14,
  author = {A. Graves},  
  title = {Generating sequences with recurrent neural networks},
  journal = {arXiv},
  year = {2014},
  pages = {1308.0850},  
  copyright = {arXiv.org perpetual, non-exclusive license}
}

@Article{Pass,
  author={B. Pass},
  title={Optimal transportation with infinitely many marginals},
  journal={J. Func. Anal.},
  year={2013},
  volume={264},
  number={4},
  pages={947--963}
  
}

@article{SalinasEtAl20,
  author  = {Salinas, David and Flunkert, Valentin and Gasthaus, Jan
             and Januschowski, Tim},
  title   = {{DeepAR}: Probabilistic Forecasting with Autoregressive
             Recurrent Networks},
  journal = {International Journal of Forecasting},
  volume  = {36},
  number  = {3},
  pages   = {1181--1191},
  year    = {2020},
  doi     = {10.1016/j.ijforecast.2019.07.001}
}

@inproceedings{RasulEtAl21,
  author    = {Rasul, Kashif and Sheikh, Abdul-Saboor and Schuster, Ingmar
               and Bergmann, Urs and Vollgraf, Roland},
  title     = {Multivariate Probabilistic Time Series Forecasting via
               Conditioned Normalizing Flows},
  booktitle = {International Conference on Learning Representations},
  year      = {2021},
  url       = {https://openreview.net/forum?id=WiGQBFuVRv}
}

@inproceedings{SiEtAl22,
  author    = {Si, Phillip and Bishop, Allan and Kuleshov, Volodymyr},
  title     = {Autoregressive Quantile Flows for Predictive Uncertainty
               Estimation},
  booktitle = {International Conference on Learning Representations},
  year      = {2022},
  url       = {https://openreview.net/forum?id=z1-I6rOKv1S}
}

@article{meitz08,
  author  = {Meitz, Mika and Saikkonen, Pentti},
  title   = {Stability of Nonlinear {AR--GARCH} Models},
  journal = {Journal of Time Series Analysis},
  volume  = {29},
  number  = {3},
  pages   = {453--475},
  year    = {2008},
  doi     = {10.1111/j.1467-9892.2007.00562.x}
}

@Article{Tucker59,
  author={H. G. Tucker},
  title={A generalization of the {G}livenko-{C}antelli theorem},
  journal={Ann. Math. Statist.},
  year={1959},
  volume={30},
  issue={3},
  pages={828--830}  
}

@Article{Rosenblatt52,
  author={M. Rosenblatt},
  title={Remarks on a multivariate transform},
  journal={Ann. Math. Statist.},
  year={1952},
  volume={23},
  pages={470--472}  
}

@article{LeGouicLoubes17,
  author  = {Le Gouic, Thibaut and Loubes, Jean-Michel},
  title   = {Existence and Consistency of {Wasserstein} Barycenters},
  journal = {Probability Theory and Related Fields},
  year    = {2017},
  volume  = {168},
  number  = {3--4},
  pages   = {901--917}
}

@article{BigotEtAl17,
  author  = {Bigot, J{\'e}r{\'e}mie and Gouet, Ra{\'u}l and
             Klein, Thierry and L{\'o}pez, Alfredo},
  title   = {Geodesic {PCA} in the {Wasserstein} Space by Convex {PCA}},
  journal = {Annales de l'Institut Henri Poincar{\'e},
             Probabilit{\'e}s et Statistiques},
  year    = {2017},
  volume  = {53},
  number  = {1},
  pages   = {1--26}
}

@book{Santambrogio15,
  author    = {Santambrogio, Filippo},
  title     = {Optimal Transport for Applied Mathematicians:
               Calculus of Variations, {PDE}s, and Modeling},
  series    = {Progress in Nonlinear Differential Equations and
               Their Applications},
  volume    = {87},
  publisher = {Birkh{\"a}user},
  address   = {Cham},
  year      = {2015}
}

@article{Ma26,
  author  = {Ma, Jianyu},
  title   = {Absolute Continuity of {Wasserstein} Barycenters on
             Manifolds with a Lower {Ricci} Curvature Bound},
  journal = {Calculus of Variations and Partial Differential Equations},
  year    = {2026},
  volume  = {65},
  number  = {1},
  pages   = {9},
  note    = {Article 9}
}

@book{Villani03,
  author    = {Villani, C{\'e}dric},
  title     = {Topics in Optimal Transportation},
  series    = {Graduate Studies in Mathematics},
  volume    = {58},
  publisher = {American Mathematical Society},
  address   = {Providence, RI},
  year      = {2003}
}

\end{document}